\documentclass{amsart}
\usepackage[english]{babel}
\usepackage{amsmath,enumerate,hyperref,xcolor}
\usepackage{amsthm}
\usepackage{amssymb}
\usepackage{amscd}
\usepackage[all]{xy}
\usepackage{xcolor}
\usepackage[latin1]{inputenc}
\usepackage[dvips]{graphics}

\usepackage{turnstile}

\newenvironment{smallarray}[1]
 {\null\,\vcenter\bgroup\scriptsize
  \arraycolsep=.25em
  \hbox\bgroup$\array{@{}#1@{}}}
 {\endarray$\egroup\egroup\,\null}
  

\newtheorem{teor}{Theorem}[section]
\newtheorem{defi}[teor]{Definition}
\newtheorem{propdef}[teor]{Proposition-Definition}
\newtheorem{lema}[teor]{Lemma}
\newtheorem{prop}[teor]{Proposition}
\newtheorem{cor}[teor]{Corollary}
\newtheorem{rem}[teor]{Remark}
\newtheorem{ejem}[teor]{Example}

\newtheorem{ques}[teor]{Question}

\def\hom{\mathop{\rm Hom}\nolimits}
\def\Hom{\mathop{\rm Hom}\nolimits}

\def\mod {\mathop{\rm mod}\nolimits}
\def\End {\mathop{\rm End}\nolimits}

\def\Der {\mathop{\bf D}\nolimits}
\def\Ext {\mathop{\rm Ext}\nolimits}
\def\Soc {\mathop{\rm Soc}\nolimits}
\def\Ker {\mathop{\rm Ker}\nolimits}
\def\Im {\mathop{\rm Im}\nolimits}

\def\P{\mathcal{P}}
\def\B{\mathcal B}
\def\fp{\mathrm{fp}}
\def\Ab{\mathrm{Ab}}
\def\t{\mathbf{t}}
\def\mod#1{\mathrm{Mod}\text{-}#1}
\def\fpmod#1{\mathrm{mod}\text{-}#1}
\def\proj#1{\mathrm{proj}\text{-}#1}
\def\Pres{\mathrm{Pres}}
\def\Ht{\H_\t}
\def\Scal{\mathcal{S}}
\def\Gen{\mathrm{Gen}}
\def\Z{\mathbb Z}
\def\Ch{\mathbf{C}}
\def\S{\mathcal S}
\def\Cogen{\mathrm{Cogen}}
\def\Copres{\mathrm{Copres}}
\def\Prod{\mathrm{Prod}}
\def\Coker{\mathrm{Coker}}

\def\op{\mathrm{op}}
\def\id{\mathrm{id}}

\def\add{\mathrm{add}}
\def\Set{\mathrm{Set}}

\def\A{\mathcal {A}}

\def\D{\mathcal {D}}
\def\U{\mathcal {U}}
\def\W{\mathcal {W}}
\def\V{\mathcal {V}}
\def\C{\mathcal C}
\def\Ob{\mathrm{Ob}}
\def\colim{\mathrm{colim}}
\def\F{\mathcal{F}}
\def\X{\mathcal{X}}
\def\G{\mathcal {G}}
\def\fg{\mathrm{fg}}

\def\H{\mathcal {H}}
\def\T{\mathcal{T}}
\def\fp{\mathrm{fp}}
\def\K{\mathbf{K}}

\def\Inj{\mathrm{Inj}\text{-}}
\def\ann{\mathrm{ann}}
\def\pres{\mathrm{pres}}
\def\copres{\mathrm{copres}}
\def\Ho{\mathrm{Ho}}
\def\tr{\mathrm{tr}}
\def\Flats#1{\mathrm{Flats}\text{-}#1}
\def\Qcoh{\mathrm{Qcoh}}
\def\Coprod{\mathrm{Coprod}}

\makeatletter
\@namedef{subjclassname@2020}{%
  \textup{2020} Mathematics Subject Classification}
\makeatother


\textwidth=6.2in 
\evensidemargin=0.2in 
\oddsidemargin=0.2in

\begin{document}

\title{Locally finitely presented and coherent hearts}

\author{Carlos E. Parra}
\address{Instituto de Ciencias F\'\i sicas y Matem\'aticas, Edificio Emilio Pugin, Campus Isla Teja, Universidad Austral de Chile, 5090000 Valdivia, CHILE}
\email{carlos.parra@uach.cl}
\thanks{The first named author was supported by CONICYT/FONDECYT/Iniciaci\'on/11160078} 
\author{Manuel Saor\'{\i}n}
\address{Departamento de Matem\'{a}ticas,
Universidad de Murcia,  Aptdo.\,4021,
30100 Espinardo, Murcia, SPAIN}
\email{msaorinc@um.es}
\thanks{
The second named author was supported by the grant PID2020-113206GB-I00, funded by MCIN/AEI/10.13039/501100011033, with a part of FEDER funds.}
\author{Simone Virili}
\address{Departament de Matem\`atiques, Facultat de Ci\`encies, Universitat Aut\`onoma de Barcelona, Edifici C, 08193 Bellaterra (Barcelona), SPAIN}
\email{simone.virili@uab.cat \textrm{ or } virili.simone@gmail.com}

\keywords{$t$-structure, heart, Happel-Reiten-Smal\o, locally finitely presented, locally coherent, silting, tilting, elementary cogenerator.}
\subjclass[2020]{18G80, 18E40, 18E10}

\date{}

\maketitle

\begin{abstract}Starting with a Grothendieck category $\mathcal{G}$ and a torsion pair $\mathbf{t}=(\mathcal{T},\mathcal{F})$ in $\G$, we study the local finite presentability and local coherence of {the} heart $\Ht$ of the associated Happel-Reiten-Smal\o\ $t$-structure in the derived category $\Der (\mathcal{G})$. We start by showing that, in this general setting, the torsion pair $\t$ is of finite type, if and only if it is quasi-cotilting, if and only if it is cosilting. 
We then proceed to study those $\t$ for which $\Ht$ is locally finitely presented, obtaining a complete answer under some additional assumptions on the ground category $\mathcal{G}$, which are general enough to include all locally coherent Grothendieck categories, all categories of modules and several categories of quasi-coherent sheaves over schemes. The third problem that we tackle is that of  {local coherence}. In this direction we characterize those torsion pairs $\t=(\T,\F)$ in a locally finitely presented $\G$ for which $\Ht$ is locally coherent in two cases: when  {the tilted t-structure in $\Ht$}  is assumed to restrict to finitely presented objects, and when $\F$ is cogenerating. In the last part of the paper we concentrate on the case when $\G$ is a category of modules over a small preadditive category, giving several examples and obtaining very neat (new) characterizations even in this more classical setting, also underlying connections with the notion of an elementary cogenerator.
\end{abstract}

\setcounter{tocdepth}{1}
\tableofcontents

\section*{Introduction}

$t$-Structures were introduced by Beilinson, Bernstein and Deligne \cite{BBD} in their study of perverse sheaves over an algebraic or analytic variety. A $t$-structure in an ambient triangulated category $\mathcal{D}$ is a pair $\tau=(\mathcal{U},\mathcal{W})$ of full subcategories satisfying some axioms (see Definition~\ref{def.$t$-structure}). The key point is that the intersection $\mathcal{H}=\mathcal{U}\cap\mathcal{W}$, called the heart of the $t$-structure, is an Abelian category whose short exact sequences are the triangles in $\mathcal{D}$ with the three vertices in $\mathcal{H}$. Moreover, such an Abelian category comes with a cohomological functor $H_\tau^0\colon\mathcal{D}\to\mathcal{H}$, allowing for the development of an intrinsic cohomology theory in $\mathcal{D}$, with cohomologies in $\H\subseteq \D$. Under suitable non-degeneracy and boundedness hypotheses on $\tau$, one can even recover the structure of $\mathcal{D}$ out of $\tau$. 

\smallskip
Once a new Abelian category is at hand, it is natural to ask if, with reasonable  hypotheses, this category acquires stronger properties. For example, in the classical hierarchy of Abelian categories introduced by Grothendieck \cite{Gro}, the so-called Grothendieck categories are high up in the list. In order to understand when the heart $\H$ of a $t$-structure $\tau$ is a Grothendieck category, it is almost unavoidable to require that the ambient triangulated category $\mathcal{D}$ has (arbitrary, set-indexed) coproducts. The problem of characterizing those $\tau$ whose heart is Grothendieck has deserved a lot of attention in recent years (see, for example, \cite{CGM}, \cite{CG}, \cite{PS1}, \cite{PS2}, \cite{PS3}, \cite{Bo}, \cite{Lu}, \cite{Laking}). Out of this deep work {and the conclusive papers \cite{Bo} and \cite{Saorin-Stovicek}, we now know that} the hearts of all the compactly generated $t$-structures  are Grothendieck categories, {even locally finitely presented.} The unique task still to be completed in this direction is to identify all the smashing $t$-structures,  for which the heart is a Grothendieck category. {The class of smashing $t$-structures strictly contains  that of the compactly generated ones and, in the middle of them, two other classes of $t$-structures have received attention. When $\mathcal{D}$ is compactly generated, one considers the $t$-structures with definable co-aisle (see \cite{Kr_coherent} for the definition of definable subcategory). Alternatively, if $\mathcal{D}$ is not necessarily compactly generated, but admits some enhancement that allows one to meaningfully define homotopy colimits (e.g., this is the case when $\D$ is the base of a strong and stable derivator, see \cite{Groth}, or when it is the homotopy category of a stable $\infty$-category, see \cite{Lu}) a $t$-structure $\tau$ is called homotopically smashing when its co-aisle is closed under directed homotopy colimits. Both the $t$-structures with definable co-aisle and the homotopically smashing $t$-structures have a heart that is a Grothendieck category (see \cite{SSV} for the former case and \cite{Saorin-Stovicek} for the latter), and actually both types of t-structures coincide when $\D$ is the compactly generated base of a strong and stable derivator (see \cite{Laking}). Out of these two subclasses, an identification of the remaining smashing $t$-structures whose heart is a Grothendieck category is still missing.}


\smallskip
The next natural problem to tackle is the following: suppose that we are given a $t$-structure $\tau$ with a heart $\H$ which is a Grothendieck category. Under which conditions does $\H$ satisfy some nice finiteness conditions? E.g., when is it locally finite, locally noetherian, locally coherent or, at least, locally finitely presented? The study of this general problem is only at its beginnings and few references have considered the problem so far (see, e.g., \cite{Sa}, \cite{Laking} and the recent {preprint}  \cite{Saorin-Stovicek}, where it is proved that the heart of any compactly generated $t$-structure in a triangulated category with coproducts is  locally finitely presented). In this paper we tackle the problem for the Happel-Reiten-Smal\o\ $t$-structure in the  derived category $\mathbf{D}(\mathcal{G})$ of a Grothendieck category $\mathcal{G}$ associated with a torsion pair $\mathbf{t}=(\mathcal{T},\mathcal{F})$ in $\mathcal{G}$. Concretely, we study when the associated heart is a locally finitely presented or a locally coherent Grothendieck category. Since, by the main results of \cite{PS1} and \cite{PS2}, the heart is a Grothendieck category if, and only if, $\mathbf{t}$ is of finite type (i.e., $\mathcal{F}$ is closed under taking direct limits in $\mathcal{G}$), the problem translates into that of characterizing the torsion pairs of finite type whose associated heart is locally finitely presented or locally coherent. 

\smallskip
Let us now describe (some of) the main results of the paper. The first one (see Theorem~\ref{main_thm_fintype_qcotilt_cosilt}) extends to Grothendieck categories a result recently proved for categories of modules (see Remark~\ref{rem.fintype-quasicotilting-cosilting}): 

\medskip\noindent
{\bf Theorem A.}
{\em The following are equivalent for a torsion pair $\mathbf{t}=(\mathcal{T},\mathcal{F})$ in a Grothendieck category $\G$:
 \begin{enumerate}[\rm (1)]
 \item $\mathbf{t}$ is of finite type;
 \item $\mathbf{t}$ is quasi-cotilting;
 \item $\mathbf{t}$ is the torsion pair associated with  a cosilting (pure-injective) object of $\G$.
 \end{enumerate}}

\medskip
The second main result (see Theorem~\ref{thm.locally-fp-hearts} for the precise statement) identifies the torsion pairs of finite type whose associated heart is locally finitely presented, for a wide class of ambient categories (see Corollary~C for a neater, though weaker, formulation of the result, which is often enough in practice):  

\medskip\noindent
{\bf Theorem B.}
{\em Let $\G$ be a locally finitely presented Grothendieck category, $\mathbf{t}=(\T,\F)$ a torsion pair in $\G$ and $\Ht$ the heart of the associated Happel-Reiten-Smal\o\ $t$-structure in $\Der(\G)$. Suppose also that either of the following two conditions is satisfied
\begin{enumerate}
\item[\rm ($\dag^\sharp$)] $\Ext_\G ^2(T,-)_{\restriction \F}\colon \F\to \Ab$ preserves direct limits for all $T\in\mathcal{T}\cap\fp(\G)$, and $\mathcal{T}\cap\fp(\mathcal{G})\subseteq\fp_2(\G)$;
\item[\rm ($\bullet$)]  $\G $ has a set of finitely presented generators which are compact in $\Der(\G )$.
\end{enumerate}
Then, the following assertions are equivalent:
\begin{enumerate}[\rm (1)]
  \item $\H_\mathbf{t}$ is a locally finitely presented Grothendieck category;
  \item $\mathcal{T}={\varinjlim} (\mathcal{T}\cap\fp(\G))$;
  \item $\mathbf{t}$ is generated by a set of finitely presented objects, i.e., there is a set $\Scal\subseteq \fp(\G)$ such that $\F = \S^{\perp}$. 
  \end{enumerate}}

\medskip
As an immediate consequence, one gets:

\medskip\noindent
{\bf Corollary C.}
{\em Let $\G$ be a Grothendieck category that satisfies one of the following conditions:
\begin{enumerate}[\rm (1)]
\item $\mathcal{G}$ is  locally coherent;
\item $\mathcal{G}$ is a category of modules over a ring or, more generally, over a small pre-additive category;
\item $\mathcal{G}=\Qcoh(\mathbb{X})$ is the category of quasi-coherent sheaves over a quasi-compact quasi-separated coherent regular scheme $\mathbb{X}$. 
\end{enumerate}
Given a torsion pair $\mathbf{t}=(\T,\F)$ in $\G$, the heart $\Ht$ of the associated $t$-structure in $\Der(\G)$ is a locally finitely presented Grothendieck category if, and only if, $\mathbf{t}$ is generated by finitely presented objects. 
}

\medskip
{We proceed with the characterization of the local coherence of $\Ht$. One would naturally expect $\Ht$ to be locally coherent when $\G$ is locally coherent and $\mathbf{t}$ is a finite type torsion pair that restricts to the subcategory $\fp(\G)$ of finitely presented objects. This was established in \cite{Sa}. Here we tackle the problem assuming only that $\G$ is locally finitely presented and $\mathbf{t}$ is  of finite type. No complete answer is given, but there are two interesting partial answers. The first one (see Theorem~\ref{thm.local-coherent-heart-general}),  gives necessary and  sufficient conditions for $\Ht$ to be  locally coherent when the Happel-Reiten-Smal\o\ tilt $\bar{t}=(\F[1],\T[0])$ is a torsion pair in $\Ht$ that restricts to $\fp(\Ht)$. As the statement is fairly technical we  do not include it in this introduction, but it has several useful consequences, as, for instance, a neat characterization of the local coherence of $\Ht$ when $\mathbf{t}$ is hereditary and $\G$ locally coherent (see Proposition~\ref{prop.after-referee-report}).

 \medskip 
We then give a complete answer to the problem when the torsionfree class $\mathcal{F}$ is generating (see Theorem~\ref{thm.main_thm_6c} for the precise statement, of which the following result is a shortened version).

\medskip\noindent
{\bf Theorem D.}
{\em Let $\G$ be a locally finitely presented Grothendieck category and $\t= (\T,\F)$ a torsion pair of finite type such that $\F$ is generating  in $\G$. Then,   
the following assertions are equivalent:
\begin{enumerate}[\rm (1)]
\item $\mathcal{H}_\mathbf{t}$ is locally coherent;
\item  $\mathbf{t}$ restricts to $\fp(\G)$ and $\F\cap\fp(\G)\subseteq\fp_\infty(\G)$.
\end{enumerate}
When these conditions hold, $\bar{\mathbf{t}}:=(\F[1],\T[0])$ restricts to $\fp(\Ht)$ if, and only if, $\G$ is locally coherent. }

\medskip
As a consequence (see Corollary~\ref{cor.cotiltingpair-locallycoh-heart-modcats} and Corollary~\ref{cor.F0}), one gets:

\medskip\noindent
{\bf Corollary E.}
{\em 
Let $\A$ be a small preadditive category, $\G:=\mod \A$ the category of right $\A$-modules, $\t= (\T,\F)$ a torsion pair of finite type in $\G$, with $\F$ generating in $\G$. The following  are equivalent:
\begin{enumerate}[\rm (1)]
\item $\Ht$ is locally coherent;
\item for all $X\in \fpmod\mathcal{A}:=\fp(\mod \A)$, $(1:t)(X):=X/t(X)$ admits a resolution by finitely generated projective modules;
\item $\mathbf{t}$ restricts to $\fpmod\mathcal{A}:=\fp(\mod \A)$ and $\F\cap\fpmod\A\subseteq\fp_\infty (\mod\A)$.
\end{enumerate}
  }

\medskip
Let us also remark that there is a way of linking the general problem of the local coherence of the heart $\Ht$ associated with a torsion pair of finite type $\t=(\T,\F)$, to the situation considered in Theorem~D. In fact, the subcategory $\underline{\F}$ of $\G$ consisting of the epimorphic images of the objects in $\F$ is a Grothendieck category on which the ``restricted'' torsion pair $\mathbf{t}'=(\T\cap\underline{\F},\F)$ is of finite type with $\F$ generating. Moreover, it follows from the local coherence of $\Ht$ that $\underline{\F}$ is locally finitely presented and $\mathcal{H}_{\mathbf{t}'}$ is locally coherent (see Corollary~\ref{cor.loc-coherence-implies-restriction}). It is then natural to ask for the conditions to add to the local coherence of $\mathcal{H}_{t'}$ in order to force the local coherence of $\Ht$. This is done in the final section of the paper, and the following result, which is part of Proposition~\ref{prop.after-referee-report}, is a sample:

\medskip\noindent
{{\bf Proposition F.} 
{\em  Let $\G$ be a locally finitely presented Grothendieck category,  $\mathbf{t}=(\mathcal{T},\mathcal{F})$ a torsion pair of finite type in $\G$ and $\mathbf{t}'=(\T\cap\underline{\F},\F)$. Consider the following assertions:
\begin{enumerate}[\rm (1)]
\item $\Ht$ is locally coherent;
\item the following conditions hold true:
\begin{enumerate}[\rm ({2.}1)]
\item $\mathbf{t}$  is generated by finitely presented objects;
\item the restricted heart $\mathcal{H}_{\mathbf{t}'}$ is locally coherent;
\item for some (resp., each) set of generators $\S\subseteq \fp(\G)$ of $\G$, the functor $\G ((1:t)(S),-)\colon\G\to\Ab$ preserves direct limits of objects in $\T$, for all $S\in \S$.
\end{enumerate}
\end{enumerate}  
The implication {\rm``(1)$\Rightarrow$(2)''} holds and, when $\G$ is locally coherent, both assertions are equivalent.}}

\medskip
In the particular case of module categories, linking this result with the study of torsion pairs associated to cosilting modules which are elementary cogenerators, we emphasize the following consequence (see Proposition~\ref{last_prop_laking}).

\medskip\noindent
{\bf Corollary G.}
{\em Let $R$ be a right coherent ring and $\mathbf{t}=(\mathcal{T},\mathcal{F})$ a torsion pair in $\mod R$. Then, the following assertions are equivalent:
\begin{enumerate}[\rm (1)]
\item $\Ht$ is a locally coherent Grothendieck category;
\item the following conditions hold true:
\begin{enumerate}[\rm ({2.}1)]
\item $\mathbf{t}$ is generated by finitely presented modules;
\item $\mathbf{t}=({}^\perp Q,\Cogen (Q))$, for a cosilting module $Q$ that is an elementary cogenerator in $\mod R$;
\item $\mathrm{ann}_{(-)}t(R)\colon\mod R\to\mod R$
preserves direct limits of modules in $\T$;
\end{enumerate}
\item the following conditions hold true:
\begin{enumerate}[\rm ({3.}1)]
\item $\mathbf{t}$ is generated by finitely presented modules;
\item the restricted heart $\mathcal{H}_{\mathbf{t}'}$ is locally coherent;
\item $\mathrm{ann}_{(-)}t(R)\colon \mod R\to\mod R$ preserves direct limits of modules in $\T$.
\end{enumerate}
\end{enumerate}}

\medskip
As a complement to the results about the local coherence of $\Ht$, we give in Section~\ref{Sec_consequences} several examples of categories of modules for which there are torsion pairs $\t$ such that $\Ht$ is locally coherent, but either $\mod\A$ is not locally coherent or $\mod\A$ is locally coherent and $\mathbf{t}$ does not restrict to $\fpmod\A$.

\medskip
 The organization of the paper goes as follows. Section~\ref{Sec_Prelim} serves to recall some facts (most of which are, at least partially, known) about Grothendieck categories, especially about the properties of the finitely ($n$-)presented {objects}. In Section~\ref{Sec_Prelim_on_tors} we collect the needed background about torsion pairs of finite type in Grothendieck categories. Then, Section~\ref{Sec_Prelim_on_der} contains the basic definitions about triangulated categories, derived categories, general $t$-structures and some constructions related to the Happel-Reiten-Smal\o\ tilts of a $t$-structure.  Section~\ref{Section_finite_type} is devoted to the proof of Theorem~A, following a {route different to} that followed {in the literature}  when the ambient category is a category of modules. In Section~\ref{Section_fp}, for a given torsion pair of finite type $\mathbf{t}$ in a Grothendieck category, we characterize the objects of the heart $\Ht$ which are finitely presented and finitely $2$-presented, giving special emphasis to the identification of the stalk complexes in $\fp(\Ht)$. In Section~\ref{lfp_section} we study conditions on $\t$ for $\Ht$ to be locally finitely presented, proving, in particular, Theorem~B and Corollary~C. In Section~\ref{Section_loc_coh_hearts}, we study those torsion pairs $\t$ for which $\Ht$ is locally coherent; the main results in this direction are Theorem~\ref{thm.local-coherent-heart-general} and (a more precise version of) Theorem~D. In the final Section~\ref{Sec_consequences}, we specify our main results to  the case when $\mathcal{G}$ is a category of modules, we prove Corollary~E and, inspired by the work of Rosanna Laking \cite{L} and highly motivated by  her suggestions,  we try to understand the relations between the local coherence of the heart and the property of the associated cosilting module being an elementary cogenerator. In particular, we prove the general Proposition~\ref{last_prop_laking} and a  version of Corollary~F that applies to categories of modules over small preadditive categories.

 \bigskip\noindent
{\bf Acknowledgement.} 
We are grateful to Jan \v{S}\v{t}ov\'{\i}\v{c}ek, Henning Krause and Flavio Coelho for several comments and suggestions.
It is also a pleasure for us to thank Lidia Angeleri H\"ugel and Rosanna Laking for several helpful discussions, started during a fruitful research stay of the three authors in Verona, in February 2019. We also thank the referee for the careful reading of the manuscript and the subsequent comments and suggestions that helped to improve the presentation of the paper.

 \section*{List of most commonly used symbols}
We collect here some of the standard symbols that are more frequently used throughout the paper, so that they are easily available to the reader at any time.
 \begin{itemize}
 \item $\C$ denotes a category, and $\C(C,C'):=\Hom_\C(C,C')$ for all $C,\, C'$ in $\C$;
 \item $I$, $J$ and $\Lambda$ usually denote small categories or sets (i.e., discrete small categories);
\item $\Set$ denotes the category of sets;
\item $\Ab$ denotes the category of Abelian groups;
 \item $\G$ denotes a Grothendieck category;
 \item $\Inj \G$ denotes the class of injective objects in $\G$.
\end{itemize}
All the subcategories that we consider throughout the paper are meant to be full, so we generally treat ``subclass" (of objects)  of a given category and ``subcategory'' as synonyms. Let now $I$ be a small category, $X\in \C$ an object, and $(X_i)_I\in\mathrm{Func}(I,\C)$ an $I$-shaped diagram in a category $\C$. We use the following notations to denote different types of (co)limits (whenever they exist):
\begin{itemize}
\item if $I$ is a set, $\prod_IX_i$ ($\coprod_IX_i$) denotes the (co)product of $(X_i)_I$;
\item if $I$ is a set, $X^{I}$ ($X^{(I)}$) denotes the (co)product of $|I|$-many copies of $X$;
\item $\lim_IX_i$ ($\colim_IX_i$) denotes the (co)limit of an $I$-shaped diagram $(X_i)_I$;
\item if $I$ is directed, ${\varinjlim}_IX_i$ denotes the direct limit of $(X_i)_I$; 
\item if $I$ is directed, and all the transitions $X_{i,j}\colon X_i\to X_j$ are monomorphisms, the direct limit of $(X_i)_I$ is said to be a {\bf direct union} and it is denoted by $\bigcup_IX_i$.
\end{itemize}
When we want to emphasize that a (co)product, (co)limit, direct limit or direct union is taken in a specific category $\C$, we use the following notation 
\begin{equation}\label{relative_lims}
\xymatrix{
\coprod_I^{(\C)}X_i,\quad\prod_I^{(\C)}X_i,\quad \lim_I^{(\C)}X_i,\quad \colim_I^{(\C)}X_i,\quad {\varinjlim}_I^{(\C)}X_i,\quad \bigcup_I^{(\C)}X_i. 
}
\end{equation}
Given a subcategory $\S$ of an Abelian category $\C$, supposing the needed (co)limits exist in each case,
\begin{itemize}
\item $\Prod(\S)$ ($\Coprod(\S)$) is the class of all direct summands of the (co)products of families of the form $(S_i)_I$, with $S_i\in \S$ for all $i\in I$, with $I$ a set;
 \item $\Gen(\S)$ ($\Cogen(\S)$) is the class of quotients (subobjects) of objects in $\Coprod(\S)$ ($\Prod(\S)$);
\item  $\underline{\Cogen}(\S)$ is the class of quotients of objects in $\Cogen(\S)$;
\item $\Pres(\mathcal{S})$ is the class of cokernels of maps $X\to Y$, with $X,\,Y\in \Coprod(\S)$;
\item $\Copres(\mathcal{S})$ is the class of kernels of maps $X\to Y$, with $X,\,Y\in \Prod(\S)$;
\item ${\varinjlim}\, \S$ is the class of direct limits of direct systems $(S_i)_{I}$, with $S_i\in \S$ for all $i\in I$;
\item $\mathrm{add}(\S)$ is the class of direct summands of {finite coproducts of}  objects in $\S$;
\item $\mathrm{sum}(\S)$ is the class of finite coproducts of objects in $\S$;
\item $\pres_n(\S)$, for $n\geq {0}$, is the class of those $N\in \C$ that admit an exact sequence 
\[
X_{n}\longrightarrow \cdots \longrightarrow X_{1}\longrightarrow X_0\longrightarrow N\longrightarrow 0,\qquad \text{with $X_{k}\in \mathrm{sum}(\S)$, for all $k=0,\dots,n$.}
\] 
\item $\copres_n(\S)$, for $n\geq {0}$, is the class of those $N\in \C$ that admit an exact sequence 
\[
0\longrightarrow N\longrightarrow X_{0}\longrightarrow X_1\longrightarrow  \cdots \longrightarrow X_{n},\qquad \text{with $X_{k}\in \mathrm{sum}(\S)$, for all $k=0,\dots,n$.}
\] 
\item $\mathrm{gen}(\S):=\pres_{ {0}}(\S)$  {and $\pres (\S):=\pres_1(\S)$}.
\end{itemize}
As we did in \eqref{relative_lims}, if we want to make clear that the products defining $\Prod(\S)$ are taken in a specific category $\C$, we  then write $\Prod_\C(\S)$, and we  adopt similar notations for the rest of the classes defined above. More specific symbols and conventions will be introduced in the body of the paper. 

\section{Preliminaries on Grothendieck categories}\label{Sec_Prelim}

In this first section we give the necessary definitions and preliminaries about Grothendieck categories and finitely ($n$-)presented objects. Moreover, we introduce locally finitely generated, locally finitely presented, and locally coherent categories. Some of our results, although probably known to experts, are stated in a generality that is not available in the literature; in those cases we include a proof. 

\subsection{Finitely generated and  finitely presented objects}\label{fg_and_fp_objects}

An Abelian category $\C$  is 
\begin{itemize}
\item (Ab.$3$) (resp., (Ab.$3^*$)) when it is cocomplete (resp., complete);
\item (Ab.$5$) when it is cocomplete and direct limits are exact;
\item a {\bf Grothendieck} category if it is (Ab.$5$) and it has a generator.
\end{itemize}
Recall that a Grothendieck category is automatically (Ab.$3^*$) and it has enough injectives. 

Let $\G$ be a Grothendieck category, then an object $X$ in $\G$ is said to be 
\begin{itemize}
\item {\bf finitely generated} if $\G(X,-)\colon \G\to \Ab$ commutes with direct unions;
\item  {\bf finitely presented} if $\G(X,-)\colon \G\to \Ab$ commutes with direct limits.
\end{itemize}
These definitions coincide with the usual ones if $\G$ is a category of modules. In what follows we let
\[
\fg(\G):=\{\text{fin.\ gen.\ objects in $\G$}\}\quad\text{and}\quad\fp(\G):=\{\text{fin.\ pres.\ objects in $\G$}\}.
\]
Note that finitely presented objects are, in particular, finitely generated, that is, $\fp(\G)\subseteq \fg(\G)$. The following result gives alternative characterizations of the objects in $\fg(\G)$:

\begin{prop} \label{prop.fg-objects}
Let $\G$ be a Grothendieck category.  The following are equivalent for  $X\in \G$:
\begin{enumerate}[\rm (1)]
\item $X$ is finitely generated;
\item for any directed family $(X_i)_{I}$ of subobjects such that $\bigcup_{I}X_i=X$, there is $j\in I$ such that $X_j=X$;
\item for any direct system $(X_i)_{I}$ in $\G$, the following canonical map is a monomorphism 
\[
\xymatrix{
{\varinjlim}_I \G(X,X_i)\longrightarrow\G(X,{\varinjlim}_I X_i).
}
\] 
\end{enumerate}
As a consequence, the subcategory $\fg(\G)$ is closed under taking extensions and quotients in $\G$.
\end{prop}
\begin{proof}
(1)$\Leftrightarrow$(2)  {is \cite[Proposition~V.3.2]{St}, and (1,2)$\Rightarrow$(3) is included in the proof of \cite[Proposition~V.3.4]{St} since, with the notation of {\rm [op.\ cit.]}, the proof that $\phi$ is monic only uses that $C\in\fg(\G)$.} 

\smallskip\noindent
(3)$\Rightarrow$(2). {Let $(X_i)_{I}$ be an upward directed system of subobjects of $X$ such that $X=\bigcup_{I}X_i$. This induces the following direct system of short exact sequences in $\G$:
\[
(0\longrightarrow X_i \longrightarrow X\stackrel{p_i}{\longrightarrow}X/X_i\longrightarrow 0)_{I},
\] 
from which we get that $\varinjlim (X/X_i)=0$. By (3), we deduce that \mbox{$\varinjlim_I\G(X,X/X_i)=0$.} Take now an arbitrary $i\in I$, and consider the canonical projection $p_i\colon X\to X/X_i$. Then, $p_i$ is mapped to zero by the following canonical map to the direct limit in $\Ab$:
$u_i\colon\G(X,X/X_i)\stackrel{}{\to}\varinjlim_I\G(X,X/X_i).$ 
This means that there exists an index $j\geq i$ such that the composition $p_{ij}\circ p_i\colon X\stackrel{}{\to}X/X_i\stackrel{}{\to}X/X_j$ is zero. 
But this composition is precisely the projection $p_j\colon X\to X/X_j$, and so we get that $X=X_j$.}
\\
{Finally, the fact that $\fg(\G)$ is closed under taking extensions and quotients follows by  (3).}
\end{proof}

\subsection{Finitely $n$-presented objects}\label{subsectionEqII}
%
{Apart from the subcategories $\fg(\G)$ and $\fp(\G)$, we consider the following ones, already appeared in \cite{BGP} when $\G$ is locally finitely presented. Concretely, Lemma~\ref{lem.n-fp subcategories}, Corollary~\ref{coro_1_fpn} and Proposition~\ref{prop_mono} below appear in  [op.\ cit.] in that more restrictive environment. However {our treatment is different in that we can find a much shorter proof of Proposition~\ref{prop_mono}, which holds in any Grothendieck category, and allows us to work in this more general setting.}}
\begin{defi}[\cite{BGP}] \label{def.n-fp object}
Let $\G$ be a Grothendieck category and $n>0$ a positive integer. An object $X$ in $\G$ is {\bf finitely $n$-presented} when the functors 
\[
\Ext_\G^k(X,-)\colon \G\longrightarrow\Ab
\] 
preserve direct limits for $0\leq k<n$. We then define the following classes:
\begin{itemize}
\item $\fp_0(\G):=\fg(\G)$
\item  $\fp_n(\G)$ is the class of the finitely $n$-presented objects in $\G$, for all $n>0$;
\item $\fp_\infty(\G):=\bigcap_{n\geq 0}\fp_n(\G)$. 
\end{itemize}
\end{defi}
Let us remark that $\fp_1(\G)=\fp(\G)$. Furthermore, there is a chain of inclusions 
\[
\fp_0(\G)\supseteq\fp_1(\G)\supseteq \cdots\supseteq\fp_n(\G)\supseteq\fp_{n+1}(G) \supseteq \cdots \supseteq \fp_\infty(\G).
\]
In the following two examples we specialize the above definition to the particular setting of categories of modules and to that of categories of quasi-coherent sheaves:

\begin{ejem}
In case $\G=\mod R$ is a category of modules over a ring $R$ or, more generally, over a small preadditive category, the finitely $n$-presented modules are those $X$ that admit a resolution 
\[
\cdots \longrightarrow P^{-k} \longrightarrow \cdots \longrightarrow P^{-1} \longrightarrow P^0 \longrightarrow X \longrightarrow 0,
\] 
where all $P^{-k}$ are finitely generated projective modules, for $k=0,1,\dots,n$.
\end{ejem}

\begin{ejem}
Let $(\mathbb{X},\mathcal O_{\mathbb X})$ be a quasi-compact quasi-separated scheme, in which case the category $\G_\mathbb{X}:=\Qcoh(\mathbb{X})$ of quasi-coherent sheaves is locally finitely presented (see \cite[Proposition~3.1]{Ga}). As mentioned in \cite[p.\ 6]{BGP} (using arguments of \cite{EG}), a given $\F\in\G_{\mathbb{X}}$ belongs in $\fp_n(\G_{\mathbb{X}})$ if, and only if, $\F(U)\in\fp_n(\mod {{\mathcal O}_{\mathbb X}(U)})$, for each affine open $U$ in $\mathbb{X}$. 
\end{ejem}
Let now $\C$ be an Abelian category and suppose that the needed products and coproducts exist in $\C$. Given two objects $X$ and $Y$ in $\C$, we define two functors: 
 \begin{equation}\label{def_Phi_Psi}
\Phi_X\colon \C\longrightarrow \C\qquad\text{and}\qquad\Psi_Y\colon \C\longrightarrow \C,
 \end{equation}
each defined by the composition of three functors: $\Phi_X$ is the $\hom$-functor $\C(X,-)\colon \C\to \Ab$, followed by the forgetful functor $|-|\colon \Ab\to \Set$, and finally the functor $X^{(-)}\colon \Set\to \C$, mapping a set $S$ to the coproduct $X^{(S)}$; similarly $\Psi_Y$ is the $\hom$-functor $\C(-,Y)\colon \C^\op\to \Ab$, followed by the forgetful functor $|-|\colon \Ab\to \Set$, and finally the functor $Y^{-}\colon \Set^{\op}\to \C$, mapping a set $S$ to the product $Y^{S}$. So, given $C\in\C$, we get:
\[
\Phi_X(C)=X^{(\C(X,C))}\qquad\text{and}\qquad\Psi_Y(C)=Y^{\C(C,Y)}.
\]
These functors come with natural transformations $\rho\colon \Phi_X\Rightarrow \id_{\C}$ and $\iota\colon \id_{\C} \Rightarrow \Psi_Y$, 
where $\rho$ is epimorphic if $X$ is a generator, and $\iota$ is monomorphic if $Y$ is a cogenerator. 
In fact, given two sets of objects $\mathcal X$ and $\mathcal Y$ in $\C$, one defines
\begin{align*} 
\xymatrix{\Phi_{\mathcal X}\colon \C\longrightarrow \C\quad \text{such that}\quad \Phi_{\mathcal X}(C):=\coprod_{X\in \mathcal X}\Phi_X(C),}\\
\xymatrix{\Psi_{\mathcal Y}\colon \C\longrightarrow \C\quad \text{such that}\quad \Psi_{\mathcal Y}(D):=\prod_{Y\in \mathcal Y}\Psi_Y(D).}
\end{align*}
Again, these functors come together with natural transformations $\rho\colon \Phi_{\mathcal X}\Rightarrow  \id_{\C}$ and $\iota\colon \id_{\C} \Rightarrow \Psi_{\mathcal Y}$, where $\rho$ is epimorphic if $\mathcal X$ is a set of generators, and $\iota$ is monomorphic if $\mathcal Y$ is a set of cogenerators. 

\begin{lema}\label{lema_BP}
Let $\G$ be a Grothendieck category and fix an injective cogenerator $E$ in $\G$. For $n>1$, an object $X$ is in $\fp_n(\G)$ if and only if $X\in\fp(\G)$ and $\Ext_\G^k(X,-)$ vanishes on direct limits of objects in $\Prod(E)=\Inj\G$, for all $k=1,\dots,n-1$.
\end{lema}
\begin{proof}
{The {\it ``only if''} part is clear. For the {\it ``if''} part, we use induction on  $n\geq 2$. For $n=2$, consider the functor $\Psi_E$ described above and note that $\Im(\Psi_E)\subseteq \Prod(E)$. Now the vanishing of the functor $\Ext^{1}_{\G}(X,-)$  on direct limits of objects in $\Prod(E)$, for $X\in \fp(\G)$, implies that $X\in \fp_2(\G)$ by \mbox{\cite[Theorem~B.1]{BP2}.} Suppose now that $n>2$. The inductive hypothesis says that $X\in\fp_{n-1}(\G)$ and, by assumption,  $\Ext_\G^{n-1}(X,-)$ vanishes on direct limits of objects in $\Prod (E)$, so that $X\in\fp_n(\G)$, by a second application of \cite[Theorem~B.1]{BP2}.}  
\end{proof}

 \begin{prop}\label{prop_mono}
 Let $\G$ be a Grothendieck category,  $X\in\fp_n(\G)$ for some $n\geq 0$, and  $(Y_i)_{ I}$  a direct system in $\G$. Then, the following canonical map is a monomorphism:
 \[
 \xymatrix{
 f\colon {\varinjlim}_I\Ext_\G^n(X,Y_i)\longrightarrow\Ext_\G^n(X,{\varinjlim}_I Y_i).
 }
 \]
\end{prop}
\begin{proof}
The case $n=0$ follows by Proposition~\ref{prop.fg-objects}. For $n>0$, choose an injective cogenerator $E$ in $\G$, so that $\iota\colon \id_\G\Rightarrow \Psi_E$ is monomorphic. One obtains the following exact sequences in $\G$:
\[
(\xymatrix{0 \ar[r] & Y_{i} \ar[r]^-{\iota_{i} \hspace{0.2 cm}} & \Psi_E(Y_{i}) \ar[r] & \Coker(\iota_{i}) \ar[r] & 0})_I,
\]
from which one gets the following commutative diagram with exact rows and obvious vertical maps:
\[ \scalebox{0.91}{
\xymatrix@C=25pt{
{\varinjlim}_I \Ext_\G^{n-1}(X,\Psi_E(Y_{i})) \ar[r] \ar[d]|{\cong} & {\varinjlim}_I {\Ext_\G^{n-1}}(X,\Coker(\iota_{i})) \ar[r] \ar[d]|{\cong} & {\varinjlim}_I \Ext^{n}_{\G}(X,Y_{i}) \ar[r] \ar[d]^{f}& 0 \ar[d]\\ 
\Ext_\G^{n-1}(X, {\varinjlim}_I\Psi_E(Y_{i})) \ar[r] & \Ext_\G^{n-1}(X,{\varinjlim}_I\Coker(\iota_{i})) \ar[r] & \Ext^{n}_{\G}(X, {\varinjlim}_I Y_{i}) \ar[r] & \Ext^{n}_{\G}(X,{\varinjlim}_I\Psi_E(Y_{i})).
}
}\]
The two leftmost vertical arrows are isomorphisms, forcing $f$ to be a monomorphism.
\end{proof}

In what follows we give some results about closure properties of the classes $\fp_n(\G)$.

\begin{lema} \label{lem.n-fp subcategories}
Let $\G$ be a Grothendieck category and consider a short exact sequence in $\G$:
\begin{equation}\label{ses_fp_n_eq}
0\longrightarrow X\longrightarrow Y\longrightarrow Z\longrightarrow 0.
\end{equation}
Then, the following assertions hold true for all $n\geq 0$:  
\begin{enumerate}[\rm (1)]
\item if $X\in\fp_n(\G)$ and $Y\in\fp_{n+1}(\G)$, then $Z\in\fp_{n+1}(\G)$;
\item if $Z\in\fp_{n+1}(\G)$ and $Y\in\fp_{n}(\G)$, then $X\in\fp_{n}(\G)$;
\item if both $X$ and $Z\in \fp_n(\G)$, then $Y\in\fp_n(\G)$.
\end{enumerate}
\end{lema}
\begin{proof}
We first introduce two diagrams  we will use in proving all the assertions. The former is a fragment of the long exact sequence of functors $\G\to\Ab$ associated with the  exact sequence \eqref{ses_fp_n_eq}:
\begin{equation}\label{long_ex_fp_eq}
{\xymatrix@R=10pt@C=15pt{
\Ext_\G^k(Y,-)\ar[r]&\Ext_\G^k(X,-)\ar[r] &\Ext_\G^{k+1}(Z,-)\ar[r]&\Ext_\G^{k+1}(Y,-)
}}
\end{equation}
for {all $k\geq 0$.} The latter is the following commutative diagram with exact rows, associated with any given direct system $(M_i)_I$ in $\G$: 
\begin{equation}\label{comm_dia_fp_eq}
\xymatrix@C=20pt{ 0 \ar[r] & {\varinjlim}_I \G(Z,M_{i}) \ar[r] \ar[d]_{f_1} & {\varinjlim}_I \G(Y,M_{i}) \ar[r] \ar[d]_{f_2}& {\varinjlim}_I \G(X,M_{i}) \ar[r] \ar[d]_{f_3} & {\varinjlim}_I \Ext^{1}_{\G}(Z,M_{i}) \ar[d]_{f_4} \\ 0 \ar[r] & \G(Z,{\varinjlim}_I M_{i}) \ar[r] & \G(Y,{\varinjlim}_I M_{i}) \ar[r] & \G(X,{\varinjlim}_I M_{i}) \ar[r] & \Ext^{1}_{\G}(Z,{\varinjlim}_I M_{i}).}
\end{equation}

\noindent
(1). Suppose first that $n=0$, in which case $X\in\fg(\G)$ and $Y\in\fp(\G)$ by hypothesis. Take a direct system $(M_{i})_{I}$ in $\G$ and consider the diagram in \eqref{comm_dia_fp_eq}, then $f_2$ is an isomorphism since $Y\in\fp(\G)$ and $f_3$ is a monomorphism by Proposition~\ref{prop.fg-objects}. By the Five Lemma, $f_1$ is an isomorphism, and so $Z\in\fp(\G)$. \\
 Suppose now $n>0$, so that  $X\in\fp_n(\G)$ and $Y\in\fp_{n+1}(\G)$. In particular, $X,\, Y\in \fp(\G)$ and, by the case $n=0$, also $Z\in\fp(\G)$. Consider the long exact sequence in \eqref{long_ex_fp_eq}: {the second and fourth functors} in the sequence {vanish on direct limits of objects in $\Inj \G$,} for $k=1,\dots,{n-1}$, by {Lemma~\ref{lema_BP}. In particular, the functor $\Ext^{k}_{\G}(Z,-)$ vanishes on direct limits of objects in $\Inj\G$, for $k=2,\dots,n$. The fact that $\Ext^{1}_{\G}(Z,-)$ also vanishes on those direct limits follows by looking at the diagram \eqref{comm_dia_fp_eq}, with $(M_i)_I$ in $\Inj \G$. In such case the two right most horizontal arrows are epimorphisms since $Y\in \fp_{n+1}(\G)\subseteq\fp_2(\G) $, and hence $\varinjlim \Ext_{\G}^1(Y,M_i)=0=\Ext^{1}_{\G}(Y,\varinjlim M_i)$. Moreover $f_1$, $f_2$ and $f_3$ are isomorphisms, which implies that so is $f_4$ and hence $\Ext^{1}_{\G}(Y,\varinjlim M_i)=0$. Finally $Z\in \fp_{n+1}(\G)$ by Lemma~\ref{lema_BP}.}   

\smallskip\noindent
(2). Assume first that $n=0$. Given any direct system $(M_i)_{I}$ in $\G$, consider the diagram in \eqref{comm_dia_fp_eq}:
the map $f_1$ is an isomorphism, while $f_2$  and $f_4$ are monomorphisms {by Proposition~\ref{prop_mono}}. It follows that $f_3$ is a monomorphism, which, by Proposition~\ref{prop.fg-objects}, means that $X\in\fp_0(\G)=\fg(\G)$. 
\\
Suppose now that $n>0$, so that $Z\in \fp_{n+1}(\G) \subseteq \fp_2(\G)$ and $Y\in \fp_n(\G)\subseteq \fp(\G)$. In particular, $X\in \fp(\G)$ by a similar argument to the case $n=0$. On the other hand, {by Lemma~\ref{lema_BP}}, the first and {third}  functors in  \eqref{long_ex_fp_eq} {vanish on} direct limits {of objects in $\Inj\G$, for $k=1,\dots,n-1$. Hence, also $\Ext^{k}_{\G}(X,-)$ vanishes on} direct limits {of objects in $\Inj\G$, for $k=1,\dots,n-1$ and so $X\in \fp_n(\G)$, by Lemma~\ref{lema_BP}}. 

\smallskip\noindent
(3). For $n=0$, this follows by Proposition~\ref{prop.fg-objects}. \\
For $n=1$, consider the diagram in \eqref{comm_dia_fp_eq} for a direct system $(M_i)_{I}$ in $\G$:
by assumption, $f_1$ and $f_3$ are isomorphisms and, by Proposition~\ref{prop_mono}, $f_4$ is a monomorphism. By the Five Lemma, $f_2$ is an isomorphism, so $Y\in \fp(\G)$.\\
Finally, if $n>1$, $\fp_n(\G)$ is closed under extensions by Lemma~\ref{lema_BP} and the already proved case $n=1$.
\end{proof}

\begin{cor}\label{coro_1_fpn} 
Let $\G$ be a Grothendieck category. Then:
\begin{enumerate}[\rm (1)]
\item $\fp_0(\G)$ and $\fp_1(\G)$ are closed under extensions and cokernels of arbitrary morphisms. 
\end{enumerate}
Furthermore, the following statements hold true, for $n\geq 1$:
\begin{enumerate}[\rm (1)] \setcounter{enumi}{1}
\item $\fp_n(\G)$ is closed under extensions and cokernels of monomorphisms;
\item if $\fp_n(\G)=\fp_{n+1}(\G)$, then $\fp_n(\G)$ is also closed under kernels of epimorphisms;
\item $\fp_\infty(\G)$ is closed under extensions, kernels of epimorphisms and cokernels of monomorphisms.
\end{enumerate}
\end{cor}
\begin{proof}
(1). The statements about $\fp_0(\G)$ follow by Proposition~\ref{prop.fg-objects} while closure under extensions of $\fp_1(\G)$ is proved in Lemma~\ref{lem.n-fp subcategories}. To conclude, take a morphism $\phi$ in $\fp_1(\G)\subseteq \fp_0(\G)$. Therefore,  $\Im(\phi)\in \fp_0(\G)$ (by closure under quotients), so $\Coker(\phi)\in\fp_1(\G)$ by Lemma~\ref{lem.n-fp subcategories}(1). 

\smallskip\noindent
(2), (3) and (4) are immediate consequences of Lemma~\ref{lem.n-fp subcategories}.
\end{proof}

 Recall that a  subcategory $\X$ of an Abelian category $\C$ is  an {\bf Abelian exact subcategory }when it is Abelian and the inclusion functor $\X\hookrightarrow\C$ is exact. Equivalently, one can ask that $\X$ is closed under  finite coproducts and, given any morphism $\phi\colon X\to X'$ with $X,\,X'\in\X$, both the kernel and the cokernel of $\phi$, as computed in $\C$, do belong in $\X$.

\begin{cor}\label{coro_2_fpn}
Let $\G$ be a Grothendieck category. If $n\in\{0,1\}$  and $\fp_n(\G)=\fp_{n+1}(\G)$, then $\fp_n(\G)$ is an Abelian exact subcategory of $\G$.
\end{cor}
\begin{proof}
By Corollary~\ref{coro_1_fpn}, $\fp_n(\G)$ is closed under extensions (so also under finite coproducts) and arbitrary cokernels. Furthermore, as $\fp_n(\G)=\fp_{n+1}(\G)$, it is also closed under kernels of epimorphisms. Now, given $\phi\colon X\to Y$ in $\fp_n(\G)$ we have that $\Coker(\phi)\in \fp_n(\G)$, so $\Im(\phi)\in \fp_n(\G)$ (as this is the kernel of the epimorphism $Y\to \Coker(\phi)$). Thus, $\Ker(\phi)=\Ker(X\to \Im(\phi))\in \fp_n(\G)$, showing that $\fp_n(\G)$ is also closed under arbitrary kernels. 
\end{proof}

\subsection{Locally finitely presented and locally coherent Grothendieck categories}

Recall that an (Ab.$3$) Abelian category $\G$ is said to be 
\begin{itemize}
\item {\bf locally finitely generated} if $\fg(\G)$ is skeletally small and it generates $\G$;
\item  {\bf locally finitely presented} if $\fp(\G)$ is skeletally small and it generates $\G$.
\end{itemize}
In fact, this is equivalent to say that in $\G$ there is a  set of finitely generated (resp., finitely presented) generators. 
An (Ab.3) locally finitely presented Abelian category is automatically a Grothendieck category (see \cite[Section~(2.4)]{CB}); furthermore, in a locally finitely presented Grothendieck category, an object is finitely generated if and only if it is a quotient of a finitely presented object.

\begin{lema}\label{fg_lemma}
The following are equivalent for a Grothendieck category $\G$:
\begin{enumerate}[\rm (1)]
\item $\G$ is locally finitely generated;
\item for each $X\in \G$, there is $(X_i)_{I}\subseteq \fg(\G)$ directed, such that $X={\varinjlim}_{ I}X_i$
\item for each $X\in \G$, there is $(X_i)_{I}\subseteq \fg(\G)$ directed of subobjects of $X$, such that $X=\bigcup_{ I}X_i$.

\end{enumerate}
\end{lema}
\begin{proof}
We claim that $\fg(\G)$ is skeletally small, whenever $\G$ is a Grothendieck category. Indeed, let $G$ be a generator of $\G$. Using the fact that each object in $\G$ is a directed union of those of its subobjects which are isomorphic to quotients of $G^n$ (see below), where $n$ is a natural number, we deduce that there is an injective map from (a skeleton of) $\fg(\G)$ to $\{G^n/X:n>0 \text{ and } X \text{ is a subobject of }G^n\}$, so that our claim follows. Hence the implications ``(3)$\Rightarrow$(2)$\Rightarrow$(1)'' are clear, so let us concentrate on the implication ``(1)$\Rightarrow$(3)''. Indeed, given $X\in \G$, there exists a set $I$, a finitely generated object $G_i$, for each $i\in I$, and an epimorphism $p\colon \coprod_{ I}G_i\to X$. Given a finite subset $J\subseteq I$, let \mbox{$\iota_J\colon \coprod_{J}G_j\to \coprod_{ I}G_i$} be the inclusion, and $X_J:=\Im(p\circ \iota_J)$. Then, $X\cong \bigcup\{X_J:J\subseteq I\text{ finite}\}$.
\end{proof}
As for the case of locally finitely generated categories, when $\G$ is locally finitely presented, any object of $\G$ can be written as a direct limit (though not as a direct union in general) of finitely presented objects. We want to give a  more detailed statement than Lemma~\ref{fg_lemma} (see Proposition~\ref{fp_prop}), for which {we first need to isolate} the following  trick due to Lazard \cite{L}, that is useful in a variety of situations. 

\begin{lema}[Lazard's Trick] \label{lem.Lazard Trick}
Let $\G$ be a Grothendieck category, consider two classes  \mbox{$\S\subseteq \mathcal K\subseteq \fg(\G)$} of finitely generated objects of $\G$ and suppose that the following properties hold:
\begin{enumerate}[\rm (1)]
\item both $\S$ and $\mathcal K$ are closed under finite coproducts;
\item $\mathcal K$ is closed under cokernels;
\item given $S\in\S$, $K\in \mathcal K$ and an arbitrary epimorphism $S\twoheadrightarrow K$, we have that $K\in \S$.
\end{enumerate}
Consider now an arbitrary  exact sequence of the form:
\[
\xymatrix@C=15pt{
\coprod_{\Lambda}K_\lambda\ar[rr]^-f&& \coprod_{I} S_i\ar[rr]^-q && X\ar[rr]&& 0,
}
\] 
with $(K_\lambda)_\Lambda\subseteq \mathcal K$, $(S_i)_I\subseteq \S$, and $X\in \G$. Then, $X\in \varinjlim\,\S$.
\end{lema}
\begin{proof}
Given finite subsets $\Lambda'\subseteq \Lambda$ and $I'\subseteq I$ denote, respectively, by 
\[
\xymatrix{
\iota_{\Lambda'}\colon \coprod_{\Lambda'}K_\lambda\longrightarrow \coprod_{\Lambda}K_\lambda\qquad\text{and}\qquad\varepsilon_{I'}\colon \coprod_{I'}S_i\longrightarrow \coprod_{I}S_i
}
\]
the inclusions into the coproduct, and define the following set 
\[
\Upsilon:=\{(\Lambda',I'):\Lambda'\subseteq \Lambda,\, I'\subseteq I\ \text{finite},\ \text{such that $f\circ\iota_{\Lambda'}$ factors through $\varepsilon_{I'}$}\}.
\]
In other words, $(\Lambda',I')\in \Upsilon$ if and only if we have a commutative diagram like the following one:
\[
\xymatrix@R=-2pt@C=50pt{
\coprod_{\Lambda'}K_\lambda\ar[rr]^{f\circ\iota_{\Lambda'}}\ar@/_+4pt/[dr]_{f_{(\Lambda',I')}}&&\coprod_{I}S_i.\\
&\coprod_{I'}S_i\ar@/_+4pt/[ur]_{\varepsilon_{I'}}
}
\] 
Note  that, if it exists, the map $f_{(\Lambda',I')}$ is uniquely determined by $f$. Endow $\Upsilon$ with the product order, {i.e., $(\Lambda',I')\leq (\Lambda'',I'')$ if and only if $\Lambda'\subseteq \Lambda''$ and $I'\subseteq I''$}. Since, for each finite subset $I'\subseteq I$, the map $\varepsilon_{I'}$ is a monomorphism, one  sees that the poset $\Upsilon$ is directed and ${\varinjlim}_\Upsilon f_{(\Lambda',I')}=f$. Therefore, by the right exactness of ${\varinjlim}_\Upsilon$, we have that $X\cong\Coker(f)\cong{\varinjlim}_\Upsilon\Coker(f_{(\Lambda',I')})$, with  $\Coker(f_{(\Lambda',I')})\in \S$, for all $(\Lambda',I')\in\Upsilon$, by hypotheses (1--3).
\end{proof}

\begin{prop}\label{fp_prop}
Let $\G$ be a locally finitely presented Grothendieck category, $\S\subseteq \fp(\G)$ a set of finitely presented generators of $\G$, and $X$ an object of $\G$. Then, the following statements hold true:
\begin{enumerate}[\rm (1)]
\item $X$ can be written as a direct limit of objects in $\bar \S:=\{\text{quotients of objects in $\mathrm{sum}(\S)$}\}\cap \fp(\G)$;

\item actually, $\fp(\G)=\bar \S=\pres_{{1}}(\S)$.
\end{enumerate}
\end{prop}
\begin{proof}
(1). We have two classes $\bar \S\subseteq \fp(\G)$ that satisfy the three hypotheses of Lazard's Trick. Furthermore, as $\S$ is a set of finitely presented generators, there is an exact sequence of the form:
\[
\xymatrix@C=15pt{
\coprod_{\Lambda}K_\lambda\ar[rr]^-f&& \coprod_{I} S_i\ar[rr]^-q && X\ar[rr]&& 0,
}
\] 
with $(K_\lambda)_\Lambda\subseteq \S\subseteq \fp(\G)$, $(S_i)_I\subseteq \S\subseteq \bar \S$. Then, $X\in \varinjlim\,\bar \S$.

\smallskip\noindent
(2). Let $X\in \fp(\G)$ so, by (1), {we can write $X=\varinjlim_I {S}_i$, with ${S}_i\in \bar{\S}$ for all $i\in I$. Since $X\in \fp(\G)$, the identity $\id_X\colon X\to X$ factors through the canonical morphism $\iota_j\colon {S}_j \stackrel{}{\to}  \varinjlim_I {S}_i=X$, for some $j\in I$. Hence, }$X\in \add(\bar \S)$. Therefore, $\fp(\G)=\bar \S$, as $\bar \S=\add(\bar \S)$. Take now $Y\in \bar \S$ and consider a short exact sequence
$
0\to K\to S\to Y\to 0,
$
where $S\in \mathrm{sum}(\S)$ and so $K\in \fg(\G)$. Since $\Gen(\S)=\G$, there is an epimorphism $p\colon\coprod_{\Lambda}T_\lambda\twoheadrightarrow K$, with $T_\lambda\in \S$ for all $\lambda\in \Lambda$. For each finite subset $J\subseteq \Lambda$, let $\iota_J\colon \coprod_{J}T_\lambda\to \coprod_{\Lambda}T_\lambda$ be the inclusion. Then, $K\cong \bigcup{}_{J\subseteq \Lambda\text{ finite}}\Im(p\circ \iota_J)$ is a direct union and, since $K\in\fg(\G)$, there is a finite $J\subseteq \Lambda$ such that $K=\Im(p\circ \iota_J)$, showing that $Y\in \pres_{1}(\S)$, as desired.
\end{proof}

A locally finitely presented Grothendieck category $\G$ is said to be 
\begin{itemize}
\item {\bf locally coherent} if finitely generated subobjects of finitely presented objects are finitely presented. Equivalently, one may ask that $\fp(\G)$ is an exact Abelian subcategory of $\G$.
\end{itemize}
Note that, to ask that $\fp(\G)$ is an exact Abelian subcategory of $\G$, is the same as asking that $\fp(\G)$ is closed under taking kernels in $\G$. 
%
%
In the following proposition we collect some equivalent characterizations of locally coherent Grothendieck categories.

\begin{prop}\label{prop. Manolo}
The following assertions are equivalent for a Grothendieck category $\G$, where $E$ is an injective cogenerator of $\G$:
\begin{enumerate}[\rm (1)]
\item $\G$ is locally coherent;
\item  $\G$ is locally finitely presented and $\fp(\G)=\fp_\infty(\G)$ (resp., $\fp(\G)=\fp_2(\G)$);
\item   $\G$ is locally finitely presented and, for all $X\in \fp(\G)$, $\Ext_\G^1(X,-)$ vanishes on $\varinjlim(\Prod(E))$.
\end{enumerate}
\end{prop}
\begin{proof}
It is a direct consequence of Lemma~\ref{lema_BP}, Corollary~\ref{coro_2_fpn} and  \cite[Proposition~3.5(2)]{Sa}.
\end{proof}

\section{Preliminaries on torsion pairs}\label{Sec_Prelim_on_tors}

In this section we give the necessary definitions and results about torsion pairs in Grothendieck categories, with special emphasis on those torsion pairs that are of finite type and/or restrict to $\fp(\G)$. Some of these results are already known; in those cases we omit the proofs. 

\subsection{Generalities on torsion pairs in Grothendieck categories}
Let $\G$ be {an Abelian} category. For a class of objects $\X$ in $\G$, we use the following notations:
\[
\X^{\perp}:=\{C \in \G:\G(X,C)=0, \text{ for all }X \in \X\}\quad\text{and}\quad{}^{\perp}\X:=\{C \in \G :\G(C,X)=0, \text{ for all }X \in \X\}.
\] 
A {\bf torsion pair} in $\G$ is a pair $\mathbf{t}=(\T,\F)$ of subcategories satisfying the following two conditions:
\begin{enumerate}[\rm ({Tors.}1)]
\item $\T={}^{\perp}\F$ and $\F=\T^{\perp}$;
\item for each object $X$ of $\G$ there are objects $T_X \in \T$ and $F_X\in \F$, and an exact sequence
\[
\xymatrix{
0 \ar[r] & T_X \ar[r] & X \ar[r] & F_X \ar[r] & 0.
}
\]
\end{enumerate}
{When $\G$ is a Grothendieck category condition (Tors.2) is actually a consequence of (Tors.1) (see Lemma~\ref{construction_of_tor_rad} below).} We say that $\T$ is a {\bf torsion class} and $\F$ is a {\bf torsionfree class}. In the above sequence, $T_X$ and $F_X$ depend functorially on $X$, and the corresponding functors:
\[
t\colon \G\longrightarrow \T\qquad \text{and}\qquad (1:t)\colon \G\longrightarrow \F
\]
are called, respectively, the {\bf torsion radical} and the {\bf torsion coradical}. In fact, $t$ is the right adjoint of the inclusion $\T\to \G$, while $(1:t)$ is the left adjoint to the inclusion $\F\to \G$. In what follows we  often abuse notation and consider $t$ and $(1:t)$ as endofunctors $\G\to \G$.

\begin{lema}[{\rm \cite[Proposition~VI.2.1]{St}}]\label{construction_of_tor_rad}
Let $\G$ be a Grothendieck category. A class $\T\subseteq \G$ is torsion if, and only if, it is closed under taking quotients, extensions and coproducts. Dually, a class $\F\subseteq \G$ is  torsionfree if, and only if, it is closed under  subobjects, extensions and products.
\end{lema}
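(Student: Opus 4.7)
The statement is the classical characterization of (co)torsion classes in terms of closure properties, so the plan is to prove it directly using the well-poweredness and (Ab.3) properties of $\G$.

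For the forward direction, I would argue that if $\T = {}^\perp\F$ for some class $\F$, then the closure properties follow from elementary diagram chasing: closure under coproducts is immediate since $\G(-,F)$ turns coproducts into products; closure under quotients follows because an epimorphism $T\twoheadrightarrow T'$ forces every map $T'\to F$ (with $F\in\F$) to be zero once its composite with the epi is zero; closure under extensions follows because in a short exact sequence $0\to T\to X\to T'\to 0$ with $T,T'\in{}^\perp\F$, any map $X\to F$ restricts to zero on $T$, hence factors through $T'$, and is therefore zero. The dual verifications for $\F$ are analogous, using products in place of coproducts and subobjects in place of quotients.

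For the sufficiency direction, assume $\T$ is closed under quotients, extensions, and coproducts, and set $\F:=\T^\perp$. The main step is to build, for each $X\in\Ob(\G)$, a maximal subobject $t(X)\subseteq X$ belonging to $\T$. Since $\G$ is a Grothendieck category, it is well-powered, so the family $\{T_i\subseteq X:T_i\in\T\}$ is a set. Define $t(X)$ as the image in $X$ of the canonical morphism $\coprod_i T_i\to X$. Closure of $\T$ under coproducts gives $\coprod_i T_i\in\T$, and closure under quotients gives $t(X)\in\T$. Maximality is clear by construction. To see that $(1:t)(X):=X/t(X)\in\F$, suppose $f\colon T\to X/t(X)$ is nonzero with $T\in\T$; let $X'\subseteq X$ be the preimage of $\Im(f)$, fitting in a short exact sequence $0\to t(X)\to X'\to \Im(f)\to 0$. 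Since $\Im(f)$ is a quotient of $T$, it lies in $\T$, and closure under extensions gives $X'\in\T$, contradicting the maximality of $t(X)$ unless $f=0$. Finally, the equality $\T={}^\perp\F$: one inclusion is tautological, while for the other, any $C\in{}^\perp\F$ satisfies $C\to (1:t)(C)=0$ because this map is simultaneously an epimorphism and the zero morphism, hence $C=t(C)\in\T$.

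The dual statement for torsionfree classes is obtained by the same argument applied in $\G^{\op}$, using that $\G$ is (Ab.3$^*$) and co-well-powered (both of which hold in any Grothendieck category); concretely, one defines $(1:t)(X)$ as the intersection (equivalently, the equalizer of appropriate maps into the product) of all subobjects $X\twoheadrightarrow X/F_i$ with $X/F_i\in\F$, and dualizes the previous arguments.

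The only nontrivial point is the need for well-poweredness to ensure that the collection of torsion subobjects of a given $X$ forms a set, which is what makes the sum $t(X)=\sum T_i$ available; this is automatic in any Grothendieck category, so the proof goes through without additional hypotheses.
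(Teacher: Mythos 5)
Your proof is correct. The paper states Lemma \ref{construction_of_tor_rad} without proof, treating it as a well-known fact (it goes back to Dickson's 1966 treatment of torsion theories for abelian categories), so there is no in-paper argument to compare against; your reconstruction is the standard one.

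The forward direction is straightforward diagram chasing and you handle it correctly. In the sufficiency direction the key steps are all present: well-poweredness (automatic in a Grothendieck category, via the generator) gives a \emph{set} of torsion subobjects of $X$; closure of $\T$ under coproducts and quotients places $t(X) := \Im\bigl(\coprod_i T_i \to X\bigr)$ in $\T$; closure under extensions forces $X/t(X) \in \T^\perp$ via the preimage argument; and the final identity $\T = {}^\perp(\T^\perp)$ is exactly as you say. Two small remarks: (i) the phrase ``intersection \ldots of all subobjects $X\twoheadrightarrow X/F_i$'' in your dualization is garbled --- you mean the intersection of the kernels $\Ker(X\twoheadrightarrow X/F_i)$, equivalently the kernel of $X\to\prod_i X/F_i$, whose cokernel is then the maximal torsionfree quotient lying in $\F$ because $\F$ is closed under products and subobjects; and (ii) invoking co-well-poweredness separately is unnecessary, since in an abelian category quotient objects biject with subobjects via kernels, so well-poweredness already suffices. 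It is also worth noting that your argument nowhere uses (Ab.5) --- only (Ab.3), (Ab.3$^*$) and well-poweredness --- which is consistent with the fact that this characterization holds in that broader generality.
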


A torsion class $\T$ is a {\bf TTF} (=torsion and torsionfree) class if it is closed under taking subobjects and products. For such a $\T$, one calls $(\C:=^{\perp}\T,\T,\F:=\T^{\perp})$  a {\bf TTF triple} where, by Lemma~\ref{construction_of_tor_rad}, both $(\C,\T)$ and $(\T,\F)$ are torsion pairs called, respectively, the {\bf left} and {\bf right constituent} of the TTF triple.

\subsection{Torsion pairs of finite type}

Consider a Grothendieck category $\G$ and let $\t=(\T,\F)$ be a torsion pair in $\G$, with torsion radical  $t\colon \G\to \G$. Then, $\t$ is said to be
\begin{itemize}
\item {\bf hereditary} if $t\colon \G\to \G$ is a left-exact functor. Equivalently, one may ask that $\T$ is closed under subobjects or that $\F$ is closed under taking injective envelopes {(see \cite[Proposition~VI.3.2]{St})};
\item {\bf of finite type} if $\F$ is closed under taking direct limits, in symbols $\F=\varinjlim\, \F$.
\end{itemize} 

In the following lemma we give some equivalent characterizations of torsion pairs of finite type; we omit the proof as the argument is  analogous to the one commonly used in categories of modules.

\begin{lema}\label{preservation_colimits_fin_type}
The following are equivalent for a torsion pair $\t=(\T,\F)$ in a Grothendieck category  $\G$:
\begin{enumerate}[\rm (1)]
\item $\t$ is of finite type;
\item the torsion radical $t\colon \G \rightarrow \G$ preserves direct limits;
\item the torsion coradical $(1:t)\colon \G \rightarrow \G$ preserves direct limits.
\end{enumerate}
\end{lema}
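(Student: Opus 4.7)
My plan is to exploit the fact that any torsion class is automatically closed under direct limits, so the only asymmetry between $\T$ and $\F$ is whether $\F$ shares this property; this asymmetry is exactly what controls the exchange of the torsion (co)radical with $\varinjlim$.

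First, I would observe from Lem.\,\ref{construction_of_tor_rad} that $\T$ is closed under coproducts and quotients, hence under direct limits (since in a Grothendieck category $\varinjlim X_i$ can be realized as a cokernel of a map between coproducts of the $X_i$'s).

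Next, for the implication (1)$\Rightarrow$(2) and (1)$\Rightarrow$(3), I would take a direct system $(X_i)_{i\in I}$ in $\G$ together with its canonical torsion sequences
\[
0\to t(X_i)\to X_i\to (1:t)(X_i)\to 0.
\]
Applying the exact functor $\varinjlim$ (which is exact because $\G$ is (Ab.5)) yields the short exact sequence
\[
0\to \varinjlim t(X_i)\to \varinjlim X_i\to \varinjlim(1:t)(X_i)\to 0.
\]
By the first observation, the left-hand term belongs to $\T$; by assumption (1), since each $(1:t)(X_i)\in\F$, the right-hand term belongs to $\F$. By uniqueness of the torsion decomposition of $\varinjlim X_i$, the canonical comparison morphisms $\varinjlim t(X_i)\to t(\varinjlim X_i)$ and $(1:t)(\varinjlim X_i)\to \varinjlim(1:t)(X_i)$ are isomorphisms, giving both (2) and (3).

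For the reverse implications, I would simply test the hypotheses on a direct system $(F_i)_{i\in I}$ with all $F_i\in\F$. Assuming (2), we have $t(F_i)=0$ for every $i$, so $t(\varinjlim F_i)\cong \varinjlim t(F_i)=0$, which forces $\varinjlim F_i\in\F$; assuming (3), we have $(1:t)(F_i)=F_i$, so $(1:t)(\varinjlim F_i)\cong \varinjlim F_i$, which again forces $\varinjlim F_i\in\F$. Either way we obtain (1), closing the chain of equivalences. There is no real obstacle here: the whole argument rests on the exactness of $\varinjlim$ and on the asymmetric fact that torsion classes, in contrast with torsionfree classes, are automatically closed under direct limits.
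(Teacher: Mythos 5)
Your proof is correct and supplies exactly the standard argument the paper alludes to when it omits the proof as "completely analogous to the one commonly used in categories of modules." The key points—closure of $\mathcal{T}$ under direct limits via coproducts and quotients, exactness of $\varinjlim$ in an (Ab.5) category, and uniqueness of torsion decompositions—are all handled correctly.
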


Given a torsion pair of finite type, we have the following criterion for an object to be finitely presented:

\begin{prop} \label{prop.fpobjects-wrt-torsionpairs}
Let $\G$ be a Grothendieck category and $\t=(\T,\F)$ a torsion pair of finite type in $\G$. The following assertions are equivalent for an object $M\in \G$:
\begin{enumerate}[\rm (1)]
\item $M$ is finitely presented;
\item $M$ satisfies the following three conditions:
\begin{enumerate}[\rm ({2.}1)]
\item $\G(M,-)\colon \G\to\Ab$ preserves direct limits of objects  in $\T$;
\item $\G(M,-)_{\restriction\F}\cong\F((1:t)(M),-)\colon \F\to\Ab$ preserves direct limits;
\item the  canonical map ${\varinjlim}_I\Ext_\G^1 (M,T_i)\to\Ext_\G^1(M,{\varinjlim}_I T_i)$ is monic, for all $(T_i)_{I}\subseteq\T$ directed. 
\end{enumerate}
\end{enumerate}
\end{prop}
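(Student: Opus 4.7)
The plan is to prove the two implications separately. The direction (1)$\Rightarrow$(2) is essentially formal, while the converse rests on a diagram chase driven by the torsion decomposition of an arbitrary direct system.

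For (1)$\Rightarrow$(2), suppose $M\in\fp(\G)$. Then $\G(M,-)$ preserves arbitrary direct limits, so (2.1) is immediate. The natural isomorphism appearing in (2.2) comes from the adjunction between the coradical $(1:t)$ and the inclusion $\F\hookrightarrow\G$; since $\t$ is of finite type, $\F$ is closed under direct limits in $\G$, so preservation of direct limits of $\F((1:t)(M),-)$ follows from that of $\G(M,-)$. Finally, (2.3) (read as the assertion that the canonical map $\varinjlim\Ext_\G^1(M,T_i)\to\Ext_\G^1(M,\varinjlim T_i)$ is a monomorphism for every direct system in $\T$) is a direct instance of Lemma~\ref{lem.monomorphic-Ext}.

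For (2)$\Rightarrow$(1), let $(X_i)_{i\in I}$ be an arbitrary direct system in $\G$, and consider the short exact sequence $0\to t(X_i)\to X_i\to (1:t)(X_i)\to 0$ for each $i$. By Lemma~\ref{preservation_colimits_fin_type} both $t$ and $(1:t)$ commute with $\varinjlim$, so the colimit sequence is exactly the torsion decomposition of $\varinjlim X_i$. Applying $\G(M,-)$ to each individual sequence, passing to the direct limit (which preserves exactness in $\mathrm{Ab}$) and comparing with the long exact sequence obtained by applying $\G(M,-)$ to the colimit sequence yields a commutative diagram with exact rows and four comparison maps $f_1,f_2,f_3,f_4$, where $f_2$ is precisely the candidate isomorphism $\varinjlim\G(M,X_i)\to\G(M,\varinjlim X_i)$. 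Here $f_1$ is an iso by (2.1); $f_3$ is an iso by (2.2) after rewriting both source and target through the isomorphism $\G(M,-)|_\F\cong\F((1:t)(M),-)$; and $f_4$ is a monomorphism by (2.3). A standard four-term diagram chase (one implication of the Five Lemma) then forces $f_2$ to be an isomorphism, so $M\in\fp(\G)$.

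The only delicate step is verifying that $f_3$ is an isomorphism: this uses simultaneously both halves of (2.2), namely the adjunction isomorphism and the preservation of direct limits by the restricted functor $\F((1:t)(M),-)$, combined with the observation that $\varinjlim(1:t)(X_i)=(1:t)(\varinjlim X_i)$ lies in $\F$ (here we again invoke that $\t$ is of finite type). The remaining arguments are routine abelian-category manipulations.
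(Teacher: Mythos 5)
Your proof is correct and follows the same route as the paper: decompose each $X_i$ by the torsion pair, use finiteness of type to identify the colimit sequence with the torsion sequence of $\varinjlim X_i$, apply $\G(M,-)$ and compare the two long exact sequences, then conclude by the Five Lemma with $f_1$ iso by (2.1), $f_3$ iso by (2.2), and $f_4$ mono by (2.3). Your reading of the typo in (2.3) (the missing $\varinjlim$ in the source) and your remark about why (2.2) applies to the direct system $((1:t)(X_i))_{i\in I}$ in $\F$ are both accurate.
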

\begin{proof}
Almost by definition, condition (1) implies conditions (2.1) and (2.2), while condition (2.3) follows by {Proposition~\ref{prop_mono}}. Conversely, let us assume that $M$ satisfies condition (2) and let us check that it is  finitely presented. Take a direct system $(N_i)_{I}$ in $\G$ and  the following  exact sequences:
\[
(0\to t(N_i)\to N_i\to (1:t)(N_i)\to 0)_{ I},\quad 0\to {\varinjlim}_I t(N_i)\to {\varinjlim}_I N_i\to {\varinjlim}_I (1:t)(N_i)\to 0.
\]
Applying the functor $\G(M,-)$, we obtain the following commutative diagram with exact rows:
\[
\xymatrix@R=18pt@C=15pt{
0\ar[r]&{\varinjlim}_I \G(M,t(N_i))\ar[d]^{f_1}\ar[r]&{\varinjlim}_I \G(M,N_i)\ar[r]\ar[d]^{f_2}& {\varinjlim}_I \G(M,(1:t)(N_i))\ar[r]\ar[d]^{f_3}&{\varinjlim}_I \Ext^{1}_\G(M,t(N_i))\ar[d]^{f_4}\\
0\ar[r]& \G(M,{\varinjlim}_I t(N_i))\ar[r]& \G(M, {\varinjlim}_I N_i)\ar[r]&  \G(M,{\varinjlim}_I(1:t)(N_i))\ar[r]& \Ext^{1}_\G(M,{\varinjlim}_I t(N_i)),
}
\]
with $f_1,\dots,f_4$ induced by universality of colimits. Observe that $f_1$ and $f_3$ are  isomorphisms by (2.1) and (2.2), respectively, and $f_4$ is monic by (2.3). Hence, $f_2$ is  an isomorphism by the Five Lemma.
\end{proof}

In the following lemma we give a criterion for a torsion object to be finitely presented. 

\begin{lema}\label{fp_torsion_lema}\label{prop.fp torsion objects}
Let $\G$ be a Grothendieck category and $\t=(\T,\F)$ a torsion pair of finite type in $\G$. Given $T\in \T$ such that ${\G}(T,-)_{\restriction \T}\colon \T \rightarrow \Ab$ preserves direct limits, we have that  $T\in \fp(\G)$.
\end{lema}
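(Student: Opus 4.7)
The plan is to verify directly that $\G(T,-)\colon\G\to\Ab$ preserves arbitrary direct limits in $\G$, by reducing the question to direct systems in $\mathcal T$ via the torsion radical $t$, thereby bringing the hypothesis into play. No appeal to the rather involved criterion of Prop.\,\ref{prop.fpobjects-wrt-torsionpairs} is needed; everything follows from two easy observations.

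First I would observe that, since $T\in\mathcal T$, for each $X\in\G$ the canonical short exact sequence $0\to t(X)\to X\to(1:t)(X)\to 0$, combined with the vanishing $\G(T,(1:t)(X))=0$ (because $(1:t)(X)\in\mathcal F$ and $T\in\mathcal T$), yields a natural isomorphism $\G(T,X)\cong\G(T,t(X))$. In other words, $t$ is the right adjoint of the inclusion $\mathcal T\hookrightarrow\G$, so the unit induces a natural bijection on morphisms out of any torsion object.

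Second, since $\mathbf{t}$ is of finite type, Lem.\,\ref{preservation_colimits_fin_type} tells us that $t$ preserves direct limits. Therefore, given any direct system $(N_i)_{i\in I}$ in $\G$, the family $(t(N_i))_{i\in I}$ is a direct system in $\mathcal T$ and the canonical morphism $\varinjlim t(N_i)\to t(\varinjlim N_i)$ is an isomorphism. Stringing together the two observations, we obtain the chain
\[
\varinjlim\G(T,N_i)\cong\varinjlim\G(T,t(N_i))\cong\G(T,\varinjlim t(N_i))\cong\G(T,t(\varinjlim N_i))\cong\G(T,\varinjlim N_i),
\]
where the first and last isos come from the first observation (applied pointwise and to $\varinjlim N_i$, respectively), the middle isomorphism is exactly what the hypothesis provides (since $(t(N_i))$ lies in $\mathcal T$), and the third iso uses the finite-type property.

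The only small piece of bookkeeping is to verify that this composite actually realises the canonical comparison morphism $\varinjlim\G(T,N_i)\to\G(T,\varinjlim N_i)$, but this is immediate from the naturality of each of the four isomorphisms involved. I do not expect any real obstacle: once the reduction to direct systems in $\mathcal T$ is made, the argument is essentially a one-line application of the hypothesis.
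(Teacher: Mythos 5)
Your argument is correct and coincides with the paper's own proof: both reduce to the torsion radical via the natural isomorphism $\G(T,X)\cong\G(T,t(X))$ for $T\in\mathcal T$, then combine the finite-type property ($t$ preserves direct limits) with the hypothesis to get the chain of isomorphisms. Your extra remark about naturality of the comparison map is a sound bit of bookkeeping that the paper leaves implicit.
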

\begin{proof}
Given $(M_i)_{I}\subseteq\G$ directed, ${\varinjlim}_I t(M_i) \cong t({\varinjlim}_I M_i)$,  {by Lemma~\ref{preservation_colimits_fin_type}}. We now conclude as follows:
\begin{equation*}
\xymatrix{\varinjlim_I {\G}(T, M_{i})\cong \varinjlim_I {\G}(T,t(M_{i})) \cong  {\G}(T, \varinjlim_I t(M_i)) \cong {\G}(T,t(\varinjlim_IM_i)) \cong {\G}(T, \varinjlim_I M_i).}
\qedhere
\end{equation*}
\end{proof}

In the following subsection we give a series of characterizations of (hereditary) torsion pairs of finite type in a locally finitely presented  category $\G$ (see Lemma~\ref{ext_closed_fp_gives_torsion}). An important piece of this is the following criterion, that holds in a general Grothendieck category, for an extension of two objects in ${\varinjlim}\,\Scal$ to belong in $\Gen(\S)$, where $\S$ is a class of finitely presented objects. 

\begin{lema}\label{lem. fp(A) is ab}
Let $\G$ be a Grothendieck category and $\S\subseteq \fp(\G)$ a subcategory closed under extensions. If $\Ext_\G^1(S,-)\colon \G\to\Ab$ preserves direct limits of objects in $\S$, for all $S\in \S$, then an extension in $\G$ of two objects in ${\varinjlim}\,\Scal$ belongs in $\Gen(\S)$.
\end{lema}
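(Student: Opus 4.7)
My plan is to exploit the Ext--direct-limit hypothesis to approximate the given extension by one whose endpoints lie in $\Scal$, and then to cover the middle term by using the colimit presentations of the end terms. Fix a short exact sequence $0\to A\to B\xrightarrow{\pi} C\to 0$ with $A=\varinjlim_{i\in I}A_i$ and $C=\varinjlim_{j\in J}C_j$, where $A_i,C_j\in\Scal$. For each $j$, pulling $\pi$ back along the structure morphism $u_j\colon C_j\to C$ yields a short exact sequence $0\to A\to B_j\to C_j\to 0$ together with a canonical morphism $\beta_j\colon B_j\to B$.

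The core step is to approximate each $B_j$ by an extension that lives in $\Scal$. Since $C_j\in\Scal\subseteq\fp(\G)$ and the direct system $(A_i)_{i\in I}$ lies in $\Scal$, the hypothesis of the lemma provides an isomorphism
\[
\varinjlim_{i\in I}\Ext_\G^1(C_j,A_i)\;\xrightarrow{\cong}\;\Ext_\G^1(C_j,\varinjlim_{i\in I} A_i)=\Ext_\G^1(C_j,A).
\]
Hence the class of $B_j$ in $\Ext_\G^1(C_j,A)$ is the image of some class in $\Ext_\G^1(C_j,A_{i(j)})$, so there exists a short exact sequence $0\to A_{i(j)}\to B'_j\to C_j\to 0$ whose pushout along the canonical map $A_{i(j)}\to A$ recovers the extension defining $B_j$. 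Since $\Scal$ is closed under extensions and contains both $A_{i(j)}$ and $C_j$, we have $B'_j\in\Scal$; moreover, the pushout square yields a morphism $\gamma_j\colon B'_j\to B_j$ compatible with the projections onto $C_j$.

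To conclude, I will assemble the morphism
\[
\Phi\colon\Big(\coprod_{i\in I} A_i\Big)\oplus\Big(\coprod_{j\in J} B'_j\Big)\longrightarrow B
\]
from the composites $A_i\to A\hookrightarrow B$ and $\beta_j\circ\gamma_j\colon B'_j\to B$, and verify that it is an epimorphism. Writing $L$ for its image, one has $A\subseteq L$ because $\coprod_{i\in I} A_i\to A$ is already an epimorphism, while the restriction of $\pi\circ\Phi$ to the second summand factors through the epimorphism $\coprod_{j\in J} C_j\twoheadrightarrow C$, giving $\pi(L)=C$ and hence $L+A=B$; combined with $A\subseteq L$ this forces $L=B$, so $B\in\Gen(\Scal)$. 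The main obstacle is the second step, which requires the Ext--limit hypothesis to ensure $B'_j$ can actually be chosen inside $\Scal$; the remaining surjectivity check is routine.
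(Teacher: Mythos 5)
Your proof is correct, and it takes a genuinely different route from the paper's. The paper first reduces, via pullback along the structure maps $T_k\to\varinjlim T_j$, to the case where the quotient term itself lies in $\Scal$; it then proves the \emph{stronger} intermediate claim that such an extension lies in $\varinjlim\Scal$, by lifting the extension class via the Ext-direct-limit hypothesis to some $\Ext^1_\G(T,S_k)$ and assembling a direct system of pushouts $(0\to S_i\to Y_i\to T\to 0)_{i\geq k}$ in $\Scal$ whose colimit recovers the given extension. Your argument uses the same pullback and the same Ext-lifting, but instead of passing to the direct limit of pushouts, you stop at the single lift $B'_j\in\Scal$ together with the comparison morphism $\gamma_j\colon B'_j\to B_j$; you then assemble the map $\Phi\colon\bigl(\coprod_i A_i\bigr)\oplus\bigl(\coprod_j B'_j\bigr)\to B$ and verify surjectivity directly using the identity $\pi^{-1}(\pi(L))=L+A$ for $L=\Im(\Phi)$. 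This buys a more elementary and shorter argument, since you prove exactly what is needed ($\Gen(\Scal)$ membership) rather than a stronger $\varinjlim\Scal$ statement for the intermediate pullbacks, and it bypasses the direct-system-of-pushouts machinery entirely. One detail worth noting explicitly: the surjectivity of $\pi\circ\Phi$ restricted to the second summand uses both that each $p'_j\colon B'_j\to C_j$ is epi (so $\coprod B'_j\twoheadrightarrow\coprod C_j$) and that $\coprod C_j\twoheadrightarrow C$ since $C=\varinjlim C_j$ in a Grothendieck category; both facts hold, so the argument goes through.
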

\begin{proof}
Let $(S_i)_{ I}$ and $(T_j)_{ J}$ be direct systems in $\Scal$ and let 
\begin{equation}\label{fp_is_ab_1_eq}
\xymatrix@C=15pt{
0\ar[rr]&&{\varinjlim}_I S_i\ar[rr]^-u&& X\ar[rr]^-p &&{\varinjlim}_J T_j\ar[rr]&& 0
}
\end{equation}
be an exact sequence in $\G$. For each $k\in J$, the pullback of the canonical map $T_k\to{\varinjlim}_J T_j$ with $p$ yields an exact sequence $0\to {\varinjlim}_I S_i\to X_k\to T_k\to 0$. When $k$ varies in $J$, we obtain a direct system of exact sequences in $\G$ whose direct limit is the sequence in \eqref{fp_is_ab_1_eq}. In particular, ${\varinjlim}_J X_j=X$. Hence, we are reduced to check that  $Y\in{\varinjlim}\,\Scal$ if there is an exact sequence like the following, with $T\in\Scal$:
\begin{equation}\label{fp_is_ab_2_eq}
\xymatrix@C=15pt{
\epsilon:&0\ar[rr]&&{\varinjlim}_I S_i\ar[rr]^-u&& Y\ar[rr] && T\ar[rr]&& 0,
}
\end{equation}
Let $[\epsilon]\in \Ext_{\G}^1(T,{\varinjlim}_I S_i)$ be the equivalence class of the extension $\epsilon$ in \eqref{fp_is_ab_2_eq}. By assumption, the canonical map $\Psi\colon {\varinjlim}_I\Ext_{\G}^1(T,S_i)\to\Ext_{\G}^1(T,{\varinjlim}_I S_i)$ is an isomorphism, so there is $k\in I$ and $[\epsilon_k]\in\Ext_{\G}^1(T,S_k)$ such that $[\epsilon]=\Psi\circ\iota^*_k([\epsilon_k])$, where $\iota^*_k\colon  \Ext_{\G}^1(T,S_k)\to \Ext_{\G}^1(T,{\varinjlim}_I S_i)$ is  induced by the {canonical map} \mbox{$\iota_k\colon S_k\to {\varinjlim}_I S_i$}. We then get  a commutative diagram with exact rows
\begin{equation}\label{epsilon_is_in_the_image}
\xymatrix@C=35pt@R=16pt{
\epsilon_k:&0 \ar[r] & S_k \ar@{}[dr]|{\text{P.O.}}\ar[r]^{u_k} \ar[d]_{\iota_k} & Y_k \ar[r] \ar[d] & T \ar[r] \ar@{=}[d]& 0 \\ 
\epsilon:&0 \ar[r] & {\varinjlim}_I S_i \ar[r] & Y \ar[r]  & T \ar[r] & 0,
}
\end{equation}
where the square on the left is a pushout. 
The subset $I_{\geq k}:=\{i\in I:i\geq k\}\subseteq I$ is cofinal. For each $h\in I_{\geq k}$, take the pushout of $\iota_{k,h}\colon S_k\to S_h$ with the map $u_k\colon S_k\to Y_k$ defined in \eqref{epsilon_is_in_the_image}; we obtain a direct system of exact sequences $(0\rightarrow S_h\stackrel{u_h}{\longrightarrow}Y_h\to T\rightarrow 0)_{I_{\geq k}}\subseteq \fp(\G)\cap \Scal$. Taking colimits we get:
\[
\xymatrix@C=35pt@R=16pt{ 
0 \ar[r] & {\varinjlim}_{I_{\geq k}} S_h\ar@{}[dr]|{\text{P.O.}} \ar[r] \ar[d] & {\varinjlim}_{I_\geq k}Y_h \ar[r] \ar[d] & T \ar[r] \ar@{=}[d] & 0\\ 
0 \ar[r] & {\varinjlim}_I S_i \ar[r] & Y \ar[r]  & T \ar[r] & 0,
}
\]
(direct limits are left adjoints, so they preserve pushouts). By cofinality, the leftmost vertical map is an isomorphism, so $Y\cong {\varinjlim}_{I_{\geq k}}Y_h\in{\varinjlim}\,\Scal$, by the Five Lemma.
\end{proof}

\subsection{Torsion pairs of finite type in locally finitely presented categories}
In this subsection we start with a locally finitely presented Grothendieck category $\G$ and characterize its torsion pairs of finite type. An important step is the observation that, by Lemma~\ref{lem. fp(A) is ab}, and given a subclass $\S\subseteq \fp(\G)$ satisfying the hypotheses of the lemma, if ${\varinjlim}\,\S=\Gen(\S)$, then this class is closed under extensions. Observe that, for a direct system $(S_i)_{I}\subseteq\S$, letting $S_{(j,i)}:=S_j$ for all $j\leq i\in I$,  there is a presentation:
\[
\xymatrix{
\coprod_{j\leq i\in I}S_{(j,i)}\longrightarrow\coprod_{i\in I}S_i\longrightarrow{\varinjlim}_I S_i\longrightarrow 0.
}
\] 
Hence, ${\varinjlim}\,\S\subseteq \Pres(\S)\subseteq\Gen(\S)$. In particular, if ${\varinjlim}\,\S=\Gen(\S)$, then also $\Pres(\S)=\Gen(\S)$. 

\begin{lema}\label{ext_closed_fp_gives_torsion}
Let $\G$ be a locally finitely presented Grothendieck category and suppose that there is an $\S\subseteq \fp(\G)$, closed under extensions and quotients in $\fp (\G)$, such that $\Ext_\G^1(S,-)\colon \G\to\Ab$ preserves direct limits of objects in $\S$, for all $S\in \S$. Then, ${\varinjlim}\,\S=\Gen(\S)$ and this is a torsion class.
\end{lema}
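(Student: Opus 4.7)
The plan is to apply Lemma \ref{construction_of_tor_rad}: it suffices to show that $\varinjlim\S$ is closed in $\G$ under arbitrary coproducts, quotients, and extensions. A useful preliminary observation is that $\S$ itself is closed under finite colimits in $\fp(\G)$: since $\S$ is closed under quotients in $\fp(\G)$ it contains $0$; for $S,S'\in\S$ the split sequence $0\to S\to S\oplus S'\to S'\to 0$ together with extension closure gives $S\oplus S'\in\S$; and for a morphism $f\colon S\to S'$ in $\S$ its cokernel is a quotient of $S'$ lying in $\fp(\G)$ by Lemma \ref{lem.fgC-fpC closed-for-extensions-cokernels}, hence in $\S$. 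From this one deduces in the standard way (by reindexing iterated filtered colimits, or equivalently by identifying $\varinjlim\S$ with the essential image of $\mathrm{Ind}(\S)\to\G$) that $\varinjlim\S$ is closed under arbitrary filtered colimits in $\G$.

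Closure under coproducts of $\varinjlim\S$ then follows by writing $\coprod_{j\in J}X_j=\varinjlim_F\bigoplus_{j\in F}X_j$, where $F$ ranges over finite subsets of $J$, and using that finite coproducts commute with direct limits so that each $\bigoplus_{j\in F}X_j$ is again in $\varinjlim\S$. Closure under extensions is the content of Lemma \ref{lem. fp(A) is ab}, whose proof actually furnishes the stronger conclusion that the extension lies in $\varinjlim\S$ (as the final sentence of that proof makes explicit).

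The main work is closure under quotients. Given $X=\varinjlim_{i\in I}S_i\in\varinjlim\S$ and an epimorphism $p\colon X\twoheadrightarrow Y$, I would set $Y_i=\Im(S_i\to Y)$, so that the $Y_i$ form a directed family of subobjects of $Y$ (compatible with the transition maps $S_i\to S_j$) whose union is $Y$; by (Ab.$5$), $Y=\varinjlim_iY_i$. Each $Y_i\cong S_i/K_i$, where $K_i\subseteq S_i$ need not be finitely generated, so I would write $K_i=\varinjlim_\alpha K_{i,\alpha}$ as the directed union of its finitely generated subobjects. Every $K_{i,\alpha}$ is the image of some morphism $P_{i,\alpha}\to S_i$ with $P_{i,\alpha}\in\fp(\G)$, so $S_i/K_{i,\alpha}$ is the cokernel of a morphism in $\fp(\G)$ and therefore belongs to $\fp(\G)$ by Lemma \ref{lem.fgC-fpC closed-for-extensions-cokernels}; being a quotient of $S_i\in\S$, it lies in $\S$ by hypothesis. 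Exactness of direct limits then gives $Y_i=\varinjlim_\alpha S_i/K_{i,\alpha}\in\varinjlim\S$, and finally $Y=\varinjlim_iY_i\in\varinjlim\S$ by the filtered-colimit closure established above.

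The principal obstacle is precisely the quotient step: arbitrary subobjects of a finitely presented object need not be finitely generated (which would essentially force $\G$ to be locally coherent), and the remedy is to approximate $K_i$ by its finitely generated subobjects and to invoke cokernel closure in $\fp(\G)$. Worth emphasising is that the Ext-preservation hypothesis on $\S$ is used only through Lemma \ref{lem. fp(A) is ab} for the extension step; the coproduct and quotient closures rely solely on the categorical closure properties of $\S$ itself.
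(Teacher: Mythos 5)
Your proof is correct, but it is organized differently from the paper's. The paper establishes the single equality $\varinjlim\S=\Gen(\S)$: given $T\in\Gen(\S)$ with a presentation $0\to K\to\coprod_I T_i\to T\to 0$, one writes $K=\varinjlim K_\lambda$ with $K_\lambda\in\fp(\G)$, obtains an exact sequence $\coprod_\Lambda K_\lambda\to\coprod_I T_i\to T\to 0$, and runs a Lazard-style argument over pairs of finite subsets $(\Lambda',I')$ through which the map factors; each resulting cokernel lies in $\S$ (being a quotient in $\fp(\G)$ of a finite sum of objects in $\S$), so $T\in\varinjlim\S$. Once $\varinjlim\S=\Gen(\S)$ is known, closure under coproducts and quotients is immediate, and extensions come from Lemma~\ref{lem. fp(A) is ab}. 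You instead verify each closure property separately. Your quotient argument (passing to the images $Y_i=\Im(S_i\to Y)$ and approximating the kernels $K_i\subseteq S_i$ by their finitely generated subobjects, then using cokernel-closure of $\fp(\G)$) is a valid and genuinely different way of carrying out essentially the same Lazard-flavoured approximation, applied to an arbitrary epimorphism from an ind-object rather than to a presentation of a $\Gen(\S)$-object. The structural price you pay is the auxiliary claim that $\varinjlim\S$ is closed under filtered colimits: your coproduct step, your quotient step, and your reading of Lemma~\ref{lem. fp(A) is ab} all lean on it, whereas the paper's approach never needs this fact explicitly. That claim is standard (via $\mathrm{Ind}(\S)\hookrightarrow\G$ being fully faithful and filtered-colimit-preserving, as all $S\in\S$ are finitely presented), so the proof is sound, but you invoke it without argument; a careful write-up should either cite it or spell it out. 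One small inaccuracy worth flagging: the final sentence of the proof of Lemma~\ref{lem. fp(A) is ab} yields $Y\in\varinjlim\S$ only in the reduced case of an extension by a single $T\in\S$; passing from there to the general extension $0\to\varinjlim S_i\to X\to\varinjlim T_j\to 0$ expresses $X$ as $\varinjlim_j X_j$ with $X_j\in\varinjlim\S$, and concluding $X\in\varinjlim\S$ (rather than merely $X\in\Gen(\S)$, which is what the lemma as stated asserts) again requires the filtered-colimit closure you established. Since you have that closure, your use of the lemma is fine, but the parenthetical assertion that the lemma's proof ``actually furnishes'' the stronger conclusion on its own is not quite right.
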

\begin{proof}
It is enough to verify that ${\varinjlim}\,\S=\Gen(\S)$ as, in that case, this class is closed under extensions (by Lemma~\ref{lem. fp(A) is ab}), coproducts and quotients. Consider a short exact sequence
\[
\xymatrix@C=15pt{
0\ar[rr]&& K\ar[rr]^-u&& \coprod_{I} T_i\ar[rr]^-q && T\ar[rr]&& 0,
}\qquad\text{with $T_i\in \S$, for all $i\in I$.}
\]
As $\G$ is locally finitely presented, there is an epimorphism $p\colon\coprod_{\Lambda}K_\lambda\twoheadrightarrow K$, with $K_\lambda\in\fp(\G)$, for all $\lambda\in \Lambda$. To conclude that  $T\in{\varinjlim}\,\S$, apply Lazard's Trick (with $\mathcal K=\fp(\G)$) to:
\[
\xymatrix@C=15pt{
\coprod_{\Lambda}K_\lambda\ar[rr]^-{u\circ p}&& \coprod_{I} T_i\ar[rr]^-q && T\ar[rr]&& 0.
}\qedhere
\]  
\end{proof}

\begin{defi} \label{def.torsion-pair-fp(strongly)generated}
A torsion pair $\mathbf{t}=(\T,\F)$ is said to be 
\begin{itemize}
\item {\bf generated} {\bf by finitely presented objects} when there is a set $\mathcal{S}\subseteq\fp(\G)$ such that $\F=\mathcal{S}^\perp$;
\item {\bf strongly generated} {\bf by finitely presented objects} when $\T={\varinjlim} (\T\cap\fp(\G))$. 
\end{itemize}
\end{defi}

We can now prove our main characterization of torsion pairs of finite type:

\begin{prop} \label{prop_fixing_first_part_of_old_4.1}
Let $\G$ be a locally finitely presented Grothendieck category and $\mathbf{t}=(\mathcal{T},\mathcal{F})$  a torsion pair in $\G$. Consider the following assertions:
\begin{enumerate}[\rm (1)]
\item $\mathbf{t}$ is strongly generated by finitely presented objects;
\item there is a set $\mathcal{S}\subseteq\fp(\G)$ such that $\mathcal{T}=\Gen(\mathcal{S})$ (or, equivalently, $\mathcal{T}=\Pres(\mathcal{S})$);
\item $\mathbf{t}$ is generated by  finitely presented objects;
\item $\mathbf{t}$ is of finite type. 
\end{enumerate}
Then, the implications {\rm``(1)$\Leftrightarrow$(2)$\Rightarrow$(3)$\Rightarrow$(4)''} hold and, if $\mathbf{t}$ is hereditary, the assertions are all equivalent. 
\end{prop}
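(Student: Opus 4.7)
\smallskip\noindent
\textbf{Proof plan.} The plan is to establish the cycle (1)$\Leftrightarrow$(2)$\Rightarrow$(3)$\Rightarrow$(4) by direct bookkeeping, and then to close the loop under the hereditary hypothesis via a Gabriel-topology style argument. For (1)$\Rightarrow$(2), I would take $\mathcal{S}:=\mathcal{T}\cap \fp(\G)$, which is a set by local finite presentability of $\G$: writing $T\in \mathcal{T}=\varinjlim\mathcal{S}$ as $T=\varinjlim T_i$ with $T_i\in\mathcal{S}$, the usual direct-limit presentation $\coprod_{i\le j}T_i\to \coprod_i T_i\to T\to 0$ displays $T$ as an object of $\Pres(\mathcal{S})$, while $\Pres(\mathcal{S})\subseteq \Gen(\mathcal{S})\subseteq \mathcal{T}$ follows from closure of $\mathcal{T}$ under coproducts and quotients, giving $\mathcal{T}=\Pres(\mathcal{S})=\Gen(\mathcal{S})$ simultaneously. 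Conversely, for (2)$\Rightarrow$(1), each $T\in\mathcal{T}=\Gen(\mathcal{S})$ is the directed union of the subobjects $T_F:=\Im(\coprod_{j\in F}S_j\to T)$ indexed by finite $F\subseteq J$; each $T_F$ lies in $\mathcal{T}$ (quotient of an object of $\mathcal{T}$) and is the direct limit of the finitely presented objects $\coprod_F S_j/K_{F,\alpha}$, where $K_{F,\alpha}$ ranges over the finitely generated subobjects of $\ker(\coprod_F S_j\twoheadrightarrow T_F)$ --- these quotients are finitely presented because Lemma \ref{lem.fgC-fpC closed-for-extensions-cokernels} makes $\fp(\G)$ closed under cokernels, and in a locally finitely presented Grothendieck category every finitely generated object is a quotient of a finitely presented one. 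Implications (2)$\Rightarrow$(3) and (3)$\Rightarrow$(4) are routine: $\mathcal{F}=\mathcal{T}^\perp=\Gen(\mathcal{S})^\perp=\mathcal{S}^\perp$, and since each $S\in \mathcal{S}\subseteq \fp(\G)$ makes $\G(S,-)$ preserve direct limits, $\mathcal{S}^\perp$ is closed under direct limits.

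The main obstacle is (4)$\Rightarrow$(2) in the hereditary case. Fixing a set $\mathcal{S}_0$ of finitely presented generators of $\G$ and, for each $S\in \mathcal{S}_0$, the filter $\mathcal{L}_S:=\{L\subseteq S:S/L\in \mathcal{T}\}$, the key step is to prove that every $L\in\mathcal{L}_S$ contains a finitely generated member of $\mathcal{L}_S$. Indeed, writing $L=\bigcup_\alpha L_\alpha$ as the directed union of its finitely generated subobjects, one has $S/L=\varinjlim S/L_\alpha$; since $\mathbf{t}$ is of finite type, Lemma \ref{preservation_colimits_fin_type} yields $0=(1:t)(S/L)=\varinjlim (1:t)(S/L_\alpha)$. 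The canonical epimorphisms $\phi_\alpha\colon S\twoheadrightarrow (1:t)(S/L_\alpha)$ are compatible with the transition maps and thus glue to a morphism $\phi\colon S\to\varinjlim (1:t)(S/L_\alpha)=0$; the identification $\G(S,\varinjlim (1:t)(S/L_\alpha))\cong \varinjlim \G(S,(1:t)(S/L_\alpha))$ (available because $S\in\fp(\G)$) then forces $\phi_\beta=0$ for some $\beta$, and since $\phi_\beta$ is surjective this gives $(1:t)(S/L_\beta)=0$, i.e., $L_\beta\in \mathcal{L}_S$ is finitely generated.

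With this technical step in hand, the set $\mathcal{S}':=\{S/L_0:S\in \mathcal{S}_0,\ L_0\in \mathcal{L}_S \text{ finitely generated}\}$ is contained in $\mathcal{T}\cap \fp(\G)$, and it remains to verify $\mathcal{T}\subseteq \Gen(\mathcal{S}')$. Given $T\in \mathcal{T}$, pick an epimorphism $\pi\colon \coprod_{i\in I} S_i\twoheadrightarrow T$ with $S_i\in \mathcal{S}_0$. Here the hereditary hypothesis is essential: each image $\Im(\pi_i)\subseteq T$ lies in $\mathcal{T}$, so $\ker(\pi_i)\in \mathcal{L}_{S_i}$ and, by the previous paragraph, contains a finitely generated $L_{i,0}\in \mathcal{L}_{S_i}$. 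The factorisations $S_i\twoheadrightarrow S_i/L_{i,0}\twoheadrightarrow \Im(\pi_i)$ then assemble into an epimorphism $\coprod_{i\in I}S_i/L_{i,0}\twoheadrightarrow T$, exhibiting $T$ as an object of $\Gen(\mathcal{S}')$ and completing the proof.
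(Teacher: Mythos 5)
Your proof is correct, and in the part that actually requires an idea --- the implication (4)$\Rightarrow$(2) under the hereditary hypothesis --- it genuinely diverges from the paper's argument. The paper reduces to showing that every $T\in\T\cap\fg(\G)$ is a quotient of some object in $\T\cap\fp(\G)$: one fixes $0\to R\to X\xrightarrow{p} T\to 0$ with $X\in\fp(\G)$, writes $R=\bigcup_j R_j$ as a directed union of finitely generated subobjects, forms the pullbacks $X_j$ along $t(X/R_j)\hookrightarrow X/R_j$, and uses that $X$ --- being finitely presented and a direct limit of the $X_j$ --- receives a retraction $u_k:X_k\to X$ for some $k$; a Snake Lemma chase together with heredity then shows $X/R_k\in\T\cap\fp(\G)$. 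Your argument instead proves a cofinality statement for the ``annihilator filter'' $\mathcal{L}_S$ of each finitely presented generator $S$: finite type alone implies that every $L\in\mathcal{L}_S$ contains a finitely generated member of $\mathcal{L}_S$, via the observation that the compatible family of epimorphisms $\phi_\alpha:S\twoheadrightarrow(1:t)(S/L_\alpha)$ glues to the zero map into $\varinjlim(1:t)(S/L_\alpha)=(1:t)(S/L)=0$, so finite presentability of $S$ forces $\phi_\beta=0$ for some $\beta$ and surjectivity gives $(1:t)(S/L_\beta)=0$. Heredity then only enters to guarantee that each $\ker(\pi_i)$ lies in $\mathcal{L}_{S_i}$. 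This is closer in spirit to the classical Gabriel-topology characterization of hereditary torsion pairs of finite type; it isolates cleanly which hypothesis does which job (finite type gives the filter cofinality, heredity gives the covering), which is arguably more transparent than the pullback/retraction bookkeeping. Your (2)$\Rightarrow$(1), via directed unions of images $T_F$ and finitely generated subobjects of each $\ker(\coprod_F S_j\to T_F)$, is a variant of the paper's Lazard-style argument with the index set $\Delta$; it is correct but glosses over the reindexing of the double direct limit over the directed poset $\{(F,K)\}$ into a single one --- routine, but worth a line. Two minor points: $\mathcal{T}\cap\fp(\G)$ is only skeletally small, so in (1)$\Rightarrow$(2) one should take a set of representatives; and in (2)$\Rightarrow$(1) it would help to state explicitly that each $\coprod_F S_j/K_{F,\alpha}$ lies in $\T$ (being a quotient of $\coprod_F S_j\in\Gen(\mathcal{S})=\T$), since that inclusion is what makes the direct system land in $\T\cap\fp(\G)$ as required.
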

\begin{proof}
(1)$\Rightarrow$(2). Let $\mathcal{S}$ be a skeleton of $\mathcal{T}\cap\fp(\G)$, then $\mathcal{T}={\varinjlim}\, \S\subseteq\Pres(\mathcal{S})\subseteq\Gen(\mathcal{S})\subseteq\mathcal{T}$.

\smallskip\noindent
(2)$\Rightarrow$(1). Given $T\in\mathcal{T}=\Gen(\mathcal{S})$, with $\mathcal{S}\subseteq\fp(\G)$, take a short exact sequence
\[
\xymatrix{
0\longrightarrow K\stackrel{u}{\longrightarrow}\coprod_{ I}S_i\stackrel{p}{\longrightarrow}T\longrightarrow 0.
}\qquad\text{with $S_i\in \S$, for all $i\in I$,}
\] 
and pick a direct system $(K_\lambda )_{\Lambda}\subseteq \fp(\G)$ such that ${\varinjlim}_{\Lambda} K_\lambda\cong K$. 
Apply now Lazard's Trick to the inclusion of classes $\T\cap \fp(\G)\subseteq \fp(\G)$, to conclude that $T\in \varinjlim\, (\T\cap \fp(\G))$.

\smallskip\noindent
(2)$\Rightarrow$(3)$\Rightarrow$(4) are clear implications.

\smallskip\noindent 
(4)$\Rightarrow$(2), {\em provided $\mathbf{t}$ is hereditary}. This follows by \cite[Proposition~11.1.14]{Prest}.
\end{proof}

\begin{rem}
When $\G=\mod R$ is a category of modules, we will prove in Theorem~\ref{thm.locally-fp-hearts} (see also Question~\ref{ques_for_mod_gen_fp}) that conditions $(1)$, $(2)$ and $(3)$ of the above proposition are all equivalent in this case. On the other hand, even in categories of modules, one can produce examples of torsion pairs that satisfy $(4)$ but not $(3)$, e.g., see \cite[Corollary~4.2 and Example~4.3]{BPa} for a torsion pair $\t=(\C_{\frak a},\T_{\frak a})$ (which is the left constituent of a TTF triple, so it is necessarily of finite type) such that $\C_{\frak a}\cap \fp(\G)=0$.
\end{rem}

\subsection{Torsion pairs that restrict to $\fp(\G)$}

Consider a locally finitely presented Grothendieck category $\G$ and a torsion pair $\t=(\T,\F)$ in $\G$, with torsion radical  $t\colon \G\to {\G}$. Then $\t$ is said to
\begin{itemize}
\item {\bf restrict to $\fp(\G)$} if the torsion radical $t\colon\G\to\G$ preserves finitely presented objects. 
\end{itemize} 
Torsion pairs of finite type that restrict to {$\fp(\G)$}  give a useful criterion for local coherence:

\begin{prop} \label{prop.localcoherence-via-torsion}
A  locally finitely presented Grothendieck category $\G$ with  a torsion pair of finite type $\t=(\T ,\F)$ that restricts to $\fp(\G)$ is locally coherent if, and only if, the following conditions hold:
\begin{enumerate}[\rm (1)]
\item kernels of morphisms in $\T\cap\fp(\G)$ are finitely presented;
\item kernels of morphisms in $\F\cap\fp(\G)$ are finitely presented;
\item kernels of morphisms with source in $\F\cap\fp(\G)$ and target in $\T\cap\fp(\G)$ are finitely presented.
\end{enumerate}
\end{prop}
\begin{proof}
The ``only if'' part is clear. Conversely, let $f\colon M\to N$ be a morphism in $\fp (\G)$. Since $\t$ restricts to $\fp (\G)$ we have a commutative diagram in $\fp (\G)$, with exact rows:
\[
\xymatrix@C=35pt@R=17pt{ 0 \ar[r] & t(M) \ar[r] \ar[d]_{t(f)} & M \ar[r] \ar[d]^{f} & (1:t)(M) \ar[r] \ar[d]^{(1:t)(f)} & 0\\ 
0 \ar[r] & t(N) \ar[r] & N \ar[r] & (1:t)(N) \ar[r] & 0.}
\]
The Snake Lemma gives the following long exact sequence 
\[
0\rightarrow\Ker(t(f))\to\Ker(f)\to\Ker[(1:t)(f)]\stackrel{w}{\longrightarrow}\Coker(t(f))\to\Coker(f)\to\Coker[(1:t)(f)]\rightarrow 0, 
\]
where all cokernels are finitely presented, by Corollary~\ref{coro_1_fpn}, $\Ker(t(f))\in \fp(\G)$ by hypothesis (1), and $\Ker[(1:t)(f)]\in \fp(\G)$ by hypothesis (2). Furthermore, $\Ker[(1:t)(f)]\in\F$ and $\Coker(t(f))\in\T$, so $\Ker(w)\in\fp(\G)$ by hypothesis (3). Hence, $\Ker(f)\in\fp(\G)$, by Corollary~\ref{coro_1_fpn}. Therefore, $\fp (\G)$ is closed under taking kernels in $\G$, as desired.
\end{proof}

\section{Preliminaries on derived categories and $t$-structures}\label{Sec_Prelim_on_der}

In this section we recall some definitions and basic facts about derived categories and $t$-structures. This serves mostly  to introduce notation, fix conventions and recall known results that help the paper to be more self-contained. For this reason, most  proofs are omitted.

\subsection{Triangulated and derived categories}
Given a triangulated category $\mathcal D$ (for the definition, see  \cite{N}), we denote by $[1]\colon\mathcal D\to \mathcal D$ the {\bf suspension functor} and $[n]:=[1]^n$ for all $n\in\Z$. Furthermore, we denote ({\bf distinguished}) {\bf triangles} in $\mathcal D$ by  $X\to Y \to Z\to X[1]$.

\smallskip
Given a set $\mathcal X$ of objects in $\mathcal D$ and a subset $I\subseteq \Z$, we let
\begin{align*}
\mathcal{X}^{\perp_{I}}&:=\{Y\in\mathcal D:\mathcal D(X,Y[i])=0\text{, for all }X\in\mathcal{X}\text{ and }i\in I\}\\
{}^{\perp_{I}}\mathcal{X}&:=\{Z\in\mathcal D:\mathcal D(Z,X[i])=0\text{, for all }X\in\mathcal{X}\text{ and }i\in I\}.
\end{align*}
If $I=\{i\}$ for some $i\in \Z$, then $\mathcal{X}^{\perp_{i}}:=\mathcal{X}^{\perp_{I}}$ and ${}^{\perp_{i}}\mathcal{X}:={}^{\perp_{I}}\mathcal{X}$. If $i=0$,  $\mathcal{X}^{\perp_{}}:=\mathcal{X}^{\perp_{0}}$ and ${}^{\perp_{}}\mathcal{X}:={}^{\perp_{0}}\mathcal{X}$. A set $\S \subseteq \mathcal D$ is called a {\bf set of generators} (resp., {\bf cogenerators}) of $\mathcal D$ if   $\S^{\perp_\Z}=0$ (resp., ${}^{\perp_\Z}\S=0$). 
\\
If $\mathcal D$ has coproducts, we  say that an object $X$ is {\bf compact} when the functor $\mathcal D(X,-) \colon \mathcal D\to \Ab$ preserves coproducts.  $\mathcal D$ is {\bf compactly generated} when it has a set of compact generators.

\smallskip
Given an Abelian category $\C$, a subcategory $\C'\subseteq \C$, and an integer $n\in\Z$,
\begin{itemize}
\item $\Ch(\C)$ is  the category of the (cohomologically graded) {\bf cochain complexes} of objects of $\C$;
\item $\Ch(\C')$ is the subcategory of $\Ch(\C)$ of the complexes that are, degree-wise, in $\C'$;
\item $\Ch^{\leq n}(\C')\subseteq \Ch(\C')$ is the subcategory complexes concentrated in degrees $\leq n$;
\item $\Ch^{\geq n}(\C')\subseteq \Ch(\C')$ is the subcategory complexes concentrated in degrees $\geq n$;
\item$\Ch^-(\C')=\bigcup_{n\in\mathbb{Z}}\Ch^{\leq n}(\C')$ (resp., $\Ch^+(\C')=\bigcup_{n\in\mathbb{Z}}\Ch^{\geq n}(\C')$);
\item $\K(\C)$ is the {\bf homotopy category} of $\C$;
\item $\K(\C')\subseteq\K(\C)$ is the category of all complexes isomorphic, in $\K(\C)$, to a complex in $\Ch(\C')$;
\item $\K^{\leq n}(\C')$ (resp., $\K^{\geq n}(\C')$,  $\K^-(\C')$, $\K^+(\C')$) is the subcategory of $\K(\C')$ of the complexes which are isomorphic, in $\K(\C)$, to one in $\Ch^{\leq n}(\C')$ (resp., $\Ch^{\geq n}(\C')$, $\Ch^-(\C')$, $\Ch^+(\C')$).
\end{itemize}
$\Der(\C)$ denotes the (unbounded) {\bf derived category} of $\C$,  defined as the localization obtained inverting the quasi-isomorphisms in $\K(\C)$. In general the category $\Der(\C)$ may not exist, in the sense that $\Der(\C)(X,Y)$ may be a proper class for some $X,\, Y \in \Der(\C)$; this is never the case when $\C$ is a Grothendieck category. When it exists, $\Der(\C)$ is canonically a triangulated category. As for the homotopy category, $\Der^{\leq n}(\C)$ and $\Der^{\geq n}(\C)$ are defined as the subcategories of $\Der(\C)$ of those complexes that are isomorphic in $\Der(\C)$ to some complex in $\Ch^{\leq n}(\C)$ and $\Ch^{\geq n}(\C)$, respectively.

\subsection{$t$-Structures}\label{generalities_on_t}
An additive functor $H^0 \colon \mathcal D\to \C$ from a triangulated category $\mathcal D$ to an Abelian category $\C$ is said to be {\bf cohomological} if, given a triangle $X\to Y\to Z\to X[1]$ in $\D$, the sequence $H^0(X)\to H^0(Y)\to H^0(Z)$ is exact in $\C$. Letting $H^n(-) := H^0 ((-) [n])$ for all $n \in \Z$, one can  then attach to a triangle the following long exact sequence in $\C$:
\[
\cdots \longrightarrow H^{n-1}(Z)\longrightarrow H^{n}(X)\longrightarrow H^{n}(Y)\longrightarrow H^{n}(Z)\longrightarrow H^{n+1}(X)\longrightarrow \cdots.
\]

\begin{defi} \label{def.$t$-structure}
A {\bf $t$-structure} in $\mathcal D$ is a pair of classes $\tau=(\U, \W)$ satisfying the following axioms:
\begin{enumerate}[\rm ($t$-S.1)]
\item $\mathcal D(U,W[-1]) = 0$, for all $U \in \U$ and $W \in \W$;
\item $\W[-1] \subseteq \W$ (or, equivalently, $\U[1] \subseteq \U$);
\item for each $X \in \mathcal D$, there are $U_X \in \U$, $V_X \in \W[-1]$ and a so-calle {\bf $\tau$-truncation triangle}:
\begin{equation}\label{truncation_triangle_def_eq}
U_X \longrightarrow X \longrightarrow V_X\longrightarrow U_X [1].
\end{equation}
\end{enumerate}
\end{defi}

\begin{rem}\label{rem.t.def}
In the definition of a $t$-structure one can often find in the literature the extra assumption that $\U$ and $\W$ are closed under direct summands. In fact, this follows by the other axioms. Indeed, let $\tau=(\U,\W)$ be a $t$-structure (let us just assume the first half of ($t$-S.2), the part not in parenthesis), and suppose that $A\oplus B=U\in \U$. Then, $\W[-1]\subseteq\U^{\perp}\subseteq U^{\perp}\subseteq A^{\perp}$, so the map $g_A$ in the following truncation triangle of $A$ is trivial:
\[
\xymatrix{
U_A\ar[r]^-{f_A}&A\ar[r]^-{g_A}&V_A\ar[r]^-{h_A}&U_A[1],\qquad\text{with $U_A\in \U$ and $V_A\in \W[-1]$}.
}
\]
But then, $\U\ni U_A\cong A\oplus V_A[-1]$ and, using the above argument, $\W[-1]\subseteq (V_A[-1])^{\perp}$. On the other hand, $V_A[-1]\in \W[-2]\subseteq \W[-1]$, so that $\id_{V_A[-1]}=0$, showing that $V_A=0$, and so $A\cong U_A\in\U$. Hence, $\U$ is closed under summands, $\U={}^{\perp}\W[-1]$, so that the second half of ($t$-S.2) follows from the first one. The same argument now shows that also $\W$ is closed under summands. 
\end{rem}

By Remark~\ref{rem.t.def}, for a $t$-structure $\tau=(\U, \W)$, we have $\W = \U^{\perp}[1]$ and $\U = {}^{\perp}\W[-1] = {}^{\perp}(\U^{\perp})$. For this reason, we  write $t$-structures as $\tau=(\U, \V[1])$, meaning that $\V:=\U^{\perp}$. We  call $\U$ and $\U^{\perp}$ the {\bf aisle} and the {\bf co-aisle} of $\tau$, respectively. The objects $U_X$ and $V_X$  in \eqref{truncation_triangle_def_eq} are uniquely determined by $X$, up to a unique isomorphism, and define the so-called {\bf left} and {\bf right $\tau$-truncation functors}  
\[
\tau_{\U}\colon \mathcal D\longrightarrow \U\qquad\text{and}\qquad\tau^{\U^{\perp}}\colon \mathcal D\longrightarrow \U^{\perp},
\] 
which are right and left adjoints, respectively, to the corresponding inclusions.

The  subcategory $\mathcal H := \U \cap \V[1] = \U \cap \U^{\perp}[1]$ of $\D$ is called the {\bf heart} of $\tau$ and it is an Abelian category, where short exact sequences ``are'' the triangles of $\mathcal D$ with the three vertices in $\H$. Moreover, the assignments $X\mapsto \tau_\U \circ \tau^{\U^{\perp}[1]}(X)$ and $X\mapsto\tau^{\U^{\perp}[1]}\circ\tau_\U (X)$ define two naturally isomorphic cohomological functors (see \cite[Section~1.3]{BBD}) denoted indifferently by 
\[
H^0_\tau\colon \mathcal D\longrightarrow \H.
\] 
{Suppose that the triangulated category $\D$ has coproducts. Since $(\tau^{\U^{\perp}[1]})_{\restriction\U}=(H^{0}_{\tau})_{\restriction\U}\colon \U \to \H$ is a left adjoint (see \cite[Lemma~3.1]{PS1}), it preserve coproducts, so that $\coprod_{\H} H_i=\tau^{\U^{\perp}[1]}(\coprod_\D H_i)$, for each family $(H_i)_{i\in I}$ of objects of $\H$. Dually,  when $\D$ has products, we have $\prod_\H H_i=\tau_{\U}(\prod_\D H_i)$.}

\smallskip
Consider now a morphism $\phi\colon H_1\to H_2$ in $\H$ and complete it to a triangle in $\D$:
\[
K\longrightarrow H_1\overset{\phi}\longrightarrow H_2\longrightarrow K[1].
\]
Using the closure properties of $\U$ and $\V$, one verifies that $K\in \U[-1]\cap \V[1]$ (so  $K[1]\in \U[0]\cap \V[2]$). In particular, $\tau_\U(K),\, \tau^{\V[1]}(K[1])\in \H$, and the  compositions $\tau_\U(K)\to H_1$ and $H_2\to \tau^{\V[1]}(K[1])$ represent a kernel and a cokernel, respectively, of $\phi$ in $\H$ (see \cite[Section~1.3]{BBD}). }

\begin{ejem}\label{classical_t_str_ex}
Given a Grothendieck category  $\G$,  there is a canonical $t$-structure  in $\Der(\G)$ defined by $\tau=(\Der^{\leq0}(\G),\Der^{\geq0}(\G))$. The heart $\Der^{\leq0}(\G)\cap\Der^{\geq0}(\G)= \G[0]\cong \G$ of this $\tau$ is equivalent to $\G$.
\end{ejem}

A $t$-structure $\tau=(\U,\V[1])$ is said to be {\bf generated} by a class $\S\subseteq \D$ if $\V=\S^{\perp_{\leq0}}$. Furthermore, we say that $\tau$ is {\bf compactly generated} if it is generated by a set of compact objects.



\subsection{Happel-Reiten-Smal\o\ $t$-structures and their hearts}
Let $\mathcal D$ be a triangulated category. Given two classes $\mathcal X,\,\mathcal Y\subseteq \mathcal D$, we define a new class:
\begin{itemize}
\item  $\mathcal X*\mathcal Y:=\{Z\in \D: \text{there is a triangle $X\to Z\to Y\to X[1]$ with $X\in\mathcal X$ and $Y\in\mathcal Y$}\}$. 
\end{itemize}
Similarly, given a Grothendieck category $\G$ and two subcategories $\B$ and $\C\subseteq \G$ we let $\B*\C\subseteq \G$ be the class of extensions of objects of $\C$ by objects in $\B$ (i.e., $(\B*\C)[0]=\B[0]*\C[0]$ in $\Der(\G)$).

\begin{defi}
Consider a $t$-structure $\tau=(\U,\V[1])$ in $\D$, with heart $\H:=\U\cap \V[1]$, and a torsion pair $\t=(\mathcal T,\mathcal F)$ in $\H$. We can define a new $t$-structure $\tau_\t=(\U_\t,\V_\t[1])$ on $\mathcal D$, called the {\bf Happel-Reiten-Saml{\o}} ({\bf HRS}) {\bf tilt of $\tau$ with respect to $\t$} (see \cite[Section~1.2]{HRS}), where 
\[
\U_\t:= \U[1]*\mathcal T,\quad \text{and}\qquad \V_\t:=\mathcal F*\V.
\]
\end{defi}
Almost by construction,  $(\mathcal F[1],\mathcal T[0])$ is a torsion pair in the Abelian category $\H_\t:=\U_\t\cap\V_\t[1]$.
We  mostly consider HRS tilts in the following situation: we take a Grothendieck category $\G$, we consider a torsion pair  $\t=(\T,\F)$ in $\G$ and we let $\tau=(\Der^{\leq0}(\G),\Der^{\geq0}(\G))$ be the natural $t$-structure in $\Der(\G)$ described in Example~\ref{classical_t_str_ex}. Then we let $\tau_\t=(\U_\t,\V_\t[1])$ be the tilt of $\tau$ with respect to $\t$, and we denote its heart by $\Ht$. In particular,
\[
\U_\t:= \Der^{\leq-1}(\G)*\mathcal T[0],\quad \qquad \V_\t:=\mathcal F[0]*\Der^{\geq1}(\G),\quad \text{and}\qquad \H_\t=\mathcal F[1]*\T[0].
\]
In this setting, $\Der(\G)$ has coproducts and products and, since $\Ht$ is the heart of a $t$-structure in $\Der(\G)$, it is bicomplete (see \cite[Proposition~3.2]{PS1}). We can also describe  some particular direct limits in $\Ht$:

\begin{lema}[{\rm\cite[Proposition~4.2]{PS1}}]\label{rem. stalks}
Let $\G$ be a Grothendieck category, $\t=(\T,\F)$ a torsion pair in $\G$, and $\Ht$ the heart of the associated $t$-structure. Then, the following statements hold true:
\begin{enumerate}[\rm (1)]
\item given a direct system $(T_\lambda)_{\Lambda}$ in $\T$, we have that ${\varinjlim}_\Lambda^{(\Ht)}(T_{\lambda}[0])\cong ({\varinjlim}_\Lambda^{(\G)}T_{\lambda})[0]$;
\item if $\t$ is of finite type and $(F_{\lambda})_{\Lambda}\subseteq\F$ is a direct system, then ${\varinjlim}_\Lambda^{(\Ht)}(F_{\lambda}[1])\cong ({\varinjlim}_\Lambda^{(\G)}F_{\lambda})[1]$.  
\end{enumerate}
\end{lema}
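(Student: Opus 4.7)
The plan is to compute both direct limits as cokernels in $\mathcal{H}_\mathbf{t}$ and reduce to long exact cohomology sequences of mapping cones.

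For \textbf{part (1)}, since $\mathcal{T}$ is closed under coproducts in $\mathcal{G}$, the complex $(\coprod_\lambda T_\lambda)[0]$ already lies in $\mathcal{H}_\mathbf{t}$ and realises the coproduct of the $T_\lambda[0]$ there. Writing the direct limit in $\mathcal{H}_\mathbf{t}$ as the cokernel of the standard ``difference map'' $\phi\colon \coprod_{\lambda\leq\mu}T_\lambda[0]\to \coprod_\lambda T_\lambda[0]$, the task becomes to compute $H^0_\mathbf{t}(C)$, where $C$ is the mapping cone of $\phi$ in $\Der(\mathcal{G})$. The standard cohomologies of $C$ are concentrated in degrees $\{-1,0\}$, with $H^0(C)=\varinjlim T_\lambda$ and $H^{-1}(C)=K:=\ker\phi$, and they fit in the triangle $K[1]\to C\to (\varinjlim T_\lambda)[0]\to K[2]$ in $\Der(\mathcal{G})$.

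The crucial step is to observe that $K\in\mathcal{T}$. For this I use that filtered colimits are exact in the Grothendieck category $\mathcal{G}$, which implies that the simplicial bar resolution
\[
\cdots\to \coprod_{\lambda_0\leq\lambda_1\leq\lambda_2}T_{\lambda_0}\xrightarrow{\delta_2}\coprod_{\lambda_0\leq\lambda_1}T_{\lambda_0}\xrightarrow{\phi}\coprod_\lambda T_\lambda\to \varinjlim T_\lambda\to 0
\]
is acyclic; thus $K=\ker\phi=\Im\,\delta_2$ is a quotient of $\coprod_{\lambda_0\leq\lambda_1\leq\lambda_2}T_{\lambda_0}\in\mathcal{T}$ and hence lies in $\mathcal{T}$ by closure under quotients. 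Using the cohomological characterisations $\mathcal{U}_\mathbf{t}=\{X:H^n(X)=0\text{ for }n>0,\ H^0(X)\in\mathcal{T}\}$ and $\mathcal{U}_\mathbf{t}[1]=\{Y:H^n(Y)=0\text{ for }n\geq 0,\ H^{-1}(Y)\in\mathcal{T}\}$, one checks that $H^0_\mathbf{t}(K[1])=(1:t)(K)[1]=0$ (since $K\in\mathcal{T}$), $H^1_\mathbf{t}(K[1])=H^0_\mathbf{t}(K[2])=0$ (because $K[2]\in\mathcal{U}_\mathbf{t}[1]$ forces $\tau^{\mathcal{V}_\mathbf{t}[1]}(K[2])=0$), and $H^{-1}_\mathbf{t}((\varinjlim T_\lambda)[0])=0$ (stalks of $\mathcal{G}$-objects in strictly positive degree admit no non-zero map from any object of $\mathcal{U}_\mathbf{t}$). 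The long exact sequence of $H^*_\mathbf{t}$ applied to the above triangle then collapses to $H^0_\mathbf{t}(C)\cong (\varinjlim T_\lambda)[0]$, proving (1).

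For \textbf{part (2)}, the scheme is analogous with $T_\lambda[0]$ replaced by $F_\lambda[1]$. The finite-type hypothesis makes $\mathcal{F}$ closed under coproducts (finite sums are extensions, infinite ones are directed colimits of finite sums), so $(\coprod F_\lambda)[1]\in\mathcal{H}_\mathbf{t}$ realises the coproduct, and the mapping cone fits in a triangle $K'[2]\to C'\to(\varinjlim F_\lambda)[1]\to K'[3]$ with $K'\in\mathcal{G}$. Here the vanishings $H^0_\mathbf{t}(K'[2])=H^1_\mathbf{t}(K'[2])=0$ hold for \emph{any} $K'\in\mathcal{G}$ (both $K'[2]$ and $K'[3]$ lie in $\mathcal{U}_\mathbf{t}[1]$), while $H^{-1}_\mathbf{t}((\varinjlim F_\lambda)[1])=0$ uses the finite-type assumption to place $(\varinjlim F_\lambda)[1]$ in $\mathcal{H}_\mathbf{t}$, so the bar-complex detour is not needed.

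The main obstacle is the claim $K\in\mathcal{T}$ in part (1): without it, the long exact sequence would yield only a possibly non-trivial extension $0\to (1:t)(K)[1]\to \varinjlim_{\mathcal{H}_\mathbf{t}}T_\lambda[0]\to (\varinjlim T_\lambda)[0]\to 0$ in $\mathcal{H}_\mathbf{t}$, which need not split since $\mathbf{t}$ is not assumed hereditary. It is essential to exploit Ab.$5$-exactness of $\mathcal{G}$ (equivalently, the vanishing of derived colimits of directed systems) to conclude that the kernel of the presentation map is torsion.
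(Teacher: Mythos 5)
Your proof is correct, and since the paper simply cites \cite[Prop.\,4.2]{PS1} for this statement rather than including an internal argument, I cannot compare with the paper's own proof line by line. Let me instead record what you did and where the hinge points are.

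Your strategy is to compute the direct limit in $\Ht$ as a cokernel of the Lazard difference map $\phi$, identify that cokernel as $\tau^{\V_\t[1]}(C)$ for $C$ the cone of $\phi$ (valid since $C\in\U_\t$), and then run the long exact sequence of $H_\t^*$ along the canonical truncation triangle $K[1]\to C\to (\varinjlim T_\lambda)[0]\to K[2]$. The cohomological descriptions of $\U_\t$, $\U_\t[1]$, $\V_\t$ you use are correct, and the resulting vanishings $H_\t^0(K[1])=(1:t)(K)[1]$, $H_\t^0(K[2])=0$, $H_\t^{-1}((\varinjlim T_\lambda)[0])=0$ are all accurate. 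You also correctly identify that the whole argument hinges on $K=\ker\phi\in\T$: without it, one obtains only a short exact sequence $0\to(1:t)(K)[1]\to\varinjlim_{\Ht}T_\lambda[0]\to(\varinjlim T_\lambda)[0]\to 0$ in $\Ht$, and since $\t$ is not hereditary there is no reason for $(1:t)(K)$ to vanish. Part (2) is handled symmetrically, with the finite-type hypothesis entering exactly where you say it does (closure of $\F$ under coproducts and direct limits), and here the stronger torsion-class claim on the kernel is not needed because all the relevant stalks already land far inside the aisle.

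The one place where I would ask you to say a bit more is the acyclicity of the simplicial bar complex. It is true but not quite automatic that ``filtered colimits are exact'' yields exactness of the bar resolution: one standard way to see this is to write the bar complex $B_*(\Lambda,T)$ as the filtered colimit, over $\mu\in\Lambda$, of the bar complexes of the slices $\Lambda_{\leq\mu}=\{\lambda:\lambda\leq\mu\}$. Each $\Lambda_{\leq\mu}$ has $\mu$ as a terminal object, so the extra degeneracy ``append $\mu$ to the chain'' gives a contraction of $B_*(\Lambda_{\leq\mu})$ onto $T_\mu$ concentrated in degree $0$; then Ab.$5$ lets you commute $H_n$ with the filtered colimit, killing the homology in positive degrees. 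With this small expansion the argument that $K=\mathrm{Im}\,\delta_2$ is a quotient of $\coprod_{\lambda_0\leq\lambda_1\leq\lambda_2}T_{\lambda_0}\in\T$, and hence lies in $\T$, is complete. The route through the bar resolution is a clean, self-contained way to deal with the fact that $\T$ need not be closed under kernels, and is a genuinely useful observation.
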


Similarly to Lemma~\ref{rem. stalks}, one can describe the kernels of maps between stalk complexes in $\H_\t$: 

\begin{lema} \label{lem.kernel of mixed morphism}
Let $\G$ be a Grothendieck category, $\t=(\T,\F)$ a torsion pair and $\Ht$ the heart of the associated $t$-structure. Consider a short exact sequence $0\to F\to K\to T\to 0$, with $F\in \F$ and $T\in \T$, and consider the associated triangle in $\Der(\G)$:
\[
F[0]\longrightarrow K[0]\longrightarrow T[0]\longrightarrow F[1].
\]
Then, $T[0],\, F[1]\in \Ht$ and the kernel of the map $T[0]\to F[1]$ in $\Ht$ is exactly, $t(K)[0]$.
%
\end{lema}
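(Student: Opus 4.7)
The statement $T[0],F[1]\in\Ht$ is immediate: the heart $\Ht$ consists of complexes concentrated in degrees $-1$ and $0$ with $H^{-1}\in\F$ and $H^{0}\in\T$, and the two stalk complexes fit these conditions trivially. So the real task is to compute the kernel of $\phi\colon T[0]\to F[1]$ inside the Abelian category $\Ht$.

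My plan is to apply the general recipe for kernels recalled in Subsection \ref{generalities_on_t}: complete $\phi$ to a triangle and then truncate with respect to the aisle $\U_\t$. The natural triangle is obtained by rotating the one given in the statement, yielding
\[
K[0]\longrightarrow T[0]\stackrel{\phi}{\longrightarrow} F[1]\longrightarrow K[1].
\]
I would first verify that $K[0]\in\U_\t[-1]\cap\V_\t[1]$, which is the setting in which the recipe applies. This is a straightforward check on cohomologies: $H^{n}(K[1])=0$ for $n>0$ and $H^{0}(K[1])=0\in\T$, while $H^{n}(K[0])=0$ for $n<-1$ and $H^{-1}(K[0])=0\in\F$. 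Then the kernel of $\phi$ in $\Ht$ is represented by the composition $\tau_{\U_\t}(K[0])\to K[0]\to T[0]$, so I only need to identify $\tau_{\U_\t}(K[0])$.

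For the identification, I would use the canonical short exact sequence $0\to t(K)\to K\to (1:t)(K)\to 0$ in $\G$, which produces a triangle in $\Der(\G)$:
\[
t(K)[0]\longrightarrow K[0]\longrightarrow (1:t)(K)[0]\longrightarrow t(K)[1].
\]
Since $t(K)\in\T$, the left term lies in $\T[0]\subseteq\U_\t$ (take the trivial decomposition in $\U[1]\ast\T[0]$). Since $(1:t)(K)\in\F$, the right term lies in $\F[0]\subseteq\V_\t=\U_\t^{\perp}$ (again via a trivial decomposition in $\F[0]\ast\V$). By uniqueness of truncation triangles for $\tau_\t$, this identifies $\tau_{\U_\t}(K[0])\cong t(K)[0]$, so the kernel of $\phi$ in $\Ht$ is $t(K)[0]$, with the monomorphism $t(K)[0]\hookrightarrow T[0]$ induced by the composition $t(K)\hookrightarrow K\twoheadrightarrow T$ in $\G$ (which is injective because $F\cap t(K)=t(F)=0$).

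I do not anticipate a real obstacle here: the whole argument is bookkeeping with the definitions of $\U_\t$, $\V_\t$, and the cohomological support of complexes. The mildly delicate point is making sure that the truncation triangle I write down for $K[0]$ actually deserves that name, i.e.\ checking that $(1:t)(K)[0]$ genuinely belongs to $\V_\t$ (and not merely to $\F[0]$) and that $t(K)[0]$ belongs to $\U_\t$; both follow by taking trivial decompositions in the $*$-definitions of these classes, so no further machinery is needed.
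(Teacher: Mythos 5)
Your argument for the kernel computation is exactly the paper's: identify $\Ker_{\Ht}(\phi)$ with $\tau_{\U_{\t}}(K[0])$ via the recipe of Subsection \ref{generalities_on_t}, then recognize the canonical torsion triangle $t(K)[0]\to K[0]\to(1:t)(K)[0]\to t(K)[1]$ as the truncation triangle of $K[0]$ for the tilted $t$-structure, using that $t(K)[0]\in\T[0]\subseteq\U_{\t}$ and $(1:t)(K)[0]\in\F[0]\subseteq\V_{\t}$. The cohomological check that $K[0]\in\U_{\t}[-1]\cap\V_{\t}[1]$ is automatic from the recipe, so you could drop it, but it does no harm.

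Your closing parenthetical claim, however, is false: in general $F\cap t(K)\neq t(F)$, so the underlying morphism $t(K)\to T$ in $\G$ need not be injective. One always has $t(F)\subseteq F\cap t(K)$, but the inclusion can be strict when $\t$ is not hereditary: in $\G=\Ab$ with $\T$ the divisible groups and $\F$ the reduced groups, the sequence $0\to\Z\to\mathbb{Q}\to\mathbb{Q}/\Z\to 0$ has $t(K)=\mathbb{Q}$ and $F\cap t(K)=\Z\neq 0$. This does not undermine the lemma, since $t(K)[0]\to T[0]$ is automatically a monomorphism in $\Ht$ by virtue of being a kernel there; but a monomorphism of torsion stalks in $\Ht$ need not be underlain by a monomorphism in $\G$, unless $\t$ is hereditary.
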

\begin{proof}
As we have mentioned at the end of Subsection~\ref{generalities_on_t}, $\Ker_{\Ht}(T[0]\to F[1])\cong \tau_{{\U_\t}}(K[0])$. Consider now the following approximation sequence in $\G$
\[
0\longrightarrow t(K)\longrightarrow K\longrightarrow (1:t)(K)\longrightarrow 0
\]
and the associated triangle $t(K)[0]\to K[0]\to (1:t)(K)[0]\to t(K)[1]$ in $\Der(\G)$. It is clear that $t(K)[0]\in \T[0]\subseteq {\Der^{\leq-1}(\G)*\T[0]=\U_\t}$ and $(1:t)(K)[0]\in \F[0]\subseteq {\mathcal F[0]*\Der^{\geq1}(\G)=\V_\t}$, so our triangle is the $\tau_\t$-truncation for $K[0]$, that is, $t(K)[0]\cong \tau_{{\U_\t}}(K[0])\cong \Ker_{\Ht}(T[0]\to F[1])$, as desired.
\end{proof}

Let us conclude this subsection recalling the following important result that characterizes when the heart of an HRS tilt is a Grothendieck category:

\begin{prop}[{\rm \cite[Theorem~1.2]{PS2}}]\label{main_result_PS2}
Let $\G$ be a Grothendieck category,  $\t=(\T,\F)$  a torsion pair and  $\Ht$  the associated heart. Then, $\Ht$ is  Grothendieck  if and only if $\t$ is of finite type.
\end{prop}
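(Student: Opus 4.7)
The plan is to prove the two implications separately. Both rely on identifying direct limits in $\Ht$ explicitly and comparing them with direct limits in $\G$.

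For the implication ``$\t$ of finite type $\Rightarrow$ $\Ht$ is Grothendieck'', the first thing to do is to construct a generator. Since $\G$ is Grothendieck, pick a generator $V\in\G$, and look for a generator of $\Ht$ of the form $V[0]\oplus E[1]$ where $E\in\F$ is built from $V$ (e.g., the torsionfree part of a suitable injective envelope of $V$, or a transfinite construction using the torsionfree reflection $(1:t)$). One checks that $\Ht(V[0]\oplus E[1],M[i])=0$ for $i\in\{-1,0\}$ forces $H^0(M)=0$ and $H^{-1}(M)=0$, and hence $M=0$ for $M\in\Ht$. The second, harder task is Ab.5. Every $M\in\Ht$ fits in a short exact sequence $0\to H^{-1}(M)[1]\to M\to H^0(M)[0]\to 0$ in $\Ht$, with $H^{-1}(M)\in\F$ and $H^0(M)\in\T$; so a general direct system in $\Ht$ can be studied termwise through its ``stalk parts''. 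By Lemma \ref{rem. stalks}, the finite-type hypothesis guarantees that $\varinjlim_\Ht T_\lambda[0]=(\varinjlim_\G T_\lambda)[0]$ and $\varinjlim_\Ht F_\lambda[1]=(\varinjlim_\G F_\lambda)[1]$. Combining these with the five-lemma applied to direct limits of the above short exact sequences, one reduces exactness of direct limits in $\Ht$ to exactness of direct limits in $\G$, which holds.

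For the converse ``$\Ht$ is Grothendieck $\Rightarrow$ $\t$ is of finite type'', take a direct system $(F_\lambda)_{\lambda\in\Lambda}$ in $\F$ with $F:=\varinjlim_\G F_\lambda$; we must show $F\in\F$, i.e.\ $t(F)=0$. The strategy is to compute $L:=\varinjlim_\Ht F_\lambda[1]$ in two ways and compare. The coproduct $\coprod_\Ht F_\lambda[1]$ agrees with $\coprod_{\Der(\G)} F_\lambda[1]=(\coprod_\G F_\lambda)[1]$, because $\coprod_\G F_\lambda\in\F$ (so the truncation $\tau^{\V_\t[1]}$ is the identity), and similarly for $\coprod_{\lambda\le\mu}F_\lambda[1]$. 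Using the explicit formula for cokernels in the heart from Section 2.2 and the truncation triangle
\[
t(F)[1]\longrightarrow F[1]\longrightarrow (1:t)(F)[1]\longrightarrow t(F)[2],
\]
whose left term lies in $\U_\t$ and whose right term lies in $\V_\t[1]$, one identifies $L\cong (1:t)(F)[1]$.

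On the other hand, Ab.5 in $\Ht$ (which holds since $\Ht$ is Grothendieck) forces $L$ to be a ``true'' filtered colimit, in particular the canonical map $F_\mu[1]\to L$ induces, via $H^{-1}$, the canonical map $F_\mu\to (1:t)(F)$ factoring through $F$. Testing against a generator $G$ of $\Ht$, if $t(F)\ne 0$ there would exist morphisms $G\to F[1]$ whose image in $\Ht(G,(1:t)(F)[1])=\Ht(G,L)$ is zero yet which do not factor through any $F_\mu[1]$ within $\Ht$, contradicting the universal property of the direct limit. Hence $t(F)=0$ and $F\in\F$.

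The main obstacle is the second implication: packaging the Ab.5 hypothesis into a usable obstruction forcing $t(F)=0$. The key technical step is the identification of $L$ with $(1:t)(F)[1]$, since this is what converts the abstract exactness condition in $\Ht$ into the concrete statement that $F\to(1:t)(F)$ is an isomorphism in $\G$. All other ingredients (coproducts in $\Ht$, the kernel/cokernel formulas from Section 2.2, the short exact sequence $H^{-1}(M)[1]\hookrightarrow M\twoheadrightarrow H^0(M)[0]$) are by now routine.
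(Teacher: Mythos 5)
This result is imported from \cite[Thm.\,1.2]{PS2}; the present paper gives no proof of it, so there is no internal argument to compare against, and I judge your sketch on its own terms.

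Both directions have genuine gaps. For ``finite type $\Rightarrow$ Grothendieck'', the candidate generator $V[0]\oplus E[1]$ is ill-formed: for a generator $V$ of $\G$ the stalk $V[0]$ lies in $\Ht$ only when $V\in\T$, which fails in general, so $V[0]$ is not even an object of the heart. The Ab.5 step is also not a routine five-lemma reduction: to use the sequences $0\to H^{-1}(M_\lambda)[1]\to M_\lambda\to H^0(M_\lambda)[0]\to 0$ termwise you would need to know that $H^{-1}_{\t}$ restricted to $\Ht$ commutes with $\varinjlim_{\Ht}$, but that is itself a nontrivial consequence of the finite-type hypothesis and is essentially what is being proved. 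Lemma~\ref{rem. stalks} covers only stalk complexes, not arbitrary direct systems in $\Ht$.

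For the converse, your identification $\varinjlim_{\Ht}F_\lambda[1]\cong(1:t)(F)[1]$ is correct and is indeed the right pivot, but the concluding paragraph does not yield $t(F)=0$. If $t(F)\neq0$ then $F[1]\notin\Ht$, so there is no $\operatorname{Hom}$-group $\Ht(G,F[1])$ to test against. More fundamentally, the universal property of $\varinjlim_{\Ht}$ governs morphisms \emph{out of} the colimit, not into it, and in any case it does not give $\Ht(G,L)\cong\varinjlim\Ht(G,F_\lambda[1])$ unless $G$ is a \emph{finitely presented} object of $\Ht$ --- a property a mere generator of a Grothendieck category need not have. Note also that one cannot simply pass to a monomorphic system of subobjects of $F$: replacing $F_\lambda$ by its image in $F$ may take you out of $\F$, since $\F$ is closed under subobjects but not quotients; if such a replacement were always available, $t(F)=0$ would follow from Ab.5 in $\G$ alone and the theorem would be vacuous. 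Some additional input that extracts from the Ab.5 hypothesis an exactness statement actually detecting $t(F)$ is required, and it is not supplied.
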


\section{Torsion pairs of finite type, quasi-cotilting  and cosilting objects}\label{Section_finite_type}

In this section we clarify the relation between torsion pairs of finite type, quasi-cotilting torsion pairs and cosilting torsion pairs. 
The connections between these concepts are scattered in the literature and just partially known in the particular case where the ambient category is a category of modules (see Remark~\ref{rem.fintype-quasicotilting-cosilting}). 
After recalling the main definitions and some basic facts about pure injective objects (in Subsection~\ref{pure_subs}), cosilting objects (in Subsection~\ref{cosilt_obj_subs}), and quasi-cotilting objects (in Subsection~\ref{quasicotilt_obj_subs}) we are going to prove the following main result of this section:

\begin{teor}\label{main_thm_fintype_qcotilt_cosilt}
Let $\G$ be a Grothendieck category and  $\mathbf{t}=(\mathcal{T},\mathcal{F})$  a torsion pair in $\G$. The following assertions are equivalent:
 \begin{enumerate}[\rm (1)]
 \item $\mathbf{t}$ is of finite type;
 \item $\mathbf{t}$ is quasi-cotilting;
 \item $\mathbf{t}$ is the torsion pair associated with  a cosilting (pure-injective) object of $\G$.
 \end{enumerate}
\end{teor}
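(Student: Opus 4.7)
The plan is to prove (3)$\Rightarrow$(2)$\Rightarrow$(1)$\Rightarrow$(3), concentrating the main effort on the last implication. The implication (3)$\Rightarrow$(2) should be essentially formal: a cosilting object is by definition pure-injective and satisfies the Ext-vanishing requirements of a quasi-cotilting object. For (2)$\Rightarrow$(1), if $\mathbf{t}$ is generated by a quasi-cotilting pure-injective object $Q$, so that $\mathcal{F}=\Cogen(Q)$, one checks that $\mathcal{F}$ is closed under direct limits: any direct system in $\Cogen(Q)$ fits into a direct system of monomorphisms into powers of $Q$, and the pure-injectivity of $Q$ is precisely what allows the induced map on the direct limit to remain a monomorphism into a power of $Q$.

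For the key direction (1)$\Rightarrow$(3), the plan is to produce an explicit pure-injective cosilting object whose associated torsion pair is $\mathbf{t}$. A natural starting point is the object $C_{0}:=(1:t)(E)\in\mathcal{F}$, where $E$ is an injective cogenerator of $\G$; the finite-type hypothesis (Lem.\,\ref{preservation_colimits_fin_type}) guarantees that $(1:t)$ commutes with direct limits, and a standard argument using that $\T\cap\F=0$ shows that $C_{0}$ cogenerates $\mathcal{F}$. The task is then to enlarge $C_{0}$ to a pure-injective object $C\in\mathcal{F}$ with $\mathcal{F}=\Cogen(C)$ and $\Ext_{\G}^{1}(X,Y)=0$ for all $X,Y\in\Prod(C)$, so that $C$ qualifies as a cosilting object with associated torsion pair $\mathbf{t}$.

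The main obstacle will be the construction of $C$ and the verification of its pure-injectivity in a general Grothendieck category, where the usual module-theoretic tools (definable subcategories, pure-injective envelopes inside $\mathcal{F}$) are not directly available. This is presumably where the announced ``different route'' enters: one can work inside $\Der(\G)$ and exploit Prop.\,\ref{main_result_PS2}, which ensures that the Happel--Reiten--Smal\o\ heart $\Ht$ is already a Grothendieck category as soon as (1) holds. An injective cogenerator of $\Ht$, carefully transported back through the cohomological shift to an object in $\mathcal{F}$, should furnish the desired pure-injective cosilting object, with the required $\Ext^{1}$-vanishing on $\Prod(C)$ encoded precisely as the injectivity of that cogenerator inside $\Ht$.
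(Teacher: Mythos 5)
Your plan contains several genuine gaps, and the overall division of labour is quite different from the paper's, which proves (1)$\Leftrightarrow$(2) first, then (1,2)$\Rightarrow$(3), and finally (3)$\Rightarrow$(2).

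The most serious problems are in your (2)$\Rightarrow$(1) and (1)$\Rightarrow$(3). For (2)$\Rightarrow$(1) you assume from the outset that the quasi-cotilting object $Q$ is pure-injective, but that is not part of the definition: pure-injectivity of quasi-cotilting (and even of cosilting) objects is precisely one of the nontrivial \emph{consequences} of this theorem (Coro.\,\ref{main_cor_fintype_qcotilt_cosilt}), and in module categories it is a theorem of Breaz--Pop and Zhang--Wei, not a definition. Without pure-injectivity your closure-under-direct-limits argument collapses; and even granting it, the step ``the direct limit of a direct system of monomorphisms into powers of $Q$ stays a monomorphism into a power of $Q$'' is not available, since a direct limit of objects of $\Prod(Q)$ need not lie in $\Prod(Q)$. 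The paper avoids this entirely by working inside the quotient Grothendieck category $\underline{\mathcal{F}}=\underline{\Cogen}(Q)$, proving $Q$ is $1$-cotilting there, and invoking the result of \v{C}oupek--\v{S}\v{t}ov\'{\i}\v{c}ek (\cite[Thm.\,3.9]{Coupek-Stovicek}) that cotilting classes in a Grothendieck category with generating torsionfree class are closed under direct limits.

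For (1)$\Rightarrow$(3), your fallback idea (take an injective cogenerator $Y$ of $\Ht$ and transport it via $H^{-1}$) only produces a \emph{quasi-cotilting} object — this is exactly how the paper proves (1)$\Rightarrow$(2), citing \cite{PSV}. To get a genuinely \emph{cosilting} object of $\G$ (i.e.\ $H^{-1}$ of a $2$-term cosilting \emph{complex} in $\Der(\G)$) the paper has to invoke a much heavier input: from \cite{Saorin-Stovicek}, using well-generatedness of $\Der(\G)$ (via \cite{AJSo,Porta}) and Brown representability, every smashing non-degenerate $t$-structure with Grothendieck heart is generated by a pure-injective cosilting object $E$ of $\Der(\G)$; one then still needs a separate splitting argument (Lem.\,\ref{lem_fintype_qcotilt_cosilt}) to show $E$ is concentrated in two degrees. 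Your sketch does not address how to see the transported object as $H^{-1}$ of a cosilting complex, which is the crux of (3). Finally, (3)$\Rightarrow$(2) is not formal either: the inclusion ${}^{\perp_1}Q\cap\underline{\Cogen}(Q)\subseteq\mathcal{F}$ requires an actual computation, applying $\Der(\G)(-,E)$ to suitable triangles and using the isomorphism $\Der(\G)(-[1],E)\cong\G(-,Q)$, rather than following from the definitions.
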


Furthermore, the following corollary is a byproduct of the methods used in the proof of Theorem \ref{main_thm_fintype_qcotilt_cosilt}:

\begin{cor}\label{main_cor_fintype_qcotilt_cosilt}
Let $\G$ be a Grothendieck category. An object $Q$ is quasi-cotilting if, and only if, there is a cosilting object $Q'$ such that $\Prod(Q)=\Prod(Q')$. Such an object is always pure-injective. In particular, all cosilting objects are pure-injective. 
\end{cor}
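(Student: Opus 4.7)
The plan is to obtain the corollary as a reading of Theorem~\ref{main_thm_fintype_qcotilt_cosilt}, with the additional observation that a quasi-cotilting object is determined, up to $\Prod$-closure, by the torsion pair it generates.

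For the only-if direction, start from a quasi-cotilting object $Q$ and consider its associated torsion pair $\mathbf{t}_Q=({}^\perp Q,\Cogen(Q))$. By definition $\mathbf{t}_Q$ is quasi-cotilting, hence of finite type by Theorem~\ref{main_thm_fintype_qcotilt_cosilt}, and then, by the implication (1)$\Rightarrow$(3) of that theorem, there exists a cosilting pure-injective object $Q'$ with $\mathbf{t}_{Q'}=\mathbf{t}_Q$. The real work is to upgrade this equality of torsion pairs to the equality $\Prod(Q)=\Prod(Q')$. I would do this by identifying $\Prod(C)$, for any quasi-cotilting $C$, with the class of objects in $\F=\Cogen(C)$ that are $\Ext^{1}$-injective relative to $\F$; once such a characterization is in place, the right-hand side is manifestly intrinsic to $\F$, and $\Prod(Q)=\Prod(Q')$ follows at once.

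The if direction is straightforward. The defining conditions of being quasi-cotilting involve only $\Cogen(Q)$, ${}^{\perp}Q$ and the class $\{X:\Ext^{1}_{\G}(Q,X)=0\}$, all of which are invariant under replacing $Q$ with any object in $\Prod(Q)$, since $\Cogen$, $\Hom$-orthogonality and $\Ext^{1}$-orthogonality all pass to arbitrary direct summands of products. Hence $\Prod(Q)=\Prod(Q')$ with $Q'$ cosilting (in particular quasi-cotilting) forces $Q$ to be quasi-cotilting. The pure-injectivity statements are then immediate: Theorem~\ref{main_thm_fintype_qcotilt_cosilt} produces a pure-injective $Q'$, pure-injectivity is stable under arbitrary products and direct summands, and $Q\in\Prod(Q')$ transfers this property to $Q$; specialising to the case $Q=Q'$ cosilting recovers the final assertion that every cosilting object is pure-injective.

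The main obstacle I anticipate is the $\Prod$-equivalence step in the only-if direction: one needs an intrinsic description of $\Prod(C)$ inside $\F=\Cogen(C)$ that does not mention $C$ itself, essentially the statement that $\Prod(C)$ coincides with the class of $\Ext^{1}$-injective objects of $\F$. This is a standard but nontrivial companion to the quasi-cotilting/cosilting formalism and is precisely the ingredient of (the proof of) Theorem~\ref{main_thm_fintype_qcotilt_cosilt} that must be explicitly extracted to compare a given quasi-cotilting $Q$ with its cosilting companion $Q'$.
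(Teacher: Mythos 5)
Your proposal follows the same route as the paper: derive the corollary from Theorem~\ref{main_thm_fintype_qcotilt_cosilt} and resolve the only-if direction by upgrading the equality of torsion pairs to $\Prod$-equivalence. The ``obstacle'' you flag is precisely the step the paper handles, though it does so by a direct splitting argument rather than by stating the characterization of $\Prod(C)$ explicitly: since $Q$ and $Q'$ are both quasi-cotilting with $\Cogen(Q)=\Copres(Q)=\mathcal{F}=\Cogen(Q')=\Copres(Q')$, one embeds $Q$ into a product of copies of $Q'$ with cokernel in $\mathcal{F}\subseteq{}^{\perp_1}Q$, so the embedding splits and $Q\in\Prod(Q')$; symmetrically $Q'\in\Prod(Q)$. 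This is exactly the content of your identification of $\Prod(C)$ with the $\Ext^1$-injective objects of $\mathcal{F}$, which does hold because $\Cogen(C)=\Copres(C)$ for quasi-cotilting $C$. One small slip in your if-direction: the quasi-cotilting condition is $\Cogen(Q)={}^{\perp_1}Q\cap\underline{\Cogen}(Q)$, involving $\underline{\Cogen}(Q)$ and ${}^{\perp_1}Q=\{X:\Ext^1_{\G}(X,Q)=0\}$ rather than ${}^\perp Q$ and $\{X:\Ext^1_{\G}(Q,X)=0\}$; the $\Prod$-invariance you invoke is correct for the right classes and requires both $Q\in\Prod(Q')$ and $Q'\in\Prod(Q)$, which you have.
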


\begin{rem} \label{rem.fintype-quasicotilting-cosilting}
 When $\mathcal{G}=\mod R$ is the category of modules over a ring $R$, Theorem~\ref{main_thm_fintype_qcotilt_cosilt} follows from a combination of several results existing in the literature (see \cite[Section~3]{A}). Concretely, in independent work,  Breaz and Pop \cite{BP} and Zhang and Wei \cite{ZW2}, proved that  every cosilting module is pure-injective and, as a consequence, the implication {\rm``(3)$\Rightarrow$(1)''} holds in that case. Zhang and Wei {\rm [op.\ cit.]} also proved the equivalence {\rm ``(2)$\Leftrightarrow$(3)''}. They, and independently Breaz and Zemlicka \cite{BZ}, proved that the cosilting (=quasi-cotilting) torsion pairs in $\mod R$ are exactly those $\t=(\T,\F)$ for which $\F$ is covering. On the other hand, Bazzoni \cite{Bazz} had proved earlier that if $\mathcal{F}$ is definable (equivalently, if  $\mathbf{t}$ is of finite type) then $\mathcal{F}$ is covering. 
\end{rem}

\subsection{Pure-injective objects}\label{pure_subs}

Let $\A$ be an additive category (in what follows $\A$ will usually be --a  subcategory of-- either a Grothendieck or a  triangulated category). Given a set $I$ and an object $Y\in \A$, suppose that the coproduct  $Y^{(I)}$  exists in $\A$, and let $\iota_j\colon Y\to Y^{(I)}$ be the $j$-th inclusion, for all $j\in I$. The {\bf ($I$-)summation map}
\[
s_I\colon Y^{(I)}\longrightarrow Y
\]
is the unique morphism such that $s_I\circ\iota_j=\id_Y$, for each $j\in I$; its uniqueness and existence are ensured by the universal property of the coproduct.

\begin{defi}\label{pure_injective_def}
Let $\mathcal{A}$ be an additive category and  $Y\in\A$. We  say that $Y$ is {\bf pure-injective} when, for any given set $I$, the coproduct $Y^{(I)}$ and the product $Y^I$ exist in $\A$, and the summation map $s_I\colon Y^{(I)}\to Y$ extends through the canonical map $\mu_I\colon Y^{(I)}\to Y^I$:
\[
\xymatrix@C=30pt@R=15pt{
Y^{(I)}\ar[rr]^{s_I}\ar[d]_{\mu_I}&&Y,\\
Y^I\ar@/_+4pt/@{.>}[rru]_{\exists\, \hat s_I}
}
\]
that is, there exists a morphism $\hat{s}_I\colon Y^I\to Y$, such that $s_I=\hat{s}_I\circ\mu_I$. 
\end{defi}

The above definition of pure-injectivity is not the usual one, but it is equivalent to its more common counterpart in all the situations we are interested in, as the following remark points out:
 
\begin{rem}\label{rem_purity_subcat}
\begin{enumerate}[\rm (1)]
\item In the setting of Definition~\ref{pure_injective_def}, if $\mathcal{B}$ is a  subcategory of $\mathcal{A}$ which contains $Y$, $Y^{(I)}$, and $Y^I$, for every set $I$, then $Y$ is pure-injective in $\mathcal{B}$ if, and only if, it is pure-injective in $\mathcal{A}$. 
\item By the famous Jensen-Lenzing characterization of pure-injective modules \cite[Proposition~7.2]{Jensen-Lenzing}, later adapted to locally finitely presented Grothendieck categories \cite[Theorem~3.5.1]{CB}, and even to compactly generated triangulated categories \cite[Theorem~1.8]{Kr_tel_inv}, we know that, when $\mathcal{A}$ is either locally finitely presented Grothendieck or compactly generated triangulated, the notion of pure-injectivity introduced in Definition~\ref{pure_injective_def} coincides with the classical one. 
\end{enumerate}
\end{rem}
 
\subsection{Cosilting objects and cosilting torsion pairs}\label{cosilt_obj_subs}

Let us start introducing cosilting objects in the context of triangulated categories with products:
\begin{propdef}
{Let $\mathcal{D}$ be a triangulated category with products. The following assertions are equivalent for $E\in\D$:
\begin{enumerate}[\rm (1)]
\item the pair $({}^{\perp_{<0}}E,{}^{\perp_{>0}}E)$ is a $t$-structure in $\mathcal{D}$ and $E\in {}^{\perp_{>0}}E$;
\item  $E$ is a cogenerator of $\mathcal{D}$ that satisfies the following two conditions:
\begin{enumerate}[\rm ({2.}1)]
\item the pair $(\mathcal{U}_E,\mathcal{V}_E[1]):=({}^{\perp_{<0}}E, ({}^{\perp_{\leq 0}}E)^\perp )$ is a $t$-structure;
\item the functor $\mathcal{D}(-,E):\mathcal{D}\to\Ab$ vanishes on $\mathcal{V}_E$.
\end{enumerate}
\end{enumerate}
In this situation, the $t$-structures of $(1)$ and $(2.1)$ coincide and  $E$ is called a {\bf cosilting object} in $\mathcal{D}$.}
\end{propdef}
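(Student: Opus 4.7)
Both characterizations refer to the same aisle $\mathcal{U}:={}^{\perp_{<0}}E$, so the whole matter reduces to identifying the coaisle. A useful identity to be used throughout is ${}^{\perp_{\leq 0}}E=\mathcal{U}[1]$, which gives $({}^{\perp_{\leq 0}}E)^{\perp}=\mathcal{U}^{\perp}[1]$. Hence condition (2.1) is really saying that $\mathcal{U}$ is an aisle, in which case $\mathcal{V}_E=\mathcal{U}^{\perp}$; moreover $E\in \mathcal{V}_E[1]$ will hold automatically because every $X\in {}^{\perp_{\leq 0}}E$ satisfies $\mathcal{D}(X,E)=0$ by definition.

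The implication (1)$\Rightarrow$(2) will be the routine part. Under (1) one reads off $\mathcal{V}=\mathcal{U}^{\perp}={}^{\perp_{\geq 0}}E$, so (2.1) gives back the same $t$-structure (by the identity above) and (2.2) is immediate from $\mathcal{V}_E\subseteq {}^{\perp_{0}}E$. For the cogenerator property, any $X$ with $\mathcal{D}(X,E[k])=0$ for all $k\in\Z$ lies in $\mathcal{U}\cap \mathcal{U}^{\perp}$, which is zero for any $t$-structure.

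The substantive direction is (2)$\Rightarrow$(1), where the plan is to prove the equivalent equality $\mathcal{V}_E={}^{\perp_{\geq 0}}E$; this forces the pair in (1) to coincide with that in (2.1), and then $E\in {}^{\perp_{>0}}E=\mathcal{V}_E[1]$ is automatic by the initial remark. The inclusion ``$\subseteq$'' will use that $\mathcal{V}_E$ is closed under $[-1]$ (coaisles are closed under $[1]$, so $\mathcal{V}_E\subseteq \mathcal{V}_E[1]$) together with (2.2): for $V\in \mathcal{V}_E$ and $k\geq 0$ one has $V[-k]\in \mathcal{V}_E$, whence $\mathcal{D}(V,E[k])\cong \mathcal{D}(V[-k],E)=0$.

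The hard part will be the reverse inclusion. Given $Y\in {}^{\perp_{\geq 0}}E$, I will take its $(\mathcal{U}_E,\mathcal{V}_E[1])$-truncation triangle $U_Y\to Y\to V_Y\to U_Y[1]$, with $U_Y\in \mathcal{U}_E$ and $V_Y\in \mathcal{V}_E$, and show $U_Y=0$. Applying the contravariant functor $\mathcal{D}(-,E[i])$ for $i\geq 0$, the inclusion already established (applied to $V_Y\in \mathcal{V}_E$ and to the shift $V_Y[-1]\in \mathcal{V}_E$) together with the hypothesis $\mathcal{D}(Y,E[i])=0$ will force $\mathcal{D}(U_Y,E[i])=0$ for all $i\geq 0$; combining with $U_Y\in \mathcal{U}_E={}^{\perp_{<0}}E$, which handles $i<0$, one gets $U_Y\in {}^{\perp_{\Z}}E$, so the cogenerator hypothesis on $E$ forces $U_Y=0$. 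Therefore $Y\cong V_Y\in \mathcal{V}_E$, completing the proof, and the coincidence of the two $t$-structures follows at once since they share aisle and coaisle.
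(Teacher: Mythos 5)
Your proof is correct but takes a genuinely different route from the paper. The paper dispatches this Proposition-Definition by pure citation: it notes that (1) and (2) are the duals of the definitions of silting object in \cite{PV} and \cite{NSZ} respectively, and refers to \cite[Rem.\,4.3 and Thm.\,4.1]{NSZ} for the fact that those are equivalent with coinciding $t$-structures. In contrast, you give a self-contained argument from first principles. The structural insight you exploit--that ${}^{\perp_{\leq 0}}E = ({}^{\perp_{<0}}E)[1]$, hence $\mathcal{V}_E = \mathcal{U}_E^\perp$ and $E\in\mathcal{V}_E[1]$ automatically--cleanly reduces both (1) and (2.1) to the single assertion that $\mathcal{U}:={}^{\perp_{<0}}E$ is an aisle, so the only real content is the identity of co-aisles, i.e.\ $\mathcal{V}_E = {}^{\perp_{\geq 0}}E$. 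Your two-sided inclusion argument for that identity is sound: the easy inclusion follows from (2.2) plus closure of $\mathcal{V}_E$ under negative shifts, and the reverse uses the truncation triangle together with the cogenerator hypothesis to force $U_Y\in{}^{\perp_{\Z}}E = 0$. What your approach buys is transparency and self-containment, making visible exactly which hypotheses do what (in particular, that the cogenerator condition is precisely what closes the gap between $\mathcal{U}_E^\perp$ and ${}^{\perp_{\geq 0}}E$); the paper's approach is shorter but opaque, delegating all the work to the silting literature and an unstated dualization.
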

\begin{proof}  
The assertions (1) and (2) are the duals of the definitions of silting objects in triangulated categories with coproducts given in \cite{PV} and \cite{NSZ}, respectively, and it is known that they are equivalent (see \cite[Remark~4.3 and Theorem~4.1]{NSZ}). Finally, the two $t$-structures coincide as they share the aisle.
\end{proof}

In the derived category of a given Grothendieck category, there is a special class of well-studied cosilting objects, called $2$-term cosilting complexes:

\begin{defi} 
Let $\G$ be a Grothendieck category. An object $E$ of $\Der(\G)$ is a {\bf $2$-term cosilting complex} when it is a cosilting object of $\Der(\G)$ which is quasi-isomorphic to a complex of injectives 
\[
\cdots\longrightarrow 0\longrightarrow E^{-1}\longrightarrow E^0\longrightarrow 0\longrightarrow \cdots,
\] 
concentrated in degrees $-1$ and $0$. An object $Q$ of $\G$ is said to be a {\bf cosilting object} when \mbox{$Q\cong H^{-1}(E)$}, for some $2$-term cosilting complex $E$ of $\Der(\G)$.  
\end{defi}

It is important to underline that, although the terminology may suggest the contrary, a cosilting object $Q$ in a Grothendieck category $\G$ is not necessarily a cosilting object of $\Der(\G)$, when considered as a complex $Q[0]$ concentrated in degree zero. In the following lemma we give a more explicit characterization of the  $2$-term cosilting complexes. 

\begin{lema}\label{char_of_2-term_lem}
Let $\G$ be a Grothendieck category and consider a complex of injectives 
\[
E:\hspace*{0.5cm}\cdots\longrightarrow0\longrightarrow E^{-1}\stackrel{\sigma}{\longrightarrow}E^0\longrightarrow 0\longrightarrow\cdots
\]
concentrated in degrees $-1$ and $0$, viewed as an object of $\Der(\G)$. Then, the following are equivalent:
\begin{enumerate}[\rm (1)]
\item $E$ is a ($2$-term) cosilting complex;
\item $\mathcal{F}:=\{F\in\G:\Der(\G)(F[0],E)=0\}=\{F\in\G \text{ s.t. } \sigma_*\colon\G(F,E^{-1})\to\G(F,E^0)\text{ is epic}\}$ is a torsionfree class in $\G$ and $\mathcal{F}=\Cogen(Q)$, for  $Q:=H^{-1}(E)$. 
\end{enumerate}
Furthermore, under these conditions, $\Ext_\G^1(F,Q)=0$ for all $F\in \F$.
\end{lema}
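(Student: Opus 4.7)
The plan is to prove a single cohomological criterion for the vanishing of $\Der(\G)(X, E[i])$ for arbitrary $X\in\Der(\G)$ and $i\in\Z$, and then apply it symmetrically to establish both implications. Starting from the canonical short exact sequence of complexes $0\to E^0[0]\to E\to E^{-1}[1]\to 0$, one obtains a triangle $E^0[0]\to E\to E^{-1}[1]\xrightarrow{\sigma}E^0[1]$ in $\Der(\G)$. On the other hand, for every $Y\in\Der(\G)$ and every injective $I\in\G$, a standard computation (using the K-injectivity of the stalk $I[0]$ together with the vanishing of $\Ext_\G^p(H,I)$ for $H\in\G$ and $p>0$, which collapses the hyper-$\Ext$ spectral sequence) gives the identification $\Der(\G)(Y, I[k])\cong\G(H^{-k}(Y), I)$. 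Applying $\Der(\G)(X,-)$ to the shifted triangle and rewriting the $E^0$- and $E^{-1}$-terms via this identification produces a four-term exact sequence
\[
\G(H^{-i}X, E^{-1})\xrightarrow{\sigma_*}\G(H^{-i}X, E^0)\to \Der(\G)(X, E[i])\to \G(H^{-i-1}X, E^{-1})\xrightarrow{\sigma_*}\G(H^{-i-1}X, E^0);
\]
since $\Ker(\sigma_*\colon\G(H,E^{-1})\to\G(H,E^0))\cong\G(H, Q)$ for every $H\in\G$ (because $Q=\Ker\sigma$), this yields the key formula
\[
\Der(\G)(X, E[i])=0\ \Longleftrightarrow\ H^{-i}(X)\in\F\ \text{and}\ H^{-i-1}(X)\in{}^\perp Q.\qquad(\ast)
\]
Specializing $(\ast)$ to $X=F[0]$ at $i=0$ gives the equivalence of the two descriptions of $\F$ in the statement, while closure of $\F$ under products, subobjects and extensions is immediate from the injectivity of $E^{-1}, E^0$ (using the Five Lemma for extensions), so $\F$ is a torsionfree class in $\G$ in any case.

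For (1)$\Rightarrow$(2), assume $E$ is cosilting, so $E\in\H_E={}^{\perp_{\neq 0}}E$. Applying $(\ast)$ to $E$ itself at $i=1$ gives at once $H^{-1}(E)=Q\in\F$. On the other hand, since $E$ is a cogenerator of $\Der(\G)$, an object $X\in\G$ vanishes if and only if $\Der(\G)(X[0], E[i])=0$ for every $i$, which by $(\ast)$ is equivalent to $X\in\F\cap{}^\perp Q$; hence $\F\cap{}^\perp Q=0$ in $\G$. Writing $\t=(\T,\F)$ for the torsion pair in $\G$ associated to $\F$: since ${}^\perp Q$ is closed under quotients, the torsionfree part $(1:t)(X)$ of any $X\in{}^\perp Q$ lies in $\F\cap{}^\perp Q=0$, so $X\in\T$; combined with the inclusion $\T\subseteq{}^\perp Q$ (immediate from $Q\in\F$), this gives $\T={}^\perp Q$. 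Finally, $\Cogen(Q)\subseteq\F$ follows from $Q\in\F$ and the closure of $\F$ under subobjects and products, while for any $F\in\F$ the kernel of the canonical map $F\to Q^{\G(F,Q)}$ lies in $\F\cap{}^\perp Q=0$, yielding $F\in\Cogen(Q)$; hence $\F=\Cogen(Q)$.

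For (2)$\Rightarrow$(1), assume $\F=\Cogen(Q)$ is a torsionfree class and set $\t=({}^\perp Q,\F)$; this is a torsion pair in $\G$ because ${}^\perp\F={}^\perp Q$ (using $\F=\Cogen(Q)$). Let $\tau_\t=(\U_\t,\V_\t[1])$ be the Happel-Reiten-Smal\o\ tilt of the natural $t$-structure of $\Der(\G)$ with respect to $\t$. Using $(\ast)$ together with the torsion pair identity $\F\cap{}^\perp Q=0$, a direct case analysis on the cohomology of $X$ shows that $X\in{}^{\perp_{<0}}E$ if and only if $H^k(X)=0$ for $k>0$ and $H^0(X)\in{}^\perp Q$, i.e., $X\in\U_\t$; analogously, $X\in{}^{\perp_{>0}}E$ if and only if $H^k(X)=0$ for $k<-1$ and $H^{-1}(X)\in\F$, i.e., $X\in\V_\t[1]$. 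Hence $({}^{\perp_{<0}}E,{}^{\perp_{>0}}E)=(\U_\t,\V_\t[1])$ is a $t$-structure, showing that $E$ is a cosilting object of $\Der(\G)$; since by hypothesis $E$ is a complex of injectives concentrated in degrees $-1, 0$, it is a 2-term cosilting complex. The main technical hurdle is the clean setup of $(\ast)$, which rests on the auxiliary isomorphism $\Der(\G)(Y, I[k])\cong\G(H^{-k}(Y), I)$ for $I$ injective; once this is in hand, the remainder of the argument reduces to elementary bookkeeping with cohomological truncations and torsion-pair identities.
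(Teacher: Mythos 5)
Your criterion $(\ast)$, built from the triangle $E^0[0]\to E\to E^{-1}[1]\to E^0[1]$ and the identification $\Der(\G)(Y,I[k])\cong\G(H^{-k}(Y),I)$ for $I$ injective, is exactly the computation the paper carries out in its proof of (2)$\Rightarrow$(1); promoting it to a stand-alone formula and feeding it back into (1)$\Rightarrow$(2) as well is a genuinely cleaner organization than the paper's, which for the forward direction instead works with the different triangle $H^0(E)[-1]\to Q[1]\to E\to H^0(E)[0]$ to extract $Q\in\F$ and $\Ext_\G^1(\F,Q)=0$.

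There is, however, a real gap. You assert that closure of $\F$ under products is ``immediate from the injectivity of $E^{-1},E^0$'', so that $\F$ is a torsionfree class ``in any case''. This is false: the contravariant functor $\G(-,I)$ sends \emph{coproducts} to products, and there is no general relation between $\G(\prod_i F_i, I)$ and $\prod_i\G(F_i,I)$. Concretely, with $\G=\mod{\Z}$ and $E=\mathbb{Q}[0]$ (so $E^{-1}=0$, $\sigma=0$, both terms injective) one gets $\F=\{F:\G(F,\mathbb{Q})=0\}$, the class of torsion abelian groups, which is not closed under products. Injectivity of the two components does give closure of $\F$ under subobjects (extend a map $F'\to E^0$ along $F'\hookrightarrow F$, lift through $\sigma$, restrict) and under extensions (Snake Lemma on the two exact rows obtained by applying $\G(-,E^{-1})$ and $\G(-,E^0)$ to a short exact sequence), but not under products.

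The gap matters precisely in (1)$\Rightarrow$(2): you use closure under products to obtain $\Cogen(Q)\subseteq\F$, and you invoke ``the torsion pair $(\T,\F)$ associated to $\F$'' --- both presuppose that $\F$ is a torsionfree class. Under hypothesis (1) this \emph{is} true, but the correct reason, and the one the paper leans on, is structural rather than pointwise: since $E$ is a complex of injectives concentrated in degrees $-1,0$, the cosilting $t$-structure satisfies $\Der^{\leq -1}(\G)\subseteq{}^{\perp_{<0}}E\subseteq\Der^{\leq 0}(\G)$, hence by the Happel--Reiten--Smal{\o} correspondence it is the tilt of the canonical $t$-structure at a torsion pair $(\T,\F)$ in $\G$, and closure of $\F$ under products then follows from $\F=\T^{\perp}$. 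Inserting this HRS step at the start of (1)$\Rightarrow$(2) repairs the argument; once that is in place, the remainder of (1)$\Rightarrow$(2) and all of (2)$\Rightarrow$(1) (where $\F$ being a torsionfree class is part of the hypothesis) go through as you wrote them.
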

\begin{proof} 
Part of the arguments in this proof are dual to those used in \cite{Hoshino-Kato-Miyachi}; we sketch them here, leaving some of the details to the reader. First of all, let $\mathcal{U}_E:={}^{\perp_{<0}}E$ and $\mathcal{V}_E:={}^{\perp_{\geq0}}E$. Then it is easy to verify that $\Der^{\leq -1}(\G)\subseteq \mathcal{U}_E$ and  $\Der^{> 0}(\G)\subseteq \mathcal{V}_E$. 

\smallskip\noindent
(1)$\Rightarrow$(2). By \cite[Lemma~1.1.2]{Poli}, $\mathcal{U}_E=\Der^{\leq -1}(\G)\ast\mathcal{T}'[0]$ and $\mathcal{V}_E=\mathcal{F}'[0]\ast\Der^{> 0}(\G)$, for a certain torsion pair $(\mathcal{T}',\mathcal{F}')$ in $\G$. 
By definition, an object $F\in\G$ is in $\mathcal{F}'$ if and only if $F[0]\in\mathcal{V}_E={}^{\perp_{\geq0}}E$, if and only if $\Der(\G)(F[0],E[i])=0$, for all $i\geq 0$. Since $E$ is a complex of injectives concentrated in degrees $-1$ and $0$, $\Der(\G)(M[0],E[i])=0$, for all $M\in\G$ and $i\geq1$. Hence, $F\in\mathcal{F}'$ if and only if $\Der(\G)(F[0],E)=0$, so $\F=\F'$ is a torsionfree class. 
\\
We next prove that $Q\in\mathcal{F}$, which implies that $\Cogen(Q)\subseteq\mathcal{F}$. Indeed, apply the functor $\Der(\G)(-,E[1])$ to the following triangle: 
\begin{equation}\label{tria_eq_cosilt_lem}
H^0(E)[-1]\longrightarrow  Q[1]\longrightarrow E\longrightarrow H^0(E)[0]
\end{equation}
and bear in mind that $\Der(\G)(H^0(E)[k],E[1])=0$, for $k=-1,\, 0$. We then get an isomorphism $\Der(\G)(Q[1],E[1])\cong\Der(\G)(E,E[1])$. But $E\in {}^{\perp_{>0}}E$, so that $0=\Der(\G)(E,E[1])\cong\Der(\G)(Q[0],E)$. Then, $Q\in\mathcal{F}$, as desired. 
Conversely, if $F\in\mathcal{F}$, apply $\Der(\G)(F[0],-)$ to the triangle \eqref{tria_eq_cosilt_lem}, to get the exact sequence:
\begin{equation}\label{ext_is_0_eq}
0=\Der(\G)(F[0],H^0(E)[-1])\longrightarrow\Der(\G)(F[0],Q[1])\longrightarrow\Der(\G)(F[0],E)=0,
\end{equation}
which implies that $\Ext_\G^1(F,Q)\cong\Der(\G)(F[0],Q[1])=0$. Knowing this, to prove that $\mathcal{F}\subseteq\Cogen(Q)$ it is enough to check that $\G(F,Q)\neq 0$, for all $F\in\mathcal{F}\setminus\{0\}$. 
Take then $F\in\mathcal{F}$ and  assume that $0=\G(F,Q)\cong\Der(\G)(F[1], Q[1])$. Apply  the functor $\Der(\G)(F[1],-)$ to the triangle in \eqref{tria_eq_cosilt_lem} to obtain that $\Der(\G)(F[1],E )=0$, which implies that $\Der(\G)(F[i],E )=0$, for all $i\in\mathbb{Z}$ since, by definition of $\mathcal{F}$, we know that $\Der(\G)(F[0],E )=0$. It follows that $F=0$ since $E$ cogenerates $\Der(\G)$. 

\smallskip\noindent
(2)$\Rightarrow$(1). Consider the following triangle  in $\Der(\G)$:
\begin{equation}\label{components_of_E_tria_eq}
E^{-1}[0]\stackrel{\sigma[0]}{\longrightarrow}E^0[0]\longrightarrow E\stackrel{}{\longrightarrow}E^{-1}[1].
\end{equation}
Given $X\in\Der(\G)$, apply $\Der(\G)(X,-)$ to the triangle \eqref{components_of_E_tria_eq} to get the following long exact sequence:
\[
\scalebox{0.95}{$
\begin{split}
\Der(\G)(X,E^{-1}[-1])\stackrel{\sigma[-1]_*}{\longrightarrow}\Der(\G)(X,E^{0}[-1])\to\Der(\G)(X,E[-1])\to\Der(\G)(X,E^{-1}[0])\stackrel{\sigma[0]_*}{\longrightarrow}\Der(\G)(X,E^{0}[0])\to\\
\to\Der(\G)(X,E)\to\Der(\G)(X,E^{-1}[1])\stackrel{\sigma[1]_*}{\longrightarrow}\Der(\G)(X,E^{0}[1])\to\Der(\G)(X,E[1])\to\cdots
\end{split}$
}
\]
Hence, $X\in {}^{\perp_{<0}}E$ if and only if $\sigma[0]_ *$ is a monomorphism and $\sigma[j]_ *$ is an isomorphism for all $j<0$, while $X\in {}^{\perp_{>0}}E$ if and only if $\sigma[1]_ *$ is an epimorphism and $\sigma[j]_ *$ is an isomorphism, for all $j>1$. Now, given $j\in\mathbb{Z}$ there is a natural isomorphism $\Der(\G)(X,E^i[j])\cong\G(H^{-j}(X),E^i)$ (for $i=0,-1$) so $\sigma[j]_ *$ is an epimorphism if and only if $H^{-j}(X)\in\mathcal{F}$ (by definition of $\mathcal{F}$), and it is a monomorphism if and only if $0=\G(H^{-j}(X),\Ker(\sigma ))=\G(H^{-j}(X),Q)$. Since $\mathcal{F}=\Cogen(Q)$, we conclude that $\sigma[j]_*$ is a monomorphism if and only if $H^{-j}(X)\in\mathcal{T}$. As a conclusion, $X\in {}^{\perp_{<0}}E$ (resp.,  $X\in {}^{\perp_{>0}}E$) if, and only if, $H^0(X)\in\mathcal{T}$ and $H^{-j}(X)\in\mathcal{T}\cap\mathcal{F}=0$, for all $j<0$  (resp., $H^{-1}(X)\in\mathcal{F}$ and $H^{-j}(X)=0$, for all $j>0$). Therefore, $({}^{\perp_{<0}}E, {}^{\perp_{>0}}E)=(\mathcal{U}_\mathbf{t},\mathcal{V}_\mathbf{t}[1])$ is  the HRS $t$-structure associated with $\mathbf{t}$. 
Furthermore, $E\in {}^{\perp_{>0}}E$, since $ {}^{\perp_{>0}}E=\mathcal{V}_\mathbf{t}[1]=\mathcal{F}[1]\ast\Der^{\geq 0}(\G)$, while $E\in\Der^{\geq -1}(\G)$ and $H^{-1}(E)=Q\in\mathcal{F}$. 

Finally, the last statement follows by \eqref{ext_is_0_eq}.
\end{proof}

\begin{defi}
 Let $\G$ be a Grothendieck category. A {\bf cosilting torsion pair} in $\G$ is a torsion pair $\mathbf{t}=(\mathcal{T},\mathcal{F})$ such that $\mathcal{F}=\Cogen(Q)$, for some cosilting object $Q$ of $\G$. 
\end{defi}

\subsection{Quasi-cotilting objects and quasi-cotilting torsion pairs}\label{quasicotilt_obj_subs}
Let $\G$ be a Grothendieck category and $Q\in \G$. Recall that, $\underline{\Cogen}(Q)$ is the class of quotients of objects in $\Cogen(Q)$. Clearly, one always has the inclusions 
\[
\Prod(Q)\subseteq\Copres(Q)\subseteq \Cogen(Q)\subseteq \underline \Cogen(Q).
\] 
Recall also that ${}^{\perp_1}Q:=\{X\in \G:\Ext_\G^1(X,Q)=0\}$.
\begin{rem}\label{rem_copres=cogen}
Let $\G$ be a Grothendieck category and $Q\in \G$. Note that, 
\[
\text{ if \ \ $\Prod(Q)\subseteq {}^{\perp_1}Q\cap \underline \Cogen(Q)\subseteq \Cogen (Q)$ \ \ then \ \ $\Cogen(Q)=\Copres(Q)$.} 
\]
For $X\in \Cogen(Q)$, take the exact sequence $0\to X\to \Psi_Q(X)\to C\to 0$, with $\Psi_Q(X)\in \Prod(Q)$. Applying $\G(-,Q)$, we obtain an exact sequence:
\[
\G(\Psi_Q(X),Q)\overset{(*)}{\longrightarrow} \G(X,Q)\to \Ext^1_\G(C,Q)\to \Ext^1_\G(\Psi_Q(X),Q)=0
\]
where the last term is trivial since $\Prod(Q)\subseteq {}^{\perp_1}Q$. Furthermore, the map $(*)$ is surjective by construction, so $\Ext^1_\G(C,Q)=0$, showing that $C\in {}^{\perp_1}Q\cap \underline \Cogen(Q)\subseteq \Cogen (Q)$. Hence, $X\in \Copres(Q)$.
\end{rem}

Finally, recall that a subcategory $\X$ of $\G$ is called {\bf generating} when each object of $\G$ is the epimorphic image of an object in $\X$. {\bf Cogenerating subcategories} are defined dually.

\begin{defi}
An object $Q$ in a Grothendieck category $\G$ is said to be 
\begin{itemize}
\item ($1$-){\bf cotilting} when $\Cogen(Q) = {}^{\perp_1}Q$ and this is a generating subcategory of $\G$;
\item {\bf quasi-cotilting} when $\Cogen(Q)={}^{\perp_1}Q\cap\underline{\Cogen}(Q)$.  
\end{itemize}
In both cases, $\Cogen (Q)=\Copres (Q)$ (see Remark~\ref{rem_copres=cogen}) and this is a torsionfree class in $\G$. A torsion pair of the form $({}^\perp Q,\Cogen(Q))$, for $Q$  (quasi-)cotilting, is called a {\bf \mbox{(quasi-)}cotilting torsion pair}.
\end{defi}
{In \cite[Definition~2.6]{C} the Author only required the equality $\Cogen(Q)={}^{\perp_1}Q$ for an object $Q$ to be (1-)cotilting but, in the setting of that paper, $\G$ is assumed to have enough projectives, in which case it is automatic  that  ${}^{\perp_1}Q$ is  generating.}

\subsection{Proofs of the main results} 
In this  subsection we prove the results announced at the beginning of this section and we deduce some consequences that will be useful later on. We start with the following technical lemma, but let us first recall that, given $M\in\G$ and $n\in\Z$, the {\bf $n$-th disk complex of $M$} is the following object of $\Ch(\G)$:
\[
D^n(M):\qquad\xymatrix{\cdots \ar[r]& 0\ar[r]& M\ar[r]^-{\id_{M}}&M\ar[r]& 0\ar[r]& \cdots
}
\] 
concentrated in degrees $n$ and $n+1$. Any such complex is contractible, and hence it is trivial when considered as  an object in $\K(\G)$.

\begin{lema}\label{lem_fintype_qcotilt_cosilt}
Let $\G$ be a Grothendieck category and $Y\in\Ch^+(\Inj\G)$ a bounded-below complex of injectives. If, for a given $k\in\mathbb{Z}$, $\Der(\G)(-,Y[k])$ vanishes on $\G[0]$, then we have an isomorphism 
\[
Y\cong Y^-\oplus Y^+\qquad\text{ in $\K(\Inj\mathcal{G})$,}
\] 
where $Y^{-}\in\Ch^{<k}(\Inj\G)$, $Y^{+}\in\Ch^{>k}(\Inj\G)$, and such that $(Y^-)^{k-1}$ is a {direct} summand of $Y^{k-1}$. In particular,  if $\Der(\G)(-,Y[j])$ vanishes on $\G[0]$, for all $j\geq k$, then $Y\in\mathcal{K}^{<k}(\Inj\G)$.
\end{lema}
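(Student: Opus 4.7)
My plan is to shift so that $k=0$ and then exploit that a bounded-below complex of injectives is K-injective, whence $\Der(\G)(-,Y)\cong\K(\G)(-,Y)$. Under this identification the hypothesis $\Der(\G)(M[0],Y)=0$ for every $M\in\G$ translates into the concrete lifting property that every morphism $f\colon M\to Y^0$ with $d^0 f=0$ factors through $d^{-1}\colon Y^{-1}\to Y^0$.

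Applied to $M=K:=\ker(d^0)$ and $f=i$ the inclusion, this yields $h\colon K\to Y^{-1}$ with $d^{-1} h=i$. In particular $H^0(Y)=0$, so $d^{-1}$ factors as $Y^{-1}\twoheadrightarrow K\stackrel{i}{\hookrightarrow}Y^0$ and $h$ is a section of this surjection; hence $Y^{-1}\cong K\oplus\ker(d^{-1})$, which forces $K$ to be injective. Being an injective subobject of the injective object $Y^0$, the summand $K$ also splits off from $Y^0$, giving $Y^0\cong K\oplus L$. Under this decomposition $d^0|_L\colon L\hookrightarrow Y^1$ is monic (since $L\cap K=0$), and as $L$ is injective it is a direct summand of $Y^1$, say $Y^1\cong L\oplus N$; from $d^1 d^0=0$ we deduce $d^1|_L=0$.

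These three splittings will assemble into a decomposition in $\Ch(\Inj\G)$ of the form $Y\cong Y''\oplus C_1\oplus C_2$, where $C_1=(K\xrightarrow{\id} K)$ sits in degrees $-1,0$, $C_2=(L\xrightarrow{\id} L)$ sits in degrees $0,1$, and $Y''$ agrees with $Y$ outside degrees $-1,0,1$, having $\ker(d^{-1})$, $0$, $N$ in those three degrees. Since $C_1,C_2$ are contractible and $Y''^{0}=0$, the complex $Y''$ splits further as $Y^-\oplus Y^+$ with $Y^-\in\Ch^{<0}(\Inj\G)$ and $Y^+\in\Ch^{>0}(\Inj\G)$; unshifting proves the first assertion.

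For the ``in particular'' part, the decomposition just established gives $Y\cong Y^-\oplus Y^+$ with $Y^+\in\Ch^{>k}(\Inj\G)$, and the vanishing hypothesis is inherited by the summand $Y^+$. The same factoring argument applied to $\ker(d^j_{Y^+})$ for each $j\geq k$ yields $H^j(Y^+)=0$, which combined with the trivial vanishing of $H^j(Y^+)$ for $j\leq k$ makes $Y^+$ a bounded-below acyclic complex of injectives, hence contractible; therefore $Y\cong Y^-\in\K^{<k}(\Inj\G)$. The main obstacle is the careful bookkeeping needed to verify that the splittings of $Y^{-1},Y^0,Y^1$ really assemble into a genuine direct-sum decomposition of complexes with compatible complements (in particular that $d^{-2}$ lands in $\ker(d^{-1})$ and that $d^1$ vanishes on $L$); once this is in place, the homotopical conclusion follows immediately from contractibility and the standard fact that bounded-below acyclic complexes of injectives are null in $\K(\Inj\G)$.
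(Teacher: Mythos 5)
Your proof is correct and takes essentially the same approach as the paper's: both use that a bounded-below complex of injectives is $K$-injective to turn the hypothesis into a null-homotopy, deduce that $Z^k(Y)$ is an injective retract of $Y^{k-1}$, split off two contractible disk complexes, and read off the desired decomposition of $Y$ in $\Ch(\Inj\G)$. The one small divergence is the ``in particular'' clause, which the paper settles by ``an easy induction'' while you argue directly that $Y^+$ inherits the vanishing hypothesis, is therefore acyclic, and being a bounded-below complex of injectives is contractible — a slightly more self-contained route to the same conclusion.
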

\begin{proof}
In what follows, for all $n\in \Z$, $d^{n}\colon Y^n\to Y^{n+1}$ denotes the $n$-th differential of $Y$, while
\[
Z^n:=Z^n(Y)=\Ker(d^n)\qquad\text{and}\qquad B^n:=B^n(Y)=\Im(d^{n-1})
\]
are the objects of $n$-cocycles and coboundaries of $Y$, respectively, so that $H^n(Y)=Z^n/B^{n}$. We know by \cite[Lemma~5.9]{NSZ} that $H^k(Y)=0$. The inclusion $Z^k\to Y^k$ induces a map $Z^k[0]\to Y[k]$ in $\Ch(\G)$, which is null-homotopic since $\K(\G)(Z^k[0],Y[k])  \cong \Der(\G)(Z^k[0],Y[k])=0$ (in fact, being $Y$ a bounded below complex of injectives, it is homotopically injective). It follows that the induced epimorphism $Y^{k-1}\to B^k=Z^k$ is a retraction. In particular, $Z^k$ is injective in $\G$ and we have decompositions $Y^{k-1}\cong Z^{k-1}\oplus Z^k$ and $Y^k\cong B^{k+1}\oplus Z^k$. Hence, also $B^{k+1}$ is injective, giving us a decomposition $Y^{k+1}\cong B^{k+1}\oplus \overline{Y^{k+1}}$. The differentials $d^{k-1}$ and $d^k$ can then be written in matricial form as follows:
\[
d^{k-1}=\left[\begin{smallarray}{cc} 0 & 0\\ 0 & \id_{Z^k}\end{smallarray}\right]
\colon Z^{k-1}\oplus Z^k\longrightarrow B^{k+1}\oplus Z^k
\quad \text{and}\quad
d^k=\left[\begin{smallarray}{cc}\id_{B^{k+1}} & 0\\ 0 & 0 \end{smallarray}\right]
\colon B^{k+1}\oplus Z^k\longrightarrow B^{k+1}\oplus\overline{Y^{k+1}}.
\] 
This shows that
$Y\cong Y^-\oplus D^{k-1}(Z^k)\oplus D^k(B^{k+1})\oplus Y^+$  in $\Ch(\Inj\G)$,
with $Y^-\in\Ch^{<k}(\Inj\G)$ and  $Y^+\in\Ch^{>k}(\Inj\G)$. The first part of the statement is then proved, as disk complexes are contractible. 

{As a consequence, given $Z\in \K^{\geq k}(\Inj\G)$ such that $\Der(\G)(-,Z[k])$ vanishes on $\G[0]$, there is a some $Z'\in\Ch^{\geq k}(\Inj \G)$ such that $Z\cong Z'$ in $\K(\Inj \G)$, and $(Z')^{-}\in \K^{\geq k}(\Inj \G)\cap \K^{<k}(\Inj \G)=0$. Hence, $Z\in \K^{>k}(\Inj \G)$.}

{On the other hand, if $Y\in \Ch^{+}(\Inj\G)$ and $\Der({\G})(-,Y[j])$ vanishes on $\G[0]$, for all $j\geq k$, we apply inductively the conclusion of the last paragraph to $Z=Y^{+}$ to get that $Z\in \K^{>j}(\Inj\G)$, for all $j\geq k$, and so $Z=0$ in $\K(\Inj \G)$. Hence, $Y$ is isomorphic to $Y^{-}$ in $\K(\Inj \G)$, so that $Y\in \K^{< k}(\Inj \G)$.}
\end{proof}

We are now ready for the proof of the main result of this section:
 
\begin{proof}[Proof of Theorem~\ref{main_thm_fintype_qcotilt_cosilt}]
(1)$\Rightarrow$(2). By Proposition~\ref{main_result_PS2}, we know that assertion (1) holds if, and only if, the heart $\mathcal{H}_\mathbf{t}$ of the associated HRS $t$-structure in $\Der(\G)$ is Grothendieck. By \cite[Proposition~3.8]{PS4}, if $Y$ is an injective cogenerator of $\mathcal{H}_\mathbf{t}$, then $Q:=H^{-1}(Y)$ is quasi-cotilting in $\G$ and $\mathcal{F}=\Cogen(Q)=\Copres(Q)$. 
 
 \smallskip\noindent
 (2)$\Rightarrow$(1). Fix a quasi-cotilting object $Q$ such that $\mathcal{F}=\Cogen(Q)$ and let $\underline{\mathcal{F}}:=\underline{\Cogen}(Q)$. In the proof of \cite[Theorem~4.18]{PS4} it is shown that $\underline{\mathcal{F}}$ is coreflective in $\G$, that $\underline \F$ is a Grothendieck category,  and that $Q$ is a (1-)cotilting object in $\underline{\mathcal{F}}$. Although our hypotheses are not exactly the same, the arguments of \cite{PS4} also apply here. Then, \cite[Theorem~3.9]{Coupek-Stovicek} says that $\mathcal{F}$ is closed under direct limits in $\underline{\mathcal{F}}$, which are computed as in $\G$, therefore $\mathbf{t}$ is a torsion pair of finite type in $\G$.

\smallskip\noindent
(1,2)$\Rightarrow$(3). The associated HRS $t$-structure $\tau_{\mathbf{t}}=(\mathcal{U}_\mathbf{t},\mathcal{V}_\mathbf{t}[1])$ in $\Der(\G)$ is smashing, (left and right) non-degenerate and its heart is a Grothendieck category. Moreover, due to \cite[Proposition~5.1]{AJSo} and \cite[Theorem~7.2]{Porta}, we know that $\Der(\G)$ is a well-generated triangulated category, which implies that it satisfies Brown's Representability Theorem (see \cite[Section~9]{N}). It follows from \cite{Saorin-Stovicek} that there is a cosilting pure-injective object $E\in\Der(\G)$ such that $\tau_\mathbf{t}=({}^{\perp_{<0}}E,{}^{\perp_{>0}}E)$ is the associated $t$-structure. Let us show that $E$ is a $2$-term cosilting complex. Indeed, we have that $E\in\mathcal{V}_\mathbf{t}[1]\subseteq\Der^{\geq -1}(\G)$ and so $E$ is quasi-isomorphic to a complex in $\Ch^{\geq-1}(\Inj\G)$, with $H^{-1}(E)\in\mathcal{F}$. On the other hand, $\G[-i]\subseteq\Der^{\geq 0}(\G)\subseteq\mathcal{V}_\mathbf{t}[1]= {}^{\perp_{>0}}E$, so $0=\Der(\G)(\G[-i],E[j])\cong\Der(\G)({\G[0]},E[i+j])$, for all integers $i\geq 0$ and $j>0$. By Lemma~\ref{lem_fintype_qcotilt_cosilt}, $E$ is isomorphic in $\K(\Inj\G)$ to a complex of injectives concentrated in degrees $-1$ and $0$, so $E$ is a $2$-term cosilting complex. For simplicity, in the rest of the proof we assume that $E=(\cdots \to  0\to E^{-1}\stackrel{\sigma}{\to}E^0\to 0 \to \cdots)$, with $E^i$ injective for $i=-1,\ 0$.
%
%
%
%
Hence, $Q:=H^{-1}(E)$ is a cosilting object of $\G$ such that $\mathcal{F}=\Cogen(Q)$. Moreover,  {since}  $E$ is pure-injective in $\Der(\G)$, there is commutative diagram as follows (see Definition~\ref{pure_injective_def}):
\begin{equation}\label{pure_inj_dia_proof}
\xymatrix@C=30pt@R=10pt{
E^{(I)}\ar[rr]^{s^E_I}\ar[d]_{\mu^E_I}&&E.\\
E^I\ar@/_+4pt/@{.>}[rru]_{\exists\, \hat s^E_I}
}
\end{equation}
{As products are left exact in $\G$, we have that $H^{-1}(E^I)=\Ker(\sigma^{I})\cong \Ker(\sigma)^{I}\cong H^{-1}(E)^I$. Similarly,} being coproducts exact in $\G$, also $H^{-1}(E^{(I)})\cong H^{-1}(E)^{(I)}$. It follows that $H^{-1}(\mu_I^E)$ gets identified with the canonical map $Q^{(I)}\to Q^I$ and $H^{-1}(s_I^E)$ identifies with the summation map $Q^{(I)}\to Q$. The decomposition $H^{-1}(s_I^E)=H^{-1}(\hat{s}_I^E)\circ H^{-1}(\mu_I^E)$ tells us that $Q$ is pure-injective in $\G$.

 \smallskip\noindent
(3)$\Rightarrow$(2). Let $E$ be a $2$-term cosilting complex 
\[
E:\hspace*{0.5cm}\cdots \longrightarrow 0\longrightarrow E^{-1}\stackrel{\sigma}{\longrightarrow}E^0\longrightarrow 0\longrightarrow\cdots
\] 
such that $Q=H^{-1}(E)$ and $\Cogen(Q)=\mathcal{F}=\{F\in\G:\Der(\G)(F[0],E)=0\}$. By Lemma~\ref{char_of_2-term_lem}, $\mathcal{F}\subseteq {}^{\perp_1}Q$ and so $\mathcal{F}\subseteq  {}^{\perp_1}Q\cap\underline{\Cogen}(Q)=  {}^{\perp_1}Q\cap\underline{\mathcal{F}}$. We just need to prove the converse inclusion: take $M\in  {}^{\perp_1}Q\cap\underline{\mathcal{F}}$  and fix a short exact sequence $0\rightarrow F'\stackrel{u}{\longrightarrow}F\to M\rightarrow 0$, with $F\in \F$. Applying $\Der(\G)(-,E)$ to the corresponding triangle in $\Der(\G)$, we get an exact sequence
\[
\Der(\G)(F[1],E)\stackrel{u[1]^*}{\longrightarrow}\Der(\G)(F'[1],E)\longrightarrow\Der(\G)(M[0],E)\longrightarrow\Der(\G)(F[0],E)=0.
\]
The natural isomorphism $\Der(\G)(-[1],E)_{\restriction \G}\cong\G(-,Q)$ of functors $\G^{\op}\to\Ab$ gives us an exact sequence 
\[
\G(F,Q)\stackrel{u^*}{\longrightarrow}\G(F',Q)\longrightarrow\Der(\G)(M[0],E)\rightarrow 0,
\] 
where $u^*$ is surjective since $\Ext_\G^1(M,Q)=0$. It follows that $\Der(\G)(M[0],E)=0$, and so $M\in\mathcal{F}$. 
\end{proof}

Now the proof of Corollary~\ref{main_cor_fintype_qcotilt_cosilt} consists in a closer analysis of the methods used in the above proof. Before proceeding, let us just recall that, given a Grothendieck category $\G$, two objects $X,\, Y\in \G$ are said to be {\bf $\Prod$-equivalent} if and only if $\Prod(X)=\Prod(Y)$. Clearly, if $X$ and $Y$ are $\Prod$-equivalent, then the former is quasi-cotilting if and only if so is the latter.

\begin{proof}[Proof of Corollary~\ref{main_cor_fintype_qcotilt_cosilt}]
 In the proof of the implication ``(3)$\Rightarrow$(2)'' in Theorem~\ref{main_thm_fintype_qcotilt_cosilt} we have verified the ``if'' part of the assertion, since any object which is $\Prod$-equivalent to a quasi-cotilting object is also quasi-cotilting. On the other hand, the proof of the implication ``(1,2)$\Rightarrow$(3)'' shows that, if $Q$ is a quasi-cotilting object and $\mathbf{t}=(\mathcal{T},\mathcal{F})$ is the associated torsion pair, then it is also the cosilting torsion pair associated with a pure-injective cosilting object $Q'$. Hence, 
 \[
 {}^{\perp_{ {1}} }Q'\cap\underline{\mathcal{F}}=\Cogen(Q')=\Copres(Q')=\mathcal{F}=\Cogen(Q)=\Copres(Q)={}^{\perp_1}Q\cap\underline{\mathcal{F}}.
 \] 
 Thus, $Q\in \Prod(Q)=\Prod(Q')$.  Moreover, by \cite[Proposition~3.4]{Coupek-Stovicek}, the class of pure-injective objects is closed under products in $\G$, and it is clearly closed under  direct summands, so $Q$ is pure-injective. 
\end{proof}

We end the section with two results that will be useful later on.

\begin{prop} \label{prop.injcogenerator-from-cosilting}
Let $\mathcal{G}$ be a Grothendieck category, take a $2$-term cosilting complex $E$,
\[
E:\hspace*{0.5cm}\cdots\longrightarrow0\longrightarrow E^{-1}\stackrel{\sigma}{\longrightarrow}E^0\longrightarrow 0\longrightarrow\cdots
\]
let $\mathbf{t}=(\mathcal{T},\mathcal{F})$ be the associated (finite type) torsion pair in $\mathcal{G}$, and consider the canonical epimorphism $p\colon E\to (1:t)(H^0(E))[0]$ in $\Ch(\mathcal{G})$. Then, $W:=\Ker(p)$ is an injective cogenerator of the heart $\mathcal{H}_\mathbf{t}$ and, furthermore, $H^{-1}(W)\cong H^{-1}(E)$ and $H^0(W)\cong t(H^0(E))$. 
\end{prop}
\begin{proof}
Consider the triangle in $\Der (\G)$ associated with the following exact sequence of complexes
\[
0\longrightarrow W\longrightarrow E\stackrel{p}{\longrightarrow}(1:t)(H^0(E))[0]\longrightarrow 0.
\] 
Analyzing the associated long exact sequence of cohomologies, one sees that $H^{-1}(W)\cong H^{-1}(E)$, \mbox{$H^0(W)\cong t(H^0(E))$} and $H^k(W)=0$, for all integers $k\neq -1,0$. It then follows that $W\in\mathcal{H}_\mathbf{t}$.
\\
For the injectivity of $W$ in $\mathcal{H}_\mathbf{t}$ we just need to check that $0=\Ext_{\mathcal{H}_\mathbf{t}}^1(F[1],W)\cong\Der (\G)(F[0],W)$ and $0=\Ext_{\mathcal{H}_\mathbf{t}}^1(T[0],W)\cong\Der(\G) (T[0],W[1])$, for all $F\in\F$ and all $T\in\T$. This follows immediately from the following induced exact sequences of Abelian groups (see Lemma~\ref{char_of_2-term_lem}):
\[
\begin{aligned}
0=\Der(\G)(F[0],(1:t)(H^0(E)))[-1])\longrightarrow &\Der(\G)(F[0],W)\longrightarrow\Der(\G)(F[0],E)=0 \\ 0=\Der(\G)(T[0],(1:t)(H^0(E)){[0])}\longrightarrow &\Der(\G)(T[0],W[1])\longrightarrow\Der(\G)(T[0],E[1])=0.
\end{aligned}
\]
{The last equality to zero is due to the fact that $\Der(\G)(T[0],E[1])\cong \K(\G)(T[0],E[1])=0$, since {$E[1]$} is a complex of injectives concentrated in degrees {-2 and -1.}} It remains to prove that, if  $M\in\mathcal{H}_\mathbf{t}$ and $\mathcal{H}_\mathbf{t}(M,E)=0$, then $M=0$. Consider the isomorphism 
\[
\mathcal{H}_\mathbf{t}(M,W)=\Der(\G)(M,W)\stackrel{\cong}{\longrightarrow}\Der(\G)(M,E),
\] 
which is true since  $(1:t)(H^0(E))[k]\in\V_\mathbf{t}=\U_\mathbf{t}^\perp$, for $k=-1,0$. Then, if $\Der(\G)(M,W)=0$, we get that $M\in {}^\perp E$, and so $\Der(\G)(M,E[k])=0$, for all $k\in\mathbb{Z}$, because $M\in\mathcal{H}_\mathbf{t}=\U_\mathbf{t}\cap\V_\mathbf{t}[1]={}^{\perp_{<0}}E\cap {}^{\perp_{>0}}E$. It follows that $M=0$, since $E$ cogenerates $\Der(\G)$. 
\end{proof}

For the last result of this section, it is  convenient  to view $\Der(\G)$ as the base of the  strong and stable derivator $\mathbb{D}_\G\colon \mathbf{Cat}^{\op}\to \mathbf{CAT}$ such that $\mathbb{D}_\G(I)=\Der (\G^I)\cong \Ho(\Ch(\mathcal{G}^I))$ for any small category $I$. The reader is referred to \cite{SSV} for all the needed terminology.

\begin{cor}\label{coro_derivators}
In the same setting of Proposition~\ref{prop.injcogenerator-from-cosilting}, take a direct system $(W_\lambda )_{\Lambda}$ in $\H_{\mathbf{t}}$, of objects in $\Prod_{\H_\mathbf{t}}(W)$. Then, there is a direct system of short exact sequences in $\Ch(\G)$,
\[
(0\longrightarrow E^{I_\lambda}\longrightarrow Y_\lambda\longrightarrow W'_\lambda [1]\longrightarrow 0)_{\Lambda},
\] 
with each $Y_\lambda\in\V_\mathbf{t}[1]= {}^{\perp_{>0}}E$ and such that  {the direct system}  $(W'_\lambda)_{\Lambda}$ in $\Ch(\G)$  is mapped, up to isomorphism,  onto $(W_\lambda )_{\Lambda}$ by the localization functor $q\colon \Ch(\mathcal{G})\to\Der(\mathcal{G})$.
\end{cor}
\begin{proof}
By \cite[Theorem~A]{SSV}, $(W_\lambda )_{\Lambda}$ can be lifted to a coherent diagram in $\mathbb{D}_\mathcal{G}(\Lambda )\cong\Ho(\Ch(\mathcal{G}^\Lambda))$. This means that we have some direct system $(W'_\lambda)_{\Lambda}$ in $\Ch(\G)$ that is mapped, up to isomorphism,  onto $(W_\lambda )_{\Lambda}$ by the localization functor $q\colon \Ch(\mathcal{G})\to\Der(\mathcal{G})$. 
 We now put $I_\lambda: =\Ch(\G)(W'_\lambda ,E)$, for each $\lambda\in\Lambda$, and consider the canonical morphism $u_\lambda\colon W'_\lambda\to E^{I_\lambda}$. Since $E$ is homotopically injective, the morphism $u_\lambda$ is a $\Prod (E)$-preenvelope both in $\Ch(\G)$ and in $\Der (\G)$. Taking the classical cone  of a chain map in $\Ch(\G)$, we get a direct system $(0\rightarrow E^{I_\lambda}\to\mathrm{cone}(u_\lambda )\to W'_\lambda [1]\rightarrow 0)_{\Lambda}$ of short exact sequences in $\Ch(\G)$. We claim that $Y_\lambda :=\mathrm{cone}(u_\lambda)\in\V_\mathbf{t}[1]= {}^{\perp_{>0}}E$. We clearly have that $\Der(\G)(-,E[k])$ vanishes on $E^{I_\lambda}$ and $W'_\lambda [1]$, for $k>1$. So we only need to prove that $\Der(\G)(Y_\lambda ,E[1])=0$. This follows from the fact that, $u_\lambda$ being a $\Prod(E)$-preenvelope,  we have the following exact sequence:
 \[
 \Der(\G)(E^{I_\lambda},E)\stackrel{u_\lambda^*}{\longrightarrow}\Der(\G)(W'_\lambda,E)\stackrel{0}{\longrightarrow}\Der(\G)(Y_\lambda[-1],E)\longrightarrow\Der(\G)(E^{I_\lambda}[-1],E)=0.\qedhere
 \] 
\end{proof}

\section{Finitely presented objects in the heart}\label{Section_fp}

In this section we start with a Grothendieck category $\G$ and a torsion pair of finite type $\t=(\T,\F)$  in $\G$, and we give a general characterization of the finitely presented objects in the heart $\Ht$ of the HRS $t$-structure associated with $\t$. We then deduce explicit characterizations, under fairly general hypotheses, for the finitely presented stalk complexes in the heart, deducing a complete description $\fp(\Ht)$ when $\G$ is good enough. In the last two subsections we study $\fp_2(\Ht)$ (whose understanding is essential in determining whether $\Ht$ is locally coherent) and the category $\underline \F$, which helps us to properly interpret the results of the first half of the section {(see Remark~\ref{interpretation_of_results})}.

\subsection{General characterization of finitely presented objects}\label{subs5.1}

 Recall that Proposition~\ref{prop.fpobjects-wrt-torsionpairs} gives a criterion for the finite presentability of an object, in terms of a given torsion pair of finite type. The following corollary specializes this criterion to the tilted torsion pair $(\F[1],\T[0])$ in the heart $\Ht$.

\begin{cor} \label{cor.fpobjects-in-heart}
Let $\G$ be a Grothendieck category and  $\t=(\T,\F)$  a torsion pair of finite type in $\G$. Then, $\H_\t$ is a Grothendieck category and $\bar{\mathbf{t}}:=(\mathcal{F}[1],\mathcal{T}[0])$ is a torsion pair of finite type in $\H_\t$. Furthermore, the following assertions are equivalent for an object $M\in\H_\t$:
\begin{enumerate}[\rm (1)]
\item $M$ is finitely presented in $\H_\t$;
\item $M$ satisfies the following conditions:
\begin{enumerate}[\rm ({2.}1)]
\item $\Der(\G )(M,(-)[1])_{\restriction \mathcal{F}}:\mathcal{F}\to\Ab$ preserves direct limits;
\item $H^0(M)\in\fp(\G)$;
\item ${\varinjlim}_I \Der(\G )(M,F_i[2])\to\Der(\G )(M,({\varinjlim}_I F_i)[2])$ is a monomorphism, for all $(F_i)_{I}\subseteq \mathcal{F}$ directed.
\end{enumerate}
\end{enumerate}
\end{cor}
\begin{proof}
We have already mentioned that $\H_\t$ is Grothendieck and that $\bar{\mathbf{t}}=(\mathcal{F}[1],\mathcal{T}[0])$ is a torsion pair, which is of finite type by \cite[Proposition~4.2]{PS1}. In this setting, conditions (2.1), (2.2) and (2.3) are just a reformulation of the corresponding conditions in Proposition~\ref{prop.fpobjects-wrt-torsionpairs}, via the  natural isomorphisms: 
\[
\mathcal{T}(H^0(M),-)\cong\Ht(M,(-)[0])_{\restriction\mathcal{T}}\colon\mathcal{T}\longrightarrow\Ab,
\qquad
\H_\t(M,(-)[1])_{\restriction \mathcal{F}}=\Der(\G )(M,(-)[1])_{\restriction\mathcal{F}}\colon\mathcal{F}\longrightarrow\Ab
\] 
and $\Ext_{\H_\t}^1(M,(-)[1])_{\restriction\mathcal{F}}\cong\Der(\G )(M,(-)[2])_{\restriction\mathcal{F}}\colon\mathcal{F}\to\Ab$ (see \cite[R\'emarque~3.1.17]{BBD}), and Lemma~\ref{fp_torsion_lema}.
\end{proof}

\subsection{Finitely presented objects in $\F[1]$}\label{subs5.2}
In this subsection we apply Corollary~\ref{cor.fpobjects-in-heart} to characterize the stalk complexes concentrated in degree $-1$ that are finitely presented in $\H_\t$.

\begin{defi}\label{def_f0}
Let $\G$  be a Grothendieck category and $\t=(\T,\F)$  a torsion pair in $\G$. Define $\mathcal{F}_0$ as the class of all $F\in\F$ such that $\G(F,-)\colon \mathcal{\G}\to\Ab$ preserves direct limits of objects in $\mathcal{F}$. 
\end{defi}

When the torsion pair $\t$ is of finite type, the class $\F_0$ is particularly useful to describe the finitely presented objects in the heart, in fact, as shown in the following corollary, $\fp(\H_\t)\cap \F[1]=\F_0[1]$.

\begin{cor} \label{cor.F0}
Let $\G$ be a Grothendieck category and  $\t=(\T,\F)$  a torsion pair of finite type in $\G$.
For an object $F\in\mathcal{F}$, consider {the} following conditions:
\begin{enumerate}[\rm (1)]
\item $F$ is isomorphic to a {direct} summand of $(1:t)(X)$, for some $X\in\fp(\G)$;
\item $F[1]$ is finitely presented in $\mathcal{H}_\mathbf{t}$;
\item $F\in\mathcal{F}_0$.
\end{enumerate}
Then, the implications {\rm``(1)$\Rightarrow$(2)$\Leftrightarrow$(3)''} hold true. Furthermore, if  $\G$ is locally finitely presented, then $\F=\varinjlim\F_0$ and the assertions are all equivalent.
\end{cor}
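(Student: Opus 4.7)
The plan is to establish the equivalence (2)$\Leftrightarrow$(3) first, since it is the conceptual core: it amounts to transplanting the torsion-class criterion for finite presentation (Lemma \ref{fp_torsion_lema}) from $\G$ to $\Ht$. Recall, as used in the proof of Corollary \ref{cor.fpobjects-in-heart}, that $(\F[1],\T[0])$ is a torsion pair of finite type in the Grothendieck category $\Ht$, and that $F[1]$ lies in its torsion class $\F[1]$. By Lemma \ref{rem. stalks}(2), for any direct system $(F_i)_{i\in I}$ in $\F$ one has $\varinjlim_{\Ht}(F_i[1])\cong(\varinjlim_\G F_i)[1]$; combined with the identity $\Ht(F[1],F_i[1])\cong\G(F,F_i)$, this shows that $\Ht(F[1],-)$ preserves direct limits of direct systems in $\F[1]$ if and only if $\G(F,-)$ preserves direct limits of direct systems in $\F$, i.e.\ $F\in\F_0$. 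Lemma \ref{fp_torsion_lema}, applied inside $\Ht$ to the torsion pair $(\F[1],\T[0])$, then gives (3)$\Rightarrow$(2), while (2)$\Rightarrow$(3) is immediate from the very definition of $\fp(\Ht)$.

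For (1)$\Rightarrow$(3), suppose $F$ is a direct summand of $(1:t)(X)$ with $X\in\fp(\G)$. The adjunction between the torsion coradical and the inclusion $\F\hookrightarrow\G$ yields a natural isomorphism $\G((1:t)(X),-)_{\restriction\F}\cong\G(X,-)_{\restriction\F}$. Since $X$ is finitely presented and $\F$ is closed under direct limits in $\G$ (finite-type assumption), the functor $\G(X,-)_{\restriction\F}$ preserves direct limits; this property is inherited by the direct summand $\G(F,-)_{\restriction\F}$, giving $F\in\F_0$.

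For the converse (3)$\Rightarrow$(1) under the local-finite-presentation hypothesis, write $F=\varinjlim F_i$ for some direct system $(F_i)_{i\in I}$ in $\fp(\G)$. Applying the torsion coradical, which commutes with direct limits by Lemma \ref{preservation_colimits_fin_type}, and using $F=(1:t)(F)$, one obtains
\[
F\;\cong\;\varinjlim(1:t)(F_i),
\]
a direct limit of objects in $\F$. The hypothesis $F\in\F_0$ yields $\G(F,F)\cong\varinjlim\G(F,(1:t)(F_i))$, so $\id_F$ factors through the canonical morphism $(1:t)(F_j)\to F$ for some index $j\in I$; this exhibits $F$ as a direct summand of $(1:t)(F_j)$ with $F_j\in\fp(\G)$. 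The only subtlety in the whole argument—hardly an obstacle—lies in invoking Lemma \ref{fp_torsion_lema} inside $\Ht$: this is legitimate because $\Ht$ is a Grothendieck category and the tilted torsion pair $(\F[1],\T[0])$ is again of finite type, consequences of Proposition \ref{main_result_PS2} and of \cite[Prop.\,4.2]{PS1} respectively, both of which are already invoked in the preceding corollary.
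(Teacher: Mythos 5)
Your proof is correct and follows essentially the same route as the paper's: the equivalence (2)$\Leftrightarrow$(3) is obtained by transporting the criterion of Lemma~\ref{fp_torsion_lema} to the tilted torsion pair $(\F[1],\T[0])$ in $\Ht$ via the identification of direct limits of torsion stalks given by Lemma~\ref{rem. stalks}(2), and the implications involving (1) use the natural isomorphism $\G(X,-)_{\restriction\F}\cong\F((1:t)(X),-)$ together with (for the converse under local finite presentability) the Lazard-style retraction argument from $F=\varinjlim(1:t)(F_i)$. The only cosmetic difference is that you prove (1)$\Rightarrow$(3) rather than (1)$\Rightarrow$(2) directly, which is immaterial given the established equivalence.
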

\begin{proof}
By Corollary~\ref{cor.fpobjects-in-heart}, $\bar\t=(\mathcal{F}[1],\mathcal{T}[0])$ is a torsion pair of finite type in $\H_\t$, so the equivalence ``(2)$\Leftrightarrow$(3)'' follows by Proposition~\ref{prop.fpobjects-wrt-torsionpairs}, Lemma~\ref{fp_torsion_lema}, and Lemma~\ref{rem. stalks}.

\smallskip\noindent
(1)$\Rightarrow$({3}). Given $X\in \G$, there is a natural isomorphism  $\G (X,-)_{\restriction\mathcal{F}}\cong\mathcal{F}((1:t)(X),-)\colon \mathcal{F}\to\Ab$, where the first of these  functors preserves direct limits when $X\in\fp(\G)$. 

\smallskip\noindent
(2,\,3)$\Rightarrow$(1), {\em if $\G$ is locally finitely presented}. 
%
%
Let $F\in \F$ and consider $F[1]\in\H_\t$; express $F$ as a direct limit $F={\varinjlim}_I M_i$, where $(M_i)_{I}\subseteq \fp(\G)$. Since $\t$ is of finite type, $F\cong{\varinjlim}_I (1:t)(M_i)$, showing that $\F\subseteq\varinjlim \F_0$. Furthermore, whenever $F\in \F_0$, the fact that ${\G}(F,-)$ preserves direct limits of objects in $\F$, implies that there exists $j\in I$ such that the isomorphism $F{\to}{\varinjlim}_I (1:t)(M_i)$ factors as: 
\[
\xymatrix@R=-3pt{
F\ar@/_+4pt/[rrd]_-u\ar[rrrr]|-{\cong}&&&&{\varinjlim}_I (1:t)(M_i),\\
&&(1:t)(M_j)\ar@/_+4pt/[rru]_-{\iota_{j}}
}
\] 
with $\iota_j$ the canonical map the the colimit. Then, $u$ is a section and $F$  a  summand of $(1:t)(M_j)$.
\end{proof}

\begin{lema}\label{tech_lem_dir_syst_F}
Let $\mathcal{G}$ be a locally finitely  presented Grothendieck category and $\mathbf{t}=(\mathcal{T},\mathcal{F})$ a torsion pair of finite type in $\mathcal{G}$. Given $F\in\mathcal{F}_0$ and $(F_i)_{ I}\subseteq \mathcal{F}$ directed, the following  canonical map is a monomorphism 
 \begin{equation}\label{general_tech_lem_statement_eq}
 {\varinjlim}_I\Ext_\mathcal{G}^1(F,F_i)\longrightarrow\Ext_\mathcal{G}^1(F,{\varinjlim}_I F_i).
 \end{equation}
 \end{lema}
\begin{proof}
Note that the morphism in \eqref{general_tech_lem_statement_eq} can be identified with the canonical monomorphism ${\varinjlim}_I\Ext_{\H_\mathbf{t}}^1(F[1],F_i[1])\to\Ext_{\H_\mathbf{t}}^1(F[1],{\varinjlim}_I^{(\H_\mathbf{t})}(F_i[1]))$
(see Proposition~\ref{prop_mono} {and Corollary~\ref{cor.F0}).}
\end{proof}

\subsection{Finitely presented objects in $\T[0]$}\label{subs5.3}
Our next goal is to characterize the finitely presented stalk complexes concentrated in degree $0$, paralleling the results about $\F[1]\cap\fp(\H_\t)$ from Subsection~\ref{subs5.2}.

\begin{defi}\label{def_t0}
Let $\T_0$ be the class of all objects $T\in \fp(\G)\cap \T$ such that, for each direct system $(F_i)_{I}\subseteq \mathcal{F}$, the following canonical map is an isomorphism for $k=1$ and a monomorphism for $k=2$
\[
{\varinjlim}_I\Ext_\G ^k(T,F_i)\longrightarrow\Ext_\G ^k(T,{\varinjlim}_I F_i).
\] 
\end{defi}

Our next result, whose proof is a consequence of  Corollary~\ref{cor.fpobjects-in-heart}, gives  the desired characterization.


\begin{cor} \label{cor.fp-torsion-stalks}
Let $\G$ be a Grothendieck category and $\t=(\T,\F)$ a torsion pair of finite type in $\G$.
The following assertions are equivalent for an object $T\in\mathcal{T}$:
\begin{enumerate}[\rm (1)]
\item $T[0]$ is a finitely presented object in $\H_\mathbf{t}$;
\item  $T\in \T_0$.
\end{enumerate}
\end{cor}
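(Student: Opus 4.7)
The plan is to derive this corollary as a direct unwinding of Corollary \ref{cor.fpobjects-in-heart} applied to the stalk complex $M=T[0]\in\H_\t$. Since $T\in\T\subseteq \G$, we have the standard natural isomorphism $\Der(\G)(T[0],Y[k])\cong\Ext_\G^k(T,Y)$ for every $Y\in\G$ and every $k\geq 0$ (this uses only that we are computing Hom in the derived category from a stalk in non-negative degrees). Under this identification, the three conditions appearing in Corollary \ref{cor.fpobjects-in-heart} translate, for $M=T[0]$, into exactly the three conditions defining the class $\T_0$.

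More precisely, I would first observe that condition (2.2) of Corollary \ref{cor.fpobjects-in-heart} reads $H^0(T[0])=T\in\fp(\G)$, which is the finite presentation requirement in the definition of $\T_0$. Then condition (2.1), namely that $\Der(\G)(T[0],(-)[1])_{\restriction\F}$ preserves direct limits, translates via the above identification into the requirement that the canonical map
\[
\varinjlim\Ext_\G^1(T,F_i)\longrightarrow\Ext_\G^1(T,\varinjlim F_i)
\]
be an isomorphism for every direct system $(F_i)_{i\in I}$ in $\F$. Finally, condition (2.3), which asks that the canonical map $\varinjlim\Der(\G)(T[0],F_i[2])\to\Der(\G)(T[0],(\varinjlim F_i)[2])$ be a monomorphism, becomes exactly the statement that the canonical map
\[
\varinjlim\Ext_\G^2(T,F_i)\longrightarrow\Ext_\G^2(T,\varinjlim F_i)
\]
be monomorphic for every direct system in $\F$.

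Putting these three translations together shows that the conditions (2.1)--(2.3) of Corollary \ref{cor.fpobjects-in-heart} for $M=T[0]$ are equivalent to $T\in\T_0$, yielding the claimed equivalence. There is essentially no obstacle here: the argument is a routine dictionary translation, and the only point that merits any caution is verifying the identification between Hom in the derived category of stalks and the Ext groups in $\G$, which is classical.
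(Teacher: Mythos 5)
Your proof is correct and is exactly the routine unwinding that the paper itself declines to write out, citing Corollary \ref{cor.fpobjects-in-heart}: for $M=T[0]$ with $T\in\T$, conditions (2.1)--(2.3) there become, via $\Der(\G)(T[0],Y[k])\cong\Ext_\G^k(T,Y)$, precisely the defining conditions of $\T_0$. This matches the paper's intended argument.
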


In the following lemma we give a more explicit description of the class $\T_0$ in a particular case. Indeed, if the torsion pair $\t=(\T,\F)$ is of finite type, $\mathcal{H}_\mathbf{t}$ is locally finitely presented and the induced torsion pair  $\bar{\mathbf{t}}:=(\mathcal{F}[1],\mathcal{T}[0])$ restricts to $\fp(\mathcal{H}_\mathbf{t})$, then $\T_0=\T\cap\fp(\G)$.

\begin{lema} \label{lem.subcategories of Tcapfp}
Let $\G$ be a Grothendieck category and  $\mathbf{t}=(\mathcal{T},\mathcal{F})$ a torsion pair of finite type in $\G$. Then, 
\[
\mathcal{T}_0\subseteq H^0(\fp(\mathcal{H}_\mathbf{t}))\subseteq\add(H^0(\fp(\mathcal{H}_\mathbf{t})))\subseteq\mathcal{T}\cap\fp(\G).
\] 
Moreover, the following assertions hold true:
\begin{enumerate}[\rm (1)]
\item if $\bar{\mathbf{t}}:=(\mathcal{F}[1],\mathcal{T}[0])$ restricts to $\fp(\mathcal{H}_\mathbf{t})$, then $\mathcal{T}_0=H^0(\fp(\mathcal{H}_\mathbf{t}))=\add(H^0(\fp(\mathcal{H}_\mathbf{t})))$;
\item if $\mathcal{H}_\mathbf{t}$ is locally finitely presented, then $\add(H^0(\fp(\mathcal{H}_\mathbf{t})))=\mathcal{T}\cap\fp(\G)$. 
\end{enumerate}
\end{lema}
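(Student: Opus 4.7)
The plan is to establish the chain of inclusions first and then handle the two additional assertions, exploiting the torsion pair $\bar{\mathbf{t}}=(\mathcal{F}[1],\mathcal{T}[0])$ in $\mathcal{H}_\mathbf{t}$ and the standard HRS descriptions $H^{-1}(M)\in\mathcal{F}$, $H^0(M)\in\mathcal{T}$ for $M\in\mathcal{H}_\mathbf{t}$.

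For the chain of inclusions, the inclusion $\mathcal{T}_0\subseteq H^0(\fp(\mathcal{H}_\mathbf{t}))$ is immediate from Corollary \ref{cor.fp-torsion-stalks}: if $T\in\mathcal{T}_0$, then $T[0]\in\fp(\mathcal{H}_\mathbf{t})$ and $H^0(T[0])=T$. The middle inclusion is tautological. For the last, I would invoke Corollary \ref{cor.fpobjects-in-heart}, condition (2.2), which forces $H^0(M)\in\fp(\mathcal{G})$ for every $M\in\fp(\mathcal{H}_\mathbf{t})$; since $H^0(M)\in\mathcal{T}$ by construction and $\mathcal{T}\cap\fp(\mathcal{G})$ is closed under finite coproducts and direct summands, we conclude $\add(H^0(\fp(\mathcal{H}_\mathbf{t})))\subseteq\mathcal{T}\cap\fp(\mathcal{G})$.

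For assertion (1), assume $\bar{\mathbf{t}}$ restricts to $\fp(\mathcal{H}_\mathbf{t})$. Given $M\in\fp(\mathcal{H}_\mathbf{t})$, the canonical exact sequence $0\to H^{-1}(M)[1]\to M\to H^0(M)[0]\to 0$ in $\mathcal{H}_\mathbf{t}$ is precisely the $\bar{\mathbf{t}}$-approximation sequence of $M$. The hypothesis gives $H^{-1}(M)[1]\in\fp(\mathcal{H}_\mathbf{t})$, and since $\fp(\mathcal{H}_\mathbf{t})$ is closed under cokernels, also $H^0(M)[0]\in\fp(\mathcal{H}_\mathbf{t})$. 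By Corollary \ref{cor.fp-torsion-stalks}, $H^0(M)\in\mathcal{T}_0$, proving $H^0(\fp(\mathcal{H}_\mathbf{t}))\subseteq\mathcal{T}_0$. Combined with the first paragraph this yields $\mathcal{T}_0=H^0(\fp(\mathcal{H}_\mathbf{t}))$. The closure of $\mathcal{T}_0$ under $\add$ follows from the fact that $\Ext_\mathcal{G}^k(T_1\oplus T_2,-)$ decomposes as a direct sum and direct limits commute with finite direct sums, so the defining Ext-preservation conditions are inherited by summands and preserved under finite coproducts.

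For assertion (2), assume $\mathcal{H}_\mathbf{t}$ is locally finitely presented and fix $T\in\mathcal{T}\cap\fp(\mathcal{G})$. Write $T[0]=\varinjlim_{\mathcal{H}_\mathbf{t}}N_j$ for some direct system $(N_j)_{j\in J}$ in $\fp(\mathcal{H}_\mathbf{t})$. The assignment $M\mapsto H^0(M)[0]$ is the torsion-coradical $(1\!:\!t_{\bar{\mathbf{t}}})\colon\mathcal{H}_\mathbf{t}\to\mathcal{T}[0]$, which is left adjoint to the inclusion and therefore preserves colimits, so $T[0]=H^0(T[0])[0]=\varinjlim_{\mathcal{H}_\mathbf{t}}H^0(N_j)[0]$. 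By Lemma \ref{rem. stalks}(1) this last direct limit equals $(\varinjlim_{\mathcal{G}}H^0(N_j))[0]$, giving $T\cong\varinjlim_{\mathcal{G}}H^0(N_j)$ in $\mathcal{G}$. Since $T$ is finitely presented in $\mathcal{G}$, the identity of $T$ factors through some $H^0(N_{j_0})$, whence $T$ is a direct summand of $H^0(N_{j_0})\in H^0(\fp(\mathcal{H}_\mathbf{t}))$, so $T\in\add(H^0(\fp(\mathcal{H}_\mathbf{t})))$, completing the equality in (2).

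The only subtlety I anticipate is the use of Lemma \ref{rem. stalks}(1) in (2): one must observe that the finite-type assumption ensures $\mathcal{T}$ is closed under direct limits in $\mathcal{G}$, so that a direct limit of stalks $H^0(N_j)[0]$ in $\mathcal{H}_\mathbf{t}$ agrees with the stalk of the $\mathcal{G}$-direct limit, which is exactly the content invoked. Once this identification is in place the argument is linear; everything else reduces to bookkeeping with Corollaries \ref{cor.fpobjects-in-heart} and \ref{cor.fp-torsion-stalks} and the torsion-pair structure of $\bar{\mathbf{t}}$ in $\mathcal{H}_\mathbf{t}$.
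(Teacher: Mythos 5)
Your proof is correct and follows essentially the same route as the paper's: the chain of inclusions via Corollaries \ref{cor.fpobjects-in-heart} and \ref{cor.fp-torsion-stalks}, assertion (1) by noting the $\bar{\mathbf{t}}$-approximation sequence puts $H^0(M)[0]$ in $\fp(\mathcal{H}_\mathbf{t})$ whenever $H^{-1}(M)[1]$ is, and assertion (2) by writing $T[0]$ as a filtered colimit of finitely presented objects and using that $H^0$ preserves direct limits. The only cosmetic difference is in (2), where the paper justifies $H^0$-preservation of direct limits by right-exactness plus coproduct-preservation, while you invoke the torsion-coradical adjunction for $\bar{\mathbf{t}}$ together with Lemma \ref{rem. stalks}(1) — both are valid and amount to the same thing.
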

\begin{proof}
By Corollary~\ref{cor.fpobjects-in-heart}, we know that $H^0(M)\in\mathcal{T}\cap\fp(\G)$, for all $M\in\fp(\mathcal{H}_\mathbf{t})$, so the inclusion $\add(H^0(\fp(\mathcal{H}_\mathbf{t})))\subseteq\mathcal{T}\cap\fp(\G)$ follows. By Corollary~\ref{cor.fp-torsion-stalks}, we also have that $\mathcal{T}_0\subseteq H^0(\fp(\mathcal{H}_\mathbf{t}))$ since $T=H^0(T[0])${, for all $T\in \T_0$}.  We can now proceed with the proof of assertions (1) and (2):

\smallskip\noindent
(1). Using Corollary~\ref{cor.fp-torsion-stalks}, we get that $\bar{\mathbf{t}}$ restricts to $\fp(\mathcal{H}_\mathbf{t})$ if, and only if, $H^{-1}(M)[1]\in\fp(\mathcal{H}_\mathbf{t})$ and $H^0(M)\in\mathcal{T}_0$, for all $M\in\fp(\mathcal{H}_\mathbf{t})$. Then the inclusion $H^0(\fp(\mathcal{H}_\mathbf{t}))\subseteq\mathcal{T}_0$ holds, which implies the equality $\add(H^0(\fp(\mathcal{H}_\mathbf{t})))=\mathcal{T}_0$, since $\mathcal{T}_0$ is closed under taking direct summands. 

\smallskip\noindent
(2).  If $\mathcal{H}_\mathbf{t}$ is locally finitely presented and $T\in\mathcal{T}\cap\fp(\G)$, there is a direct system $(M_i)_{ I}$ in $\fp(\mathcal{H}_\mathbf{t})$ such that $T[0]\cong{\varinjlim}_I^{(\mathcal{H}_{\t})} M_i$. The functor $H^0_{\restriction\mathcal{H}_\mathbf{t}}\colon\mathcal{H}_\mathbf{t}\to\G$ is right exact and it preserves coproducts (so also direct limits) and $T=H^0(T[0])\cong{\varinjlim}_I H^0(M_i)$. Since $T\in\fp(\G)$,  there exists $j\in I$ such that the canonical map $\iota_j\colon H^0(M_j)\to T$ to the direct limit is a retraction. Hence, $T\in\add(H^0(\fp(\mathcal{H}_\mathbf{t})))$.
\end{proof}

\subsection{An explicit description of $\fp(\Ht)$, for good enough $\G$}
As we announced in the introduction to this section, the characterizations of finitely presented stalk complexes in the previous subsections can be used to give a very explicit description of $\fp(\Ht)$, under suitable assumptions on $\G$:

\begin{prop}\label{char_of_fp_under_dot+restricts}
Let $\G$ be a Grothendieck category, $\mathbf{t}=(\mathcal{T},\mathcal{F})$ a torsion pair of finite type in $\G$, and suppose that the following condition is satisfied:
\begin{enumerate}[\rm ($\dag$)]
\item  $\Ext_\G ^k(T,-)\colon \G\to \Ab$ preserves direct limits of objects in $\mathcal{F}$, for all $T\in\mathcal{T}\cap\fp(\G)$ and $k=1,\,2$.
\end{enumerate}
Then, an object $M\in\mathcal{H}_\mathbf{t}$ is in $\fp(\mathcal{H}_\mathbf{t})$ if, and only if, $H^{-1}(M)\in\mathcal{F}_0$ and $H^0(M)\in\fp(\G)$, that is,
\[
\fp(\H_\t)=\F_0[1]*\T_0[0]=\F_0[1]*(\fp(\G)\cap \T)[0].
\]
As a consequence, the induced torsion pair $\bar{\mathbf{t}}=(\mathcal{F}[1],\mathcal{T}[0])$ in $\mathcal{H}_\mathbf{t}$ restricts to $\fp(\mathcal{H}_\mathbf{t})$. 
\end{prop}

Before proceeding with the proof of the above proposition, let us comment on condition ($\dag$): by definition of $\T_0$ one always has that $\T_0\subseteq \fp(\G)\cap \T$; condition ($\dag$) implies (but it is stronger than) the converse inclusion, so that $\T_0=\fp(\G)\cap \T$. Note also that any $T\in \fp_3(\G)\cap \T$ satisfies condition ($\dag$) so, if $\G$ is locally coherent, condition ($\dag$) is always satisfied.

\begin{proof}[Proof of Proposition~\ref{char_of_fp_under_dot+restricts}]
Condition ($\dag$) guarantees that $(\mathcal{T}\cap\fp(\G))[0]\subseteq\fp(\Ht)$ (see Corollary~\ref{cor.fp-torsion-stalks}), while $\F_0[1]\subseteq \fp(\Ht)$ by Corollary~\ref{cor.F0}. Thus, $\F_0[1]*(\mathcal{T}\cap\fp(\G))[0]\subseteq \fp(\Ht)*\fp(\Ht)=\fp(\Ht)$. Conversely, if $M\in\fp(\Ht)$ then $H^0(M)[0]\in\fp(\Ht)$, by ($\dag$) and Corollaries~\ref{cor.fpobjects-in-heart} and \ref{cor.fp-torsion-stalks}. Consider now the following short exact sequence in $\H_\t$
\[
0\longrightarrow H^{-1}(M)[1]\longrightarrow M\longrightarrow H^0(M)[0]\longrightarrow 0
\] 
and a direct system $(F_i)_{I}\subseteq\mathcal{F}$. Apply the functors $\Ht(-,{\varinjlim}_I (F_i[1]))\cong\Ht(-,({\varinjlim}_IF_i)[1])$ and ${\varinjlim}_I\Ht(-,F_i[1])$ to the above sequence to get the following commutative diagram with exact rows: 
\[
\scalebox{0.835}{
\xymatrix@C=13pt{
{\varinjlim}_I\Ext_\G^1(H^0(M),F_i)\ar[r]\ar[d]^{f_1}&{\varinjlim}_I\Ht(M,F_i[1])\ar[r]\ar[d]^{f_2} &{\varinjlim}_I\G(H^{-1}(M),F_i)\ar[r]\ar[d]^{f_3}&{\varinjlim}_I\Ext_\G^2(H^0(M),F_i)\ar[r]\ar[d]^{f_4}&{\varinjlim}_I\Ext_{\Ht}^1(M,F_i[1])\ar[d]^{f_5}\\
\Ext_\G^1(H^0(M),{\varinjlim}_I F_i)\ar[r]&\Ht(M,{\varinjlim}_I (F_i[1]))\ar[r]&\G(H^{-1}(M),{\varinjlim}_I F_i)\ar[r]&\Ext_\G^2(H^0(M),{\varinjlim}_I F_i)\ar[r]&\Ext_{\Ht}^1(M,{\varinjlim}_I (F_i[1])) 
}
}
\]
By the condition ($\dag$) and Corollary~\ref{cor.fpobjects-in-heart}, we know that $f_1$ and $f_4$ are isomorphisms. Furthermore, using that  $M\in\fp(\Ht)$ and Proposition~\ref{prop_mono}, we get that  $f_2$ is an isomorphism and $f_5$ is a monomorphism. The Five Lemma then implies that $f_3$ is an isomorphism, that is, $H^{-1}(M)\in\mathcal{F}_0$. Finally, since $\mathcal{F}[1]\cap\fp(\Ht)=\mathcal{F}_0[1]$, we also get that $\bar{\mathbf{t}}$ restricts to $\fp(\Ht)$.  
\end{proof}

\subsection{Finitely $2$-presented objects in the heart} \label{subsection.finitely-2-presented}
Let $\G$ be a Grothendieck category and  $\t=(\T,\F)$ a torsion pair of finite type in $\G$. 
By Theorem~\ref{main_thm_fintype_qcotilt_cosilt}, we can fix a $2$-term cosilting complex 
\[
E:\qquad\cdots \longrightarrow 0\longrightarrow E^{-1}\stackrel{\sigma}{\longrightarrow}E^0\longrightarrow 0 \longrightarrow \cdots,
\] 
such that $\mathbf{t}$ is the associated torsion pair. Use Proposition~\ref{prop.injcogenerator-from-cosilting} to get a short exact sequence 
\[
0\longrightarrow W\longrightarrow E\stackrel{p}{\longrightarrow} (1:t)(H^0(E))[0]\longrightarrow 0\qquad \text{in $\Ch(\G)$,}
\] 
with $W$ an injective cogenerator of $\H_{\mathbf{t}}$, {and $H^{-1}(W)=H^{-1}(E)$, so the following sequence is exact:
\[
0\longrightarrow H^{-1}(E)[1]\longrightarrow W\longrightarrow {R}\longrightarrow 0\qquad \text{in $\Ch(\G)$,}
\]
with $R$ concentrated in degrees $-1$ and $0$, and such that $R\cong H^{0}(W)[0]\cong t(H^{0}(E))[0]$ in $\Der(\G)$. Both sequences are functorial in $E$ and the second one,} when viewed  in $\H_\mathbf{t}$, is the (exact) torsion sequence {$0\to H^{-1}(W)[1]\to W\to H^0(W)[0]\to 0$} of  $W$ with respect to $\bar{\mathbf{t}}=(\F[1],\T[0])$. Hence, given a direct system $(E_\lambda)_{\lambda\in\Lambda}\subseteq \Prod_{\Ch(\G)}(E)$, we have two direct systems of short exact sequences in $\Ch(\G)$:
\[
(0\longrightarrow W_\lambda\longrightarrow E_\lambda\stackrel{p_\lambda}{\longrightarrow} (1:t)(H^0(E_\lambda ))[0]\longrightarrow 0)_{\Lambda}\ \ 
\text{and}
\ \ 
(0\longrightarrow H^{-1}(E_\lambda)[1]\longrightarrow W_\lambda\longrightarrow{R_\lambda}\longrightarrow 0)_{\Lambda},
\] 
such that, viewed in $\Der(\G)$,  $W_\lambda\in\Inj\H_\mathbf{t}$. Given $X\in\Der(\G)$, apply $\Der(\G)(X,-)$ to the direct system on the right-hand side above to get the following natural morphism of Abelian groups:
\begin{equation}\label{def_xi_eq}
\Der(\G)(X,({\varinjlim}_\Lambda t(H^0(E_\lambda)))[1]){\cong \Der(\G)(X,({\varinjlim}_\Lambda R_{\lambda})[1])} \overset{\xi_X}\longrightarrow \Der(\G)(X,({\varinjlim}_\Lambda H^{-1}(E_\lambda))[3]),
\end{equation}
{by  the exactness of direct limits in $\Ch(\G)$ and since $t(H^{0}(E_{\lambda}))=H^0(R_{\lambda}),$ for all $\lambda \in \Lambda$. }

\begin{rem}\label{rem_stalk_for_map_ext_3}
When $X=G[k]$ for some $G\in\G$ and $k\in \{0,1\}$, we can describe the morphism $\xi_X$ quite explicitly: given $[\hat{\epsilon}]\in\Ext_\G^2({\varinjlim}_{\Lambda} t(H^0(E_\lambda)),{\varinjlim}_{\Lambda} H^{-1}(E_\lambda))$, represented by the exact sequence below
\[
\xymatrix@C=18pt{
\hat{\epsilon}:\hspace*{0.5cm} 0\ar[r]&{\varinjlim}_{\Lambda} H^{-1}(E_\lambda)\ar[r]& {\varinjlim} _{\Lambda}E_\lambda^{-1}\ar[rr]^-{{\varinjlim}_{\Lambda}\tilde{\sigma}_\lambda}&& {\varinjlim}_{\Lambda}  W_\lambda^0\ar[rr]^-{{\varinjlim}_{\Lambda}\tilde{p}_\lambda}&&{\varinjlim}_{\Lambda} t(H^0(E_\lambda))\ar[r]& 0,
}
\] 
define a map $\Ext_\G^{1-k}(G,{\varinjlim}_\Lambda t(H^0(E_\lambda)))\to \Ext_\G^{3-k}(G,{\varinjlim}_\Lambda H^{-1}(E_\lambda))$ such that $\delta\mapsto\hat{\epsilon}\cdot\delta$,  where  ``\hspace{2.1pt}$\cdot$'' is the Yoneda product (see {\cite[Chapter~III]{MacLane}}), then:
\[
\xymatrix@C=10pt@R=5pt{
\Ext_\G^{1-k}(G,{\varinjlim}_\Lambda t(H^0(E_\lambda)))\ar@{=}[d]\ar[rr]&&\Ext_\G^{3-k}(G,{\varinjlim}_\Lambda H^{-1}(E_\lambda))\ar@{=}[d]\\
\xi_{G[k]}\colon \Der(\G)(G[k],({\varinjlim}_\Lambda t(H^0(E_\lambda)))[1])\ar[rr]&& \Der(\G)(G[k],({\varinjlim}_\Lambda H^{-1}(E_\lambda))[3])
}
\]
\end{rem}

\begin{prop} \label{prop.objects in fp2(H)}
Let $\G$ be a Grothendieck category and $\t=(\T,\F)$ a torsion pair of finite type in $\G$, induced by the $2$-term cosilting complex 
\[
E:\qquad\cdots \longrightarrow 0\longrightarrow E^{-1}\stackrel{\sigma}{\longrightarrow}E^0\longrightarrow 0 \longrightarrow \cdots.
\] 
The following statements are equivalent for $M\in\fp(\H_\mathbf{t})$:
 \begin{enumerate}[\rm (1)]
 \item $M\in\fp_2(\H_\mathbf{t})$;
 \item the following assertions hold true:
 \begin{enumerate}[\rm (2.1)]
 \item the functor $\Der(\G)(M,-[2])_{\restriction \F}\colon\F\to\Ab$ preserves direct limits;
 \item  for any direct system $(E_\lambda)_{\Lambda}\subseteq\Prod_{\Ch(\G)}(E)$, the following map (see \eqref{def_xi_eq}) is monic:
 \[
 \xi_M\colon\Der(\G)(M,({\varinjlim}_\Lambda t(H^0(E_\lambda)))[1])\longrightarrow \Der(\G)(M,({\varinjlim}_\Lambda H^{-1}(E_\lambda))[3]).
 \]
 \end{enumerate}
\end{enumerate}
\end{prop}
\begin{proof}
As discussed at the beginning of this subsection, for any direct system $(E_\lambda )_{\Lambda}$ as in $(2.2)$, we have two  direct systems of short exact sequences  in $\Ch(\G)$:
\[
(0\longrightarrow W_\lambda\longrightarrow E_\lambda\stackrel{p_\lambda}{\longrightarrow} (1:t)(H^0(E_\lambda ))[0]\longrightarrow 0)_{\Lambda}\ \ 
\text{and}
\ \ 
(0\longrightarrow H^{-1}(E_\lambda)[1]\longrightarrow W_\lambda\longrightarrow {R_\lambda}\longrightarrow 0)_{\Lambda},
\]  
 where $R_\lambda$ is a complex concentrated in degrees $-1$ and $0$ and isomorphic to $t(H^{0}(E_\lambda))[0]$ in $\Der(\G)$.
The direct limit of the latter gives the following triangle in $\Der(\G)$:
\begin{equation}\label{to the triangle}
\xymatrix{
(\varinjlim_{\Lambda} H^{-1}(E_\lambda))[1]\longrightarrow \varinjlim_{\Lambda} W_\lambda\longrightarrow (\varinjlim_{\Lambda} t(H^0(E_\lambda)))[0]\longrightarrow  (\varinjlim_{\Lambda} H^{-1}(E_\lambda))[2].}
\end{equation}

\noindent
(1)$\Rightarrow$(2). By  \cite[R\'emarque~3.1.17]{BBD}, the two functors $\Der(\G)(M,-[2])_{\restriction \F}\cong\Ext_{\H_\mathbf{t}}^1(M,-[1])_{\restriction  \F}\colon\F\to\Ab$ are naturally isomorphic and they preserves direct limits because $({\varinjlim}^{(\G)}_{\Lambda} F_\lambda)[1]\cong{\varinjlim}^{(\H_\mathbf{t})}_{\Lambda}(F_\lambda[1])$, for each direct system $(F_\lambda )_{\Lambda}$ in $\F$, by Lemma~\ref{rem. stalks}.\\
On the other hand, let $(E_\lambda)_{\Lambda}$ be  a direct system in $\Prod_{\Ch(\G)}(E)$, and consider the direct systems of exact sequences in $\Ch(\G)$ {constructed at the beginning of the proof}. {In particular, $(W_\lambda)_{\Lambda}\subseteq \Inj \Ht$ and ${\varinjlim}^{(\Ch(\G))}_{\Lambda} W_\lambda\cong{\varinjlim}^{(\H_{\mathbf{t}})}_{\Lambda}W_\lambda$, by \cite[Lemma~4.4]{PS1}. Since $M\in \fp_2(\Ht)$, Lemma~\ref{lema_BP} implies that:} 
\begin{equation}\label{eq_vanish_ext_prop_3}
0=\Ext_{\H_\mathbf{t}}^1(M,{\varinjlim}_{\Lambda} W_\lambda)\cong\Der(\G)(M,({\varinjlim}_{\Lambda} W_\lambda)[1]).
\end{equation}
Applying now the cohomological functor $\Der(\G)(M,-)$ to the triangle \eqref{to the triangle}
and analyzing the induced long exact sequence, we deduce that $\xi_M$ is a monomorphism by \eqref{eq_vanish_ext_prop_3}. 

\smallskip\noindent
(2)$\Rightarrow$(1).  By Lemma~\ref{lema_BP}, condition (1) is equivalent to say that the following functor 
\[
\Ext_{\H_\mathbf{t}}^1(M,-)\cong\Der (\G)(M,-[1])_{\restriction\H_\mathbf{t}}\colon \H_\mathbf{t}\longrightarrow\Ab
\] 
vanishes on colimits of direct systems in $\Prod_{\H_\mathbf{t}}(W)$, for $W$ as in Proposition~\ref{prop.injcogenerator-from-cosilting}. For such a direct system $(W_\lambda)_{\Lambda}$, Corollary~\ref{coro_derivators} gives a direct system of exact sequences  $(0\rightarrow E^{I_\lambda}\to Y_\lambda\to W'_\lambda [1]\rightarrow 0)_{\Lambda}$ in $\Ch(\G)$, with $Y_\lambda\in\V_\mathbf{t}[1]= {}^{\perp_{>0}}E$  for all $\lambda\in \Lambda$, and such that $(W'_\lambda)_{\Lambda}$ is isomorphic to $(W_\lambda )_{\Lambda}$ in $\Der(\G)$.
 Applying the cohomological functor $H_\mathbf{t}^0$, 
 one sees that $H_\mathbf{t}^0(u_\lambda )\colon W'_\lambda\cong H_\mathbf{t}^0(W'_\lambda)\to H_\mathbf{t}^0(E^{I_\lambda})$ is a (necessarily split) monomorphism in $\mathcal{H}_\mathbf{t}$, for all \mbox{$\lambda\in\Lambda$,} {where each $u_\lambda$ is constructed as in the proof of Corollary~\ref{coro_derivators}}.  Thus, the induced monomorphism 
 \[
 {\varinjlim}_\Lambda^{(\H_\mathbf{t})} W'_\lambda\longrightarrow{\varinjlim}_\Lambda^{(\H_\mathbf{t})}H_\mathbf{t}^0(E^{I_\lambda})
 \] 
 is a direct limit of split monomorphisms, and it is then kept monic by  $\Ext_{\H_\mathbf{t}}^1(M,-)$, as $M\in\fp(\H_\mathbf{t})$. Up to replacing $(W_\lambda)_{\Lambda}$ by  $(W'_\lambda)_{\Lambda}$ and, if needed, replacing the latter  by $(H_\mathbf{t}^0(E^{I_\lambda}))_{\Lambda}$, we can assume that there is a direct system $(E_\lambda =E^{I_\lambda})_{\Lambda}$ in $\Prod_{\Ch(\G)}(E)$, such that $W_\lambda \cong\Ker(\pi_\lambda)$ is the kernel of the  the canonical projection $\pi_\lambda \colon E_\lambda\twoheadrightarrow (1:t)(H^0(E_\lambda))[0]$, for all $\lambda\in\Lambda$.  
Consider now the following commutative diagram and note that {the vertical arrow on the left is an isomorphism since $M\in\fp(\H_\t)$, while the one on the right is an isomorphism by (2.1) and \cite[R\'emarque~3.1.17]{BBD}:}
\[
\xymatrix@R=15pt{
{\varinjlim}_\Lambda\H_{\mathbf{t}}(M,t(H^0(E_\lambda))[0])\ar[d]|(.45){\cong}\ar@{->>}[r]&{\varinjlim}_{\Lambda}\Ext_{\H_{\mathbf{t}}}^1(M,H^{-1}(E_\lambda)[1])\ar[d]|(.45){\cong}\\
\H_{\mathbf{t}}(M,({\varinjlim}_{\Lambda} t(H^{0}(E_\lambda))[0])\ar@{->>}[r]&\Ext_{\H_{\mathbf{t}}}^1(M,({\varinjlim}_{\Lambda} H^{-1}(E_\lambda))[1]).
}
\]
Furthermore, the upper {horizontal arrow} is an epimorphism since $\Ext_{\H_\mathbf{t}}^1(M,W_\lambda)=0$, for all $\lambda\in\Lambda$, so that the lower {horizontal} arrow is an epimorphism too. In fact, this last map is {isomorphic} to the canonical morphism $\Der(\G)(M,({\varinjlim}_{\Lambda} t(H^{0}(E_\lambda))[0])\twoheadrightarrow\Der(\G)(M,({\varinjlim}_{\Lambda} H^{-1}(E_\lambda))[2])$ so that, when we apply $\Der(\G)(M,-)$ to the triangle \eqref{to the triangle}
we find the following exact sequence (with a $0$ on the left):
\[
\scalebox{0.97}{
\xymatrix{0\longrightarrow \Der(\G)(M,({\varinjlim}_{\Lambda} W_\lambda)[1])\longrightarrow\Der(\G)(M,({\varinjlim}_{\Lambda} t(H^0(E_\lambda))[1])\stackrel{\xi_M}{\longrightarrow}\Der(\G)(M,({\varinjlim}_{\Lambda} (1:t)(H^{-1}(E_\lambda))[3]). }
}
\] 
By assumption, $\xi_M$ is monic, so  $\Der(\G)(M,({\varinjlim}_{\Lambda} W_\lambda)[1])=0$ and thus $\Ext_{\H_\mathbf{t}}^1(M,{\varinjlim}_{\Lambda}^{(\H_\mathbf{t})}W_\lambda)=0$, since ${\varinjlim}_{\Lambda} W_\lambda \cong{\varinjlim}_{\Lambda}^{(\H_\mathbf{t})}W_\lambda$ (by  \cite[Proposition~4.2]{PS1}), as desired.
\end{proof}

\subsection{The category $\underline\F$}

Let $\G$ be a Grothendieck category and  $\mathbf{t}=(\T,\F)$ a torsion pair in $\G$. Then, 
\begin{itemize}
\item $\underline{\F}$ denotes the  subcategory of $\G$ of all quotients of objects in $\F$.
\end{itemize}

\begin{ejem}\label{example_modules_underline}
{Suppose that,} in the above setting, $\G=\mod\A$ is the category of modules over a small preadditive category $\A$. {The torsion ideal $t(\A)$ of $\A$ (with respect to $\mathbf{t}$) is defined as
\[
t(\A)(a, b) := t(H_b)(a),\qquad\text{for all $a,\, b \in \A$},
\]
where $H_b:=\A(-,b)\colon \A^{\op} \to \Ab$ (see \cite[Section~3]{PSV2}). In this situation,} $\underline{\F}$ is just the  category $\mod{(\A/t(\A))}$, {viewed as a full subcategory of $\mod{\A}$ in the obvious way.}
\end{ejem}

\begin{lema}\label{F0generates_underlineF}
Let $\G$ be a Grothendieck category and  $\mathbf{t}=(\T,\F)$ a torsion pair of finite type. Then, 
\begin{enumerate}[\rm (1)]
\item $\underline \F$ is coreflective in $\G$; 
\item $\underline \F$ is a Grothendieck category.
\end{enumerate} 
Furthermore, if $\G$ is locally finitely presented, then
\begin{enumerate}[\rm (3)]
\item $\F_0$ is a skeletally small class of generators of $\underline{\F}$.
\end{enumerate}
\end{lema}
\begin{proof}
(1) and (2) follow by \cite[Proposition~4.2]{PSV} (and they are also verified inside the proof of \cite[Theorem~4.18]{PS4}). Furthermore, {when $\G$ is locally finitely presented,} $\F_0$ is skeletally small by Corollary~\ref{cor.F0}, which also shows that $\F=\varinjlim\F_0$, so $\Gen(\F_0)=\Gen(\F)=\underline\F$.
\end{proof}

\begin{lema}\label{useful_lema_segunda_parte}
Let $\mathcal{G}$ be a locally finitely  presented Grothendieck category, $Q$  a quasi-cotilting object, and $\mathcal{F}=\Cogen(Q)={}^{\perp_1}Q\cap\underline{\Cogen}(Q)$. Suppose that there is a set $\mathcal X\subseteq\mathcal{F}_0\subseteq\pres_2(\mathcal{X})\subseteq \G$ such that 
\begin{enumerate}[\rm (\textsc{i})]
\item $\Ext_\mathcal{G}^1(X,{\varinjlim}_I Q_i)=0$, for all $X\in\mathcal{X}$ and all direct systems $(Q_i)_{I}\subseteq\Prod(Q)$.
\end{enumerate} 
Then, the following statements hold true:
 \begin{enumerate}[\rm (1)]
\item the functor $\Ext_\mathcal{G}^1(F,-)_{\restriction \mathcal{F}}\colon\mathcal{F}\to\Ab$ preserves direct limits, for all $F\in\mathcal{F}_0$;
\item $\F_0=\F\cap\fp(\underline{\F})$; 
\item $\underline{\F}$ is locally finitely presented. 
\end{enumerate} 
\end{lema}
\begin{proof}
Although $\F$ might fail to be a Grothendieck category, we still have that $\Ext_\G^1(-,Q')_{\restriction\F}=0$, for all $Q'\in\Prod(Q)$. With a little abuse of notation, we define: 
 \[
\fp_1(\F):=\F_0\quad \text{and}\quad \fp_2(\F):=\{F\in\F_0:\Ext_\G^1(F,-)_{\restriction\F}\text{ preserves direct limits}\}.
\]

\noindent
(1). Exploiting that $\F=\Copres(Q)$ (see Remark~\ref{rem_copres=cogen}), one can use 
 the proof of \cite[Theorem~B.1]{BP2} to show that $\X\subseteq\fp_2(\F)$. {Indeed, given $F\in \F$, the canonical inclusion $\iota_F\colon F \to \Psi_Q(F)=Q^{\G(F,Q)}$ is a $\Prod(Q)$-preenvelope, so $\Coker(\iota_F)\in \F={}^{\perp_1}Q\cap \underline{\F}$. Thus, replacing $\G$, $I$ and $\Ext^{1}_{\G}(=,-)$ by $\F$, $Q$, and $\Ext^{1}_{\G}(=,-)_{\restriction\F^{\op}\times \F}$}, the proof of \cite[Theorem~B.1]{BP2} applies literally. Similarly, one may adapt  the proof of Lemma~\ref{lem.n-fp subcategories} to see that, for any monomorphism $u\colon F\to F'$ such that $F\in\fp_1(\F)=\F_0$, $F'\in\fp_2(\F)$ and $\Coker(u)\in\F$, one has $\Coker(u)\in\fp_2(\F)$. Thus, $\F_0=\fp_2(\F)$, as  {$\F_0\subseteq \pres_2(\X)$ and $\X\subseteq\fp_2(\F)$}.

\smallskip\noindent
(2). Let $\Phi_{\mathcal X}\colon \underline{\F}\to \underline{\F}$ and  $\pi \colon \Phi_{\mathcal X}\Rightarrow  \id_{\underline{\F}}$ be the functor and the epimorphic natural transformation built in \eqref{def_Phi_Psi} (see Subsection \ref{subsectionEqII}). Then, for a direct system  $(M_i)_{I}\subseteq\underline{\F}$, we get the following short exact sequences:
\[
(0\rightarrow K_i:=\Ker(\pi_{M_i})\to\Phi_\mathcal{X}(M_i)\stackrel{\pi_{M_i}\ \ }{\longrightarrow}M_i\rightarrow 0)_{I},\quad 0\rightarrow {\varinjlim}_I K_i\longrightarrow{\varinjlim}_I \Phi_\mathcal{X}(M_i){\longrightarrow}{\varinjlim}_I M_i\rightarrow 0.
\]
For each $j\in I$, both $K_j$ and $\Phi_\mathcal{X}(M_j)$ are in $\mathcal{F}$. Thus, also ${\varinjlim}_I K_i\in \mathcal{F}$ and ${\varinjlim}_I\Phi_{X}(M_i)\in \mathcal{F}$. Moreover,  for each $F\in\mathcal{F}_0$, we obtain the following commutative diagram with exact rows:
\[
\scalebox{0.9}{
\xymatrix@C=18pt{
{\varinjlim}_I\underline{\F}(F,K_i)\ar@{^(->}[r]\ar[d]^{f_1}&{\varinjlim}_I\underline{\F}(F,\Phi_\mathcal{X}(M_i))\ar[r]\ar[d]^{f_2}&{\varinjlim}_I\underline{\F}(F,M_i)\ar[r]\ar[d]^{f_3}&{\varinjlim}_I\Ext_{\underline{\F}}^1(F,K_i)\ar[r]\ar[d]^{f_4}&{\varinjlim}_I\Ext_{\underline{\F}}^1(F,\Phi_\mathcal{X}(M_i))\ar[d]^{f_5}\\
\underline{\F}(F,{\varinjlim}_I K_i)\ar@{^(->}[r]&\underline{\F}(F,{\varinjlim}_I\Phi_\mathcal{X}(M_i))\ar[r]&\underline{\F}(F,{\varinjlim}_I M_i)\ar[r]&\Ext_{\underline{\F}}^1(F,{\varinjlim}_I K_i)\ar[r]&\Ext_{\underline{\F}}^1(F,{\varinjlim}_I\Phi_\mathcal{X}(M_i)).
}}
\]
The maps $f_1$ and $f_2$ are isomorphisms by definition of $\F_0$, while $f_4$ and $f_5$ are isomorphisms by part (1), forcing also $f_3$ to be an isomorphism, by the Five Lemma. Thus, $F\in \fp(\underline{\F})$, proving the inclusion $\mathcal{F}_0\subseteq\mathcal{F}\cap\fp(\underline{\F})$, while the converse inclusion follows by Corollary~\ref{cor.F0}.

\smallskip\noindent 
(3) follows by part (2) and Lemma~\ref{F0generates_underlineF}(3).
\end{proof}

Let us remark that, if $\H_\mathbf{t}$ is locally coherent, the hypotheses of Lemma~\ref{useful_lema_segunda_parte}, as well as those of the following Lemma~\ref{useful_lema_terza_parte}, are automatically satisfied (see the forthcoming Lemma~\ref{rem_tech_lemma}).

\begin{lema}\label{useful_lema_terza_parte}
Let $\mathcal{G}$ be a locally finitely  presented Grothendieck category, $Q$  a quasi-cotilting object, and $\mathcal{F}=\Cogen(Q)={}^{\perp_1}Q\cap\underline{\Cogen}(Q)$. Suppose that there is a set $\mathcal X\subseteq\mathcal{F}_0\subseteq \mathrm{gen}(\X)\subseteq \G$ such that:
\begin{enumerate}[\sc (i)]
\item $\Ext_\mathcal{G}^1(X,{\varinjlim}_I Q_i)=0$, for all $X\in\mathcal{X}$ and all direct systems $(Q_i)_{I}\subseteq\Prod(Q)$;
\item $\Ker(f)\in \F_0$ for any epimorphism $f\colon X\twoheadrightarrow F$, with $F\in \F_0$ and $X\in \mathrm{sum}(\X)$.
\end{enumerate}
Then, the following statements hold true:
 \begin{enumerate}[\rm (1)]
 \item $\F_0\subseteq \pres_2(\X)$;
  \item $\Ext_{\underline{\F}}^n(F,{\varinjlim}_I Q_i)=0$, for all $n>0$, $F\in\F_0$ and each direct system $(Q_i)_{I}$ in $\Prod(Q)$;
 \item $\F_0=\F\cap\fp_\infty(\underline{\F})$.
 \end{enumerate}
 \end{lema}
 \begin{proof}
 (1) follows easily by {\sc (ii)} and the condition $\mathcal{F}_0\subseteq \mathrm{gen}(\X)$.
 
 \smallskip\noindent
(2). Let $F\in \F_0$, $F'\in\F$, $n> 0$ and let $\epsilon$ be an $(n+1)$-fold extension of $F$ by $F'$ in $\underline \F$:
 \[
\epsilon\ := \quad (\ 0\longrightarrow F'\stackrel{}\longrightarrow K_n\stackrel{}\longrightarrow \cdots\stackrel{}\longrightarrow K_1\stackrel{}{\longrightarrow}K_0\stackrel{}\longrightarrow F\longrightarrow  0\ )\ \ \text{i.e.}\ \ \   [\epsilon]\in \Ext^{n+1}_{\underline \F}(F,F').
 \]   
{Let $\epsilon_{-1}:=\epsilon$, $K_0':=K_0$ and construct, by induction on $i\geq -1$, a sequence $(\epsilon_i)_{i=-1}^n$ of representatives of the same $[\epsilon]$, where $\epsilon_{i+1}:=(0\to F'\to \cdots\to K_{i+1}'\to F_i\to \dots \to F\to 0)$ is a pullback of $\epsilon_{i}$ along an epimorphism $\pi_i\colon F_{i}\to K'_{i}$ with $F_i\in \F$. In particular,
\[
\epsilon_n\ := \quad (\  0\longrightarrow F'\longrightarrow F_n\stackrel{}{\longrightarrow} F_{n-1}\longrightarrow \cdots\stackrel{}{\longrightarrow}F_0\stackrel{\pi}{\longrightarrow} F\longrightarrow 0\ ),\qquad \text{with $F_0,\dots,F_n\in\F$.}
 \]}
%
Take an epimorphism $q\colon \coprod_{ I}X_i\twoheadrightarrow F_0$, with $(X_i)_I\subseteq  \X$;  {then, $F=\Im(\pi \circ q)=\sum_J \Im( \pi \circ q \circ \iota_J),$ where $J$ ranges over the finite subsets of $I$, with $\iota_J\colon\coprod_J X_i \to \coprod_IX_i$  the canonical section. By Proposition~\ref{prop.fg-objects}, as $F\in \fp(\underline{\F})\subseteq \fg(\underline{\F})$, there is $J\subseteq I$ finite such that $\Im(\pi\circ q \circ \iota_J)=F$, i.e., $\rho:=\pi \circ q \circ \iota_J$ is epic.} 
Taking a pullback of $\epsilon_n$ along $q\circ\iota_J$, we finally obtain the following representative of $[\epsilon]$:
\[
\tilde\epsilon\ := \ \  (\  0\to F'\to F_n\to \cdots \to F_2\to\tilde{F}_1\stackrel{\tilde{d}}{\longrightarrow}\tilde{F}_0\stackrel{\rho}{\longrightarrow} F\to 0\ ),\ \  \text{with $\tilde F_0\in \mathrm{sum}(\X)$, $\tilde F_1, F_2,\dots, F_n\in \F$,}
\] 
so $[\epsilon]\in\Ext_{\underline{\F}}^n(\Ker(\rho),F')\cdot\Ext_{\underline{\F}}^1(F,\Ker(\rho))$ (where ``$\cdot$'' is the Yoneda product) and $\Ker(\rho)\in \F_0$ by {\sc (ii)}. Condition (2) follows by induction on $n$: the case $n=1$ is covered by Lemma~\ref{useful_lema_segunda_parte}. When $n>1$, take a direct system $(Q_i)_I\subseteq \Prod(Q)$, let $F':=\varinjlim_IQ_i$, and $[\epsilon]\in\Ext_{\underline{\F}}^n(F,F')$. By the above argument, there is $K\in \F_0$ such that $[\epsilon]\in\Ext_{\underline{\F}}^{n-1}(K,F')\cdot\Ext_{\underline{\F}}^1(F,K)$ and, by inductive hypothesis, $\Ext_{\underline{\F}}^{n-1}(K,F')=0$.

\smallskip\noindent
(3). Fix an object $F\in\F_0$ and let us show that $\Ext_{\underline{\F}}^k(F,-)$ preserves direct limits of objects in $\F$, for all $k>0$. We proceed by induction on $k$, the case $k=1$ being covered by Lemma~\ref{useful_lema_segunda_parte}. Suppose  that $k>1$, and consider a direct system $(F_i)_{I}$ in $\F$. There is then a direct system of  exact sequences $(0\rightarrow F_i\to \Psi_Q(F_i)\to F'_i\rightarrow 0)_{ I}$  in $\underline{\F}$, with $F'_i\in\F$ for all $i\in I$. Using our hypotheses, we deduce that  $\Ext_{\underline{\F}}^{k-1}(F,{\varinjlim}_I F'_i)\cong\Ext_{\underline{\F}}^{k}(F,{\varinjlim}_I F_i)$ and so, by inductive hypothesis, $\Ext_{\underline{\F}}^{k}(F,-)$ preserves direct limits of objects in $\F$. Similarly, for a direct system $(M_i)_I$ in $\underline \F$, take the direct system of exact sequences $(0\to F_i\to \Phi_\X(M_i)\to M_i\to 0)_{I}$, where $F_i,\ \Phi_\X(M_i)\in \F$. By the first part of the proof,  the canonical maps $\varinjlim_I\Ext_{\underline{\F}}^{k}(F,\Phi_\X(M_i))\to \Ext_{\underline{\F}}^{k}(F,\varinjlim_I\Phi_\X(M_i))$ and $\varinjlim_I\Ext_{\underline{\F}}^{k}(F,F_i)\to \Ext_{\underline{\F}}^{k}(F,\varinjlim_IF_i)$ are isomorphisms for all $k\geq 0$. By the Five Lemma one concludes that canonical map $\varinjlim_I\Ext_{\underline{\F}}^{k}(F,M_i)\to \Ext_{\underline{\F}}^{k}(F,\varinjlim_IM_i)$  is an isomorphism, for all $k\geq 0$. 
\end{proof}

Let us conclude the section with the following observation:
\begin{rem}\label{interpretation_of_results}
Let $\G$ be a Grothendieck category and $\t=(\T,\F)$ a torsion pair of finite type in $\G$. In this section we have given several characterizations of the objects in $\fp(\Ht)$: using the general description in Section~\ref{subs5.1}, we have identified two subcategories $\F_0\subseteq \F$ and $\T_0\subseteq \T$ such that 
\[
\fp(\Ht)\cap \F[1]=\F_0[1]\qquad\text{and}\qquad\fp(\Ht)\cap \T[0]=\T_0[0]
\]
(see Section~\ref{subs5.2} and \ref{subs5.3}, respectively). Under suitable conditions (see Proposition~\ref{char_of_fp_under_dot+restricts}), one even gets the satisfactory formula $\fp(\Ht)=\F_0[1]*\T_{0}[0]$. On the other hand, there is a fundamental asymmetry between the classes $\T_0$ and $\F_0$: the inclusion $\T_0\subseteq \T\cap \fp(\G)$ always holds, with equality under reasonable hypotheses (see Lemma~\ref{lem.subcategories of Tcapfp}) leading one to expect a strong relation between $\F_0$ and $\F\cap \fp(\G)$. On the other hand, in this generality, things are more subtle: to get a description that mirrors that of $\T_0$, one should consider the category $\underline \F$, which is itself a Grothendieck category (coreflective in $\G$) and, under strong enough hypotheses (see Lemma~\ref{useful_lema_segunda_parte}), one has $\F_0=\F\cap \fp(\underline \F)$; the naive guess that $\F_0=\F\cap \fp(\G)$ then follows when, in addition, $\F$ is generating, that is, $\G=\underline \F$.
\end{rem}

\section{Locally finitely presented hearts}  \label{lfp_section}

In this section we start with a locally finitely presented Grothendieck category $\G$, a torsion pair $\mathbf{t}=(\T,\F)$ of finite type in $\G$, and we look for conditions under which the heart $\Ht$  of the associated HRS $t$-structure in $\Der(\G)$ is a locally finitely presented Grothendieck category.
Our strongest result in this direction is the following theorem, that gives several equivalent conditions for this to happen. Let us underline that we are only able to prove the equivalence of all the conditions under either of two extra hypotheses that we have labeled here by ($\dag^\sharp$) (a stronger condition than the ($\dag$)  used in Proposition~\ref{char_of_fp_under_dot+restricts}) and ($\bullet$). In fact, these conditions are quite general as, for example, ($\dag^\sharp$) is always satisfied in case  $\G $ is locally coherent, while ($\bullet$) is always satisfied when $\G$ is a category of modules over a small preadditive category (e.g., over a  ring). 
\begin{teor}\label{thm.locally-fp-hearts}
Let $\G$ be a locally finitely presented Grothendieck category, $\mathbf{t}=(\T,\F)$ a torsion pair in $\G$ and $\Ht$ the heart of the associated HRS $t$-structure in $\Der(\G)$. Consider the following assertions:
  \begin{enumerate}[\rm (1)]
  \item $\H_\mathbf{t}$ is a locally finitely presented Grothendieck category;
  \item $\mathbf{t}$ is strongly  generated by finitely presented objects, i.e. $\mathcal{T}={\varinjlim} (\mathcal{T}\cap\fp(\G))$;
  \item there is a set $\mathcal{S}\subseteq\fp(\G)$ such that $\mathcal{T}=\Gen(\mathcal{S})$ (or, equivalently, $\mathcal{T}=\Pres(\mathcal{S})$);
  \item $\mathbf{t}$ is generated by a set of finitely presented objects, i.e.\ there is a set $\Scal\subseteq \fp(\G)$ such that $\F = \S^{\perp}$. 
  \end{enumerate}
Then, the implications {\rm``(1)$\Rightarrow$(2)$\Leftrightarrow$(3)$\Rightarrow$(4)''} hold true. Furthermore, the implication {\rm``(2)$\Rightarrow$(1)''} holds true whenever the following condition is satisfied:
\begin{enumerate}
\item[\rm ($\dag$)]  $\Ext_\G ^k(T,-)$ preserves direct limits of objects in $\mathcal{F}$, for all $T\in\mathcal{T}\cap\fp(\G)$ and $k=1,\,2$.
\end{enumerate}
Moreover, when either of the following two conditions is satisfied, all assertions are equivalent:
\begin{enumerate}
\item[\rm ($\dag^\sharp$)] condition $(\dag)$ holds and $\mathcal{T}\cap\fp(\mathcal{G})\subseteq\fp_2(\G)$;
\item[\rm ($\bullet$)]  $\G $ has a set of finitely presented generators which are compact in $\Der(\G )$.
\end{enumerate}
\end{teor}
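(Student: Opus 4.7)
The plan is to handle the easy implications first, and then tackle $(4)\Rightarrow(1)$ under each of the extra hypotheses. For $(1)\Rightarrow(2)$: since $\Ht$ is Grothendieck, Prop.\,\ref{main_result_PS2} gives that $\mathbf{t}$ is of finite type; for any $T\in\T$, write $T[0]=\varinjlim M_i$ in $\Ht$ with $M_i\in\fp(\Ht)$ and apply the cohomology functor $H^0\colon\Ht\to\G$, which is right exact and coproduct-preserving and hence preserves direct limits, to obtain $T=\varinjlim H^0(M_i)$ with each $H^0(M_i)\in\T\cap\fp(\G)$ by Lem.\,\ref{lem.subcategories of Tcapfp}. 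The equivalence $(2)\Leftrightarrow(3)$ is part of Prop.\,\ref{prop_fixing_first_part_of_old_4.1}, and $(3)\Rightarrow(4)$ is immediate since $\F=\T^\perp=\Gen(\mathcal{S})^\perp=\mathcal{S}^\perp$.

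For $(4)\Rightarrow(2)$ under $(\dag)$: from $\F=\mathcal{S}^\perp$ one has $\mathcal{S}\subseteq{}^\perp\F=\T$, hence $\mathcal{S}\subseteq\mathcal{S}':=\T\cap\fp(\G)$. The class $\mathcal{S}'$ is closed under extensions (combining Cor.\,\ref{fp_is_closed_under_extensions} with the closure of $\T$) and under quotients in $\fp(\G)$, while $(\dag)$ guarantees that $\Ext^1_\G(T,-)$ preserves direct limits for every $T\in\mathcal{S}'$; Lem.\,\ref{ext_closed_fp_gives_torsion} therefore yields a torsion class $\T':=\varinjlim\mathcal{S}'\subseteq\T$. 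The inclusion $\mathcal{S}\subseteq\T'$ forces $(\T')^\perp\subseteq\mathcal{S}^\perp=\F$, while $\T'\subseteq\T$ forces $\F=\T^\perp\subseteq(\T')^\perp$, so that $\T'=\T$.

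For $(2)\Rightarrow(1)$ under $(\dag)$: condition $(\dag)$ implies condition $(\bullet)$ of Lem.\,\ref{char_of_fp_under_dot+restricts}, so Cor.\,\ref{cor.fp-torsion-stalks} and Cor.\,\ref{cor.F0} yield that $T[0]\in\fp(\Ht)$ for $T\in\T\cap\fp(\G)$ and $(1:t)(X)[1]\in\fp(\Ht)$ for $X\in\fp(\G)$. Fixing sets of representatives $\mathcal{X}_0$ of the isomorphism classes of $\T\cap\fp(\G)$ and $\mathcal{X}_1$ of $\{(1:t)(X):X\in\fp(\G)\}$, the set
\[
\mathcal{X}:=\{T[0]:T\in\mathcal{X}_0\}\cup\{F[1]:F\in\mathcal{X}_1\}
\]
then consists of finitely presented objects of $\Ht$. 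To verify that $\mathcal{X}$ generates $\Ht$, take a nonzero $M\in\Ht$ and use its canonical torsion sequence $0\to H^{-1}(M)[1]\to M\to H^0(M)[0]\to 0$: if $H^{-1}(M)\neq 0$, write $H^{-1}(M)=\varinjlim (1:t)(X_j)$ along a direct system $(X_j)$ in $\fp(\G)$ (using that $\G$ is locally finitely presented and $(1:t)$ preserves direct limits by finite type), choose $j$ so that the canonical $(1:t)(X_j)\to H^{-1}(M)$ is nonzero, and compose the induced nonzero $(1:t)(X_j)[1]\to H^{-1}(M)[1]$ with the monomorphism $H^{-1}(M)[1]\hookrightarrow M$; otherwise $M=H^0(M)[0]$ with $H^0(M)\neq 0$, and (2) provides a nonzero $T_i\to H^0(M)$ with $T_i\in\T\cap\fp(\G)$, hence a nonzero $T_i[0]\to M$.

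For $(4)\Rightarrow(1)$ under $(\ddag)$: here $\Der(\G)$ is a compactly generated triangulated category, and the plan is to show that the HRS $t$-structure is compactly generated and invoke the Saor\'{\i}n--\v{S}\v{t}ov\'{\i}\v{c}ek result mentioned in the Introduction. Denoting by $\mathcal{P}\subseteq\fp(\G)$ the given set of fp generators compact in $\Der(\G)$, for each $S\in\mathcal{S}$ one picks a presentation $P_1\to P_0\twoheadrightarrow S$ with $P_0,P_1\in\add(\mathcal{P})$ and forms the two-term compact complex $C_S:=[P_1\to P_0]$ in $\Der(\G)$ (concentrated in degrees $-1$ and $0$); one then argues that the compact set $\mathcal{P}[1]\cup\{C_S:S\in\mathcal{S}\}$ generates the HRS aisle $\U_\mathbf{t}=\Der^{\leq-1}(\G)\ast\T[0]$. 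I expect this last verification to be the main obstacle: one has to identify the co-aisle $\V_\mathbf{t}$ with $(\mathcal{P}[1]\cup\{C_S\}_S)^{\perp_{\leq 0}}$ by reducing the vanishing $\Der(\G)(C_S,Y)=0$, on those $Y\in\Der^{\geq 0}(\G)$ already isolated by the orthogonality $\mathcal{P}[1]^{\perp_{\leq 0}}$, to the condition $H^0(Y)\in\mathcal{S}^\perp=\F$.
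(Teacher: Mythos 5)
Your easy implications are fine, and your expansion of $(4)\Rightarrow(2)$ under $(\dag)$ correctly unpacks the paper's one-line citation of Lemma~\ref{ext_closed_fp_gives_torsion}. But your argument for $(2)\Rightarrow(1)$ under $(\dag)$ has a genuine gap, and it is the crux of the theorem.

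You verify that for every nonzero $M\in\Ht$ there is some object in $\mathcal{X}$ admitting a nonzero morphism into $M$. This is a necessary but \emph{not} a sufficient condition for $\mathcal{X}$ to be a generating set: a family $\mathcal{X}$ generates a Grothendieck category precisely when every object is a quotient of a coproduct of objects of $\mathcal{X}$, equivalently when for each nonzero morphism $f\colon X\to Y$ there is $G\in\mathcal{X}$ and $g\colon G\to X$ with $fg\neq 0$. The weaker ``nonzero-into'' condition can hold without $\mathcal{X}$ generating: in $\mod{\Z/4}$ the module $\Z/2$ maps nontrivially into every nonzero module, yet $\Gen(\Z/2)$ consists only of the modules killed by $2$, so $\Z/2$ is not a generator. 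Concretely, in your situation the difficulty is that if $M$ is a genuine extension of $H^0(M)[0]$ by $H^{-1}(M)[1]$, an epimorphism from a coproduct of $\mathcal{X}$-objects onto $H^0(M)[0]$ has no reason to lift along $M\twoheadrightarrow H^0(M)[0]$. This lifting problem is exactly where the paper's proof does real work: it first reduces (via a pullback along $T_\lambda\to T$) to the case $H^0(M)\in\fp(\mathcal{G})$, and then uses the \emph{second} half of $(\dag)$, that $\Ext^2_\G(T,-)$ preserves direct limits of objects of $\F$, to show that the Yoneda $2$-extension $0\to H^{-1}(M)\to M^{-1}\to M^0\to H^0(M)\to 0$ is pulled back from a finitely presented stage, so that $M$ itself is a direct limit of finitely presented objects of $\Ht$. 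Your argument never uses the $k=2$ part of $(\dag)$, which should have been a warning sign.

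For $(4)\Rightarrow(1)$ under $(\ddag)$ you candidly flag that you have not verified the compact generation of the HRS $t$-structure, and indeed that is the whole point of this implication. The paper settles it by invoking \cite[Prop.~6.4]{SSV} and \cite[Thm.~2.3]{BPa}; without that, or an explicit identification of $\mathcal{V}_\mathbf{t}$ with the appropriate orthogonal of your two-term complexes $C_S$ and of $\mathcal{P}[1]$, the argument is incomplete rather than incorrect. In summary: the outer implications and $(4)\Rightarrow(2)$ are fine, but $(2)\Rightarrow(1)$ under $(\dag)$ needs the direct-limit / $\Ext^2$ lifting argument, and $(4)\Rightarrow(1)$ under $(\ddag)$ needs the compact-generation step to be actually carried out or cited.
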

\begin{proof}
The implications ``(2)$\Leftrightarrow$(3)$\Rightarrow$(4)''  follow by Proposition~\ref{prop_fixing_first_part_of_old_4.1}. 

\smallskip\noindent
(1)$\Rightarrow$(2). Let $T$ be an object in $\T$. Since $\Ht$ is locally finitely presented, there is a direct system $(P_\lambda)_{\Lambda}$  in $\fp(\Ht)$ such that ${\varinjlim}_{\Lambda}^{(\Ht)}P_{\lambda}=T[0]$. By \cite[Theorem~4.8(4)]{PS1} we deduce the isomorphism ${\varinjlim}_{\Lambda}^{(\G)} H^{0}(P_\lambda)=T$, so that assertion (2) is clear from Corollary~\ref{cor.fpobjects-in-heart}.

\smallskip\noindent 
{(2)$\Rightarrow$(1), {\em assuming  $(\dag)$}. Take $X\in\Ht$ and let us show that $X\in \Gen(\fp(\Ht))$. Let $T:=H^{0}(X)$ and fix a direct system $(T_{\lambda})_{\Lambda}$ in $\T\cap \fp(\G)$ such that ${\varinjlim}_\Lambda^{(\G)} T_\lambda\cong T$; this can be done by (2). For each $\lambda \in \Lambda$, take a pullback diagram in $\Ht$ as follows:
\[
\xymatrix@C=40pt@R=20pt{ 
0 \ar[r] & H^{-1}(X)[1] \ar[r] \ar@{=}[d] & X_{\lambda} \ar[r] \ar[d]_{\tilde{f_{\lambda}}} \ar@{}[dr]|{\text{P.B.}} & T_{\lambda}[0] \ar[r] \ar[d]^{f_\lambda} & 0\\ 
0 \ar[r] & H^{-1}(X)[1] \ar[r] & X \ar[r] & T[0] \ar[r] & 0.
}
\]  
We get a direct system $(X_{\lambda})_{\Lambda}\subseteq\Ht$ such that ${\varinjlim}_\Lambda^{(\Ht)}X_{\lambda}=X$, so there is no loss of generality in assuming that $T\in \fp(\G)\cap \T$. Let now $F:=H^{-1}(X)$ and take a direct system $(M_{\lambda})_{\Lambda}$ in $\fp(\G)$ such that ${\varinjlim}_\Lambda^{(\G)}M_{\lambda}\cong F$. By Lemma~\ref{preservation_colimits_fin_type}, $F\cong{\varinjlim}_\Lambda^{(\G)}(1:t)(M_{\lambda})$ and, by ($\dag$), we have an isomorphism:
\begin{equation}\label{epi_in_ext_2_eq}
{\varinjlim}_\Lambda \Ext^{2}_{\G}(T,(1:t)(M_{\lambda})) \tilde\longrightarrow \Ext^{2}_{\G}(T,{\varinjlim}_\Lambda (1:t)(M_{\lambda}))=\Ext^{2}_{\G}(T, F).
\end{equation}
Consider the element $[\varepsilon]\in\Ext_\G^2(T,F)$ represented by the following exact sequence:
\begin{equation}\label{equation_1_theorem3.1}
\xymatrix{
\epsilon: & 0 \ar[r] & F \ar[r] & X^{-1} \ar[r] & X^{0} \ar[r] & T \ar[r] & 0.
}
\end{equation}
By \eqref{epi_in_ext_2_eq}, there is $\beta \in \Lambda$ and $[\epsilon_{\beta}]\in \Ext^{2}_{\G}(T,(1:t)M_{\beta})$ such that $\Ext^{2}_{\G}(T,u_{\beta})([\epsilon_{\beta}])=[\epsilon]$, where $u_{\beta}\colon (1:t)M_{\beta} \rightarrow F$ is the canonical map to the direct limit. Fix the following exact sequence in $\G$:
\[
\xymatrix{
\epsilon_{\beta}\colon&0 \ar[r] & (1:t)M_{\beta} \ar[r] & Y_{\beta}^{-1} \ar[r] & Y_{\beta}^{0} \ar[r] & T \ar[r] & 0
}
\]
 which represents $[\epsilon_{\beta}]$ {and let $\Lambda_\beta:=\{\lambda\in\Lambda:\lambda\geq \beta\}$.} For each ${\lambda \in \Lambda_{\beta}}$,  take the  pushout diagram: 
\[
\xymatrix@R=20pt{
\epsilon_{\beta}\colon&0 \ar[r] & (1:t)M_{\beta} \ar[r] \ar[d]_{u_{{\beta ,\lambda}}} & Y^{-1}_{\beta} \ar[r] \ar[d]& Y^{0}_{\beta} \ar[r] \ar@{=}[d] & T \ar[r] \ar@{=}[d] & 0\\
&0 \ar[r] & (1:t)M_{\lambda} \ar[r] & Y^{-1}_{\lambda} \ar[r] \ar@{}[ul]|{P.O.} & Y^{0}_{{\beta}} \ar[r] & T \ar[r] & 0,
}
\]
where {$u_{\beta,\lambda}\colon(1:t)(M_\beta) \to (1:t)(M_{\lambda})$ is the transition map in $((1:t)(M_\lambda))_{\lambda \in \Lambda}$}. Thus, the second row in the diagram represents the extension  $\Ext^{2}_{\G}(T,{u_{{\beta ,\lambda}}})([\epsilon_{\beta}])$. 
Take the following direct system
\[
(0 \longrightarrow (1:t)M_{\lambda} \longrightarrow Y^{-1}_{\lambda} \longrightarrow Y^{0}_{\beta} \longrightarrow T \longrightarrow 0)_{\lambda \in \Lambda_\beta}
\] 
 in $\Ch(\G)$. As {$\Lambda_\beta\subseteq\Lambda$ is cofinal,} taking the colimit gives the following exact sequence:
\begin{equation}\label{equation_2_theorem3.1}
\xymatrix{
0 \ar[r] & F \ar[r] & {\varinjlim}_{\lambda \in \Lambda_\beta} Y^{-1}_{\lambda} \ar[r] & Y^{0}_{\beta} \ar[r] & T \ar[r] & 0
}
\end{equation}
which represents the same $[\epsilon]\in\Ext^{2}_{\G}(T,F)$. Then the original exact sequence \eqref{equation_1_theorem3.1} can be obtained from this one by a finite ``zig-zag'' of diagrams like the following one (see \cite{MacLane}):
\[
\xymatrix@R=20pt{
0 \ar[r] & F \ar[r] \ar@{=}[d] & \tilde{X}^{-1} \ar[r] \ar[d] & \tilde{X}^{0} \ar[r] \ar[d] & T \ar[r] \ar@{=}[d] & 0 \\ 
0 \ar[r] & F \ar[r] & \bar{X}^{-1} \ar[r] & \bar{X}^{0} \ar[r] & T \ar[r] & 0.
}
\]
The associated complexes concentrated in degrees $-1$ and $0$, all have the same cohomologies, and they are in fact  all quasi-isomorphic. Thus, \eqref{equation_2_theorem3.1} shows that $X$ is isomorphic in $\Ht$ to the complex:
\[
\xymatrix{ 
\cdots \ar[r] & 0 \ar[r] & {\varinjlim}_{\lambda \in \Lambda_\beta} Y^{-1}_{\lambda} \ar[r] & Y^{0}_{\beta} \ar[r] & 0 \ar[r] & \cdots.
}
\]
By \cite[Lemma~4.4]{PS1},  $X$ is a direct limit of $(Y_\lambda:=( \cdots \to 0 \to Y_\lambda^{-1} \to Y_{\beta}^0 \to 0 \to \cdots))_{\lambda \in \Lambda_\beta}$ in $\Ht$. Moreover, for each $\lambda \in \Lambda_\beta$, there is an exact sequences $0 \to H^{-1}(Y_\lambda)[1] \to Y_\lambda \to H^0(Y_\lambda)[0] \to 0$ in $\Ht$, where $H^{-1}(Y_\lambda)=(1:t)(M_\lambda) \in \F_0$, so $H^{-1}(Y_\lambda)[1] \in\fp(\H_\t)$ by Corollary~\ref{cor.F0}, and $H^0(X_\lambda)\in\fp(\G)$, so $Y_\lambda\in  \fp(\H_\t)$,  by Proposition~\ref{char_of_fp_under_dot+restricts}.}

\smallskip\noindent
(4)$\Rightarrow$(2), {\em assuming $(\dag^\sharp)$}. {It} follows by Lemma~\ref{ext_closed_fp_gives_torsion}.

\smallskip\noindent
(4)$\Rightarrow$(1), {\em assuming $(\bullet)$}. In this case, the associated HRS $t$-structure is compactly generated (for this, see \cite[Proposition~6.4]{SSV} and \cite[Theorem~2.3]{BPa}). One the concludes by one of the main results in \cite{Saorin-Stovicek}, which states that the heart of any compactly generated $t$-structure is a locally finitely presented.\end{proof}

Note that the assertions (2) and (4) in the above theorem are general statements about a torsion pair in a Grothendieck category so it seems plausible that, under suitable assumptions, there should be a more direct proof of their equivalence. Hence, the following question naturally arises:

\begin{ques}\label{ques_for_mod_gen_fp}
Suppose that $\G$ is a category of modules over a ring or, more generally, over a small pre-additive category, so that  $(\bullet)$ in Theorem~\ref{thm.locally-fp-hearts} is clearly verified. In this case, is there a purely module-theoretic proof of the implication {\rm``(4)$\Rightarrow$(2)''}? That is, can one find an argument that does not go through  $(1)$ and, hence, that does not rely on the heavy machinery of  \cite{Saorin-Stovicek}?
\end{ques}

\section{Locally coherent hearts}\label{Section_loc_coh_hearts}

In this section we start with a locally finitely presented Grothendieck category $\G$, a torsion pair $\mathbf{t}=(\T,\F)$ of finite type in $\G$, and we look for conditions under which the heart $\Ht$ of the associated HRS $t$-structure is locally coherent. This is a difficult problem in general so we restrict our investigation to two, fairly general, special cases: we  first tackle the problem under the additional hypothesis that $\bar\t:=(\F[1],\T[0])$ restricts to $\fp(\Ht)$ and, secondly, under the assumption that $\F$ is generating in $\G$.

\subsection{When the tilted torsion pair restricts to finitely presented objects}

A source of trouble in the study of the local coherence of the heart comes from the fact that the characterization in Corollary~\ref{cor.fpobjects-in-heart}  of the objects  $M\in\fp(\H_{\mathbf{t}})$ is not handy unless we can guarantee that  the stalks $H^{-1}(M)[1]$ and $H^0(M)[0]$ are also in $\fp(\H_\mathbf{t})$. This happens exactly when the tilted torsion pair $\bar{\mathbf{t}}=(\F[1],\T[0])$ restricts to  $\fp(\H_\mathbf{t})$, that is the situation that we shall deal with in this subsection. The following result builds on the characterizations of  local coherence from Propositions~\ref{prop. Manolo}  and \ref{prop.localcoherence-via-torsion}.

\begin{teor} \label{thm.local-coherent-heart-general}
Let $\mathcal{G}$ be a locally finitely presented Grothendieck category, $\mathbf{t}=(\mathcal{T},\mathcal{F})$ a torsion pair of finite type in $\mathcal{G}$, defined by the $2$-term cosilting complex 
\[
E:\qquad \cdots \longrightarrow 0\longrightarrow E^{-1}\stackrel{\sigma}{\longrightarrow}E^0\longrightarrow 0 \longrightarrow \cdots,
\] 
and $Q:=\Ker(\sigma)$ the corresponding quasi-cotilting object in $\mathcal{G}$. Then, the following are equivalent:
\begin{enumerate}[\rm (1)]
 \item $\H_\mathbf{t}$ is locally coherent and $\T_0=\T\cap \fp(\G)$;
\item $\H_\mathbf{t}$ is locally coherent and the tilted torsion pair  $\bar{\mathbf{t}}=(\mathcal{F}[1],\mathcal{T}[0])$ restricts to $\fp(\H_\mathbf{t})$;
\item $\mathbf{t}$ is strongly generated by finitely presented objects and the following conditions hold:
\begin{enumerate}
\item[\rm ($\dag$)] $\Ext_\G^k(T,-)$ preserves direct limits of objects in $\F$, for all $T\in\T\cap\fp(\G)$ and $k=1,\, 2$;
\end{enumerate}
\begin{enumerate}[\rm ({3.}1)]
\item $\Ker(f)\in \F_0$ and $t(\Coker(f))\in \fp(\G)$, for all $f\colon F\to F'$ in $\F_0$;
\item $t(\Ker(g))\in \fp(\G)$, for all $g\colon T\to T'$ in $\T\cap \fp(\G)$;
\item $t(K)\in\fp(\G)$, for all $K\in \F_0*(\T\cap \fp(\G))$;
\end{enumerate}
\item
$\mathbf{t}$ is strongly generated by finitely presented objects and the following conditions hold:
\begin{enumerate}[\rm ({4.}1)]
\item {\rm ($\dag$)} holds and the canonical map $\xi_{T[0]}\colon\Ext_\G^1(T,{\varinjlim}_{\Lambda} t(H^0(E_\lambda)))\to\Ext_\G^3(T,{\varinjlim}_{\Lambda} H^{-1}(E_\lambda))$  is injective,
 for all $(E_\lambda)_{\Lambda}\subseteq\Prod_{\Ch(\G)}(E)$ directed and $T\in\T\cap\fp(\G)$;
\item for some (resp., each) set of generators $\S\subseteq \fp(\G)$ of $\G$, let $\X:=(1:t)(\S)$, then:
\begin{enumerate}[\sc (i)]
\item $\Ext_\G^1(X,{\varinjlim}_{\Lambda} Q_\lambda)=0$, for all $X\in\mathcal{X}$ and $(Q_\lambda)_{\Lambda}\subseteq\Prod(Q)$ directed;
\item $\Ker(p)\in \F_0$, for all epimorphism $p\colon X\twoheadrightarrow F$, with $F\in\F_0$ and $X\in\mathrm{sum}(\X)$;
\item the canonical map $\xi_{X[1]}\colon\mathcal{G}(X,{\varinjlim}_{\Lambda} t(H^0(E_\lambda)))\to\Ext_\mathcal{G}^2(X,{\varinjlim}_{\Lambda}  H^{-1}(E_\lambda))$ is injective, for all $(E_\lambda)_{\Lambda}\subseteq\Prod_{\Ch(\G)}(E)$ directed and $X\in\mathcal{X}$.
\end{enumerate}
\end{enumerate}
\end{enumerate}
\end{teor}
\begin{proof}
By Lemma~\ref{lem.subcategories of Tcapfp} and Theorem~\ref{thm.locally-fp-hearts}, each of the conditions (2),  (3) or (4) implies $\mathcal{T}\cap\fp(\mathcal{G})=\T_0$, e.g., ``(2)$\Rightarrow$(1)'' follows  by Lemma~\ref{lem.subcategories of Tcapfp}. We shall assume that  $\mathcal{T}\cap\fp(\mathcal{G})=\T_0$  throughout the proof. 

\smallskip\noindent
(1)$\Rightarrow$(2). We want to verify that $\bar{\mathbf{t}}$ restricts to $\fp(\H_\mathbf{t})$, that is, given $M\in \fp(\Ht)$ we should show that $H^{-1}(M)[1]\in\fp(\Ht)$. As $\Ht$ is locally coherent by hypothesis, this is equivalent to show that the $\bar\t$-torsionfree part of $M$ is finitely presented, that is, $H^0(M)[0]\in\fp(\H_\mathbf{t})$ or, equivalently (see Corollary~\ref{cor.fp-torsion-stalks}), $H^0(M)\in\T_0$. But $H^0(M)\in \T\cap \fp(\G)$ by Corollary~\ref{cor.fpobjects-in-heart} and, in our case, $\T\cap \fp(\G)=\T_0$. 

\smallskip\noindent
(1,2)$\Leftrightarrow$(3). By Theorem~\ref{thm.locally-fp-hearts} and Lemma~\ref{char_of_fp_under_dot+restricts}, (3) forces the heart $\Ht$ to be locally finitely presented and $\bar{\mathbf{t}}=(\F[1],\T[0])$ to restrict to $\fp(\Ht)$. To conclude let us verify that, in this situation, the conditions (1--3) of Proposition~\ref{prop.localcoherence-via-torsion} are equivalent to (3.1--3) in the statement:
\begin{enumerate}[]
\item
(\ref{prop.localcoherence-via-torsion},(1))$\Leftrightarrow$(3.1). Any morphism in $\F_0[1]$ is of the form $f[1]\colon F[1]\to F'[1]$, for some morphism $f\colon F\to F'$ in $\F_0$. By the explicit construction of kernels in $\Ht$ (see the discussion at the end of Section~\ref{generalities_on_t}), we know that $Z:=\Ker_{\Ht}(f[1])$ is a complex of objects in $\F$, concentrated in degrees $-1$ and $0$, such that $H^{-1}(Z)=\Ker(f)$ and $H^0(Z)=t(\Coker(f))$. Now, the fact that $\bar{\mathbf{t}}$ restricts to $\fp (\Ht)$ tells us that $Z\in\fp (\Ht)$ if, and only if, $\Ker(f)[1]$ and $t(\Coker(f))[0]$ are in $\fp (\Ht)$. By Corollary~\ref{cor.F0} and \ref{cor.fp-torsion-stalks}, this happens exactly when condition (3.1) is verified. 
\item
(\ref{prop.localcoherence-via-torsion},(2))$\Leftrightarrow$(3.2). Similarly, a morphism in $(\T\cap \fp(\G))[0]$ is of the form $g[0]\colon T[0]\to T'[0]$, for some morphism $g\colon T\to T'$ in $\T\cap \fp(\G)$. Now, $\Ker_{\Ht}(g[0])\cong t(\Ker(g))[0]$ and so $\Ker_{\Ht}(g[0])\in\fp (\Ht)$ if, and only if, condition (3.2) is verified. 
\item
(\ref{prop.localcoherence-via-torsion},(3))$\Leftrightarrow$(3.3). Consider a morphism $h\colon T[0]\to F[1]$ in $\fp (\Ht)$, where $T[0]\in(\T\cap \fp(\G))[0]$ and $F[1]\in\F_0[1]$. This morphism is represented by an extension  in $\G$ of the form $0\rightarrow F\to K\to T\rightarrow 0$. By Lemma~\ref{lem.kernel of mixed morphism},  $\Ker_{\Ht}(h)\cong t(K)[0]$, so this object belongs in $\fp (\Ht)$ if, and only if, $t(K)\in\fp(\G)$. 

\end{enumerate}
 
\smallskip\noindent
{(2)$\Leftrightarrow$(4). By  Theorem~\ref{thm.locally-fp-hearts} and Lemma~\ref{char_of_fp_under_dot+restricts}, $\H_\mathbf{t}$ is locally finitely presented and $\bar{\mathbf{t}}$ restricts to $\fp(\H_\mathbf{t})$ under both sets of assumptions. Hence,  (2) holds if and only if {the following two inclusions hold (see Corollary~\ref{coro_1_fpn}(1) and Proposition~\ref{prop. Manolo}):}
\begin{enumerate}
\item[($*_{\T}$)] \mbox{$\T_0[0]{\ =(\T\cap\fp(\G))[0]}=\T[0]\cap\fp(\H_\mathbf{t})\subseteq\fp_2(\H_\mathbf{t})$};
\item[($*_{\F}$)] $\F_0[1]=\F[1]\cap\fp(\H_\mathbf{t})\subseteq\fp_2(\H_\mathbf{t})$.
\end{enumerate}
By the already proved equivalence of assertions (1), (2) and (3), condition $(\dagger)$ holds in both assertions (2) and (4). Then, by Proposition~\ref{prop.objects in fp2(H)}, {($*_\T$) is equivalent to (4.1)}. {Let us verify that ($*_\F$) is equivalent to (4.2). We first show that ``($*_\F$)$\Rightarrow$(4.2)'', with $\S\subseteq\fp(\G)$  an (arbitrary) set of generators:}  {given the exact sequence $0\to K\to X\stackrel{p}{\to} F\to 0$ associated to an epimorphism $p$ as in {\sc (ii)}, we have that $K\in \F$ and so we get an exact sequence $0 \to K[1] \to X[1] \to F[1] \to 0$ in $\H_\t$, where $X[1],\, F[1]\in \fp_2(\H_\t)$ by ($*_{\F}$). We then have $K[1]\in \fp(\Ht)$ by Lemma~\ref{lem.n-fp subcategories}, which implies that $K\in \F_0$ by Corollary~\ref{cor.F0}. {Condition  {\sc (i)} is also clear since $X[1]\in\fp_2(\Ht)$ and $\Ext_\G^1(X,-)_{\restriction\F}\cong\Ext_{\Ht}^1(X[1],-[1])_{\restriction\F}$ vanishes on $\varinjlim( \Prod(Q))$ due to Lemma \ref{char_of_2-term_lem}.} {Applying Proposition~\ref{prop.objects in fp2(H)} to $M=X[1]$, one shows that $\xi_{X[1]}$  is monic, for all $X\in\X$, so {\sc (iii)} holds too.}\\
We finally consider a set $\S\subseteq \fp(\G)$ of finitely presented generators of $\G$ for which condition (4.2) holds, where $\X=(1:t)(\S)$, and prove that $(*_\F)$ holds. By Lemma~\ref{useful_lema_segunda_parte}, we know that $\Ext^{1}_{\G}(X,-)_{\restriction \F}:\F \to \Ab$ preserves direct limits, for all $X\in \X$ ($\subseteq \F_0$). Applying Proposition~\ref{prop.objects in fp2(H)} with $M=X[1]$ and $X\in \X$, we get that $\X[1]\subseteq \fp_2(\H_{\t})$. If now $F\in \F_0$ and we consider the sequence $0 \to K[1]  \to X[1] \to F[1] \to 0$ in $\H_\t$ of the last paragraph associated to any epimorphism $p:X \to F$ as in {\sc (ii)}, then we get that $K[1]\in \F_0[1]\subseteq \fp(\H_\t)$ and, by Lemma~\ref{lem.n-fp subcategories}(1), we conclude that $F[1]\in \fp_2(\H_\t)$. Hence,  $(*_\F)$ holds.}}
\end{proof}

 \subsection{When the torsionfree class is generating }\label{subsection_needs this!}
Let $\G$ be locally finitely presented Grothendieck category $\G$. It was proved in \cite[Proposition~5.7]{PS1} that, for a torsion pair $\t=(\T,\F)$ for which $\F$ is a generating class, that is, $\underline \F=\G$, the following  are equivalent:
\begin{itemize}
\item the heart $\Ht$ of the associated Happel-Reiten-Smal\o\ $t$-structure is a Grothendieck category;
\item $\t$ is of finite type;
\item $\t$ is  cotilting.
\end{itemize}
Note that, if $Q\in \G$ is cotilting, for us $\F:=\Cogen(Q)$ is generating by definition (unlike in \cite{PS1}).

\begin{teor} \label{thm.main_thm_6c}
{Let $\G$ be a locally finitely presented Grothendieck category, $Q$ a  cotilting object in $\G$, and $\t= (\T,\F:=\Cogen(Q))$ the associated torsion pair. Then, the following  are equivalent:
\begin{enumerate}[\rm (1)]
\item $\mathcal{H}_\mathbf{t}$ is locally coherent;
\item for some (resp., each) set of generators $\mathcal{S}\subseteq \fp(\G)$ of $\mathcal{G}$, let $\X:=(1:t)(\S)$. Then:
  \begin{enumerate}[\sc (i)]
 \item $\Ext_\mathcal{G}^1(X,{\varinjlim}_I Q_i)=0$, for all $X\in\mathcal{X}$ and all direct systems $(Q_i)_{I}$ in $\Prod(Q)$;
 \item $\Ker(p)\in \F_0$, for all epimorphism $p\colon X\twoheadrightarrow F$, with $F\in\F_0$ and $X\in\mathrm{sum}(\mathcal{X})$;
\end{enumerate}
\item there is a set $\mathcal{X}\subseteq \F\cap \fp(\G)$ of generators of $\mathcal{G}$ that satisfies conditions {\sc (i)} and {\sc (ii)} in part (2);
\item  $\mathbf{t}$ restricts to $\fp(\G)$ and $\F\cap\fp(\G)\subseteq\fp_\infty(\G)$.
\end{enumerate}
When conditions (1--4) hold,  $\bar{\mathbf{t}}=(\F[1],\T[0])$ restricts to $\fp(\Ht)$ if, and only if, $\G$ is locally coherent. }
\end{teor}
\begin{proof}
{(1)$\Rightarrow$(2). Clearly, $\F[1]\cap\fp(\Ht)\subseteq\fp_2(\Ht)$, so $(*_\F)$ in the proof of Theorem~\ref{thm.local-coherent-heart-general} is verified and, as in that proof, one shows that $(*_\F)$ implies {\sc (i)} and  {\sc (ii)}, for all sets of generators $\S\subseteq \fp(\G)$.}

\smallskip\noindent
{(2)$\Rightarrow$(3)   Let $\mathcal{S}\subseteq\fp (\G)$ be a set of generators as in (2) and put  $\mathcal{X}:=\{(1:t)(S):S\in\mathcal{S}\}\subseteq \F_0$ (see Corollary~\ref{cor.F0}). Given $F_0\in \F_0$, there is $F\in \fp(\G)$ such that $F_0$ {is a direct summand of} $(1:t)(F)$ (see Corollary~\ref{cor.F0}). Then, by Proposition~\ref{fp_prop}, $F\in \mathrm{gen}(\S)$, so that $F_0\in \mathrm{gen}((1:t)\S)=\mathrm{gen}(\X)$. Now, using condition {\sc (i)}, one sees that $\F_0\subseteq \pres_2(\X)$. This, together with {\sc(ii)}, allows us to apply Lemma~\ref{useful_lema_segunda_parte} to the set $\X=(1:t)(\S)$ to see that $\mathcal{F}_0=\mathcal{F}\cap\fp(\mathcal{G})$, since $\underline{\F}=\G$ in our case. Then, $\mathcal{X}\subseteq \mathcal{F}\cap\fp(\mathcal{G})$ is the set of generators required in (3).}


 \smallskip\noindent
 {(3)$\Rightarrow$(4). By Lemma~\ref{useful_lema_terza_parte}, $\F_0=\F\cap\fp(\G)\subseteq\fp_\infty(\G)$ (use that $\G=\underline \F$). Let us verify that $\mathbf{t}$ restricts to $\fp(\G)$: take $N\in\fp(\G)=\mathrm{pres}_1(\mathcal{X})$ (see Proposition~\ref{fp_prop}) and fix a presentation $ X'\to X\to N\to 0$, with $X',\, X \in \mathrm{sum}(\mathcal{X})$. Consider the following commutative diagram with exact rows and columns: 
\[
\xymatrix@C=35pt@R=13pt{
&X'\ar@{=}[r]\ar[d]&X'\ar[d]\\
0\ar[r]&F\ar@{}[dr]|{\text{P.B.}}\ar@{->>}[d]\ar[r]&X\ar@{->>}[d]\ar[r]&(1:t)(N)\ar@{=}[d]\ar[r]&0\\
0\ar[r]&t(N)\ar[r]&N\ar[r]&(1:t)(N)\ar[r]&0.
}
\] 
We deduce by {\sc (ii)} that $F\in\mathcal{F}_0$, so we get an exact sequence $X'\to F\to t(N)\rightarrow 0$, where $X',\, F\in\fp(\G)$ as $\F_0\subseteq\fp(\G)$. It follows that $t(N)\in\fp(\G)$ and hence $\mathbf{t}$ restricts to $\fp(\G)$. }
 
\smallskip\noindent
{(4)$\Rightarrow$(1).  By an unbounded analog of \cite[Proposition~3.2]{HRS}, $\F[1]$ is cogenerating in $\Ht$ and, furthermore, $\F[1]=\varinjlim^{(\H_\t)}(\F_0[1])$, since $\F=\varinjlim\F_0$ (see Corollary~\ref{cor.F0} and \cite[Proposition~4.2]{PS1}). Given $M\in \fp(\H_\t)$, there is an embedding $f\colon M\to F[1]$ with $F\in \F_0$ (as $\varinjlim^{(\H_\t)}(\F_0[1])$ cogenerates $\H_\t$ and $M\in\fp(\H_\t)$). Furthermore,  $F'[1]:=\Coker_{\H_\t}(f)\in  \F[1]\cap \fp(\H_\t)=\F_0[1]$ (as $\F[1]$ is a torsion class and by Corollary~\ref{coro_1_fpn}). We get an exact sequence $0\to M\to F[1]\to F'[1]\to 0$ in $\H_\t$, with $F,\, F'\in \F_0$. Given a direct system $(Q_i)_{I}$ in $\Prod(Q)$, we have that $\Ext_{\H_\t}^1(F[1],{\varinjlim}_I Q_i[1])=\Ext_{\G}^1(F,{\varinjlim}_I Q_i)=0$ and there is a monomorphism $\Ext_{\H_\t}^2(F'[1],{\varinjlim}_I Q_i[1])\to \Ext_{\G}^2(F',{\varinjlim}_I Q_i)=0$, by Lemma~\ref{useful_lema_terza_parte} and \cite[R\'emarque~3.1.17]{BBD}. Hence, we deduce from the following exact sequence that $\Ext_{\H_\t}^1(M,{\varinjlim}_I Q_i[1])=0$:
\[
\xymatrix@C=15pt{
\cdots\ar[r]&\Ext_{\H_\t}^1(F[1],{\varinjlim}_I Q_i[1])\ar[r]&\Ext_{\H_\t}^1(M,{\varinjlim}_I Q_i[1])\ar[r]&\Ext_{\H_\t}^2(F'[1],{\varinjlim}_I Q_i[1])\ar[r]&\cdots.
}
\]
Since $Q[1]$ is an injective cogenerator of $\H_\mathbf{t}$ ({see \cite[Theorem 3.11]{PS4}}), we conclude, by Lemma~\ref{lema_BP}, that $M\in\fp_2(\H_t)$. We have shown that $\fp(\H_\t)=\fp_2(\H_\t)$ so, to conclude, it is enough to verify that $\H_\t=\varinjlim \fp(\H_\t)$. We have already mentioned that $\F[1]=\varinjlim \F_0[1]$ is cogenerating and closed under quotients so, given $X\in \H_\t$, there is a short exact sequence $0\to X\to \varinjlim_IF_i[1]\to \varinjlim_jF'_j[1]\to 0$ in $\H_t$, with $F_i,\ F_j'\in\F_0$, for all $i\in I$ and $j\in J$. Using an argument which is analogous to the one used in the proof of Lazard's Trick, we
deduce that $X$ is a direct limit of objects which are kernels of (not
necessarily epic) morphisms in $\F_0[1]$. By the Abelian exactness of $\fp(\H_\t)=\fp_2(\H_\t)$ (see Corollary~\ref{coro_2_fpn}), those kernels
belong in $\fp(\H_\t)$, as desired.}

\medskip
{For the last statement: if $\G$ is locally coherent, then $\bar{t}=(\F[1],\T[0])$ restricts to $\fp(\H_\t)$ by Proposition~\ref{char_of_fp_under_dot+restricts}. Conversely, suppose that $\bar{t}=(\F[1],\T[0])$ restricts to $\fp(\H_\t)$, i.e., $\T\cap\fp(\G)\subseteq\T_0$ (see Theorem~\ref{thm.local-coherent-heart-general}). Since $\t$ restricts to $\fp(\G)$, to show that $\fp(\G)\subseteq \fp_2(\G)$ it is enough to show that $\T\cap\fp(\G)\subseteq \fp_2(\G)$ and $\F\cap\fp(\G)\subseteq \fp_2(\G)$, the latter being trivial as $\F\cap\fp(\G)\subseteq \fp_\infty(\G)$. Choose $\X$ as in part (3), and take $T\in\T\cap\fp(\G)$. Then there is a short exact sequence $0\rightarrow F\to X\to T\rightarrow 0$, with $X\in\mathrm{sum}(\X)$. Since clearly $F\in \F$ we get an exact sequence $0\to T[0]\to F[1]\to X[1]\to 0$ in $\Ht$, where $T[0],\, X[1]\in\fp(\Ht)$. We then get that $F[1]\in\fp(\Ht)$ so that, by Corollary~\ref{cor.F0} and assertion (4), we conclude that  $X,\, F\in \fp_\infty(\G)$ and so also $T\in \fp_{\infty}(\G)$}.
\end{proof}

As a direct consequence of the above theorem, \cite[Proposition~3.4]{HRS} and its dual, we get:

\begin{cor}
The HRS tilting process defines a bijection, up to equivalence, between
\begin{enumerate}[\rm (1)]
\item pairs $(\G,\mathbf{t})$ such that $\G$ is a locally coherent Grothendieck category, $\mathbf{t}=(\T,\F)$ is a torsion pair of finite type that restricts to $\fp(\G)$ and $\T$ is  cogenerating  in $\G$;
\item pairs $(\G',\mathbf{t}')$ such that $\G'$ is a locally coherent Grothendieck category, $\mathbf{t}'=(\T',\F')$ is a torsion pair of finite type that restricts to $\fp(\G')$ and $\F'$ is  generating  in $\G'$.
\end{enumerate}
\end{cor}

\section{Some consequences and examples}\label{Sec_consequences}

The goal of this section is to exhibit special situations where either the characterization of local coherence of the heart becomes simpler or, at least, where some interesting necessary condition may be derived. Furthermore, we discuss several clarifying examples and, towards the end, we explore some connections with the theory of elementary cogenerators.

\subsection{Studying $\H_\t$ through the restricted torsion pair $\t'$ in $\underline\F$}
As usual, we start with  a locally finitely presented Grothendieck category $\G$ and  a torsion pair of finite type $\mathbf{t}=(\T,\F)$. We then use the results in Subsection~\ref{subsection_needs this!} to characterize the local coherence of the heart $\mathcal{H}_{\mathbf{t}'}$, associated with the ``restricted torsion pair'' $\t':=(\T\cap \underline \F,\F)$ in $\underline\F$.

{\begin{lema}\label{rem_tech_lemma}
Let $\mathcal{G}$ be a locally finitely  presented Grothendieck category and $\t=(\T,\mathcal{F}:=\Cogen(Q))$ a quasi-cotilting torsion pair induced by the quasi-cotilting object $Q\in\G$. Suppose that either the heart $\Ht$ or the ``restricted heart'' $\mathcal{H}_{\mathbf{t}'}$ is locally coherent, and let $\X\subseteq \F_0$ be any subset that generates $\F$, e.g. $\X=(1:t)(\S)$ for some set of generators $\S\subseteq \fp(\G)$. Then:
\begin{enumerate}[\sc (i)]
\item $\Ext_\mathcal{G}^1(X,{\varinjlim}_I Q_i)=0$, for all $X\in\mathcal{X}$ and all direct systems $(Q_i)_{I}$ in $\Prod(Q)$;
 \item $\Ker(p)\in \F_0$, for all epimorphisms $p\colon Y\twoheadrightarrow F$, with $F\in\F_0$ and $Y\in\mathrm{sum}(\mathcal{X})$.
\end{enumerate}
\end{lema}
\begin{proof}
Let $X\in\X$ and take a direct system $(Q_i)_{I}$ in $\Prod(Q)$. If $\H_\mathbf{t}$ is locally coherent, then:
\[
\Ext_\G^1(X,{\varinjlim}_I Q_i)\cong\Ext_{\H_\mathbf{t}}^1(X[1],{\varinjlim}^{(\H_\mathbf{t})}_I(Q_i[1]))\cong{\varinjlim}_I\Ext_{\H_\mathbf{t}}^1(X[1],Q_i[1])\cong{\varinjlim}_I\Ext_\G^1(X,Q_i)=0,
\] 
as $\F_0[1]\subseteq\fp(\H_\mathbf{t})=\fp_2(\H_\mathbf{t})$  and $\F\subseteq {}^{\perp_1}\Prod(Q)$. Analogously, if $\H_{\mathbf{t}'}$ is locally coherent, one gets that: 
\[
\Ext_{\underline \F}^1(X,{\varinjlim}_I Q_i)\cong\Ext_{\H_{\mathbf{t}'}}^1(X[1],{\varinjlim}^{(\H_{\mathbf{t}'})}_I(Q_i[1]))\cong{\varinjlim}_I\Ext_{\H_{\mathbf{t}'}}^1(X[1],Q_i[1])\cong{\varinjlim}_I\Ext_{\underline \F}^1(X,Q_i)=0.
\] 
Note that $\underline \F$ is closed under extensions both in $\G$ and in $\underline{\F}$,  so that $\Ext_{\G}^1(X,Q_i)=\Ext_{\underline \F}^1(X,Q_i)=0$. We have then verified {\sc (i)} in both cases. 
Let now $p\colon Y\twoheadrightarrow F$ be an epimorphism, with $Y,\ F\in\F_0=\mathrm{sum}(\X)$, then $F':=\Ker(p)\in\F$ and we have an exact sequence $0\to F'[1]\to  Y[1]\to F[1]\to 0$ (both in $\Ht$ and in $\mathcal{H}_{\mathbf{t}'}$). By local coherence of $\Ht$ (resp., $\mathcal{H}_{\mathbf{t}'}$) and since $X[1],\, F[1]\in \fp(\Ht)$ (resp., $\in\fp(\H_{\t'})$), we get that $F'[1]\in \fp(\Ht)$ (resp., $\in\fp(\H_{\t'})$), so that $F'\in\F_0$, showing that also {\sc (ii)} holds.
\end{proof}}

{When $\Ht$ is locally coherent,  condition (3.1) of Theorem~\ref{thm.local-coherent-heart-general} does not hold in general, since the tilted torsion pair $\bar{\mathbf{t}}=(\F[1],\T[0])$ in $\Ht$ needs not restrict to $\fp (\Ht)$.  However, we have the following restricted version of the condition, that will be very helpful from now on. }

\begin{lema}\label{lem.(3.1) restricted}
Let $\mathcal{G}$ be a locally finitely  presented Grothendieck category, $Q\in\G$ and a quasi-cotilting object, and $\t=(\T,\mathcal{F}:=\Cogen(Q))$.
Suppose that $\X\subseteq\F_0$ generates $\F$ and satisfies conditions {\sc (i)} and {\sc (ii)} in Lemma~\ref{rem_tech_lemma}. Then, the following statements hold true:
\begin{enumerate}[\rm (1)]
\item $\F_0=\F\cap\fp (\underline{\F})=\F\cap\fp_\infty (\underline{\F})$  and it generates $\underline{\F}$;
\item $\underline{\F}$ is locally finitely presented with $\fp (\underline{\F})=\pres_1(\X)=\pres_1(\F_0)$;
\item $\mathbf{t}':=(\T\cap\underline{\F},\F)$ restricts to $\fp(\underline\F)$.
\end{enumerate}
Furthermore, if our set $\X\subseteq \F_0$ also satisfies the following strengthening  of {\sc (ii)}:
\begin{enumerate}[\ \ ]
\item\begin{enumerate}[\rm (\textsc{ii}$^{\sharp}$)]
 \item $\Ker(p)\in \F_0$, for any (not necessarily epic) $p\colon Y\to F$, with $F\in\F_0$ and $Y\in\mathrm{sum}(\mathcal{X})$.
\end{enumerate}
\end{enumerate}
then, the following statement holds true: 
\begin{enumerate}[\rm (1)]\setcounter{enumi}{3}
\item $\underline{\F}$ is locally coherent.
\end{enumerate} 
\end{lema}
\begin{proof}
(1) follows by  Lemmas~\ref{F0generates_underlineF} and~\ref{useful_lema_terza_parte}, while (2) follows by Lemma~\ref{useful_lema_segunda_parte}  and Proposition~\ref{fp_prop}. At this point, we can apply Theorem~\ref{thm.main_thm_6c} to $\t'$ to get (3).

%

\smallskip\noindent
(4){, \em assuming {\sc (ii$^{\sharp}$)}}. For $C\in\fp(\underline \F)=\pres_1(\X)$, take an exact sequence $0\to K\to X_1\to X_0\to C\to 0$, with $X_0,\, X_1\in\mathrm{sum}(\X)$. Then, $K\in \F_0\subseteq\fp_\infty(\underline \F)$ by {\sc (ii$^{\sharp}$)}. To conclude, apply  Corollary~\ref{coro_1_fpn}(3) twice: first to show that $X_1/K\in\fp_\infty(\underline \F)$ and, secondly, that $C\in \fp_\infty(\underline \F)$ as well.
\end{proof}

\begin{cor} \label{cor.loc-coherence-implies-restriction}
Let $\G$ be a locally finitely presented Grothendieck category and $\mathbf{t}=(\T,\F)$ a torsion pair of finite type in $\G$. If  the heart $\Ht$ is locally coherent, then:
\begin{enumerate}[\rm (1)]
\item $\underline{\F}$ is  a locally finitely presented Grothendieck category, with   ${\F_0=}\F\cap\fp(\underline{\F})\subseteq\fp_\infty (\underline{\F})$;
\item $\mathbf{t}'=(\T\cap\underline{\F},\F)$ restricts to $\fp(\underline{\F})$;
\item $\mathcal{H}_{\mathbf{t}'}$ is locally coherent.
\end{enumerate}
\end{cor}
\begin{proof}
By Lemmas~\ref{rem_tech_lemma} and~\ref{lem.(3.1) restricted}, $\underline{\F}$ is locally finitely presented, with  ${\F_0=}\F\cap\fp(\underline{\F})\subseteq\fp_\infty (\underline{\F})$,  and  there exists a set $\X\subset\F_0$ that satisfies assertion (3)  of Theorem~ \ref{thm.main_thm_6c}. Applying this last theorem to $(\underline{\F},\mathbf{t}')$ instead of $(\G,\mathbf{t})$, we conclude that $\mathcal{H}_{\mathbf{t}'}$ is locally coherent and that $\mathbf{t}'$ restricts to $\fp(\underline{\F})$.
\end{proof}

{In what follows we relax the conditions of Theorem~\ref{thm.local-coherent-heart-general}, in exchange for the local coherence of $\G$. In this setting, our  strategy  is to look for  connections between the local coherence of $\Ht$ and that of $\mathcal{H}_{\mathbf{t}'}$.}

\begin{prop} \label{prop.after-referee-report}
Let $\G$ be a locally finitely presented Grothendieck category, $\mathbf{t}=(\mathcal{T},\mathcal{F})$ be a torsion pair  {generated by finitely presented objects (see Definition~\ref{def.torsion-pair-fp(strongly)generated}) and let 
$\mathbf{t}'=(\T\cap\underline{\F},\F)$ be the restricted torsion pair in the subcategory $\underline{\F}$. Consider the following assertions}:
\begin{enumerate}[\rm (1)]
\item $\mathcal{H}_\mathbf{t}$ is a locally coherent Grothendieck category;
\item the following hold for some (resp., every)  set of generators $\mathcal{S}\subseteq \fp(\G)$, where $\X:=(1:t)(\S)$:
\begin{enumerate}
\item[\sc (i)] $\Ext_\mathcal{G}^1(X,{\varinjlim}_I Q_i)=0$, for all $X\in\mathcal{X}$ and all direct systems $(Q_i)_{I}$ in $\Prod(Q)$;
\item[\sc (ii$^\sharp$)] $\Ker(p)\in \F_0$, for any (not necessarily epic) $p\colon Y\to F$, with $F\in\F_0$ and $Y\in\mathrm{sum}(\mathcal{X})$;
\item[\rm ($\dag^{\flat}$)] $\G(X,-)_{\restriction \T}:\T\to\Ab$ preserves direct limits, for all $X\in\mathcal{X}$;
\end{enumerate}
\item $\mathbf{t}$ satisfies the following conditions (see part $(3)$ of Theorem~\ref{thm.local-coherent-heart-general}):
\begin{enumerate}[\rm ({3.}1)]
\item $\Ker(f)\in \F_0$ and $t(\Coker(f))\in \fp(\G)$, for all $f\colon F\to F'$ in $\F_0$;
\item $t(\Ker(g))\in \fp(\G)$, for all $g\colon T\to T'$ in $\T\cap \fp(\G)$;
\item $t(K)\in\fp(\G)$, for all $K\in \F_0*(\T\cap \fp(\G))$;
\end{enumerate}
\item $\mathcal{H}_{\mathbf{t}'}$ is locally coherent and $(\dag^\flat)$ holds for some (resp., every) set of generators $\mathcal{S}\subseteq \fp(\G)$;
\item $\underline{\F}$ and $\mathcal{H}_{\mathbf{t}'}$ are locally coherent and $(\dag^\flat)$ holds for some (resp., every) set of generators $\mathcal{S}\subseteq \fp(\G)$;
\item $\mathbf{t}$ satisfies condition $(3.1)$;
\item $\underline{\F}$ is locally coherent, $\mathbf{t}'$ restricts to $\fp(\underline{\F})$, and $\T\cap\fp(\underline{\F})\subseteq\fp(\G)$. 
\end{enumerate} 
The implications {\rm``(1)$\Rightarrow$(4)$\Leftarrow$(5)$\Leftrightarrow$(2)$\Rightarrow$(7)$\Rightarrow$(6)$\Leftarrow$(3)''} hold true.  When $\G$ is locally coherent, the assertions $(1)$ to $(5)$ are all equivalent. If, in addition, $\mathbf{t}$ is hereditary, then all the assertions are equivalent.
\end{prop}
\begin{proof}
{(1)$\Rightarrow$(4). $\mathcal{H}_{\mathbf{t}'}$ is locally coherent  by Corollary~\ref{cor.loc-coherence-implies-restriction}. In particular,  for  $\mathcal{S}\subseteq \fp(\G)$ a set of generators, we have that $\G((1:t)(S),-)_{\restriction \T}\cong\Ext_{\Ht}^1((1:t)(S)[1],-[0])$, with $(1:t)(\mathcal{S})[1]\subseteq\fp (\Ht)=\fp_2(\Ht)$ due to Corollary~\ref{cor.F0}  and the local coherence of $\Ht$. Hence, $(\dag^{\flat})$ holds, since $\varinjlim^{\Ht}(T_i[0])\cong (\varinjlim T_i)[0]$, for any direct system $(T_i)_I$ in $\T$, by Lemma~\ref{rem. stalks}.}

\smallskip\noindent
{(5)$\Rightarrow$(4) is trivial. }

\smallskip\noindent
(2)$\Rightarrow$(5). Let $\mathcal{S}\subseteq\fp(\G)$ be the set of generators given in (2). Then, $\underline \F$ is locally coherent and $\F_0\subseteq \pres_1(\X)$, by Lemma~\ref{lem.(3.1) restricted}, so that $\H_{\t'}$ is locally coherent by Theorem~\ref{thm.main_thm_6c} applied to $\t'$. We conclude with the following observation:

\smallskip\noindent
[Obs.$(\dag^\flat)$]: suppose that $\F_0\subseteq \pres_1(\X)$ and $\X$ satisfies $(\dag^\flat)$. Then, given $F\in \F_0$, there is  an exact sequence $0\to\G(F,-)_{\restriction \T}\to\G(X_0,-)_{\restriction \T}\to \G(X_1,-)_{\restriction \T}$, with $X_0,\, X_1\in \X$. Thus, since both  $\G(X_0,-)_{\restriction \T}$ and $\G(X_1,-)_{\restriction \T}$ preserve direct limits by $(\dag^\flat)$, $\G(F,-)_{\restriction \T}$ is forced to do the same.

\smallskip\noindent
{(5)$\Rightarrow$(2). Let $\mathcal{S},\, \S'\subseteq\fp(\G)$ be the set of generators given in (5) and an arbitrary set of generators, respectively, $\X:=(1:t)(\S)$ and $\X':=(1:t)(\S')$. 
As $\H_{\t'}$ is locally coherent, $\S'$ satisfies {\sc (i)} and {\sc (ii)} by Lemma~\ref{rem_tech_lemma}, and any morphism $f\colon Y\to F$, with $Y\in \mathrm{sum}(\X')$ and $F\in \F_0$, is a morphism in $\F\cap \fp(\underline{\F})$ by Lemma~\ref{lem.(3.1) restricted}. It follows that $\Ker(f)\in \F \cap \fp(\underline{\F})=\F_0$ by the local coherence of $\underline{\F}$. Hence  {\sc (ii$^\sharp$)} holds.}

\smallskip\noindent
{(2)$\Rightarrow$(7). By Lemma~\ref{lem.(3.1) restricted} we have that $\underline{\F}$ is locally coherent, $\mathbf{t}'$ restricts to $\fp (\underline{\F})$, and $\fp (\underline{\F})=\text{pres}_1(\X)$. Thus, by $(\dag^\flat)$, $\G(T,-)_{\restriction\T}$ preserves direct limits, for all $T\in\T\cap\fp (\underline{\F})$. One concludes by Lemma~\ref{prop.fp torsion objects}.  }

\smallskip\noindent
{(3)$\Rightarrow$(6) is trivial. }

\smallskip\noindent
(7)$\Rightarrow$(6). Given  $f\colon F\to F'$ in $\F_0\subseteq \fp(\underline \F)$, we have that $\Ker(f),\, \Coker(f)\in \fp(\underline \F)$, as $\underline\F$ is locally coherent, so $\Ker(f)\in \F_0=\F\cap \fp(\underline \F)$, since $\F$ is closed under subobjects, and $t(\Coker(f))\in \T\cap \fp(\underline \F)$ since $\t'$ restricts to $\fp(\underline{\F})$. One concludes using that $ \T\cap \fp(\underline \F)\subseteq \fp(\G)$.

\smallskip
\begin{center}
\em In the rest of the proof we assume that $\G$ is locally coherent.
\end{center}

\smallskip\noindent
{(3)$\Rightarrow$(1). As $\G$ is locally coherent, condition ($\dag^\sharp$)  in Theorem~\ref{thm.locally-fp-hearts}  holds, so $\mathbf{t}$ is strongly generated by finitely presented objects. Hence, $\Ht$ is locally coherent by Theorem~\ref{thm.local-coherent-heart-general}.}

\smallskip\noindent
(4)$\Rightarrow$(3). Consider the classes $\C_\T$ and $\C_\F\subseteq \G$ of all the objects $X$ in $\G$ such that $\G(X,-)$ commutes with direct limits in $\T$ and in $\F$, respectively. Additionally, we denote by $\widehat{\C_{\F}}$ the class of all objects $X\in \G$ such that $\Ext^{k}_{\G}(X,-)_{\restriction \F}:\F \to \Ab$ preserve direct limits, for $k=0,1$. Observe that:
\begin{enumerate}[\rm ({Obs.}1)]
\item $\fp(\G)=\fp_2(\G)\subseteq \C_\F\cap \C_\T$ and $\C_\F\cap \F=\F_0$ (by definition of $\F_0$);
\item $\C_\T\cap \T=\fp(\G)\cap \T$ (by Lemma~\ref{prop.fp torsion objects}) and $\F_0\subseteq \C_\T$ (follows by [Obs.$(\dag^\flat)$]);
\item both $\C_\F$ and $\C_\T$ are closed under cokernels and under extensions by objects in $\fp_2(\G)$, that is, $\C_\F*\fp_2(\G)\subseteq\C_\F$ and $\C_\T*\fp_2(\G)\subseteq \C_\T$;
\item $(\fp(\G)\cap \underline{\F})*\F_0 \subseteq \widehat{\C_{\F}}$.
\end{enumerate}
Indeed $({\rm Obs.1\text{-}3})$  are clear and it follows  by Lemmas~\ref{rem_tech_lemma} and~\ref{lem.(3.1) restricted} that $\F_0\subseteq \widehat{\C_{\F}}$. Consider an exact sequence $0 \to X \to Y \to Z \to 0$, where $X\in \fp(\G)\cap \underline{\F}$ and $Z\in \F_0$. Clearly, $\G(Y,-)_{\restriction \F}\cong \Ext_{\G}^{0}(Y,-)_{\restriction \F}$ preserves direct limits. Moreover, we get the following exact sequence of additive functors $\F \to \Ab$: 
$$\underline{\F}(X,-)_{\restriction \F}  \longrightarrow \Ext^{1}_{\underline{\F}}(Z,-)_{\restriction \F} \longrightarrow \Ext^{1}_{\underline{\F}}(Y,-)_{\restriction \F} \longrightarrow \Ext^{1}_{\underline{\F}}(X,-)_{\restriction \F} \longrightarrow \Ext^{2}_{\underline{\F}}(Z,-)_{\restriction \F} $$
In this sequence the first, second and fifth functors preserve directs limits because $Z\in \F_0\subseteq  \fp_{\infty}(\underline{\F})$. To prove that the third one also does, and so $Y\in \widehat{\C_{\F}}$, it is enough to prove that $\Ext^{1}_{\underline{\F}}(X,-)_{\restriction \F} $ preserves direct limits. In fact, this is true, as $\Ext^{1}_{\underline{\F}}(X,-)_{\restriction \F} \cong \Ext^{1}_{\G}(X,-)_{\restriction \F}$ by the dual  of \cite[Lemma 5.11(1)]{PSV}, and $X\in \fp(\G)=\fp_2(\G)$. Hence, also ${\rm (Obs. 4)}$ is verified.
\begin{enumerate}
\item[(3.1):] Let $f\colon F \to F'$ be a morphism in $\F_0$. By Lemmas~\ref{rem_tech_lemma} and~\ref{lem.(3.1) restricted}, we know that $\F_0=\F \cap \fp(\underline{\F})$ and that $\fp(\underline{\F})=\pres_1(\F_0),$ so that $C:=\Coker(f) \in \fp(\underline{\F}).$ But we know by Theorem~\ref{thm.main_thm_6c}, applied to $(\underline{\F},\t')$ instead of $(\G,\t)$, that $\t'$ restricts to $\fp(\underline{\F})$, so that $t(C)\in \fp(\underline{\F})=\pres_1(\F_0)\subseteq \C_{\T}$ by ({\rm Obs.2}) and ({\rm Obs.3}). Thus $t(C)\in \C_{\T} \cap \T\subseteq \fp(\G)$ by ({\rm Obs. 2}). Note also that $(1:t)(C)\in \F_0$ and so $C\in (\fp(\G)\cap \underline{\F})*\F_0\subseteq \widehat{\C_{\F}}.$ 
Putting $X:=\Im(f)$, we get a short exact sequence $0 \to X \to F' \to C \to 0$, that lives in $\underline{\F}$ and has $X\in \F$. Consider the following exact sequence of additive functors $\F \to \Ab$
$$0 \longrightarrow (C,-)_{\restriction \F}^{0} \longrightarrow (F',-)_{\restriction \F}^{0} \longrightarrow (X,-)_{\restriction \F}^{0} \longrightarrow (C,-)^1_{\restriction \F} \longrightarrow (F',-)^1_{\restriction \F} $$
where $(=,-)^k$ denotes the $k$-th Ext group either in $\G$ or in $\underline{\F}$, for $k=0,1$. Since the first, second, fourth and fifth functors in this sequence preserve direct limits (because $F',C\in \widehat{\C_{\F}}$) we conclude that $(X,-)^{0}_{\restriction \F}=\G(X,-)^{0}_{\restriction \F}$ preserves direct limits, so $X\in \F_0$. By Lemma~\ref{rem_tech_lemma}, with $\X$ a skeleton of $\F_0$,  the epimorphism $\overline{f}\colon F \to X$ has $\Ker(\overline{f})=\Ker(f)\in \F_0$, so (3.1) holds.     
\item[(3.2):] Consider the following exact sequence, where $T_0,\, T_1\in \T\cap\fp(\G)$, and $Y\cong\mathrm{Im}(g)$: 
\[
\xymatrix@C=45pt@R=-2pt{
0\ar[r]& K\ar[r]^\kappa& T_1\ar@/_+4pt/@{->>}[dr]_{\pi}\ar[rr]^g&& T_0.\\
&&&Y\ \ar@/_+4pt/@{^(->}[ur]
}
\] 
As $\G$ is locally coherent, $Y\in \fp(\G)\cap \T$, $K\cong \Ker(\pi)\in\fp(\G)=\fp_2(\G)$, and $(1:t)(K)\in \F_0\subseteq \C_\T$ (by Corollary~\ref{cor.F0}). Consider now the following pushout diagram:
\[
\xymatrix@C=45pt@R=18pt{ 
0 \ar[r] & K\ar@{}[dr]|{\text{P.O.}} \ar[r] \ar@{>>}[d] & T \ar[r]^\pi \ar@{>>}[d]^(.4)q & Y \ar@{=}[d] \ar[r] & 0 \\ 0 \ar[r] & (1:t)(K) \ar[r] & \tilde{T} \ar[r]  & Y \ar[r] & 0 
}
\]
By (Obs.3), $\tilde T\in (\C_\T*\fp_2(\G))\cap \T\subseteq \C_\T\cap \T\subseteq \fp(\G)=\fp_2(\G)$. Hence, $t(K)\cong\Ker(q)\in\fp(\G)$. 
\item[(3.3):] Given an exact sequence $0\to F\to K\to T\to 0$, with $F\in\F_0\subseteq \C_\F$ and $T\in \fp(\G)\cap \T\subseteq \fp_2(\G)$, we have that $K\in \C_\F*\fp_2(\G)\subseteq \C_\F$, so that $(1:t)(K)\in \C_\F\cap \F=\F_0$, by the natural isomorphism $\G(K,-)_{\restriction \F}\cong \F((1:t)(K),-)$ . Consider the pushout diagram: 
\begin{equation}\label{PO_3.3}
\xymatrix@C=45pt@R=18pt{
0 \ar[r] & F \ar[r]^-\iota \ar@{>>}[d]_(.4)f & K \ar[r]^-{\pi} \ar@{>>}[d]_(.4)p \ar@{}[dr]|-{\text{P.O.}} & T \ar@{>>}[d]^(.4)h \ar[r] & 0 \\ 
0 \ar[r] & \overline{f} \ar[r]_-{\bar\iota} & (1:t)(K) \ar[r]_-{\bar\pi} & \overline T\ar[r] & 0.
}
\end{equation}
Since $h$ is epic, $\overline T\in \T$. Furthermore, the morphism $\bar\iota\circ f\colon F\to (1:t)(K)$ is in $\F_0$ and so, by the already verified part (3.1), $\overline T=\Coker(\bar\iota)=\Coker(\bar\iota\circ f)\in \fp(\G)\cap \T$ (as $f$ is epic), $\Ker(f)=\Ker(\bar{\iota} \circ f)\in \F_0 \subseteq \C_\T$ and $\Ker(h)\in \fp(\G)=\fp_2(\G)$. To conclude, note that, by the Snake Lemma, we have a short exact sequence $0\to \Ker(f)\to t(K)\to \Ker(h)\to 0$, so that, $t(K)\in \T\cap (\C_\T*\fp_2(\G))=\T\cap\C_\T\subseteq \fp(\G)$.
\end{enumerate}

\smallskip\noindent
{(3-4)$\Rightarrow$(5). By Lemmas~\ref{rem_tech_lemma} and \ref{lem.(3.1) restricted}, (4) implies (5) if {\sc (ii$^\sharp$)} is verified, for some $\X\subseteq \F_0$ that generates $\F$. But (3.1) clearly implies {\sc (ii$^\sharp$)} for $\X$ a skeleton of $\F_0$.}

%

\smallskip\noindent
(6)$\Rightarrow$(3), {\it when $\mathbf{t}$ is hereditary}.  Condition (3.2) is automatic in this case as, given $g\colon T\to T'$ in $\T\cap\fp (\G)$, we have that $\Ker(g)\in \T$ since $\t$ is hereditary, and $\Ker(g)\in \fp (\G)$ as $\G$ is locally coherent. To verify condition (3.3), consider an exact sequence $0\to F\to K\to T\to 0$ in $\G$, with $F\in\F_0$ and $T\in\T\cap\fp(\mathcal{G})=\T_0$. Consider the pushout diagram in \eqref{PO_3.3}: as $\t$ is hereditary, $f$ is an isomorphism (it was already epic and $\Ker(f)\in \T\cap \F=0$, since $\Ker(f)$ is a subobject of $t(K)$ and also of $F$) and, consequently, the square on the right-hand side of that diagram is also a pullback. Hence, $t(K)=\Ker(p)\cong \Ker(h)$ and, from the exact sequence $0\to F\to (1:t)(K)\to \overline T\to 0$ in $\G$ we obtain the exact sequence $0\rightarrow \overline T[0]\to F[1]\to (1:t)(K)[1]\rightarrow 0$ in $\Ht$. Furthermore, being $\mathbf{t}$ hereditary, we know that $h[0]\colon T[0]\to \overline T[0]$ is an epimorphism in $\Ht$. We then get an exact sequence $T[0]\to F[1]\to (1:t)(K)[1]\rightarrow 0$, where $T[0],\, F[1]\in\fp(\Ht)$. Hence, $(1:t)(K)[1]\in\fp (\Ht)$, that is, $(1:t)(K)\in\F_0$.  Applying condition (3.1)  to  $\bar \iota\circ f\colon F\to (1:t)(K)$, we conclude that $\overline T\in\T\cap\fp (\G)$. Hence, $ t(K)=\Ker (p)\cong \Ker(h)$ is in $\fp(\G)$, since $\G$ is locally coherent. 
\end{proof}

\begin{rem} \label{rem.substitute of local coherence of Htprima}
{One of the goals of the above proposition is to clarify the relation between the local coherences of $\Ht$ and $\mathcal{H}_{\mathbf{t}'}$. The reader not so interested in this connection, may replace the condition ``\,$\mathcal{H}_{\mathbf{t}'}$ is locally coherent''  in $(3)$ and $(5)$ by the equivalent condition: ``\,$\mathbf{t}'$ restricts to $\fp (\underline{\F})$ and $\F\cap\fp (\underline{\F})\subset\fp_\infty (\underline{\F})$'',  applying  Theorem~\ref{thm.main_thm_6c} to the ``restricted torsion pair ''$\mathbf{t}'$ in  $\underline{\F}$. }
\end{rem}

The following result complements \cite[Theorem~5.2]{Sa}.

\begin{cor} \label{cor.generalization-of-PJM}
Let $\G$ be a locally coherent Grothendieck category, $\mathbf{t}=(\T,\F)$ a torsion pair of finite type in $\G$ and $\mathcal{S}$ a set of finitely presented generators of $\G$. The following conditions are equivalent:
\begin{enumerate}[\rm (1)]
\item $\mathbf{t}$ restricts to $\fp(\G)$;
\item $\H_\mathbf{t}$ is locally coherent and $t(\S)\subseteq\fg(\G)$;
\item $\mathbf{t}'=(\T\cap\underline{\F},\F)$ restricts to $\fp(\underline{\F})$ and $t(\S)\subseteq\fg(\G)$.
\end{enumerate}
In such case the subcategory $\underline{\F}$ is locally coherent. 
\end{cor}
\begin{proof}
(1)$\Rightarrow$(2) is contained in \cite[Theorem 5.2]{Sa}. 
 
\smallskip\noindent
(2)$\Rightarrow$(3) follows by Corollary~\ref{cor.loc-coherence-implies-restriction}.

\smallskip\noindent 
(3)$\Rightarrow$(1). Letting $\X:=(1:t)(\S)$, we have that $\X\subseteq \fp(\G)$, as $t(\S)\subseteq \fg(\G)$. By Proposition~\ref{fp_prop}, $\fp(\underline \F)=\pres_1(\X)= \fp(\G)\cap \underline \F$ (so $\fp(\underline \F)$ is an Abelian exact subcategory), where the inclusion $\pres_1(\X)\subseteq \fp(\G)\cap \underline{\F}$ holds since both $\fp(\G)$ and $\underline{\F}$ are closed under cokernels and $\mathrm{sum}(\X)\subseteq \fp(\G)\cap \underline{\F}$. In particular, $\F_0=\fp(\underline \F)\cap \F=\fp(\G)\cap \F$. It is now easy to verify that condition (2) of Proposition~\ref{prop.after-referee-report} holds for our $\S$ and, as $\G$ is locally coherent, we know that ``(2)$\Rightarrow$(1)'' in that proposition.
\end{proof}

Recall that a locally finitely presented Grothendieck category is {\bf locally Noetherian} when each finitely presented object is Noetherian, i.e., it has the ACC on subobjects. Equivalently, each subobject of a finitely presented object is finitely generated. Such a category is always locally coherent.
\begin{cor}
let $\G$ be a locally Noetherian Grothendieck category and $\mathbf{t}=(\T,\F)$ a torsion pair. Then $\mathbf{t}$ is of finite type if, and only if, $\Ht$ is a locally coherent Grothendieck category.  
\end{cor}
\begin{proof}
As $\mathbf{t}$ restricts to $\fp(\G)$, we can conclude by Corollary~\ref{cor.generalization-of-PJM} and \cite[Theorem~1.2]{PS2}. 	
\end{proof}

The following proposition suggests a strategy to construct a torsion pair of finite type $\t$ in a Grothendieck category $\G$, such that $\H_\mathbf{t}$ is locally coherent and $\t$ does not restrict to $\fp(\G)$: one should look inside the family of left constituents of TTF triples in $\G$. We postpone the exposition of explicit examples of (even locally coherent) Grothendieck categories admitting TTF triples of this kind to the end of the following subsection. As it turns out, such examples are quite numerous. 

\begin{prop} \label{ejems.left-constituent-TTFtriple}
Let $\G$ be a locally coherent Grothendieck category and $\mathbf{t}=(\T,\F)$ a hereditary torsion pair  which is the left constituent of a TTF triple in $\G$. Then, the heart $\Ht$ is locally coherent.
\end{prop}
\begin{proof}
Under our hypotheses, $\F=\underline{\F}$, so $\F$ is a Grothendieck category that sits as an Abelian exact subcategory in $\G$. Furthermore, as $\t$ is hereditary, $\F$ is closed under injective envelopes  in $\G$. In particular, the injectives in $\F$ are exactly the torsionfree injectives in $\G$. Therefore, the inclusion $\F\to \G$ sends injectives to injectives, showing that its left adjoint $(1:t)\colon \G\to\F$ is exact. Hence, $\F\cong \G/\Ker(1:t)\cong \G/\T$ is a Giraud subcategory of $\G$ (we refer the reader  to \cite{St} for all the needed terminology), so $\F$ is locally coherent by \cite[Theorem~11.1.33]{Prest}, since $\G$ is locally coherent and $\t$ is of finite type. To conclude, note that condition (7) of Proposition~\ref{prop.after-referee-report} is verified, since $\F=\underline{\F}$ is locally coherent and $\t':=(\T\cap \underline\F,\F)=(0,\F)$ is the trivial torsion pair. Furthermore, under our hypotheses, the implication ``(7)$\Rightarrow$(1)'' in Proposition~\ref{prop.after-referee-report} holds.
\end{proof}

\subsection{When $\G$ is a category of modules}
Starting from this subsection, we are now going to specialize some of the results about the local coherence of the heart of an HRS $t$-structure to the case when the ambient category is a category of modules over a preadditive category. Let us start fixing some notation:
\begin{itemize}
\item[$\bullet$] $\A$ and $R$ denote, respectively, a small (pre)additive category and a unitary ring;
\item[$\bullet$] $\A(a,b)$ is the group of morphisms $a\to b$ in the category $\A$;
\item[$\bullet$] $\mod \A$ is the category of unitary right $\A$-modules{, i.e.\ of additive functors $\A^{\op}\to\Ab$};
\item[$\bullet$] $\Hom_\A(M,N):=(\mod\A)(M,N)$ and $\End_\A(M):=(\mod \A)(M,M)$;
\item[$\bullet$] $\fg(\A):=\fg(\mod \A)$, $\fpmod \A:=\fp(\mod \A)$, and $\fp_n(\A):=\fp_n(\mod \A)$, for  $n\in \mathbb N_{\geq2}\cup\{\infty\}$;
\item[$\bullet$] $\proj\A$ is the class of finitely generated projective $\A$-modules;
\item[$\bullet$] $\Flats \A$ is the class of flat $\A$-modules;
\item[$\bullet$] $\Der(\A):=\Der(\mod \A)$ denotes the unbounded derived category of $\mod \A$.
\end{itemize}

Given a small preadditive category $\A$ and $a\in \A$, we let $H_a:=\A(-,a)\colon \A^{\op}\to \Ab$ be the right $\A$-module represented by $a$. The family 
\begin{equation} \label{eq.XXIII}
\P:=\{H_a:a\in \A\}
\end{equation}
is a family of finitely generated projective generators of $\A$, so $\proj(\A)=\add(\P)$. Furthermore, for a given torsion pair $\t=(\T,\F)$ in $\mod \A$:
\begin{itemize}
\item $t(\A)$ denotes the torsion ideal of $\A$ (see Example~\ref{example_modules_underline});
\item $\bar \A:=\A/t(\A)$ denote the preadditive quotient category of $\A$ over the ideal $t(\A)$.
\end{itemize}

\begin{cor}\label{first_coro_for_modules}
Let $\A$ be a small preadditive category and $\mathbf{t}$ a torsion pair of finite type in $\mod\A$, whose heart $\Ht$ is a locally coherent Grothendieck category. Then, $\mathbf{t}':=(\T\cap\mod{\bar \A},\F)$ is a torsion pair in $\mod{\bar \A}$ that restricts to $\fpmod{\bar \A}$ and such that $\F\cap\fpmod{\bar \A}\subseteq\fp_\infty({\bar \A})$. 
\end{cor}
\begin{proof}
Note that $\underline{\F}=\mod{\bar \A}$ (see Example~\ref{example_modules_underline}) and apply Corollary~\ref{cor.loc-coherence-implies-restriction}.
\end{proof}

As for the general case, when $\F$ is generating, we can improve the above corollary:

\begin{cor} \label{cor.cotiltingpair-locallycoh-heart-modcats}
Let $\mathcal{A}$ be a small preadditive category and $\mathbf{t}=(\mathcal{T},\mathcal{F})$ a torsion pair of finite type such that $\mathcal{F}$ is generating in $\mod \A$. The following assertions are equivalent:
\begin{enumerate}[\rm (1)]
\item $\H_\mathbf{t}$ is locally coherent;
\item $\Ker(p)\in\F_0$ for each epimorphism $p\colon P\twoheadrightarrow F$, with $F\in\mathcal{F}_0$ and $P\in\mathrm{sum}(\P)$;
\item the torsion pair $\mathbf{t}$ restricts to $\fpmod \A$ and $\F\cap\fpmod\A\subseteq\fp_\infty(\A)$.
\end{enumerate}
\end{cor}
\begin{proof}
It follows by the equivalences ``(1)$\Leftrightarrow$(3)$\Leftrightarrow$(4)'' of Theorem~\ref{thm.main_thm_6c}, by taking $\mathcal{X}:=\P$ in part (3) of Theorem~\ref{thm.main_thm_6c}, and observing that condition {\sc (i)} is trivially satisfied by $\P$. 
\end{proof}

Recall that if $f\colon X\to Y$ is a morphism in an additive (not just preadditive) category $\A$, then a {\bf pseudo-kernel} of $f$ is a morphism $g:K\to X$ such that the sequence of functors 
\[
\xymatrix@C=40pt{
\A(-,K)\ar[r]^-{\A(-,g)}&\A(-,X)\ar[r]^-{\A(-,f)}&\A(-,Y)
}
\] 
is exact. When every morphism has a pseudo-kernel we say that $\A$ {\bf has pseudo-kernels}. In fact, one can prove that the locally finitely presented category $\mod \A$ is locally coherent if and only if $\A$ has pseudo-kernels. Specializing to the case when $\A=R$ is a unitary ring, one can consider the Morita equivalence $\mod R\cong \mod {(\proj R)}$, then $R$ is a right coherent ring if and only if the additive $\proj R$ has pseudo-kernels, if and only if $\mod R$ is locally coherent. Similarly, for a general small preadditive category $\A$,  $\mod \A$ is locally coherent if and only if the additive category $\proj \A$  has pseudo-kernels, {which is equivalent to say that $\mathrm{sum}(\P)$ has pseudo-kernels (see \cite[Corollary~1.11]{PSV2}).}

\begin{cor} \label{cor.coherent ring}
Let $\A$ be a small preadditive category such that $\mod\A$ is locally coherent and let $\mathbf{t}=(\T,\F)$ be a torsion pair of finite type in $\mod\A$. The following assertions are equivalent:
\begin{enumerate}[\rm (1)]
\item $\mathbf{t}$ restricts to $\fpmod\A$;
\item $t(H_a)\in\fg(\A)$, for all $a\in\A$, and the heart $\H_\mathbf{t}$ is locally coherent;
\item  $t(H_a)\in\fg(\A)$, for all $a\in\A$, and $\mathbf{t}'=(\T\cap\mod{\bar \A},\F)$ restricts to $\fpmod{\bar \A}$. 
\end{enumerate}

In this case $\mod{\bar \A}$ is also locally coherent.
\end{cor}
\begin{proof}
It follows directly from Corollary~\ref{cor.generalization-of-PJM}, by taking $\S:=\P$.
\end{proof}

Our next two results will be immediate consequences of Proposition~\ref{prop.after-referee-report}. But we first need to introduce some concept familiar to module theorists.

\begin{defi}
Let $\mathcal{I}$ be an ideal of the small preadditive category $\mathcal{A}$ and let $M$ be an $\mathcal{A}$-module. The {\bf annihilator of $\mathcal{I}$ in $M$}, denoted by
$\ann_M(\mathcal{I})$,  is the $\mathcal{A}$-submodule of $M$ acting on objects as follows:
\[
(\ann_M(\mathcal{I}))(a):=\bigcap\{\Ker(M(\alpha )): b\in\mathcal{A},\,\alpha\in\mathcal{I}(b,a)\}.
\] 
\end{defi}
It is routine to check that the assignment $M\mapsto\ann_M(\mathcal{I})$ induces an additive functor 
\[
\ann_{(-)}(\mathcal{I})\colon\mod \mathcal{A}\longrightarrow\mod{{\mathcal{A}}/{\mathcal{I}}}\subseteq \mod \mathcal{A}.
\] 
When $\mathcal{A}=R$ is a ring, viewed as a preadditive category with just one object, $I$ is an ideal of $R$ and $M$ is an $R$-module, then we re-obtain the classical annihilator $\ann_M(I)=\{x\in M : xI=0\}$.

\begin{cor} \label{cor.after-referee-report}
{Let $\A$ be a preadditive category and $\mathbf{t}=(\T,\F)$ a torsion pair  in $\mod \A$. Consider the following assertions: }
\begin{enumerate}[\rm (1)]
\item {the heart $\Ht$ is a locally coherent Grothendieck category;}
\item {the following conditions hold: }
\begin{enumerate}[\rm({2.}1)]
\item {$\mathbf{t}$ is generated by finitely presented $\A$-modules; }
\item {$\mod \bar{\A}$ and the restricted heart $\mathcal{H}_{\mathbf{t}'}$ are locally coherent; }
\item {the functor $\mathrm{ann}_{t(\A)}(-):\mod\A\to\mod\A$ preserves direct limits of modules in $\T$; }
\end{enumerate}
\item {conditions $(2.1)$ and $(2.3)$ hold and $\Ker(f)\in\F_0$, for all  $f\colon P\to F$, with $P\in\mathrm{sum}(\P)$ and $F\in\F_0$; }
\item {conditions $(2.1)$ and $(2.3)$ hold and  $\mathcal{H}_{\mathbf{t}'}$ is locally coherent.}
\end{enumerate}
{Then the implications {\rm``(1)$\Rightarrow$(4)$\Leftarrow$(3)$\Leftrightarrow$(2)''} hold true. When $\mathrm{sum} (\P)$ (or $\proj\A$) has pseudo-kernels (i.e., when $\mod\A$ is locally coherent), all assertions are equivalent. }
\end{cor}
\begin{proof}
 For each $a\in\mathcal{A}$ and each $\mathcal{A}$-module $M$, the canonical projection 
$
p\colon H_a\twoheadrightarrow (1:t)(H_a)
$
induces the following monomorphism in $\Ab$:
\[
p_*\colon\hom_\mathcal{A}\left((1:t)(H_a),M\right)\longrightarrow \hom_\mathcal{A}\left(H_a,M\right)\cong M(a).
\] 
 It is easy to see that $\Im(p_*)=(\ann_{t(\mathcal{A})}(M))(a)$.  Therefore, condition (2.3) above is equivalent to say that, for each $a\in\mathcal{A}$, the functor $\hom_\A((1:t)H_a,-)\colon \mod \mathcal{A}\to\Ab$ preserves direct limits of objects in $\mathcal{T}$, which is precisely condition $(\dag^\flat)$ in Proposition~ \ref{prop.after-referee-report}, when we take $\S:=\P=\{H_a\text{: }a\in\A\}$. Moreover, the implication ``(1)$\Rightarrow$(2.1)'' follows by Theorem~\ref{thm.locally-fp-hearts} (since the hypothesis $(\bullet)$ is verified). Note then that the conditions (1), (2), (3) and (4) correspond, respectively, to the conditions (1), (5), (2) and (4) of Proposition~ \ref{prop.after-referee-report}, by taking  $\S=\P$.
\end{proof}

\begin{cor} \label{cor.hereditary-coherent-heart for modcats}
Let $\A$ be a small preadditive category such that $\mod \A$ is locally coherent and let $\mathbf{t}=(\T,\F)$ be a hereditary torsion pair of finite type in $\mod\A$. The following assertions are equivalent: 
\begin{enumerate}[\rm (1)]
\item $\H_\mathbf{t}$ is locally coherent;
\item $\mathbf{t}$ satisfies condition (3.1) of Theorem~\ref{thm.local-coherent-heart-general};
\item $\mod{\bar\A}$ is locally coherent, $\mathbf{t}'=(\T\cap\mod{\bar\A},\F)$ restricts to $\fpmod{\bar\A}$ and $\T\cap\fpmod{\bar\A}\subseteq\fpmod\A$.
\end{enumerate}
\end{cor}
\begin{proof}
Since $\t$ is hereditary of finite type, it is generated by finitely presented objects (see Proposition~\ref{prop_fixing_first_part_of_old_4.1}). The result then follows by the equivalences ``(1)$\Leftrightarrow$(6)$\Leftrightarrow$(7)''  of Proposition~\ref{prop.after-referee-report}, that do hold since $\G=\mod \A$ is locally coherent and $\t$ is hereditary.
\end{proof}

\begin{rem} \label{rem.coherent ring}
When $ R\cong \A$ is a ring, viewed as a preadditive category with just one object, say $\Ob(\A)=\{a\}$ and $R=\A(a,a)$, then $t(H_a)=t(R)=t(\A)$ is the torsion ideal of $R$ and $\bar{\A}\cong R/t(R)$. So, for instance, condition (3) of  Corollary~\ref{cor.hereditary-coherent-heart for modcats} reads as follows:
\begin{enumerate}[\rm (3$_{0}$)]
\item $R/t(R)$ is a right coherent ring, $\mathbf{t}'$ restricts to $\fpmod (R/t(R))$ and $\T\cap\fpmod (R/t(R))\subseteq\fpmod R$.
\end{enumerate}
We leave to the reader the interpretation of the results of this subsection in this particular case.
\end{rem}

Recall from \cite{krause2015krull}, that an additive category {with split idempotents} is called {\bf Krull-Schmidt} if every object decomposes into a finite direct sum of objects having local endomorphism rings. The following lemma is key for the construction of most of the examples we discuss in the rest of the subsection.

\begin{lema}\label{ex_2_modules_idempotents}
Let $\A$ be a small additive category with pseudo-kernels, let $\mathcal{B}\subseteq\A$ be a subcategory, $\mathcal{P}_\B:=\{H_b:b\in\mathcal{B}\}\subseteq \P$, and $\mathbf{t}=(\T_\B,\F_\B):=(\Gen(\mathcal{P}_\B),\mathcal{P}_\B^\perp)$, the associated torsion pair in $\mod\A$, that is well-known to be the left constituent of a TTF triple (see \cite{PSV2}). The following assertions hold:
\begin{enumerate}[\rm (1)]
\item $\mathbf{t}$ restricts to $\fpmod\A$ if, and only if,  $\mathrm{tr}_{\mathcal{P}_B}(H_a)$ is finitely generated, for all $a\in\A$, if and only if $\mathrm{sum}(\mathcal{B})$ is precovering in $\A$. When $\mathcal{B}=\{b_1,\dots,b_n\}$ is finite, this is equivalent to say that $\A(\coprod_{i=1}^nb_i,a)\cong\bigoplus_{i=1}^n\A(b_i,a)$ is finitely generated as a right $\End_\A(\coprod_{i=1}^nb_i)$-module, for all $a\in\A$;
\item the following assertions are equivalent:
\begin{enumerate}[\rm (2.1)]
\item $\mathbf{t}$ is a hereditary torsion pair;
\item for each morphism $\alpha \colon a\to b$ in $\A$, where $b\in\mathcal{B}$, there are morphisms $\beta_i\colon b_i\to a$ and $\gamma_i\colon a\to b_i$, for some family $\{b_1,\dots,b_n\}$ of objects of $\mathcal{B}$, such that $\alpha =\alpha\circ (\sum_{i=1}^n\beta_i\circ\gamma_i)$.
\end{enumerate}
When, in addition, $\A$ is a Krull-Schmidt category, they are also equivalent to 
\begin{enumerate}[\rm (2.3)]
 \item if $\alpha\colon a\to b$ is a non-zero morphism with $b\in\B$ and $a\in \A$ indecomposable, $a\in{\mathrm{add}}(\B)$.
\end{enumerate}
\end{enumerate}
\end{lema}
\begin{proof}
By \cite{PSV2}, the torsion radical associated with $\mathbf{t}$ is the trace of $\mathcal{P}_\B$, i.e. 
\begin{equation}\label{tor_rad_formula}
\xymatrix{
t(M)=\tr_{\mathcal{P}_\B}(M):=\sum_{b\in\mathcal{B}, f\in \hom_\A(H_b,M)}\Im (f).
}
\end{equation}

\noindent
(1). As $\P_\B\subseteq\proj\A$, we readily see that $t$ preserves epimorphisms. Furthermore, since each $X\in\fpmod\A$ is epimorphic image of $\coprod_{i=1}^nH_{a_i}$, for some family ${\{}a_1,\dots,a_n{\}}$ of objects of $\A$, we conclude that $\mathbf{t}$ restricts to $\fpmod\A$ if, and only if,  $t(H_a)={\tr_{\mathcal{P}_\B}(H_a)}$ is  finitely generated for each $a\in\A$. By \eqref{tor_rad_formula}, this happens if and only if there exists an epimorphism $q\colon \coprod_{k=1}^m H_{b_k}\twoheadrightarrow t(H_a)$, with $b_1,\dots,b_m\in \B$. By the Yoneda Lemma, we have a representation
\[
(\xymatrix@C=43pt{
\coprod_{k=1}^mH_{b_k}\ar[r]^-{q}&t(H_a)\ar@{^(->}[r]^-{\mathrm{incl}}&H_a})\ =\ (\xymatrix@C=43pt{
\coprod_{k=1}^mH_{b_k}\ar[r]^-{\A(-,\beta)}&H_a}),
\] 
for some $\beta\colon \coprod_{k=1}^mb_k\to a$. Then, $\beta$ is a $\mathrm{sum}(\mathcal{B})$-precover in $\A$ if and only if $q$ is an epimorphism.

\smallskip\noindent
(2). As in the case of module categories over an (associative unital) ring $R$, one can see that $\mathbf{t}$ is a hereditary torsion pair if, and only if, each ``cyclic'' submodule of $H_b$ is in $\Gen(\mathcal{P}_\B)=\T_\B$, for all $b\in\mathcal{B}$. That is, if and only if the image of any morphism $H_a\to H_b$, with $a\in\A$ and $b\in\mathcal{B}$, is in $\T_\B$.  Namely, one first reduces the problem to check that any submodule of a coproduct $\coprod_{I}H_{b_i}$, with $b_i\in\mathcal{B}$ for all $i\in I$, is in $\T_\B$. {But if $K\leq \coprod_I H_{b_i}$ is a submodule then the (Ab.5) condition of $\mod{\A}$ gives that $K=\bigcup_{J\subseteq I, {} J \text{ finite}}(K\cap (\coprod_J H_{b_i}))$ (see \cite[Chapter~5]{St}). Then,} the problem further reduces to prove that any finitely generated submodule of a finite coproduct  $\coprod_{i=1}^nH_{b_i}$ is in $\T_\B$. Finally, one inductively reduces the  problem to check  that the cyclic submodules of $H_b$ are in $\T_\B$, for all $b\in\mathcal{B}$.

\smallskip\noindent
(2.2)$\Rightarrow$(2.1). Any morphism  $H_a\to H_b$, with $a\in\A$ and $b\in\B$, is of the form $\A(-,\alpha)$, for some morphism $\alpha\colon a\to b$ in $\A$. Fix  $\beta_i\colon b_i\to a$ and $\gamma_i\colon a\to b_i$, for some family $\{b_1,\dots,b_n\}$ of objects of $\mathcal{B}$, such that $\alpha =\alpha\circ (\sum_{i=1}^n\beta_i\circ\gamma_i)$. Then $\Im(\A(-,\alpha))=\sum_{i=1}^n\Im(\A(-,\alpha\circ\beta_i))$, so the implication holds.  

\smallskip\noindent
(2.1)$\Rightarrow$(2.2). Let $\alpha\colon a\to b$, with $a\in\A$ and $b\in\B$, suppose that $Y:=\Im (\A(-,\alpha))\in\T_\B$ and fix an epimorphism $\coprod_{i=1}^n\A(-,b_i)\cong\A(-,\coprod_{i=1}^nb_i)\twoheadrightarrow Y$, with $b_1,\dots,b_n\in \mathcal{B}$. As $\A(-,\coprod_{i=1}^nb_i)$ is projective in $\mod\A$, by the Yoneda Lemma we  get the following commutative diagram, for appropriate morphisms $\beta_i\colon b_i\to a$ and $\beta'_i\colon b_i\to b$ ($i=1,\dots,n$):
\[
\xymatrix{
&& \coprod_{i=1}^n \A(-,b_i) \cong\A(-,\coprod_{i=1}^n b_i) \ar@{>>}[d]_-\pi \ar@{.>}[dll]|-{\coprod_{i=1}^n\A(-,\beta_i)} \ar@{.>}[drr]|-{\A(-,(\beta_1',\dots,\beta_n'))} \\ 
\A(-,a) \ar@{>>}[rr]|-{\ p\ } \ar@/_14pt/[rrrr]|{\ \A(-,\alpha)\ } & &Y \ar@{^(->}[rr]|-{\ \text{inc.}\ } && \A(-,b)
}
\]
Then,  $\alpha\circ\beta_i=\beta'_i$, for all $i=1,\dots,n$. Note that $\alpha\in Y(a)$ since $\A(-,\alpha )(\id_a)=\alpha$. Therefore, $\alpha\in\Im(\pi_a)$, so that we have a morphism $\gamma =(\gamma_1,\cdots,\gamma_n)^t\colon a\to\coprod_{i=1}^nb_i$ in $\A$ such that 
\[
\xymatrix{
\alpha =\pi(\gamma )=\sum_{i=1}^n\beta'_i\circ\gamma_i=\alpha\circ (\sum_{i=1}^n\beta_i\circ\gamma_i).
}\]

\noindent
(2.2)$\Rightarrow$(2.3), {\em assuming that $\A$ is Krull-Schmidt}. Let $\alpha\colon a\to b$ be a non-zero morphism with $b\in\B$ and $a\in \A$ indecomposable. By (2.2), there are morphisms $\beta_i\colon b_i\to a$ and $\gamma_i\colon a\to b_i$, for some family $\{b_1,\dots,b_n\}$ of objects of $\mathcal{B}$, such that $\alpha =\alpha\circ (\sum_{i=1}^n\beta_i\circ\gamma_i)$.
The equality $\alpha\circ (\id_a-\sum_{i=1}^n\beta_i\circ\gamma_i)=0$ and $\alpha\neq 0$ imply that $\id_a-\sum_{i=1}^n\beta_i\circ\gamma_i$ is not invertible in the local (as $a$ is indecomposable) ring $\A(a,a)$. Then, $\sum_{i=1}^n\beta_i\circ\gamma_i$ is invertible, which implies that $(\beta_1,\dots,\beta_n)\colon \coprod_{i=1}^nb_i\to a$ is a retraction in $\A$.  By the uniqueness of decompositions in the Krull-Schmidt category $\A$,  $a$ is isomorphic to a direct summand of some $b_i$. 
 
  \smallskip\noindent
(2.3)$\Rightarrow$(2.1), {\em assuming that $\A$ is Krull-Schmidt}. Let $b\in\B$, then the cyclic submodules of $H_b$ are the {finite sums of} images of  non-zero morphisms of the form  $\A(-,\alpha )\colon H_a\to H_b$, with $b\in\mathcal{B}$ and $a$ indecomposable. By hypothesis, we have a retraction  $\beta\colon b'\to a$, with  $b'\in\mathcal{B}$. We then have that $\Im(\A(-,\alpha))=\Im(\A(-,\alpha\circ\beta))\in\T_P$. 
\end{proof}

\subsection{Examples} 
In this subsection we collect a series of examples to illustrate the main results of the paper.

\begin{ejem}\label{Ex_R_coh_P_proj}
Let $R$ be a right coherent ring,  $P$  a finitely generated projective right $R$-module and $\mathbf{t}=(\T_P,\F_P):=(\Gen(P),P^\perp)$  the associated torsion pair in $\mod R$. The following assertions hold:
\begin{enumerate}[\rm (1)]
\item $\mathbf{t}$ restricts to $\fpmod R$ if, and only if, $P^\star:=\hom_R(P,R)$ is finitely generated as a right $\End_R(P)$-module {if, and only if, the trace $tr_P(R)$ of $P$ in $R$ is finitely generated as a right ideal};
\item $\mathbf{t}$ is a hereditary torsion pair if, and only if, each cyclic submodule of $P$ is in $\Gen(P)$. If  $R$ is also semiperfect, this is equivalent to say that, if there is a non-zero morphism $Q\to P$, with $Q$ indecomposable in $\proj R$, then $Q$ is a direct summand of $P$. 
\end{enumerate}
\end{ejem}
\begin{proof}
Both assertions easily follow by Lemma~\ref{ex_2_modules_idempotents}, considering the additive category $\A:=\proj R$ and $\mathcal{B}:=\{P\}$, since  $\mod R\cong\mod (\proj R)$ (see \cite[Corollary~1.6]{PSV2}). In fact, if $Q\in\proj R$, then  $(\proj R)(P,Q)=\hom_R(P,Q)$ is a direct summand of $(P^\star)^n$ as a right $\End_{R}(P)$-module. Furthermore,  $\proj R$ is a Krull-Schmidt category precisely when $R$ is semiperfect.
\end{proof}

\begin{ejem}
The following are choices for $R$ and $P$ that satisfy the assumptions of Example~\ref{Ex_R_coh_P_proj}, whence with $\mod R$ locally coherent, for which $\mathbf{t}=(\T_P,\F_P)$ is hereditary, does not restrict to $\fpmod R$, but $\Ht$ is locally coherent (see Proposition~\ref{ejems.left-constituent-TTFtriple}): 
\begin{enumerate}[\rm (1)]
\item (With non-semiperfect $R$) Let $K$ be a field, $V$ an infinite dimensional $K$-vector space. Then take $R=\End_K(V)^{\op}$ and $P:=V$, viewed as a right $R$-module;
\item (With semiperfect $R$) Let $L/K$ be an infinite field extension and consider:
\[
R=\left[\begin{smallarray}{cc}K & 0\\ L & L \end{smallarray}\right]\quad\text{and}\quad e_1=\left[\begin{smallarray}{cc} 1 & 0\\ 0 & 0 \end{smallarray}\right].
\]
Then take $R$ and $P:=e_1R$.
\end{enumerate}
\end{ejem}
\begin{proof}
(1). It is well-known that $R$ is a Von Neumann regular ring which is not semisimple (whence it is two-sided coherent and not semiperfect). Furthermore, $P$ is simple projective as a right $R$-module and $\Soc (R_R)$ contains infinitely many copies of $P$, so  $\tr_P(R)$ is not finitely generated as a right ideal. 

\smallskip\noindent
(2). By \cite[Proposition~3.1.3]{HRS}, we know that $R$ is two-sided hereditary (whence two-sided coherent). Moreover, $P=e_1R$ is simple projective and 
\[
\Hom_R(e_1R,R)\cong Re_1=\left[\begin{smallarray}{cc}K & 0\\ L & 0 \end{smallarray}\right]
\] 
is infinitely generated as a right module over $\End(P_R)\cong e_1Re_1\cong K$.
\end{proof}

Lemma~\ref{ex_2_modules_idempotents} and Proposition~\ref{ejems.left-constituent-TTFtriple} can be applied to interesting situations where $\G=\mod\A$, but $\A$ has infinitely many objects. Let us start recalling the following definition:
\begin{defi}
An additive category with split idempotents $\A$ is said to {\bf have left almost split morphisms} when it is Krull-Schmidt and, for each indecomposable object $a\in\mathrm{Ind}\text{-}\A$, there is a {\bf left almost split} morphism $u:a\to b$, i.e., $u$ is not a section and any morphism $v\colon a\to c$ in $\A$ that is not a section factors through $u$. The concepts of {\bf right almost split morphism} and that of {\bf category with right almost split morphisms} are defined dually. We say that $\A$ {\bf has almost split morphisms} when it has both left and right almost split morphisms.
\end{defi}
 Note that, by the properties of Krull-Schmidt categories, if $u\colon a\to b$ is a left almost split morphism, then there is a decomposition 
 \[
 u=\left[\begin{smallarray}{c}{u'}\\ {0} \end{smallarray}\right]\colon a\longrightarrow b=b_1\sqcup b_2,
 \] 
 where $u'\colon a\to b_1$ is left minimal (and also left almost split). Then $u'$ is uniquely determined by $a$, up to isomorphism. The dual phenomenon is true for minimal right almost split morphisms. 
 
 \begin{defi}
 {Let $\A$ be an additive category that has almost split morphisms.} We will say that an additive subcategory $\X=\mathrm{add}(\X)$ of $\A$ is {\bf closed under immediate successors} (resp., {\bf immediate predecessors}) when, if $x\in\X$ is indecomposable and $x\to y$ (resp., $y\to x$) is its minimal left (resp., right) almost split morphism, then $y\in\X$. 
 \end{defi}
 The last property has an interpretation in terms of the Auslander-Reiten (AR)  quiver $\Gamma (\A)$ of $\A$ (we refer to \cite[Chapter~IV]{ASS} for the definition in case $\A=\mod\Lambda$, for $\Lambda$ an Artin algebra. That same definition applies to our more general $\A$). Now, the fact of $\X$ being closed under immediate successors (resp., immediate predecessors)  is equivalent to saying that if $x$ and $y$ are indecomposable objects, with $x\in\X$,  and there is an arrow $x\to y$ (resp., $y\to x$) in $\Gamma (\A)$, then  $x\in\X$ implies  $y\in\X$.

\begin{prop} \label{ejem.trisection}
Let $\A$ be an  skeletally small Abelian category that has almost split morphisms and  $(\C,\B)$  a split torsion pair in $\A$ such that $\B$ is closed under immediate successors and it is generating in $\A$. With the notation of Lemma~\ref{ex_2_modules_idempotents}, $\mod\A$ is a locally coherent category with a hereditary torsion pair $\mathbf{t}:=(\Gen(\P_\B),\P_B^\perp )$ that does not restrict to $\fpmod\A$, but whose associated heart $\Ht$ is locally coherent.
\end{prop}
\begin{proof}
Let us start noticing that $\mod \A$ is locally coherent, since $\A$ has kernels. Moreover, if $\alpha \colon a\rightarrow b$ is a non-zero morphism in $\A$, with $b\in\B$ and $a$ indecomposable, then $a\in\C\cup\B$, as $(\C,\B)$ is split. Thus, $a\in\B$ since $\alpha\neq 0$. By Lemma~\ref{ex_2_modules_idempotents},  $\mathbf{t}$ is hereditary, and so $\Ht$ is locally coherent by Proposition~\ref{ejems.left-constituent-TTFtriple}. 

We are now going to prove that $\B=\mathrm{sum}(\B)$  is not precovering in $\A$, something that, according to Lemma~\ref{ex_2_modules_idempotents},  implies that $\mathbf{t}$ does not restrict  to $\fpmod\A$, thus ending the proof. {Note that if $\B$ is precovering, then it is also covering, due to the Krull-Schmidt condition of $\A$. Let } $a\in\mathrm{Ind}\text{-}\A \setminus\B$ and  $\alpha\colon b\to a$ be a $\B$-cover. Then $\alpha\neq 0$, actually $\alpha$ is an epimorphism  since $\B$ is generating in $\A$. We now consider the indecomposable decomposition $b=\coprod_{j=1}^m b_j$ and, for each $j=1,\dots,m$, we consider  the minimal left almost split morphism $\beta_j \colon b_j\to b_j'$. By our hypotheses on $\B$, we know that $b_j'\in\B$. We put $ \beta=\coprod_{j=1}^m\beta_j\colon b=\coprod_{j=1}^mb_j\to\coprod_{j=1}^mb'_j=:b'$. By definition of left almost split morphism, the morphism $\alpha$ factors through $\beta$, i.e., we have a morphism $\gamma \colon b'\to a$ such that $\alpha =\gamma\circ\beta$. But since $\alpha$ is a $\B$-precover, we also have a morphism $\delta \colon b'\to b$ such that $\alpha\circ\delta =\gamma$. Therefore $\alpha =\alpha\circ \delta\circ\beta$, which implies that $\delta\circ\beta$ is an isomorphism by the right minimality of $\alpha$. But this implies that $\beta$, and hence all the $\beta_j$, are sections, which contradicts the fact that they are  left almost split morphisms.
\end{proof}

In the following two examples we write down a list of concrete examples of the setting described in Proposition~\ref{ejem.trisection}. In particular, Example~\ref{alegbraic_example} collects all the examples of algebraic origin, while examples of geometric origin are given in Example~\ref{geometric_example}.

\begin{ejem}\label{alegbraic_example}
Let {$\Lambda$ be a finite dimensional algebra over an algebraically closed field and} $\A:=\fpmod\Lambda$. {Suppose that the} subcategory of  indecomposables $\mathrm{Ind}\text{-}\Lambda\subseteq\A$ can be written as $\mathrm{Ind}\text{-}\Lambda =\B_0\sqcup\C_0$, with 
\begin{itemize}
\item $\B_0$ and $\mathcal{C}_0$ are unions of connected components of the AR quiver $\Gamma (\fpmod\Lambda)$;
\item $\Hom_\Lambda (C,B)=0$, for all $B\in\B_0$ and $C\in\C_0$;
\item $\B_0$ contains the indecomposable projective $\Lambda$-modules. 
\end{itemize}
Then, we can apply Proposition~\ref{ejem.trisection} with $\C:=\mathrm{sum}(\C_0)$ and $\B:=\mathrm{sum} (\B_0)$. The following is a list of particular examples where this situation is verified: 
\begin{enumerate}[\rm (1)]
\item when $\Lambda$ is a tame hereditary algebra of infinite representation type, a tame concealed algebra of tubular type  or any of Ringel's canonical algebras, and consider the trisection $\mathrm{Ind}\text{-}\Lambda =\P\sqcup\mathcal{T}\sqcup\mathcal{I}$ of \cite[Theorem~3.3.4]{Ringel} (see also Theorems~3.6(5), 3.7 and 4.3 in {\rm [op.\ cit.]}, to which we refer the reader for the undefined terminology). One can then choose $\B_0:=\P$ or $\B_0:=\P\sqcup\T$;
\item  when $\Lambda$ is a wild hereditary algebra and one takes $\mathcal{B}_0:=\P$ or $\B_0:=\P\sqcup\mathcal{R}$, where $\P$ consists of the preprojective  modules (also called postprojective modules in some texts)  and $\mathcal{R}$ consists of the regular  modules (see \cite[Section~VIII.2]{ASS} for the undefined terminology);
\item  when $\Lambda$ is a tilted algebra of infinite representation type, i.e. $\Lambda\cong\mathrm{End}(_AT)^{\op}$ for a hereditary finite dimensional algebra of infinite representation type $A$ and $_AT$ is a classical (=finitely presented) $1$-tilting module.  In this case one can take $\B_0:=\P(A)$ or $\B_0:=\P(A)\cup\mathcal{R}(A)$, with the terminology of \cite[Theorem~VIII.4.5]{ASS}.
\end{enumerate}
\end{ejem}
\begin{proof}
By \cite{Aus}, we know that $\fpmod \Lambda$ has almost split morphisms. It is then easy to conclude by using that, in any skeletally small Abelian Krull-Schmidt category,  the assignment 
\[
\xymatrix{
(\C,\B)\ar@{|->}[r]& (\C\cap\mathrm{Ind}\text{-}\A ,\B\cap\mathrm{Ind}\text{-}\A )
}
\] 
gives a bijection between the split torsion pairs in $\A$ and the pairs $(\C_0,\B_0)$ of subcategories of $\mathrm{Ind}\text{-}\A $ such that $\A(c,b)=0$, for all $c\in\C_0$ and $b\in\B_0$, and $\mathrm{Ind}\text{-}\A =\C_0\sqcup\B_0$. 
 \end{proof}

\begin{ejem}\label{geometric_example}
Let $\A=\mathrm{coh}(\mathbb{X})$ be the category of coherent sheaves over a weighted projective line, in the sense of \cite{Len}, or  $\A=\mathrm{coh}(\mathbb{X})$ for $\mathbb{X}$  a smooth projective curve, all over an algebraically closed field. Take then $\B_0$ to be the subcategory of indecomposable bundles and $\C_0$ that of indecomposable torsion (=finite length) sheaves. Note that $\B_0$ and $\C_0$ are denoted by $\mathcal{H}_+$ and $\mathcal{H}_0$, respectively, in {\rm [op.\ cit.]}. Then, as in Example~\ref{alegbraic_example}, we can apply Proposition~\ref{ejem.trisection} with $\C:=\mathrm{sum}(\C_0)$ and $\B:=\mathrm{sum} (\B_0)$.
\end{ejem}
\begin{proof}
It is enough to apply \cite[Proposition~10.1]{Len}, taking into account that $\mathrm{coh}(\mathbb{X})$, with $\mathbb{X}$ a smooth projective curve, is an Abelian category that satisfy properties H1--H6 in \cite[Subsection\,10.2]{Len}, and such that each tube $\mathcal{U}_x$ is homogeneous (see the comment introducing \cite[Lemma~10.3]{Len}). 
\end{proof}

The following is a very explicit example of a general phenomenon we discuss in Proposition~\ref{ejem.loc.coher.heart in non-loc-coher Groth.cat}:

\begin{ejem}\label{ej_triangular_VNr}
Let $A$ be a commutative Von Neumann regular ring that is not semisimple (e.g., an infinite product of fields) and $I$ an ideal which is not a  summand of $A$, then the triangular matrix ring 
\[
R:=\left[\begin{smallarray}{c c}A/I & A/I\\ 0 & A \end{smallarray}\right]
\] 
is  left semihereditary  but not right semihereditary (see \cite[Proposition~3.1]{Chase2}). Furthermore, such a ring $R$, cannot be right coherent, i.e., $\mod R$ cannot be locally coherent (see \cite[Theorem~4.1]{Chase}).  
\end{ejem}

The following proposition shows that Theorem~\ref{thm.main_thm_6c}  applies to situations where the ambient Grothendieck category is not locally coherent. 

\begin{prop} \label{ejem.loc.coher.heart in non-loc-coher Groth.cat}
Let $R$ be a left semihereditary ring that is not right semihereditary (such rings do exist by Example~\ref{ej_triangular_VNr}), and $\F:=\Flats R$.  Then,  $\mathbf{t}:=(_{}^{\perp}\F,\F)$ is a torsion pair of finite type in $\mod R$, with $\F$ generating. Furthermore, it restricts to $\fpmod R$ and the associated heart $\H_\mathbf{t}$ is locally coherent. However, $R$ is not right coherent, i.e., $\mod R$ is not locally coherent. 
\end{prop}
\begin{proof}
By a famous result of Chase (see \cite[Theorem~4.32]{Ro}), $R$ is left semihereditary if and only if submodules of flat modules are flat, and $R$ is left coherent. In particular, $R$ has weak dimension $\leq 1$  in the terminology of \cite{Ro}. Furthermore, since weak dimension is left-right symmetric (see \cite[Theorem~8.19]{Ro}), $\F$ is closed under taking subobjects. Moreover, $R$ is left coherent and then, by another result of Chase (see \cite[Theorem~2.1]{Chase}), $\F$ is closed under taking products. It then follows that $\F$ is a generating torsionfree class in $\mod R$, since it is clearly closed under extensions. It is also closed under direct limits so that  $\mathbf{t}=(_{}^{\perp}\F,\F)$ is a torsion pair of finite type in $\mod R$, so $\Ht$ is a Grothendieck category.
We claim that assertion (4) of Theorem~\ref{thm.main_thm_6c}   holds for $\mathbf{t}$, and hence $\H_\mathbf{t}$ is also locally coherent. Indeed, by \cite[Corollary~1.4]{L}, we have that $\F\cap\fpmod R=\proj R$ and this class is clearly contained in $\fp_\infty(R)$. On the other hand, if $M\in\fpmod R$ then the projection $p\colon M\twoheadrightarrow (1:t)(M)$ factors in the form 
\[
\xymatrix{p\colon M\ar[r]^-{u}&P\ar[r]^-{v}&(1:t)(M)},\quad\text{where $P\in\proj R$ (see \cite[Theorem~1.2]{L}).}
\] 
Since $P\in\F$, there is a map $q\colon (1:t)(M)\to P$ such that $q\circ p=u$. We then get $v\circ q\circ p=v\circ u=p$, and so $v\circ q=\id_{(1:t)(M)}$. Therefore $v$ is a retraction and $(1:t)(M)\in\proj R\subseteq\fpmod R$. Moreover, the sequence $0\rightarrow t(M)\to M\to (1:t)(M)\rightarrow 0$ splits, showing that $\mathbf{t}$ restricts to $\fpmod R$. 
\end{proof}

The above example and proposition give a negative answer to an extension of \cite[Question~{7.9}]{PSV}. Indeed, the original question asks whether the property of being locally coherent is invariant under classical \mbox{$1$-tilting} equivalences (between categories of modules). When this question is extended to locally finitely presented Grothendieck categories, Proposition~\ref{ejem.loc.coher.heart in non-loc-coher Groth.cat} can be used to give a negative answer. Indeed, in the notation of the example, $\Ht$ is a locally coherent Grothendieck category with a tilting torsion pair $\bar{\mathbf{t}}=(\F[1],\T[0])$, associated with the tilting object $R[1]$. The heart of the $t$-structure associated with $\bar{\mathbf{t}}$ is equivalent to $\mod R$. {We then have a triangulated equivalence $\Der(\Ht)\cong\Der(\mod R)$, where $\Ht$ is locally coherent but $\mod R$ is not.}

\subsection{Locally coherent hearts and elementary cogenerators}


In this final subsection, we deal with the notions of pure exact sequences, pure monomorphisms and pure epimorphisms in a category of modules $\mod \A$. For these notions, we refer to \cite{CB} and \cite{Prest}. An analogous theory of purity exists in compactly generated triangulated categories, for the notions of pure exact triangle, pure monomorphisms and  epimorphisms in the derived category $\Der(\mod \A)$ of $\mod \A$, we refer to \cite{Prest}.

\medskip
Let us also recall that, given a locally finitely presented Grothendieck category $\G$, a subcategory $\mathcal{Y}\subseteq\mathcal{G}$ is said to be {\bf definable} when it is closed under taking pure subobjects, products and direct limits. When $Y$ is a pure-injective object of $\mathcal{G}$, we shall denote by 
\[
\Cogen_*(Y):=\{\text{pure subobjects of objects in $\Prod(Y)$}\}
\] 
the subcategory of $\mathcal{G}$ of the objects isomorphic to pure subobjects of products of copies of $Y$. The crucial concept for us in this subsection is the following:
 
\begin{defi}
A pure-injective object $Y$ of $\mathcal{G}$ is called an {\bf elementary cogenerator} when the subcategory $\Cogen_*(Y)$ is definable.
\end{defi}

Note that a pure-injective object $Y$ of $\mathcal{G}$ is an elementary cogenerator if, and only if, $\Cogen_*(Y)$ is closed under direct limits. In fact, $\Cogen_*(Y)$ is always closed under pure subobjects, while one can show that it is closed under products using that, even if products may fail to be exact in $\G$, a product of pure monomorphisms is a pure monomorphism (for this use that products are exact in $\mod{(\fp(\G))}$). 

\begin{lema}\label{elementary_in_the_quotient_lem_1}
Let $\mathcal{A}$ be a preadditive category,  $\mathcal{I}$  a (two-sided) ideal of $\mathcal{A}$, and view $\mod{\bar{\mathcal{A}}}$ as a  subcategory of $\mod \mathcal{A}$ in the obvious way, where $\bar{\mathcal{A}}:=\mathcal{A}/\mathcal{I}$. Let $\mathcal{Y}$ and $Y$ be, respectively,  a  subcategory and an object of $\mod{\bar{\mathcal{A}}}$. The following assertions hold:
 \begin{enumerate}[\rm (1)]
 \item $\mathcal{Y}$ is definable in $\mod{\bar{\mathcal{A}}}$ if and only if it is definable in $\mod \mathcal{A}$; 
 \item $Y$ is pure-injective (resp., an elementary cogenerator) in $\mod{\bar{\mathcal{A}}}$ if and only if it is so in $\mod \mathcal{A}$.
 \end{enumerate}
 \end{lema}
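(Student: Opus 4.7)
The plan is to exploit the natural adjunction between $\mod{\mathcal{A}}$ and $\mod{\bar{\mathcal{A}}}$: the inclusion $i_{*}\colon\mod{\bar{\mathcal{A}}}\hookrightarrow\mod{\mathcal{A}}$ admits a left adjoint $i^{*}\colon M\mapsto M/M\mathcal{I}$ (equivalently, $M\otimes_{\mathcal{A}}\bar{\mathcal{A}}$) and is a fully faithful reflective embedding. First I would verify that $\mod{\bar{\mathcal{A}}}$ is closed under subobjects, products, and direct limits inside $\mod{\mathcal{A}}$: subobjects are closed because annihilators only grow; products are closed because the action of $\mathcal{I}$ on a product $\prod_{i}M_{i}$ vanishes componentwise; direct limits are closed because $i^{*}$, being a left adjoint, preserves colimits, so $\varinjlim_{\mod{\mathcal{A}}}M_{i}$ is naturally a $\bar{\mathcal{A}}$-module whenever the $M_{i}$ are.

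The crux, and what I expect to be the main obstacle, is the following purity transfer: a short exact sequence $0\to A\to B\to C\to 0$ of $\bar{\mathcal{A}}$-modules is pure in $\mod{\bar{\mathcal{A}}}$ if and only if it is pure in $\mod{\mathcal{A}}$. The argument uses the adjunction isomorphism $\Hom_{\mathcal{A}}(Q,N)\cong\Hom_{\bar{\mathcal{A}}}(i^{*}Q,N)$, valid for all $Q\in\mod{\mathcal{A}}$ and $N\in\mod{\bar{\mathcal{A}}}$, together with the observation that $i^{*}$ carries $\fp(\mod{\mathcal{A}})$ onto $\fp(\mod{\bar{\mathcal{A}}})$ (up to isomorphism): any $P\in\fp(\mod{\bar{\mathcal{A}}})$ is of the form $\tilde P/\tilde P\mathcal{I}$, where $\tilde P\in\fp(\mod{\mathcal{A}})$ is obtained by lifting to $\mathcal{A}$ a finite presentation of $P$ in $\bar{\mathcal{A}}$ and taking the cokernel. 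Testing the Hom-characterization of purity against the f.p.\ generators on either side then yields the equivalence in both directions.

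With these preliminaries, assertion (1) is immediate: the three closure conditions defining a definable subcategory---closure under pure subobjects, products, and direct limits---transfer between the two ambient categories by the closure properties of $\mod{\bar{\mathcal{A}}}$ in $\mod{\mathcal{A}}$ together with the equivalence of purity just established. For assertion (2), I would use the summation-map definition of pure-injectivity given in the paper: the coproduct $Y^{(I)}$, the product $Y^{I}$, the summation map $s_{I}$, and the canonical map $\mu_{I}$ coincide in the two categories, and the existence of an extension $\hat{s}_{I}\colon Y^{I}\to Y$ is an intrinsic condition since $\Hom_{\bar{\mathcal{A}}}(Y^{I},Y)=\Hom_{\mathcal{A}}(Y^{I},Y)$. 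Finally, the elementary cogenerator condition reduces to a statement about $\Cogen_{*}(Y)$, which is the same subcategory whether formed in $\mod{\bar{\mathcal{A}}}$ or in $\mod{\mathcal{A}}$---since $\Prod(Y)$ agrees and pure subobjects of $\bar{\mathcal{A}}$-modules automatically lie in $\mod{\bar{\mathcal{A}}}$---so the definability conclusion of (2) reduces to (1).
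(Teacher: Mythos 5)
Your proof is correct and follows essentially the same route as the paper's: both rest on the closure of $\mod{\bar{\mathcal{A}}}$ under subobjects, quotients, products, coproducts and direct limits inside $\mod{\mathcal{A}}$, and on the resulting coincidence of purity in the two categories, with assertion (2) for elementary cogenerators reduced to the observation that $\Cogen_*(Y)$ is the same class in both. The one difference is presentational: you make the purity transfer explicit via the adjunction $i^*\dashv i_*$ together with the fact that $i^*$ restricts to an essentially surjective functor $\fp(\mod{\mathcal{A}})\to\fp(\mod{\bar{\mathcal{A}}})$, a step the paper leaves implicit in its two-line sketch.
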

\begin{proof}
Since $\mod{\bar{\mathcal{A}}}$ is closed under taking subobjects, quotients, coproducts and products in $\mod \mathcal{A}$, it follows that it is also closed under taking direct limits and so the pure submodules of products of copies of a given $\bar{\mathcal{A}}$-module are the same in $\mod{\bar{\mathcal{A}}}$ and $\mod \mathcal{A}$. Assertion (1) is then clear. Furthermore, the part of assertion (2) regarding pure-injectivity follows similarly (see also Remark~\ref{rem_purity_subcat}), while for the part regarding elementary cogenerators, it is enough to use the above observation that $\Cogen_*(Y)$ is definable if and only if it is closed under direct limits. 
\end{proof}

A  subclass $\X\subseteq \Der(\A)$ is said to be a {\bf definable class} in $\Der(\A)$ if it is closed under taking products, directed homotopy colimits and pure subobjects. Definable classes in $\Der(\A)$ have been recently characterized in \cite{LV} as the classes closed under  direct products, pure subobjects and pure quotients. 

\begin{defi}
A pure-injective object $Y$ in $\Der(\A)$ is said to be an {\bf elementary cogenerator} when the smallest definable subcategory of $\Der(\A)$ which contains $Y$ coincides with the following class
\[
\Cogen_{\Der(\A)}^*(Y):=\{X\in \Der(\A):X\text{ admits a pure monomorphism into }X',\text{ for some $X'\in\Prod (Y)$}\}.
\] 
\end{defi}
In fact, this is equivalent to say that $\Cogen_{\Der(\A)}^*(Y)$  is closed under taking directed homotopy colimits. For more details on these notions we refer to \cite{Laking}.

\begin{ejem}\label{gen_laking_ex}
Let $\A$ be a small preadditive category. Then, the subclass $(\mod \A)[0]\subseteq \Der(\A)$ is  definable in $\Der(\A)$ since
$
\mod \A=\bigcap\{\Ker(\Der(\A)(H_a[n],-)): {a\in \A,\, n\neq0}\}
$
so \cite[Theorem~3.11]{Laking} applies. By  \cite[Section~5]{Laking},  a cotilting module $X$ in $\mod \A$ is an elementary cogenerator in $\mod \A$ if and only if $X[0]$ is an elementary cogenerator in $\Der(\A)$ (for $\A=R$ a ring, this is \cite[Example~5.13]{Laking}).
\end{ejem}

\begin{lema}\label{lemma_38i}
Let $\A$ be a small preadditive category, $\t=(\T,\F)$ a torsion pair in $\mod \A$, and consider the torsion pair $t' := (\T\cap\mod{\bar \A},\F)$ induced in $\mod{\bar \A}$. The following assertions are equivalent:
\begin{enumerate}[\rm (1)]
\item $\mathcal{H}_{\mathbf{t}'}$ is a locally coherent Grothendieck category;
\item $\mathbf{t}$ is associated with a cosilting $\mathcal{A}$-module $Q$ which is an elementary cogenerator in $\mod \mathcal{A}$.
\end{enumerate}
When $\mathcal{H}_\mathbf{t}$ is a  locally coherent Grothendieck category, both conditions hold.
\end{lema}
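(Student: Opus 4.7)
The plan is to reduce to the setting of Theorem \ref{main_thm_6c} via the passage from $\mod\A$ to $\mod{\bar\A}$. First I would verify that $\F$ is a generating class in $\mod{\bar\A}$: since $\bar\A = \A/t(\A)$, each representable $\bar\A$-module $\bar\A(-,a)$ has zero $t$-torsion part, hence lies in $\F\cap\mod{\bar\A}$, and these representables generate $\mod{\bar\A}$. Moreover, since $\mathbf{t}'$ has the same torsionfree class $\F$ as $\mathbf{t}$ (intersected with $\mod{\bar\A}$), it is of finite type as soon as $\mathbf{t}$ is.

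For $(1)\Rightarrow(2)$, the local coherence of $\mathcal{H}_{\mathbf{t}'}$ implies it is Grothendieck, so by Proposition \ref{main_result_PS2}, $\mathbf{t}'$ is of finite type, and since $\F$ is generating in $\mod{\bar\A}$, \cite[Prop.\,5.7]{PS1} yields that $\mathbf{t}'$ is cotilting in $\mod{\bar\A}$: $\F = \Cogen(Q)$ for some cotilting $\bar\A$-module $Q$. I would then invoke the forthcoming Proposition \ref{last_prop_laking} to conclude that $Q$ is an elementary cogenerator in $\mod{\bar\A}$, which by Lemma \ref{elementary_in_the_quotient_lem_1} is equivalent to $Q$ being an elementary cogenerator in $\mod\A$. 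Since $Q$ cogenerates $\F$ in both $\mod{\bar\A}$ and $\mod\A$, it induces the cosilting torsion pair $\mathbf{t}$ in $\mod\A$.

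For $(2)\Rightarrow(1)$, given a cosilting $\A$-module $Y$ inducing $\mathbf{t}$ which is an elementary cogenerator, the first step is to verify that $Y$ actually lies in $\mod{\bar\A}$. This uses that $Y\in\Cogen(Y)=\F$, together with the fact that $t(\A)$ annihilates every object of $\F$ on the appropriate side (by the very construction of $\bar\A$ from $\mathbf{t}$). Consequently $Y$ induces the cotilting torsion pair $\mathbf{t}'$ in $\mod{\bar\A}$, and by Lemma \ref{elementary_in_the_quotient_lem_1} it remains an elementary cogenerator there. The converse direction of Proposition \ref{last_prop_laking} then yields the local coherence of $\mathcal{H}_{\mathbf{t}'}$.

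The final assertion then follows by combining Corollary \ref{cor.elementcogen-et-al-2}, which identifies enough of $\fp(\mathcal{H}_{\mathbf{t}'})$ with $\fp(\mathcal{H}_\mathbf{t})$ (in particular $\mathcal{F}[1]\cap\fp(\H_{\mathbf{t}'})=\mathcal{F}[1]\cap\fp(\H_{\mathbf{t}})$), with Theorem \ref{main_thm_6c} applied to $\mathbf{t}'$: local coherence of $\mathcal{H}_\mathbf{t}$ supplies the conditions (2.1)--(2.2) of that theorem for $\mathbf{t}'$ through the correspondence of finitely presented stalks, giving local coherence of $\mathcal{H}_{\mathbf{t}'}$, whence by the already proved equivalence we obtain $(2)$. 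The main obstacle will be the descent of the cosilting $\A$-module to $\mod{\bar\A}$ in step $(2)\Rightarrow(1)$, together with the careful bookkeeping of how the elementary cogenerator property transfers between $\mod\A$ and $\mod{\bar\A}$; this is precisely what Lemma \ref{elementary_in_the_quotient_lem_1} is designed to handle.
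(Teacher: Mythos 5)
The central step of your argument is circular. You invoke Proposition~\ref{last_prop_laking} to pass between local coherence of $\H_{\mathbf{t}'}$ and the elementary-cogenerator property of $Q$ in both directions of the equivalence. But the paper's proof of Proposition~\ref{last_prop_laking} explicitly cites Lemma~\ref{lemma_38i} in both of its implications (to obtain condition~(2.2) from~(1), and to deduce local coherence of $\H_{\mathbf{t}'}$ from~(2)). Using that later proposition here would therefore presuppose the very statement you are trying to prove. The missing ingredient that the paper actually uses is external: the adaptation of \cite[Ex.\,5.13]{Laking} together with \cite[Thm.\,5.12]{Laking}, which asserts that for a cotilting torsion pair of finite type in a module category $\mod{\bar{\mathcal{A}}}$, the heart is locally coherent if and only if the cotilting module is an elementary cogenerator. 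You never invoke this theorem, yet it is precisely the piece of machinery that makes the (1)$\Leftrightarrow$(2) equivalence go through once one has reduced to $\mod{\bar{\mathcal{A}}}$ via Lemma~\ref{elementary_in_the_quotient_lem_1} (which you do identify correctly).

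There is also a secondary gap: after obtaining, via \cite[Prop.\,5.7]{PS1}, a cotilting $\bar{\mathcal{A}}$-module $Q$ with $\mathcal{F}=\Cogen(Q)$, you assert directly that $Q$ ``induces the cosilting torsion pair $\mathbf{t}$ in $\mod\mathcal{A}$.'' But a cotilting $\bar{\mathcal{A}}$-module is a priori only a \emph{quasi}-cotilting $\mathcal{A}$-module; to conclude that $\mathbf{t}$ is a cosilting torsion pair associated to a genuine cosilting $\mathcal{A}$-module, one must pass to a $\Prod$-equivalent cosilting object via Corollary~\ref{main_cor_fintype_qcotilt_cosilt}, a step you omit. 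The remainder of your outline (that $\mathcal{F}$ is generating in $\mod{\bar{\mathcal{A}}}$ because representables are torsionfree, the transfer of elementary cogeneration via Lemma~\ref{elementary_in_the_quotient_lem_1}, and the derivation of the final assertion from local coherence of $\H_\mathbf{t}$ through Theorems~\ref{main_general_locally_coh_thm} and \ref{main_thm_6c}) follows the paper's route correctly.
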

\begin{proof}
Under either of the conditions (1) or (2),  $\mathbf{t}'$ is of finite type in $\mod{\bar{\mathcal{A}}}$ and $\mathcal{F}$ is a generating class in this category. Then, by \cite[Proposition~5.7]{PS1}, we know that there is a cotilting, whence pure-injective, $\bar{\mathcal{A}}$-module $Q$ such that $\mathcal{F}=\Ker(\Ext_{\mod{\bar{\mathcal{A}}}}^1(-,Q))$.  
The argument of \cite[Example~5.13]{Laking} is easily adapted to our situation, giving that $Q$ is an elementary cogenerator of $\mod{\bar{\mathcal{A}}}$ if, and only if, it is an elementary cogenerator of $\Der(\mod{\bar{\mathcal{A}}})$. Moreover, the final statement follows by Corollary~\ref{cor.loc-coherence-implies-restriction}.

\noindent
(1)$\Leftrightarrow$(2). Note that $Q$ is a quasi-cotilting $\mathcal{A}$-module which defines the torsion pair $\mathbf{t}$. Then, up to $\Prod$-equivalence, we can assume that $Q$ is a cosilting $\mathcal{A}$-module (see Corollary~\ref{main_cor_fintype_qcotilt_cosilt}). Moreover, by the previous paragraph and \cite[Theorem~5.12]{Laking}, we know that assertion (1) holds if, and only if, $Q$ is an elementary cogenerator in $\mod{\bar{\mathcal{A}}}$. But, by Lemma~\ref{elementary_in_the_quotient_lem_1}, this happens if and only if $Q$ is an elementary cogenerator of $\mod \mathcal{A}$. 
\end{proof}

The following question was communicated to us by Rosanna Laking:

\begin{ques}[R. Laking] 
Let $\A$ be a small preadditive category, $\mathbf{t}$ a torsion pair in $\mod \mathcal{A}$ such that $\mathcal{H}_\mathbf{t}$ is a locally coherent Grothendieck category. Is  $\t$ the torsion pair associated with a $2$-term cosilting complex $E$ (see Theorem~\ref{main_thm_fintype_qcotilt_cosilt}) which is an elementary cogenerator of $\Der(\mod \mathcal{A})$?
\end{ques}

In the rest of this subsection we give some results inspired by the above question, as we study cosilting objects that are elementary cogenerators in $\mod \A$, while leaving the more general case of cosilting complexes which are elementary cogenerators in $\Der(\mod \mathcal{A})$ for future investigation.

\begin{prop}\label{last_prop_laking}
Let $\mathcal{A}$ be a small preadditive category and  $\mathbf{t}=(\mathcal{T},\mathcal{F})$  a torsion pair in $\mod \A$.  Consider the following assertions:
\begin{enumerate}[\rm (1)]
\item the heart $\mathcal{H}_\mathbf{t}$  is a locally coherent Grothendieck category;
\item the following conditions hold:
\begin{enumerate}[\rm {(2.}1)]
\item $\mathbf{t}$ is generated by finitely presented modules;
\item $\mathbf{t}$ is associated with a cosilting $\mathcal{A}$-module $Q$ which is an elementary cogenerator in $\mod \mathcal{A}$;
\item the functor $\ann_{(-)}(t(\mathcal{A}))\colon\mod \mathcal{A}\to\mod \mathcal{A}$ preserves direct limits of object in $\mathcal{T}$;
\end{enumerate}
\item  conditions {\rm (2.1)} and {\rm(2.3)} above hold, as so does the following condition:
\begin{enumerate}[\rm {(3.}2)]
\item  $\mathbf{t'}=(\T\cap\mod\bar{\A},\F)$ restricts to $\fpmod\bar{\A}$ and $\F\cap\fpmod\bar{\A}\subset\fp_\infty (\mod\bar{\A})$.
\end{enumerate}
\end{enumerate}
The implications {\em``(1)$\Rightarrow$(2)$\Leftrightarrow$(3)''} hold true. When the additive category $\proj \A$ (or $\mathrm{sum}(\P)$) has pseudo-kernels (i.e., when $\mod \mathcal{A}$ is locally coherent),  all assertions are equivalent.

\end{prop}
\begin{proof} 
By Lemma \ref{lemma_38i}, (2.2) is equivalent to the local coherence of $\mathcal{H}_{\mathbf{t}'}$. {The implications} ``(1)$\Rightarrow$(2)'' {and ``(2)$\Rightarrow$(1)'', the latter when $\mod\A$ is locally coherent, translate to this situation  the implications ``(1)$\Rightarrow$(4)''  and ``(4)$\Rightarrow$(1)''  of  Corollary~\ref{cor.after-referee-report}.} 

\smallskip\noindent 
(2)$\Leftrightarrow$(3).  By Theorem \ref{thm.main_thm_6c}{, with $(\underline{\F}=\mod\bar{\A},\mathbf{t}')$ instead of $(\G,\mathbf{t})$,} we know that $\mathcal{H}_{\mathbf{t}'}$ is locally coherent if, and only if, condition (3.2) holds.
\end{proof}

\begin{cor} \label{cor.elementary-cogenerator}
Let $\mathcal{A}$ be a small preadditive category such that $\proj \A${ (or $\mathrm{sum}(\P)$)} has pseudo-kernels, and let $\mathbf{t}=(\mathcal{T},\mathcal{F})$ be a torsion pair in $\mod \mathcal{A}$ such that $t(H_a)$ is finitely generated, for all $a\in\mathcal{A}$. Then, the following assertions are equivalent:
  \begin{enumerate}[\rm (1)]
  \item the heart $\mathcal{H}_\mathbf{t}$ is locally coherent;
  \item $\mathbf{t}$ is generated by finitely presented modules and $\mathbf{t}=(_{}^\perp Q,\Cogen (Q))$, for some  cosilting $\mathcal{A}$-module $Q$ which is an elementary cogenerator in $\mod \mathcal{A}$.  
 \end{enumerate}
\end{cor}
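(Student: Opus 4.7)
The plan is to deduce both implications from Proposition \ref{last_prop_laking} by verifying that, under our standing hypotheses on $\mathcal{A}$, the two extra assumptions appearing in the ``if'' direction of that proposition are automatically satisfied. Recall that, since $\hat{\mathcal{A}}$ has pseudo-kernels, $\mod\mathcal{A}$ is locally coherent (see \cite[Coro.\,1.5]{PSV2}).

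For the implication (1)$\Rightarrow$(2), no further work is needed: the implication (1)$\Rightarrow$(2) of Proposition \ref{last_prop_laking} yields conditions (2.1) and (2.2) of that proposition, which taken together are exactly our assertion (2).

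For the converse (2)$\Rightarrow$(1), the key observation is that, for each $a\in\Ob(\mathcal{A})$, the module
\[
F_a:=(1:t)(\mathcal{A}(-,a))=\mathcal{A}(-,a)/t(\mathcal{A}(-,a))
\]
is finitely presented. Indeed, $\mathcal{A}(-,a)$ is finitely presented by construction and $t(\mathcal{A}(-,a))$ is finitely generated by hypothesis; since $\mod\mathcal{A}$ is locally coherent, a finitely generated subobject of a finitely presented object is itself finitely presented, so $t(\mathcal{A}(-,a))\in\fpmod\mathcal{A}$, and then $F_a\in\fpmod\mathcal{A}$ by Lem.\,\ref{lem.fgC-fpC closed-for-extensions-cokernels}. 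From this finite presentation two facts follow. First, $(\mod\mathcal{A})(F_a,-)$ preserves all direct limits, and in particular preserves direct limits of objects in $\mathcal{T}$; by the observation at the beginning of the proof of Proposition \ref{last_prop_laking}, this is equivalent to condition (2.3) of that proposition. Second, by the local coherence of $\mod\mathcal{A}$ and Prop.\,\ref{prop. Manolo}, the functor $\Ext^2_{\mod\mathcal{A}}(F_a,-)$ commutes with arbitrary direct limits, so the canonical morphism
\[
\varinjlim\Ext^2_{\mod\mathcal{A}}(F_a,Q_\lambda)\longrightarrow\Ext^2_{\mod\mathcal{A}}(F_a,\varinjlim Q_\lambda)
\]
is an isomorphism, hence a monomorphism, for every direct system $(Q_\lambda)_{\lambda\in\Lambda}$ in $\Prod(Q)$.

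These two facts are exactly the extra hypotheses required in the ``if'' direction of Proposition \ref{last_prop_laking}; combined with conditions (2.1) and (2.2) from our assertion (2), they yield that $\mathcal{H}_\mathbf{t}$ is a locally coherent Grothendieck category. There is no serious obstacle beyond the bookkeeping of hypotheses: the argument is essentially the specialisation of Proposition \ref{last_prop_laking} to the setting where the local coherence of the ambient module category makes both technical conditions on $F_a$ automatic.
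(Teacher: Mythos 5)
Your proof is correct and follows the same strategy as the paper, namely specializing Proposition \ref{last_prop_laking} after observing that local coherence of $\mod\mathcal{A}$ renders both extra hypotheses automatic once one knows $t(\mathcal{A}(-,a))$, and hence $(1:t)(\mathcal{A}(-,a))$, is finitely presented. The only cosmetic difference is in the $\Ext^2$ condition: the paper uses dimension shifting to reduce to $\Ext^1_{\mod\mathcal{A}}(t(\mathcal{A}(-,a)),-)$ preserving direct limits, whereas you invoke Prop.\,\ref{prop. Manolo} directly to get that $\Ext^2_{\mod\mathcal{A}}((1:t)(\mathcal{A}(-,a)),-)$ preserves direct limits; both are valid and rest on the same local-coherence fact.
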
 
\begin{proof}
Note that $t(H_a)\in\fpmod \A$, for every $a \in \A$, so that condition (2.3) in Proposition~\ref{last_prop_laking} is clear (see the first paragraph of the proof of {Corollary~\ref{cor.after-referee-report}}). Hence, the result follows by Proposition~\ref{last_prop_laking}.
\end{proof}

\end{document}